\newtheorem{theorem}{Theorem}[section]
\newtheorem{lemma}[theorem]{Lemma}
\newtheorem{prop}[theorem]{Proposition}
\newtheorem{cor}[theorem]{Corollary}
\theoremstyle{definition}
\newtheorem{defn}[theorem]{Definition}
\newtheorem{rem}[theorem]{Remark}
\DeclareMathOperator{\Res}{Res}
\newcommand{\im}{\mathbf{i}}
\newcommand{\pb}{\mathrm{Prob}}
\newcommand{\E}{\mathbb E}
\renewcommand\tableofcontents{%
  \null\hfill\textbf{\Large\contentsname}\hfill\null\par
  \@mkboth{\MakeUppercase\contentsname}{\MakeUppercase\contentsname}%
  \@starttoc{toc}%
}
\g@addto@macro\normalsize{%
  \setlength\abovedisplayskip{5pt}
  \setlength\belowdisplayskip{5pt}
  \setlength\abovedisplayshortskip{3pt}
  \setlength\belowdisplayshortskip{3pt}
}
\numberwithin{equation}{section}
\begin{document}
\title{Interlacing adjacent levels of $\beta$--Jacobi corners processes}

\author{Vadim Gorin
\thanks{Department of Mathematics, Massachusetts Institute of Technology, Cambridge, MA, USA and Institute for Information Transmission Problems of Russian Academy of Sciences, Moscow, Russia.
e-mail: vadicgor@gmail.com} \and Lingfu Zhang
\thanks{Department of Mathematics, Princeton University, Princeton, NJ, USA. e-mail: lingfuz@princeton.edu}
}
\date{}

\maketitle

\begin{abstract}
 We study the asymptotics of the global fluctuations for the difference between two
 adjacent levels in the $\beta$--Jacobi corners process (multilevel and general
 $\beta$ extension of the classical Jacobi ensemble of random matrices). The limit
 is identified with the derivative of the $2d$ Gaussian Free Field. Our main tools
 are integral forms for the (Macdonald-type) difference operators originating from
 the shuffle algebra.
\end{abstract}

\tableofcontents

\section{Introduction}

A typical setup in random matrix theory is to take an $N\times N$ Hermitian
matrix $X$ and to study its eigenvalues $x_1<\dots<x_N$ as $N\to\infty$. One
observable of interest is the linear statistic
\begin{equation}
\mathfrak{S}_{f}:=\sum_{i=1}^N f(x_i),
\end{equation}
for suitable
(usually smooth) functions $f$. In many cases $\frac{1}{N} \mathfrak{S}_{f}$
converges to a constant as $N\to\infty$, while $\mathfrak{S}_f-\E \mathfrak{S}_f$ is
asymptotically Gaussian, see e.g.\ the textbooks \cite{anderson2010introduction},
\cite{Pastur_Shcherbina}, \cite{Forrester}.  Such limit results are usually referred
to as the Law of Large Numbers and the Central Limit Theorem  for the \emph{global
fluctuations} of $X$.

The asymptotic covariance for $\mathfrak{S}_f$ is best understood through the
\emph{corners processes} --- a 2d extension obtained by looking at the joint
distribution of the eigenvalues of all principal corners of $X$. In more details,
let $x^k_1<x^k_2<\dots<x^k_k$ be the eigenvalues of the $k\times k$ top--left corner of
$X$, $k=1,2,\dots,N$. The global fluctuations of the array $\{x_i^j\}_{1\le i \le j
\le N}$ as $N\to\infty$ can be then described by a pullback of the 2--dimensional
Gaussian Free Field, as was proven in \cite{Borodin_CLT}, \cite{borodin2015general},
\cite{Dum_Paq}, \cite{Johnson_Pal}, \cite{Gan_Pal}
 for numerous ensembles of random matrix theory: Wigner,
Wishart, $\beta$--Jacobi, and adjacency matrices of random graphs.

The corners processes also pave a way for a sequential construction of $\mathfrak S_f$. Define
\begin{equation}
 \mathfrak{S}'_f(k)=\left(\sum_{i=1}^k f ( x^k_i) \right)-\left(\sum_{i=1}^{k-1} f ( x^{k-1}_i) \right),
\end{equation}
then clearly $\mathfrak{S}_f=\sum_{k=1}^N \mathfrak{S}'_f(k)$. The aforementioned
Central Limit Theorems for the corners processes imply that for any
$0<\alpha_1<\dots<\alpha_m<1$, and smooth functions $f_1,\dots,f_m$, the
$m$--dimensional vector
\begin{equation} \label{eq_der_sums}
 \sum_{k=1}^{\lfloor L \alpha_i \rfloor} (\mathfrak{S}'_{f_i}(k)- \E \mathfrak{S}'_{f_i} (k)),\quad i=1,\cdots,m,
\end{equation}
is asymptotically Gaussian as $L \rightarrow \infty$, and its covariance can be identified with the joint
covariance of certain integrals of the Gaussian Free Field.

A different approach of understanding \eqref{eq_der_sums} is to analyze the asymptotics of the joint law of its
individual terms, $\mathfrak{S}'_f(k)-\E \mathfrak{S}'_f(k)$. As far as the authors
know, such analysis in the setting of the Central Limit Theorem escaped the
attention up until recently and it is the main topic of the present article. Let us
however note that in the Law of Large Numbers context, $\mathfrak{S}'_f(k)$ was
previously considered for Wigner matrices by Kerov
\cite{Kerov93transitionprobabilities}, \cite{kerov1993root},
\cite{kerov1998interlacing} and Bufetov \cite{Bufetov_2013}, leading to interesting
connections with orthogonal polynomials and random partitions.
\bigskip

The stochastic system that we work with is the $\beta$--Jacobi corners process,
first introduced in \cite{borodin2015general}. This is a random array of particles
split into levels, and such that the distribution of particles at level $N$ can be
identified with the classical $\beta$--Jacobi ensemble of random $N \times N$ matrices. The
special values of the parameter $\beta=1,2,4$ arise when considering real, complex
or quaternion matrices, and for such values of the parameter the Jacobi corners
process can be identified with the eigenvalues of the MANOVA ensemble
$A^*A(A^*A+B^*B)^{-1}$ with rectangular Gaussian matrices $A$ and $B$ that vary with
$N$, see \cite[Section 1.5]{borodin2015general} and \cite{sun2015matrix} for the
details. More generally, extrapolating from the $\beta=1,2,4$ cases, the definition also
makes sense for any value of $\beta>0$, see Section \ref{Section_def} for the
details.

In the $\beta$--Jacobi corners process, suppose that the particles at level $k$ are $x_1^k < x_2^k < \cdots x_k^k$, then we define $\mathfrak{S}_f(k) = \sum_{i=1}^k f(x_i^k)$, and $\mathfrak{S}'_f(k) = \mathfrak{S}_f(k) - \mathfrak{S}_f(k-1)$.
We study $\mathfrak{S}'_f(k)$ in two separate asymptotic regimes: for individual
$k=\lfloor y L \rfloor$ as $L\to\infty$, and in the integrated form by averaging
$\mathfrak{S}'_f(\lfloor y L \rfloor)$ with a smooth weight function on $y$.
These two regimes have very different behaviors.
For the first one, $\mathfrak{S}'_f(\lfloor y L \rfloor)$ converges as $L\to\infty$
to a constant (depending on $f$, see Theorem \ref{thm:mea}), while $\mathfrak{S}'_f(\lfloor y L \rfloor)-\E \mathfrak{S}'_f(\lfloor y L \rfloor)$ decays as
$L^{-1/2}$ and becomes asymptotically Gaussian upon rescaling. Somewhat
surprisingly, for $y_1 \neq y_2$ the random variables $L^{1/2}\left(\mathfrak{S}'_f(\lfloor y_1 L \rfloor)-\E \mathfrak{S}'_f(\lfloor y_1 L \rfloor)\right)$ and
$L^{1/2}\left(\mathfrak{S}'_f(\lfloor y_2 L \rfloor)-\E \mathfrak{S}'_f(\lfloor y_2 L
\rfloor)\right)$ are asymptotically independent, see Theorem \ref{thm:clt1} for the
exact statement and details.
The scaling is also different for the second limit regime, as the weighted averages of
the form
\begin{equation}
 \int_{0}^1 g(y) \left( \mathfrak{S}'_f(\lfloor y L \rfloor)-\E \mathfrak{S}'_f(\lfloor y L
\rfloor) \right) dy
\end{equation}
decay as $L^{-1}$ and become Gaussian upon rescaling, see Theorem \ref{thm:clt2} for
the exact statement and details.

The results in both scalings are best understood if we recall the main theorem of
\cite{borodin2015general}: $\mathfrak{S}_f(\lfloor y L \rfloor)-\E \mathfrak{S}_f(\lfloor y L \rfloor)$ is asymptotically Gaussian (jointly in several $y$'s and
$f$'s), and the limit can be identified with the integral of a generalized Gaussian
field, which in turn is a pullback of the Gaussian Free Field. Then our results
yield that $\mathfrak{S}'_f(\lfloor y L \rfloor)-\E \mathfrak{S}'_f(\lfloor y L
\rfloor)$ converges to the $y$--derivative of this generalized Gaussian field.
Rigorously speaking, the field is not differentiable in $y$--direction, and this is
what leads to the appearance of two scalings; we make this connection more precise
in Theorems \ref{thm:gff:la}, \ref{thm:gff:sm}.

From this perspective, our main results strengthen the convergence of $\mathfrak{S}_f(\lfloor y L \rfloor)-\E \mathfrak{S}_f(\lfloor y L \rfloor)$ to the pullback of
the Gaussian Free Field up to the convergence of the derivatives in the $y$--direction.
We emphasize that there is no a priori reason why such an upgrade for the CLT should
hold. Indeed, in a parallel work \cite{Erdos_Schroder} Erd\H{o}s and Schr\"oder show
that this is not the case for general Wigner matrices; in that article the limit
might even fail to be Gaussian.

On a more technical side, our result can be linked to an observation that the
Gaussian convergence of $\mathfrak{S}_f(\lfloor y L \rfloor)-\E \mathfrak{S}_f(\lfloor
y L \rfloor)$ is faster than it might have been: the cumulants (of order $3$ and
greater) decay much faster than just $o(1)$, see Proposition \ref{prop:gtp2} for the details. In several $2d$ stochastic systems,
which have no direct connection with our setup, but also lead to the asymptotic
appearance of the Gaussian Free Field, somewhat similar fast decay of cumulants were observed e.g.\ in \cite{Conlon_Spencer}, \cite{Yau_CLT}. 

For the proofs, we adopt parts of the methodology of \cite{borodin2015general} and
exploit the fact that the $\beta$--Jacobi process is a limit of Macdonald processes
of \cite{borodin2014macdonald}, \cite{borodin2016observables}. A critical new
ingredient is the use of a family of difference operators arising from the work of
Negut \cite{Negut}, \cite{Negut_Shuffle} (see also \cite{feigin2009commutative}) on
Macdonald operators and shuffle algebra. These operators were first introduced by
Borodin and the first author in the appendix to \cite{Fyodorov2016} (without
detailed proofs), and here we further develop them to their full power. Let us
emphasize that although asymptotic information on $\beta$--Jacobi process was
previously accessible through other operators used in \cite{borodin2015general},
their combinatorics was too complicated for the delicate asymptotic regimes
addressed in this text. Therefore, our use of a new family of difference operators
is crucial for the proofs.  These operators output answers in the form of contour
integrals in large dimensions, and additional efforts and ideas are required to
convert them into a trackable one or two--dimensional form. This last step is done
by employing new integral identities for dimension reductions, which are
presented in Appendix \ref{sec:dr}.

\bigskip

Independently of the present article, the Central Limit Theorem for $\mathfrak{S}'_f(k)$ in the context of Wigner matrices was also considered recently by
Erd\H{o}s--Schr\"oder \cite{Erdos_Schroder} and by Sodin \cite{Sodin}. Since these
authors consider only the asymptotics for a single $k$, the link to the Gaussian Free Field is less visible there, although we believe that it should be also present (at
least in the case when the Wigner matrices have Gaussian entries, i.e.\ for GOE,
GUE, GSE). Despite the connections, our setup is quite different from \cite{Erdos_Schroder},
\cite{Sodin}. In particular, these papers rely on matrix models and independence of
matrix elements; we do not know how to extend such an approach to our settings of
$\beta$--Jacobi corners process.
In the opposite direction: although there is a
simple and well-known limit from $\beta=1,2,4$ Jacobi corners process to
GOE/GUE/GSE, some technical difficulties prevent us from
performing such a limit transition in the exact
formulas that we use for the asymptotic analysis.

\section*{Acknowledgments}

The authors would like to thank Alexei Borodin and Alexey Bufetov for helpful
discussions. 
We are very grateful to the two anonymous referees, whose feedback led to many important improvements in the text.
V.G.~was partially supported by the NSF grants DMS-1407562, DMS-1664619, and by the
Sloan Research Fellowship. L.Z.~ was partially supported by Undergraduate Research
Opportunity Program (UROP) in Massachusetts Institute of Technology.

\section{Background and setup}

\subsection{$\beta$--Jacobi corners process}

\label{Section_def}

\begin{defn}
The \emph{$K$-particle Jacobi ensemble} is a probability distribution on
$K$-tuples of real numbers $0\leq x_1 < \cdots < x_K \leq 1$ with density (with
respect to Lebesgue measure) proportional to

\begin{equation}
\prod_{1\leq i<j\leq K} (x_j - x_i)^{\beta}\prod_{i=1}^K x_i^p (1-x_i)^q,
\end{equation}
where $\beta > 0, p, q > -1$ are real parameters.
\end{defn}

This is the distribution of the eigenvalues of the MANOVA (Jacobi) random matrix ensemble.
Specifically, consider two infinite matrices $X_{ij}$ and $Y_{ij}$, $i, j = 1, 2,
\cdots$ where entries are i.i.d.\ real, complex, or quaternion Gaussian,
corresponding to $\beta=1,2,4$, respectively. For integers $A\geq M>0$ and $N > 0$,
let $X^{AM}$ be the $A\times M$ top--left corner of $X$, and $Y^{N M }$ the $N\times
M$ top--left corner of $Y$. For the $M\times M$ matrix,
\begin{equation}
\mathcal{M}^{ANM} = (X^{AM})^{*} X^{AM} \left( (X^{AM})^{*} X^{AM} + (Y^{NM})^{*}
Y^{NM} \right)^{-1} ,
\end{equation}
almost surely $K=\min(N,M)$ of its $M$ eigenvalues are different from $0$ and $1$;
they are distributed as $K$-particle Jacobi ensembles, for $\beta = 1, 2, 4$, and $p
= \frac{\beta}{2}(A-M+1) - 1$, $q = \frac{\beta}{2}(|M-N|+1) - 1$, see e.g.\
\cite[Section 3.6]{Forrester}.


\smallskip

Following \cite{borodin2015general}, we further introduce the $\beta$--Jacobi
corners process by coupling a sequence of Jacobi ensembles. Let $\chi^M$ be the set
of infinite families of sequences $x^1, x^2, \cdots$, where for each $N\geq 1$,
$x^N$ is an increasing sequence with length $\min(N, M)$:
\begin{equation}
0\leq x_1^N < \cdots < x_{\min(N, M)}^N \leq 1
\end{equation}
and for each $N>1$, $x^N$ and $x^{N-1}$ interlace:
\begin{equation}
x_1^N < x_1^{N-1} < x_2^N < \cdots .
\end{equation}

\begin{defn}
The \emph{$\beta$--Jacobi corners process} is a random element of $\chi^M$ with
distribution $\mathbb{P}^{\alpha, M, \theta}$, where $\alpha, M, \theta = \frac{\beta}{2} > 0$ are parameters, such that the sequence $x^N$, for
$N=1,2,\dots$, is a Markov chain satisfying the following conditions.
First, the marginal distribution of a single $x^N$ has
density (with respect to Lebesgue measure) proportional to
\begin{equation}  \label{eq:cp:defn1}
\prod_{1\leq i<j\leq \min(N, M)} (x_j^N - x_i^N)^{2\theta}\prod_{i=1}^{\min(N, M)}
(x_i^N)^{\theta\alpha-1} (1-x_i^N)^{\theta(|M-N|+1)-1} .
\end{equation}
Second, the conditional density of $x^{N-1}$ given $x^{N}$ is
\begin{multline}  \label{eq:cp:defn2}
\frac{\Gamma(N\theta)}{\Gamma(\theta)^N}
\prod_{i=1}^{N}(x_i^N)^{(N-1)\theta}\prod_{1\leq i<j \leq N - 1}(x_j^{N-1} - x_i^{N-1})
\prod_{1\leq i<j\leq N}(x_j^N - x_i^N)^{1-2\theta} \\
\times \prod_{i=1}^{N-1}\prod_{j=1}^{N}|x_j^N -
x_i^{N-1}|^{\theta-1}\prod_{i=1}^{N-1}\frac{1}{(x_i^{N-1})^{N\theta}} 
\end{multline}
when $N\leq M$, and
\begin{multline}  \label{eq:cp:defn3}
\frac{\Gamma(N\theta)}{\Gamma(\theta)^M\Gamma(N\theta - M\theta)}
\prod_{1\leq i<j\leq M}(x_j^{N-1} - x_i^{N-1})(x_j^N - x_i^N)^{1-2\theta} \\
\times \prod_{j=1}^{M} (x_i^N)^{(N-1)\theta}(1-x_i^N)^{\theta(M-N-1)+1}
\prod_{i,j=1}^{M}|x_j^N - x_i^{N-1}|^{\theta-1}
\prod_{i=1}^{M}\frac{(1-x_i^{N-1})^{\theta(N-M)-1}}{(x_i^{N-1})^{N\theta}} 
\end{multline}
when $N > M$.
\end{defn}

The proof that the distribution $\mathbb{P}^{\alpha, M, \theta}$ is well-defined (i.e., that the formulas (\ref{eq:cp:defn1}), (\ref{eq:cp:defn2}), and (\ref{eq:cp:defn3}) agree with each other) can be found in \cite[Proposition 2.7]{borodin2015general}.
It is based on integral identities due to Dixon \cite{dixon1905generalization} and Anderson \cite{Anderson1991}.

Sun proved in \cite[Section 4]{sun2015matrix} that the joint distribution of the
(different from $0, 1$) eigenvalues in $\mathcal{M}^{AnM}$, $n = 1, \cdots, N$ is
the same as the first $N$ rows of $\beta$--Jacobi corners process with $\alpha = A -
M + 1$, and $\beta = 1, 2$ (corresponding to real and complex entries, respectively).
See \cite[Section 1.5]{borodin2015general} for more discussions about matrix models of the $\beta$--Jacobi corners process.

\subsection{Signed measures and their diagrams}

Our main object of study is a pair of interlacing sequences $x^{N-1}$, $x^N$ drawn from the
$\beta$--Jacobi corners process. We assign to such a pair two closely related objects:
a \emph{signed measure} and a \emph{diagram}.

\begin{defn} \label{defn:sim}
Given an interlacing sequence $x_1 \leq y_1 \leq \cdots \leq y_{n-1} \leq x_n$, the
corresponding \textit{signed interlacing measure} $\mu^{\{x_i\},\{y_i\}}$ is an
atomic signed measure on $\mathbb R$ of total mass $1$ given by
\begin{equation}
\mu^{\{x_i\},\{y_i\}}(A) = \sum_{i=1}^{n} \mathds{1}_{x_i\in A} - \sum_{i=1}^{n-1}
\mathds{1}_{y_i\in A}, \quad \forall A\subset \mathbb{R} .
\end{equation}
\end{defn}

An alternative way to describe interlacing sequences (due to Kerov
\cite{Kerov93transitionprobabilities}, see also \cite{Bufetov_2013}) relies on the
notion of a diagram.

\begin{defn}  \label{defn:dia}
A \textit{diagram} $w: \mathbb{R}\rightarrow \mathbb{R}$ is a function satisfying:
\begin{enumerate}
\item Lipschitz condition: $| w(u_1) - w(u_2) | \leq |u_1 - u_2|$, $\forall u_1 , u_2 \in \mathbb{R}$.
\item There is a $u_0 \in \mathbb{R}$, the \textit{center} of $w$, such that $w(u) = |u - u_0|$ for $|u|$ large enough.
\end{enumerate}
Any diagram $w$ that is piecewise linear and satisfies $\frac{d}{du} w = \pm 1$ (except for
finitely many points) is called \textit{rectangular}.
\end{defn}

We draw a connection between interlacing sequences and diagrams, see Figure
\ref{fig:interseq} for an example.

\begin{defn}  \label{defn:yod}
For any interlacing sequence $x_1 \leq y_1 \leq \cdots \leq y_{n-1} \leq x_n$,
define its diagram $w: \mathbb{R}\rightarrow \mathbb{R}$ as follows:
\begin{enumerate}
\item For $u < x_1$ or $u > x_n$, let $w(u) = |u - u_0|$, where $u_0 = \sum_{i=1}^n x_i - \sum_{i=1}^{n-1}
y_i$.
\item For $i = 1, \cdots, n$, let $w(x_i) = \sum_{1\leq j < i}(y_j - x_j) + \sum_{i<j\leq n}(x_j - y_{j-1})$.
\item For $i = 1, \cdots, n-1$, let $w(y_i) = \sum_{1\leq j < i}(y_j - x_j) - x_i + x_{i+1} + \sum_{i + 1<j\leq n}(x_j - y_{j-1})$.
\item In all the intervals $[x_i, y_i]$ and $[y_i, x_{i+1}]$, $w$ is linear.
\end{enumerate}
\end{defn}

It's easy to verify that the defined $w$ is a rectangular diagram; to be more precise, it satisfies the following conditions:
\begin{enumerate}
\item $\frac{d}{du} w(u) = 1$, for any
 $u \in \left( \bigcup_{i=1}^{n-1}(x_i, y_i) \right) \bigcup (x_n, \infty)$ .
\item $\frac{d}{du} w(u) = -1$, for any
 $u \in \left( \bigcup_{i=1}^{n-1}(y_i, x_{i+1}) \right) \bigcup (- \infty, x_1)$ .
\end{enumerate}

\begin{figure}
    \centering
    \includegraphics[width=0.8\textwidth]{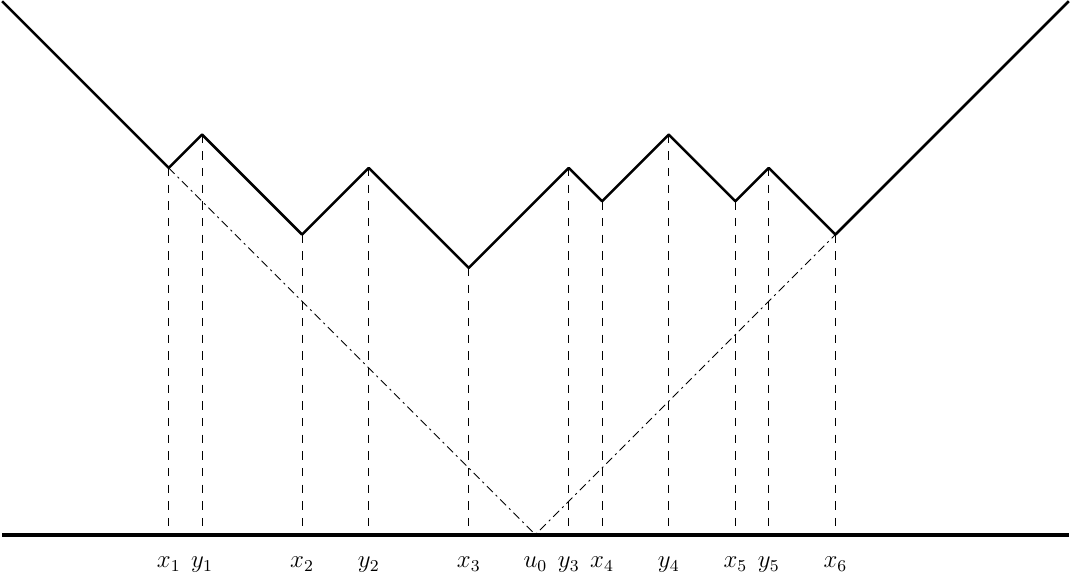}
    \caption{The diagram of an interlacing sequence}
    \label{fig:interseq}
\end{figure}

\begin{rem}  \label{rem:secder}
For an interlacing sequence $x_1 < y_1 < \cdots < y_{n-1} < x_n$, and its diagram
$w$, the second derivative $\frac{d^2}{du^2}w$ can be identified with
$2\mu^{\{x_i\},\{y_i\}}$.
\end{rem}

\subsection{Pullback of the Gaussian Free Field}

In this section we briefly define a pullback of the Gaussian Free Field, and review the results of \cite{borodin2015general} about the appearance of the GFF with
Dirichlet boundary conditions in the asymptotics of the $\beta$--Jacobi corners process.

Detailed surveys of the 2--dimensional Gaussian Free Field are given in
\cite{sheffield2007gaussian}, \cite[Section 4]{dubedat2009sle}, \cite{Werner_GFF},
and here we will omit some details. Informally, the Gaussian Free Field with
Dirichlet boundary conditions in the upper half plane $\mathbb{H}$ is defined as a
mean $0$ (generalized) Gaussian random field $\mathcal{G}$ on $\mathbb{H}$.
It vanishes on the real axis, and the
covariance (for any $z, w \in \mathbb{H}$) is
\begin{equation}  \label{eq:defn:gff}
\mathbb{E}(\mathcal{G}(z) \mathcal{G}(w)) = -\frac{1}{2\pi} \ln \left| \frac{z-w}{z-\bar{w}} \right| .
\end{equation}
Since \eqref{eq:defn:gff} has a singularity at the diagonal $z=w$, the value of the
GFF at a point is not defined. However, the GFF can be well-defined as an element of a
certain functional space. In particular, the integrals of $\mathcal{G}(z)$ against
sufficiently smooth measures are bona fide Gaussian random variables.

\bigskip

The next step is to define a correspondence which maps to the upper half--plane the
space where particles of the $\beta$--Jacobi corners process live.
\begin{defn}   \label{defn:dom}
Let $\hat{M}, \hat{\alpha} > 0$ be parameters, and define $D \subset [0, 1] \times \mathbb{R}_{>0}$ be the set of all $(x, \hat{N})$, satisfying the following:
\begin{equation}   \label{eq:sec:hei}
\left| x - \frac{\hat{M}\hat{N} + (\hat{M} + \hat{\alpha})(\hat{N} + \hat{\alpha})  }{(\hat{N} + \hat{\alpha} + \hat{M})^2} \right| < \frac{2\sqrt{\hat{M}\hat{N}(\hat{M} + \hat{\alpha})(\hat{N} + \hat{\alpha})}}{(\hat{N} + \hat{\alpha} + \hat{M})^2} .
\end{equation}
Let $\Omega : D \rightarrow \mathbb{H} $ be
such that the horizontal section of $D$ at height $\hat{N}$ (for $\hat{N} \neq \hat{M}$) is mapped to the
half-plane part of the circle, centered at
\begin{equation}
\frac{\hat{N}(\hat{\alpha} + \hat{M})}{\hat{N} - \hat{M}}
\end{equation}
with radius
\begin{equation}
\frac{\sqrt{\hat{M}\hat{N}(\hat{M} + \hat{\alpha})(\hat{N} + \hat{\alpha})}}{\left| \hat{N} - \hat{M} \right|}.
\end{equation}
(when $\hat{N} = \hat{M}$ the circle is replaced by the vertical line at $\frac{\hat{\alpha}}{2}$),
in a way that for any point $u$ on the half circle (or half vertical line), it is the image of
\begin{equation}
\left( \frac{u}{u + \hat{N}}\cdot \frac{u - \hat{\alpha}}{u - \hat{\alpha} - \hat{M}} , \hat{N} \right)     .
\end{equation}
\end{defn}
It is known that such $\Omega$ is well defined and injective, and the image is $\mathbb{H}$ without (the upper half of) the ball centered at $\hat{\alpha} + \hat{M}$ with radius $\sqrt{\hat{M}(\hat{M} + \hat{\alpha})}$ (see \cite[Section 4.6]{borodin2015general}).

\begin{defn}
\label{def_GFF_pullback} $\mathcal{K}$ is a generalized Gaussian random field in
$[0,1]\times \mathbb{R}_{> 0}$ which vanishes outside $D$  and is equal to $\mathcal{G} \circ
\Omega$ (i.e.\ the pullback of $\mathcal{G}$ with respect to map $\Omega$) inside
$D$.
\end{defn}

\begin{rem}
Since the image of $\Omega$ is smaller that $\mathbb{H}$, only the
restriction of the field $\mathcal{G}$ to the image $\Omega(D)$ is important.
However, for the definition of $\mathcal{G}$ we need to take the entire
$\mathbb{H}$, i.e.\ we do not impose any new boundary conditions for the
field on the non-real part of the boundary of the image.
\end{rem}

Again, the value of $\mathcal{K}$ at a given point in $D$ is not well-defined, but
it can be integrated with respect to certain types of measures; specifically, we
have the following results, which can essentially be taken as an alternative
definition of $\mathcal K$ (indeed,
either of Lemmas \ref{lemma:pbcov1}, \ref{lemma:pbcov2} can be used to recover the covariance kernel
of $\mathcal{K}(u, y)$).
\begin{lemma}   \label{lemma:pbcov1}
For any $0 < \hat{N}_1 \leq \cdots \leq \hat{N}_h $, and positive integers $k_1, \cdots, k_h$, the following random vector
\begin{equation}   \label{eq:lemma:pbcov1}
\left( \int_0^1 u^{k_i}\mathcal{K}(u, \hat{N}_i)du \right)_{i=1}^h
\end{equation}
is jointly centered Gaussian, and the covariance between the $i$th and $j$th component is the double contour integral
\begin{multline}
\label{eq_1d_cov} \frac{\theta^{-1}}{(2\pi\im)^2 (k_i + 1)(k_j + 1)} \oint \oint
\frac{dv_1 dv_2}{(v_1 -v_2)^2}
\\
\times
\left(\frac{v_1}{v_1 + \hat{N}_i}\cdot \frac{v_1 - \hat{\alpha}}{v_1 - \hat{\alpha} - \hat{M}} \right)^{k_i+1}
\left(\frac{v_2}{v_2 + \hat{N}_j}\cdot \frac{v_2 - \hat{\alpha}}{v_2 - \hat{\alpha} - \hat{M}} \right)^{k_j+1}  ,
\end{multline}
where $|v_1| \ll |v_2|$, and the contours enclose $-\hat{N}_i$, $-\hat{N}_j$,
but not $\hat{\alpha} + \hat{M}$.
\end{lemma}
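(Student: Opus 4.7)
The plan is to unfold Definition \ref{def_GFF_pullback}, reduce \eqref{eq:lemma:pbcov1} to integrals of the bona fide GFF $\mathcal{G}$ along explicit arcs in $\mathbb{H}$, and then evaluate the covariance by an integration-by-parts / contour-deformation argument in the spirit of \cite[Section 4.6]{borodin2015general}.

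First I would use that $\mathcal{K}(u,\hat{N}) = \mathcal{G}(\Omega(u,\hat{N}))$ on $D$ and vanishes off $D$. Fixing $\hat{N}_i$, the horizontal section $D\cap(\mathbb{R}\times\{\hat{N}_i\})$ is an interval $[x^-_i,x^+_i]\subset[0,1]$ whose image under $\Omega(\cdot,\hat{N}_i)$ is a half-circular arc $\gamma_i\subset\mathbb{H}$ with endpoints on $\mathbb{R}$. The inverse restricted to $\gamma_i$ is given explicitly by the real rational function
\[
u = \phi_i(v) := \frac{v}{v + \hat{N}_i}\cdot \frac{v - \hat{\alpha}}{v - \hat{\alpha} - \hat{M}},
\]
whose poles $-\hat{N}_i$, $\hat{\alpha}+\hat{M}$ lie on the real axis, outside the relevant arcs. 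Changing variables rewrites the $i$th component of \eqref{eq:lemma:pbcov1} as
\[
\int_0^1 u^{k_i}\mathcal{K}(u,\hat{N}_i)\,du \;=\; \int_{\gamma_i} \phi_i(v)^{k_i}\,\phi_i'(v)\,\mathcal{G}(v)\,dv .
\]
Joint centered Gaussianity is then automatic: each entry is a linear functional of $\mathcal{G}$ against a smooth, compactly supported test density on $\mathbb{H}$, for which the pairing with the GFF is a standard bona fide Gaussian random variable.

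Next I would compute the covariance. Pairing the rewritten expressions and substituting \eqref{eq:defn:gff} gives
\[
-\frac{1}{2\pi}\int_{\gamma_i}\!\int_{\gamma_j} \phi_i(v_1)^{k_i}\phi_j(v_2)^{k_j}\,\phi_i'(v_1)\phi_j'(v_2)\,\ln\!\left|\frac{v_1-v_2}{v_1-\bar v_2}\right|\,dv_1\,dv_2.
\]
The key trick is to write $\ln|z| = \tfrac12(\ln z+\ln\bar z)$ and, using the reality of $\phi_j$ together with $\phi_j(\bar v)=\overline{\phi_j(v)}$, to absorb the image-charge term $\ln|v_1-\bar v_2|$ into an integral over the reflected arc $\bar\gamma_j$ in the lower half plane. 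The two log pieces then combine into a single integral over the closed contour $\gamma_j\cup\bar\gamma_j$ (and similarly for $\gamma_i$), with the real-axis endpoints contributing nothing because the Dirichlet kernel $\ln|(v-w)/(v-\bar w)|$ vanishes on $\mathbb{R}$. Integrating by parts twice using $\phi(v)^k\phi'(v)\,dv = \tfrac{1}{k+1}\,d\phi(v)^{k+1}$ and the identity $\partial_{v_1}\partial_{v_2}\ln(v_1-v_2) = \tfrac{1}{(v_1-v_2)^2}$ converts the log kernel into $\tfrac{1}{(v_1-v_2)^2}$ and extracts the $(k_i+1)^{-1}(k_j+1)^{-1}$ prefactor. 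Finally, I would deform the resulting closed contours to small loops encircling $-\hat{N}_i$ and $-\hat{N}_j$ respectively while avoiding $\hat\alpha+\hat M$, with the nesting $|v_1|\ll|v_2|$ chosen so that the diagonal pole is not swept across during the deformation. Tracking the factor of $i$ from $dv$ integration along the doubled contour yields $(2\pi\mathbf{i})^{-2}$ and matches \eqref{eq_1d_cov}. The overall $\theta^{-1}$ prefactor reflects the $\beta$-dependent normalization of the pullback field $\mathcal{K}$ relative to the standard Gaussian Free Field, as set in \cite[Section 4.6]{borodin2015general}.

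The main technical obstacle is a careful handling of the logarithm: selecting branches on the doubled contour so that the two integrations by parts are legitimate, and verifying that the folding of the image-charge term into the reflected arc does not introduce stray boundary contributions. Beyond that, one must check that the contour deformation in $v_1,v_2$ encounters only the expected poles of $\phi_i^{k_i+1}$ and $\phi_j^{k_j+1}$ at $-\hat{N}_i$ and $-\hat{N}_j$, and avoids the pole at $\hat\alpha+\hat M$; this is automatic since $\gamma_i\cup\bar\gamma_i$ already separates $-\hat{N}_i$ from $\hat\alpha+\hat M$. Once these bookkeeping points are settled, the formula \eqref{eq_1d_cov} follows directly, so the lemma is in essence a clean restatement of the covariance structure of $\mathcal{K}$ established in \cite{borodin2015general}.
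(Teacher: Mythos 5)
Your sketch is in fact the expected proof, and it agrees in outline with the derivation in \cite[Section 4.6]{borodin2015general}; note that the paper itself does not supply a proof of this lemma but simply cites it from there as an equivalent characterization of $\mathcal K$. Unfolding $\mathcal K=\mathcal G\circ\Omega$, changing variables $u=\phi_i(v)$ along the arc $\gamma_i$, reducing $u^{k_i}\,du=\frac{1}{k_i+1}\,d\phi_i(v)^{k_i+1}$, double integration by parts against the log kernel, Schwarz-reflecting the $\ln|v_1-\bar v_2|$ piece onto $\bar\gamma_j$ to close up the contour (with the boundary terms killed by the Dirichlet condition on $\mathbb{R}$), and finally deforming to small loops is precisely the expected route. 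Two bookkeeping remarks.

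First, whether the doubled circle $\gamma_i\cup\bar\gamma_i$ encircles $-\hat N_i$ or $\hat\alpha+\hat M$ depends on the sign of $\hat N_i-\hat M$: for $\hat N_i>\hat M$ it is $\hat\alpha+\hat M$ that lies inside; for $\hat N_i<\hat M$ it is $-\hat N_i$. What is uniform is that the circle always \emph{separates} the two poles of $\phi_i^{k_i+1}$, and since the integrand decays at infinity the residues at $-\hat N_i$ and at $\hat\alpha+\hat M$ are opposite (up to the diagonal contribution), so the deformation to a loop around $-\hat N_i$ is still legitimate in both cases. Your phrasing ``automatic since $\gamma_i\cup\bar\gamma_i$ already separates $-\hat{N}_i$ from $\hat\alpha+\hat M$'' is the right reason, but you should be explicit that you are using residue symmetry rather than the naive inclusion.

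Second, you are correct to flag the $\theta^{-1}$ prefactor: if one takes Definition \ref{def_GFF_pullback} literally ($\mathcal K=\mathcal G\circ\Omega$ with no scaling), the covariance computed from \eqref{eq:defn:gff} produces \eqref{eq_1d_cov} without the $\theta^{-1}$. The factor $\theta^{-1}$ is inherited from the convention in \cite[Section 4.6]{borodin2015general}, where the relevant field is normalized by $\theta^{-1/2}$ (equivalently, the covariance of the GFF used there carries the $\theta^{-1}$). It would be worth saying this explicitly so that a reader does not worry about a contradiction between Definition \ref{def_GFF_pullback} and Lemmas \ref{lemma:pbcov1}--\ref{lemma:pbcov2}. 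Similarly, ``smooth compactly supported test density on $\mathbb H$'' should really say ``smooth density on a compact arc''; pairing the GFF against a $1$-dimensional measure is fine because $\iint_{\gamma_i\times\gamma_j}|\ln|v_1-v_2||\,|dv_1|\,|dv_2|<\infty$, which is slightly different in flavor from the standard pairing against smooth bulk test functions.
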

\begin{rem}
We note that $|v_1| \ll |v_2|$ means that the contour of $v_2$ encloses any pole that depends on $v_1$, for any $v_1$ in its contour. In particular, in \eqref{eq_1d_cov}, this is equivalent to that the contour of $v_1$ is inside the contour of $v_2$.
We will use the notation $\ll$ throughout the following text.
\end{rem}
\begin{lemma}   \label{lemma:pbcov2}
For any integers $k_1, \cdots, k_h$, and $g_1, \cdots, g_h \in C^{\infty}([0, G])$ for some $G \in \mathbb{R}_{>0}$, the joint distribution of the vector
\begin{equation}   \label{eq:lemma:pbcov2}
\left( \int_0^G \int_0^1 u^{k_i} g_i(y) \mathcal{K}(u, y) du dy \right)_{i=1}^h
\end{equation}
is centered Gaussian, and the covariance between the $i$th and $j$th component is
\begin{multline}
\label{eq_2d_cov}
\int_0^G \int_0^G \frac{g_i(y_1)g_j(y_2)\theta^{-1}}{(2\pi \im)^2 (k_i + 1)(k_j + 1)} \oint \oint \frac{1}{(v_1 - v_2)^2} \\
\times \left(\frac{v_1}{v_1 + y_1} \cdot \frac{v_1 - \hat{\alpha}}{v_1 - \hat{\alpha} - \hat{M}}\right)^{k_i + 1} \left(\frac{v_2}{v_2 + y_2} \cdot \frac{v_2 - \hat{\alpha}}{v_2 - \hat{\alpha} - \hat{M}}\right)^{k_j + 1} dv_1 dv_2 dy_1 dy_2 ,
\end{multline}
where the inner contours enclose poles at $-y_1$ and $-y_2$, but not $\hat{\alpha} + \hat{M}$,
and are nested: when $y_1 \leq y_2$, $|v_1| \ll |v_2|$; when $y_1 \geq y_2$, $|v_2| \ll |v_1|$.
\end{lemma}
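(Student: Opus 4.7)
The plan is to deduce Lemma \ref{lemma:pbcov2} from its single-height counterpart Lemma \ref{lemma:pbcov1} by a Fubini-type argument together with the Gaussianity of $\mathcal{K}$. First, joint centered Gaussianity of the vector \eqref{eq:lemma:pbcov2} is essentially automatic: each component is a continuous linear functional of $\mathcal{K}=\mathcal{G}\circ\Omega$, and hence of the centered Gaussian field $\mathcal{G}$, so any finite family of such functionals is jointly centered Gaussian. Thus the substantive part of the lemma is the covariance formula. Applying bilinearity of covariance and Fubini gives
\[
\mathrm{Cov}= \int_0^1\!\!\int_0^1 g_i(y_1)\,g_j(y_2)\,\mathrm{Cov}\!\left(\int_0^1 u^{k_i}\mathcal{K}(u,y_1)\,du,\ \int_0^1 u^{k_j}\mathcal{K}(u,y_2)\,du\right)dy_1\,dy_2,
\]
and substituting the inner covariance from Lemma \ref{lemma:pbcov1} (the $h=2$ case with $(\hat{N}_i,\hat{N}_j)$ replaced by the pair arising from $(y_1,y_2)$) immediately produces the contour integral \eqref{eq_2d_cov}.

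The nesting prescription in the lemma statement is exactly the translation of the 1D condition $|v_1|\ll|v_2|$, which required the inner integration variable to be associated with the smaller height. When $y_1\le y_2$ no relabelling is needed and the natural pairing keeps $v_1$ inside $v_2$; when $y_1>y_2$ one must swap $v_1\leftrightarrow v_2$ in order to apply Lemma \ref{lemma:pbcov1} under its ordering hypothesis $\hat{N}_i\le\hat{N}_j$, and this swap reverses the nesting. The resulting expression is then manifestly symmetric in the two indices, as it must be for a covariance.

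The main obstacle is the rigorous justification of the Fubini step, since $\mathcal{K}$ is only a generalized field and the object $\int_0^1 u^k\mathcal{K}(u,y)\,du$ at a single height is meaningful only through pairing with the $\Omega$-pullback measure on $\mathbb{H}$. The cleanest workaround is to approximate $g_i,g_j$ uniformly by step functions supported on short subintervals of $[0,1]$; for each such approximation the statement reduces to a finite linear combination of outputs of Lemma \ref{lemma:pbcov1}, and one then passes to the limit. Here one uses that the integrand on the right-hand side of \eqref{eq_2d_cov} is continuous in $(y_1,y_2)$ (the $v$-poles at $-y_1,-y_2$ remain on the negative real axis, and the $(v_1-v_2)^{-2}$ pole is kept away by the prescribed nesting), so dominated convergence produces the claimed formula. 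An alternative route, matching the style of \cite[Section 4.6]{borodin2015general}, is to insert the definition $\mathcal{K}=\mathcal{G}\circ\Omega$ and rewrite both sides as double integrals of the logarithmic kernel $-\frac{1}{2\pi}\log\bigl|(z-w)/(z-\bar w)\bigr|$ against the $\Omega$-pushforwards of the test measures, and then recognize the contour integral via a residue computation.
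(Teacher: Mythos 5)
Your proposal is correct and takes the same route the paper (tersely) indicates: the text simply remarks that \eqref{eq_2d_cov} is obtained from \eqref{eq_1d_cov} by integrating in the $y$-direction, deferring to \cite[Section 4.6]{borodin2015general}, and your Fubini-plus-step-function argument together with the observation that the integrand is continuous away from the diagonal supplies the rigor that is left implicit. The accounting of the nesting (inner contour tied to the smaller of $y_1,y_2$, obtained by relabelling $v_1\leftrightarrow v_2$ when $y_1>y_2$) is exactly the correct translation of the ordering hypothesis $\hat N_i\le\hat N_j$ in Lemma \ref{lemma:pbcov1}.
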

These above two Lemmas are obtained from the Gaussianity of integrals against the Gaussian Free Field (see e.g. \cite[Lemma 4.5, 4.6]{borodin2015general}), by following the arguments in \cite[Section 4.6]{borodin2015general} to pull the integrals back to $\mathcal{K}$.

Let us emphasize once again that since the values of $\mathcal{K}$ are not defined,
the expressions \eqref{eq:lemma:pbcov1} and \eqref{eq:lemma:pbcov2} are not
conventional integrals, rather they are pairings of a generalized random function
$\mathcal{K}$ with certain measures.

\section{Main results}
We proceed to the statements of our asymptotic theorems.

In our limit regime the parameters $\alpha$, $M$ of the $\beta$--Jacobi corners process and level
$N$ depend on a large auxiliary variable $L\rightarrow \infty$, in such a way that
\begin{equation} \label{eq:lsch}
\lim_{L\rightarrow \infty} \frac{\alpha}{L} = \hat{\alpha},\quad  \lim_{L\rightarrow \infty}
\frac{N}{L} = \hat{N},\quad  \lim_{L\rightarrow \infty} \frac{M}{L} = \hat{M}.
\end{equation}
For a random $\mathbb{P}^{\alpha, M, \theta}$--distributed sequence $(x^1, x^2, \cdots) \in
\chi^M$, we introduce random variables
\begin{equation}
\tilde{x}^N =
\begin{cases}
    x^N ,  & N \leq M \\
    (x^N_1, \cdots, x^N_M, \underbrace{1, \cdots, 1}_{N-M}\,),  & N > M
\end{cases}
\end{equation}
and
\begin{equation} \label{eq:defpk} \mathfrak{P}_k(x^N) = \sum_{i=1}^{N} (\tilde{x}_i^N)^k .
\end{equation}

The following three theorems give the $L\to\infty$ Law of Large Numbers for the pair $(x^{N-1}, x^N)$ in
three different forms.
They are equivalent to each other.
\begin{theorem} \label{thm:mom}
In the limit regime \eqref{eq:lsch}, the random variable $\mathfrak{P}_k(x^N) -
\mathfrak{P}_k(x^{N-1})$ converges to a constant as $L\rightarrow \infty$, in the sense that the
variance
\begin{equation}
\mathbb{E}\left[(\mathfrak{P}_k(x^N) - \mathfrak{P}_k(x^{N-1}))-\mathbb{E}(\mathfrak{P}_k(x^N) -
\mathfrak{P}_k(x^{N-1}))\right]^2 ,
\end{equation}
decays as $O(L^{-1})$. The constant is given by the following contour integral:
\begin{equation} \label{eq:mom:diff}
\lim_{L\rightarrow \infty} \mathbb{E}\left(\mathfrak{P}_k(x^N) - \mathfrak{P}_k(x^{N-1})\right) = \frac{1}{2\pi\im}\oint \left(\frac{v}{v + \hat{N}}\cdot \frac{v - \hat{\alpha}}{v - \hat{\alpha} - \hat{M}}\right)^k \frac{1}{v+\hat{N}} dv ,
\end{equation}
where the integration contour encloses the pole at $-\hat{N}$ but not $\hat{\alpha} + \hat{M}$,
and is positively oriented.
\end{theorem}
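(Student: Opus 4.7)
The plan is to apply the shuffle-algebra difference operators mentioned in the introduction to obtain exact contour integral formulas for the first and second moments of $\mathfrak P_k(x^N)-\mathfrak P_k(x^{N-1})$, and then analyze their $L\to\infty$ asymptotics. These operators, acting on the partition function of the $\beta$--Jacobi corners process (realized as a Heckman--Opdam degeneration of Macdonald processes), extract joint expectations of power-sum observables at designated levels; the specific shape of \eqref{eq:mom:diff}, with the same rational kernel $\frac{v}{v+\hat N}\cdot\frac{v-\hat\alpha}{v-\hat\alpha-\hat M}$ that underlies the GFF covariance in Lemmas \ref{lemma:pbcov1} and \ref{lemma:pbcov2}, strongly suggests that such operators produce \eqref{eq:mom:diff} essentially directly.

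For the limit of the expectation, I would first derive a single-contour formula for $\mathbb E\mathfrak P_k(x^N)$ whose leading term in $L$ takes the form $L\cdot g(\hat N)$ with
\begin{equation*}
g(\hat N)=\frac{1}{2\pi\im}\oint\left(\frac{v}{v+\hat N}\cdot\frac{v-\hat\alpha}{v-\hat\alpha-\hat M}\right)^{k}h(v)\,dv,
\end{equation*}
for an appropriate rational $h$ independent of $\hat N$, recoverable from \cite{borodin2015general}. The analogous formula at level $N-1$ corresponds to the substitution $\hat N\mapsto \hat N-1/L$, so subtracting and Taylor-expanding in $1/L$ produces $\partial_{\hat N}g(\hat N)+O(L^{-1})$. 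Since $\partial_{\hat N}\bigl(\frac{v}{v+\hat N}\bigr)^k=-\frac{k}{v+\hat N}\bigl(\frac{v}{v+\hat N}\bigr)^k$, the differentiation manufactures precisely the factor $\frac{1}{v+\hat N}$ appearing in \eqref{eq:mom:diff}; the prefactor $k$ is absorbed by a routine integration by parts in $v$ (or by rewriting the single-level moment in the normalized form $\mathbb E\mathfrak P_k/(k+1)$ commonly used in this context), matching the constant in \eqref{eq:mom:diff}.

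The $O(L^{-1})$ variance estimate requires applying two difference operators to obtain exact contour formulas for $\mathbb E\bigl[\mathfrak P_k(x^N)\mathfrak P_k(x^{N-1})\bigr]$, $\mathbb E\mathfrak P_k(x^N)^2$ and $\mathbb E\mathfrak P_k(x^{N-1})^2$. Each of these second moments is individually $O(L^2)$, but $\mathrm{Var}\bigl(\mathfrak P_k(x^N)-\mathfrak P_k(x^{N-1})\bigr)$ admits two layers of cancellation: the $O(L^2)$ contributions cancel by the law of large numbers, while the $O(1)$ contributions cancel because the two-variable integrand controlling the covariance differs from the diagonal ones only by a shift of a single $\hat N$ argument by $1/L$. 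Each such shift gives a factor of $1/L$ after Taylor expansion, so the remaining term is genuinely $O(L^{-1})$, exactly as claimed.

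The main technical obstacle will be organizing the two-variable shuffle-algebra identities so that both cancellations in the variance are automatic rather than requiring a delicate algebraic miracle, and checking that the contour deformations demanded by the integral forms of the operators remain admissible across both regimes $N\le M$ and $N>M$, where the conditional densities \eqref{eq:cp:defn2} and \eqref{eq:cp:defn3} take quite different forms. Once the exact integral formulas are secured, the asymptotic analysis reduces to residue calculus near the poles $v=-\hat N$ and $v=\hat\alpha+\hat M$, and the limit \eqref{eq:mom:diff} falls out cleanly.
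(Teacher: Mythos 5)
Your proposal captures the right high-level strategy (shuffle-algebra difference operators give contour-integral moment formulas; the adjacent-level difference behaves like a ``derivative'' of the single-level Law of Large Numbers; the factor $\frac{1}{v+\hat N}$ comes from differentiating the kernel), and the cancellation heuristic for the variance is also sound. But the sketch elides the two steps the paper actually hinges on, and one of your claimed mechanisms is wrong.

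First, the paper does not expand $\mathbb{E}\mathfrak{P}_k(x^N)=L\,g(\hat N)+O(1)$ and then differentiate in $\hat N$. It takes the exact finite-$L$ $k$-fold contour formula from Theorem~\ref{thm:var:dis} (with $l=1$), writes $\mathbb{E}\mathfrak{P}_k(x^N)$ and $\mathbb{E}\mathfrak{P}_k(x^{N-1})$ over the \emph{same} contours, and subtracts the \emph{integrands} before taking $L\to\infty$: the difference inserts the factor $\prod_i\frac{u_i-\theta}{u_i+(N-1)\theta}-\prod_i\frac{u_i-\theta}{u_i+(N-2)\theta}$, which after $u_i=L\theta v_i$ is $-L^{-1}\bigl(\prod_i\frac{v_i}{v_i+\hat N}\bigr)\sum_i\frac{1}{v_i+\hat N}+O(L^{-2})$ (this is \eqref{eq:dismomdif}). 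Your route instead requires knowing that the $O(1)$ remainder in $\mathbb{E}\mathfrak{P}_k(x^N)=L\,g(\hat N)+O(1)$ is $C^1$ in $\hat N$, uniformly — without that, ``Taylor-expanding in $1/L$'' is not legitimate, and establishing it essentially forces you back to the exact difference computation anyway. Second, and more substantively, you take for granted a \emph{single}-contour expression for $g(\hat N)$, but the operators produce a genuinely $k$-fold nested contour. Collapsing it is the dimension-reduction identity Theorem~\ref{thm:dr}/Corollary~\ref{cor:dr}, one of the paper's main new tools, which you do not mention; it is also the actual mechanism that fixes your factor-of-$k$ problem. Indeed Corollary~\ref{cor:dr} with $s=1$, $f(v)=\frac{v}{v+\hat N}\cdot\frac{v-\hat\alpha}{v-\hat\alpha-\hat M}$, $g_1(v)=\frac{1}{v+\hat N}$ gives
\begin{equation*}
\frac{1}{(2\pi\im)^k}\oint\cdots\oint\frac{\prod_{i=1}^k f(v_i)\,dv_i}{(v_2-v_1)\cdots(v_k-v_{k-1})}\left(\sum_{i=1}^k\frac{1}{v_i+\hat N}\right)=\frac{1}{2\pi\im}\oint f(v)^k\,\frac{dv}{v+\hat N},
\end{equation*}
with \emph{no} overall $k$ on the right because $n^{s-1}=k^0=1$. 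Equivalently, the $1/k$ that absorbs $\partial_{\hat N}(\cdots)^k$ is already present in $g(\hat N)=-\frac{1}{k}\frac{1}{2\pi\im}\oint f(v)^k\,dv$ via the cyclic sum of Theorem~\ref{thm:dr}; it is not ``routine integration by parts,'' and it has nothing to do with a $\mathbb{E}\mathfrak{P}_k/(k+1)$ normalization.

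Two smaller points. Your concern about the regimes $N\le M$ versus $N>M$ is unnecessary: Theorem~\ref{thm:var:dis} is a single formula, requiring only $M>k_1+\cdots+k_l$ (automatic for large $L$), and the asymptotic analysis never returns to the conditional densities \eqref{eq:cp:defn2}--\eqref{eq:cp:defn3}. And the $O(L^{-1})$ variance bound, which the paper defers to Lemma~\ref{lemma:cov:s}, needs one genuinely nontrivial idea beyond ``two layers of cancellation'': the asymptotic covariance of the difference is computed with the two opposite nesting orders of contours, and it is the symmetrized combination (weights $\frac{1}{k_1}$ and $\frac{1}{k_2}$, then Theorem~\ref{thm:dr}) that collapses to a single-contour expression and exposes the extra factor of $L^{-1}$. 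That step is not visible in your outline.
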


\begin{rem}
Using exactly the same approaches as those in the proof of Theorem \ref{thm:mom}, we can show that
\begin{equation}
\lim_{L\rightarrow \infty} L^{-1}\mathbb{E}\left(\mathfrak{P}_k(x^N)\right) = - \frac{1}{k} \cdot \frac{1}{2\pi\im}\oint \left(\frac{v}{v + \hat{N}}\cdot \frac{v - \hat{\alpha}}{v - \hat{\alpha} - \hat{M}}\right)^k dv ,
\end{equation}
whose $\hat{N}$ derivative is precisely \eqref{eq:mom:diff}.
\end{rem}


\begin{theorem} \label{thm:dia}
Let $w^{\tilde{x}^N, \tilde{x}^{N-1}}$ be the interlacing diagram of the sequence $\tilde{x}^N_1 \leq \tilde{x}^{N-1}_{1} \leq \cdots \leq \tilde{x}^{N-1}_{N-1} \leq \tilde{x}^N_N$.
Then it converges to a deterministic diagram $\varphi$ under the limit scheme \eqref{eq:lsch}, in the sense that
\begin{equation}
\lim_{L \rightarrow \infty } \sup_{u \in \mathbb{R} } \left| w^{\tilde{x}^N, \tilde{x}^{N-1}}(u) -
\varphi(u) \right| = 0,
\end{equation}
in probability. Here $\varphi$ is the unique diagram satisfying
\begin{equation}
\varphi''(u) =
\begin{cases}
    \frac{\hat{M} - \hat{N} + (\hat{N}+\hat{M} + \hat{\alpha})(1-u)}{\pi(\hat{N}+\hat{M} + \hat{\alpha})(1-u)}\frac{1}{\sqrt{(\gamma_2 - u)(u - \gamma_1)}} ,  & u \in (\gamma_1, \gamma_2) \\
    2C(\hat{M}, \hat{N}) \delta(u - 1),  & u \in (-\infty, \gamma_1] \bigcup [\gamma_2, \infty) ,
\end{cases}
\end{equation}
where
\begin{equation}
\gamma_1 = \frac{\left(\sqrt{(\hat{\alpha} + \hat{M})(\hat{\alpha} + \hat{N})} -
\sqrt{\hat{M}\hat{N}}\right)^2}{(\hat{N} + \hat{M}+\hat{\alpha})^2}, \quad \quad \gamma_2  =
\frac{\left(\sqrt{(\hat{\alpha} + \hat{M})(\hat{\alpha} + \hat{N})} +
\sqrt{\hat{M}\hat{N}}\right)^2}{(\hat{N} + \hat{M}+\hat{\alpha})^2} ,
\end{equation}
\begin{equation}
C(\hat{M}, \hat{N}) =
\begin{cases}
    0 ,  & \hat{M} > \hat{N} \\
    \frac{1}{2} ,  & \hat{M} = \hat{N} \\
    1 ,  & \hat{M} < \hat{N}
\end{cases}.
\end{equation}
\end{theorem}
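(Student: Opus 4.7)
The plan is to pass to second derivatives of the diagrams, use Remark \ref{rem:secder} to identify them with the signed interlacing measures $\mu^{\tilde{x}^N,\tilde{x}^{N-1}}$, apply Theorem \ref{thm:mom} to get convergence in probability of every polynomial moment, evaluate the resulting contour integrals to identify the limiting measure with $\tfrac{1}{2}\varphi''$, and finally upgrade moment convergence of $w_n''$ to uniform convergence of the diagrams themselves via Lipschitz compactness.

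\textbf{Step 1 (Moments).} From the definition of $\mathfrak{P}_k$ one has
\[ \int u^k\,d\mu^{\tilde{x}^N,\tilde{x}^{N-1}}(u) = \mathfrak{P}_k(x^N) - \mathfrak{P}_k(x^{N-1}), \]
so the $O(L^{-1})$ variance bound in Theorem \ref{thm:mom} yields joint convergence in probability of all polynomial moments to the contour integrals on the RHS of \eqref{eq:mom:diff}. In particular, the centers $u_0^N := \mathfrak{P}_1(x^N) - \mathfrak{P}_1(x^{N-1}) \in [0,1]$ converge in probability to a deterministic limit $u_0^*$.

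\textbf{Step 2 (Identifying the limit).} I would show that for each $k\ge 0$,
\[ \frac{1}{2\pi\im}\oint \left(\frac{v(v-\hat{\alpha})}{(v+\hat{N})(v-\hat{\alpha}-\hat{M})}\right)^{\!k}\frac{dv}{v+\hat{N}} \;=\; \int u^k\,d\nu(u),\qquad \nu := \tfrac{1}{2}\varphi''. \]
The substitution $u = u(v) := \tfrac{v(v-\hat{\alpha})}{(v+\hat{N})(v-\hat{\alpha}-\hat{M})}$ is rational of degree two, hence a two-sheeted branched cover of the $u$-sphere, and a direct computation shows its critical values (roots of the quadratic $u'(v)=0$) are exactly $\gamma_1$ and $\gamma_2$. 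Deforming the $v$-contour (originally a small CCW loop around $-\hat{N}$) outward across the Riemann sphere produces (i) an integral over the branch cut $[\gamma_1,\gamma_2]$ in the $u$-plane whose Jacobian furnishes the $1/\sqrt{(\gamma_2-u)(u-\gamma_1)}$ factor and whose prefactor simplifies to the stated density, and (ii) a residue at $v=\infty$ (where $u(v)\to 1$) contributing the atom $C(\hat{M},\hat{N})\delta(u-1)$. The trichotomy in $C(\hat{M},\hat{N})$ reflects whether $v=\infty$ lies inside the deformed contour, which is controlled by the sign of $\hat{M}-\hat{N}$; the boundary case $\hat{M}=\hat{N}$ yields a principal-value half-residue. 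This contour manipulation is the main technical obstacle.

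\textbf{Step 3 (Uniform convergence of diagrams).} Each $w^{\tilde{x}^N,\tilde{x}^{N-1}}$ is $1$-Lipschitz and coincides with $|u - u_0^N|$ outside $[0,1]$ with $u_0^N \in [0,1]$, so the family is pre-compact in $C_{\mathrm{loc}}(\mathbb{R})$ by Arzel\`a--Ascoli. For any subsequence, pass to a further subsequence along which $u_0^N$ and all polynomial moments converge almost surely, and simultaneously extract a $C_{\mathrm{loc}}$-limit $w_*$. Since $w_n''$ is a compactly supported distribution in $[0,1]$, pairing with $u^k$ is well defined, and integration by parts against a cutoff $\psi u^k$ (with $\psi\in C^\infty_c$ equal to $1$ on $[0,1]$) gives
\[ \langle w_*'',\,u^k\rangle = \lim_n\langle w_n'',\,u^k\rangle = \langle \varphi'',\,u^k\rangle. \]
As $w_*''$ and $\varphi''$ are compactly supported distributions with matching polynomial pairings they must coincide, so $w_* - \varphi$ is affine; equal asymptotes $|u-u_0^*|$ at $\pm\infty$ force this affine function to vanish, i.e.\ $w_*=\varphi$. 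Uniform convergence on all of $\mathbb{R}$ (rather than merely compacts) then follows from the tail bound $\sup_u \bigl||u - u_0^N| - |u - u_0^*|\bigr| \le |u_0^N - u_0^*|$, and the subsequence argument upgrades the whole sequence to converge uniformly in probability.
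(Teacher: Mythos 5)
Your outline shares the paper's skeleton --- pass to $\mu^{\tilde{x}^N,\tilde{x}^{N-1}}=\tfrac12\frac{d^2}{du^2}w$ via Remark~\ref{rem:secder}, feed the moment convergence of Theorem~\ref{thm:mom} into the diagram moments, match the limiting contour integral with $\tfrac12\int_0^1\varphi''(u)u^k\,du$, and upgrade to uniform convergence --- but the two nontrivial steps go by genuinely different routes. For the identification of the limiting measure (the paper's Lemma~\ref{lemma:dia:id}), the paper sums the generating function $\sum_k z^k(\cdots)$, turning the contour-integral side into an explicit algebraic function of $z$ via the two roots $\mathcal R_{1,2}$ of a quadratic in $v$, turning the density side into residues of an analytic continuation $\eta$ at $w=1$ and $w=z^{-1}$, and comparing Taylor coefficients; you instead propose pushing the $v$-contour across the Riemann sphere through the degree-two rational map $u(v)$ and reading off the density from the branch-cut Jacobian and the atom from a residue at $v=\infty$. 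Both are legitimate: the paper's route is more mechanical, while yours better explains \emph{why} $\gamma_1,\gamma_2$ are the support endpoints and where the trichotomy $C(\hat M,\hat N)$ comes from, but you yourself flag this as the main obstacle, and the half-residue in the boundary case $\hat M=\hat N$ is exactly the sort of thing that needs a careful argument --- in the paper it falls out transparently from the residue of $\eta$ at $w=1$. For the upgrade to uniform convergence, the paper invokes Kerov/Ivanov's Lemma~\ref{lemma:eqcov} (equivalence of the moment topology and the uniform topology on diagrams supported in a fixed interval), whereas you reprove this from scratch via Arzel\`a--Ascoli together with moment-determinacy of compactly supported distributions; that is correct and self-contained. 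One phrasing to fix: rather than ``simultaneously extract a $C_{\mathrm{loc}}$-limit'' (which suggests an $\omega$-dependent further subsequence), note that once all moments converge a.s.\ along a subsequence, \emph{every} pathwise $C_{\mathrm{loc}}$-subsequential limit has the moments of $\varphi$ and hence equals $\varphi$, so the whole a.s.-subsequence converges pathwise to $\varphi$; you also implicitly use $w(1)+w(0)=1$ and $\frac{d}{du}w(1)=1$ to control boundary terms, which the paper writes out explicitly in the integration-by-parts step of Proposition~\ref{prop:dia:cd}.
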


Note that $\gamma_1,\gamma_2$ precisely describe the left and right boundaries of the
domain $D$ in Definition \ref{defn:dom}.

\begin{theorem} \label{thm:mea}
Let $\varphi$ be defined as in Theorem \ref{thm:dia}. Take a function
$f: [0, 1]\rightarrow \mathbb{R}$, such that $f'$ exists almost everywhere, and is of finite variation. Then the random variable
\begin{equation} \label{eq:thm:mea}
\int_{0}^1 f d\mu^{\tilde{x}^N, \tilde{x}^{N-1}} = \sum_{i=1}^N f(\tilde{x}_i^N) - \sum_{i=1}^{N-1}
f(\tilde{x}_i^{N-1})
\end{equation}
converges (in probability) as $L\to\infty$ in the limit regime \eqref{eq:lsch} to the constant
$\frac{1}{2}\int_{0}^1 f(u) \varphi''(u) du$.
\end{theorem}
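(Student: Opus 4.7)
The plan is to apply integration by parts to the identity $\mu^{\tilde{x}^N,\tilde{x}^{N-1}} = \tfrac{1}{2}(w^{\tilde{x}^N,\tilde{x}^{N-1}})''$ from Remark \ref{rem:secder}, and then invoke the uniform diagram convergence of Theorem \ref{thm:dia}. Writing $w := w^{\tilde{x}^N,\tilde{x}^{N-1}}$, one has $\int_0^1 f\,d\mu^{\tilde{x}^N,\tilde{x}^{N-1}} = \tfrac{1}{2}\int_{[0,1]} f\,dw'$ in the Lebesgue--Stieltjes sense, since $w'$ is of bounded variation (it takes values in $\{-1,+1\}$ with finitely many jumps). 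Two successive integrations by parts (using that $f$ is Lipschitz hence absolutely continuous, and that $f'$ is of bounded variation so $df'$ is a finite signed measure) yield the identity
\[
2\int_0^1 f\,d\mu^{\tilde{x}^N,\tilde{x}^{N-1}} = \bigl[f(u)w'(u) - f'(u)w(u)\bigr]_{u=0^+}^{u=1^-} + \int_{[0,1]} w(u)\,df'(u),
\]
with the atom of $dw'$ at $u=1$ (present precisely when $N>M$) absorbed via the jump $w'(1^+)-w'(1^-)=2$.

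I would then pass to the limit $L\to\infty$ term by term using Theorem \ref{thm:dia}. The convergence $\sup_{u}|w(u)-\varphi(u)|\to 0$ in probability gives pointwise convergence $w(0)\to\varphi(0)$ and $w(1)\to\varphi(1)$; moreover, the deterministic bound
\[
\Bigl|\int_{[0,1]}(w-\varphi)\,df'\Bigr| \le \sup_u|w(u)-\varphi(u)| \cdot \|f'\|_{\mathrm{TV}}
\]
implies $\int w\,df'\to\int\varphi\,df'$ in probability, where $\|f'\|_{\mathrm{TV}}$ denotes the total variation of $f'$. Almost surely $w'(0^+)=-1$ (since $\tilde{x}^N_1>0$), matching $\varphi'(0^+)=-1$; and $w'(1^-)$ equals $+1$ for $N\le M$ and $-1$ for $N>M$, matching $\varphi'(1^-) = 1 - 2C(\hat M,\hat N)$ whenever $\hat M\ne\hat N$. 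Reversing the integration by parts on the limit side then reconstructs $\int_0^1 f(u)\varphi''(u)\,du$, where the Dirac contribution at $u=1$ with weight $2C(\hat M,\hat N)$ is the limit of the atomic part of $dw'$ at $u=1$, yielding the claim.

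As a consistency check, for $f(u)=u^k$ Theorem \ref{thm:mom} gives $L^2$-convergence of $\int u^k\,d\mu^{\tilde{x}^N,\tilde{x}^{N-1}} = \mathfrak{P}_k(x^N)-\mathfrak{P}_k(x^{N-1})$ to the contour integral in \eqref{eq:mom:diff}. By uniqueness of limits in probability, this contour integral must coincide with $\tfrac{1}{2}\int_0^1 u^k\varphi''(u)\,du$, confirming that the integration-by-parts scheme is consistent with the moment formula of Theorem \ref{thm:mom}.

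The main obstacle is the careful treatment of boundary contributions at $u=1$: the signed measure $\mu^{\tilde{x}^N,\tilde{x}^{N-1}}$ carries an atom of weight $1$ at $u=1$ whenever $N>M$, whereas the limiting diagram $\varphi$ has a fractional feature governed by $C(\hat M,\hat N)\in\{0,\tfrac{1}{2},1\}$. Matching up the Stieltjes boundary values is immediate when $\hat M\ne\hat N$, but the borderline case $\hat M=\hat N$ (where the finite-$L$ values $w'(1^-)\in\{\pm 1\}$ must average out to $\varphi'(1^-)=0$) requires an additional argument, e.g., isolating a shrinking neighbourhood of $u=1$ and using the precise form of the fluctuations there so that the pointwise mismatch of $w'(1^-)$ with $\varphi'(1^-)$ does not contribute in the limit.
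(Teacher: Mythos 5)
You follow the same route as the paper: two integrations by parts to rewrite $2\int f\,d\mu^{\tilde x^N,\tilde x^{N-1}}$ in terms of boundary values of $w:=w^{\tilde x^N,\tilde x^{N-1}}$ and of $\int w\,df'$, followed by Theorem \ref{thm:dia}'s uniform convergence together with the bound $\bigl|\int(w-\varphi)\,df'\bigr|\le\sup|w-\varphi|\,\|f'\|_{\mathrm{TV}}$. That part is correct. The problem is the boundary term: your displayed identity evaluates $w'$ at $1^-$ and $0^+$, but the Lebesgue--Stieltjes integration by parts over $[0,1]$, which must include the atom of $dw'=2\,d\mu$ at $u=1$ when $N>M$, produces $f(1)w'(1^+)-f(0)w'(0^-)$, not the interior one-sided limits. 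As written your formula \emph{omits} the atom: test it on $\mu=\delta_1$, $w(u)=|u-1|$ — the left side is $2f(1)$, while $[fw'-f'w]_{0^+}^{1^-}+\int_0^1(1-u)\,df'(u)$ evaluates to $0$. The parenthetical remark about the atom being ``absorbed via the jump'' is precisely the $\bigl(w'(1^+)-w'(1^-)\bigr)f(1)$ correction that would make the identity true, but it is not incorporated into the formula you actually use downstream.

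Switching to $w'(1^+)$ and $w'(0^-)$ removes all the difficulty. Since the interlacing sequence lies in $[0,1]$, the rectangular diagram $w$ has center $u_0\in[0,1]$, so $w'(u)=+1$ for $u>\tilde x^N_N$ and $w'(u)=-1$ for $u<\tilde x^N_1$; hence $w'(1^+)=+1$ and $w'(0^-)=-1$ deterministically, regardless of whether $N>M$. The boundary term is $f(0)+f(1)$ on the nose (this is what the paper uses when it writes $\tfrac{d}{du}w(1)=1$ and $\tfrac{d}{du}w(0)=-1$), and the same values hold for $\varphi$. Consequently your last paragraph's concern about the borderline case $\hat M=\hat N$ — that $w'(1^-)\in\{\pm1\}$ must ``average out'' to $\varphi'(1^-)=0$ — is an artifact of the wrong one-sided limit: $w'(1^-)$ and $\varphi'(1^-)$ never enter the correct identity, and no shrinking-neighbourhood argument is needed. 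With that fix the rest of your argument (pointwise convergence of $w(0),w(1)$, convergence of $\int w\,df'$ via the TV bound, and reversing the integration by parts at the $\varphi$ level) matches the paper's proof.
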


\begin{figure}

\begin{minipage}{.33\textwidth}
\centering
\resizebox{\textwidth}{1.7in}{%
\begin{tikzpicture}
    \begin{axis}[
        domain=0:1,
        xmin=-0.05, xmax=1.1,
        ymin=-0.8, ymax=4.0,
        samples=200,
        axis y line=center,
        axis x line=middle,
        xlabel={$u$},
    ]
        \addplot+[mark=none, color=blue] {(5-4*x)/(4 * pi * (1 - x) * sqrt((0.93301270189 - x) * (x - 0.0669872981)))};
        \addplot+[mark=none, style=dashed, color=black] coordinates {(0.0669872981, -0.8) (0.0669872981, 4.0)};
        \addplot+[mark=none, style=dashed, color=black] coordinates {(0.93301270189, -0.8) (0.93301270189, 4.0)};
    \end{axis}
\end{tikzpicture}
}
\caption*{$\hat{M} = 2$, $\hat{N} = \hat{\alpha} = 1$}
\end{minipage}
\begin{minipage}{.33\textwidth}
\centering
\resizebox{\textwidth}{1.7in}{%
\begin{tikzpicture}
    \begin{axis}[
        domain=0:1,
        xmin=-0.05, xmax=1.1,
        ymin=-0.8, ymax=4.0,
        samples=200,
        axis y line=center,
        axis x line=middle,
        xlabel={$u$},
    ]
        \addplot+[mark=none, color=blue] {1 /(pi * sqrt((1 - x) * (x - 0.1111111111)))};
        \addplot+[mark=none, color=blue] coordinates {(1, 0) (1, 4.5)};
        \addplot+[mark=none, style=dashed, color=black] coordinates {(1, -0.8) (1, 4.0)};
        \addplot+[mark=none, style=dashed, color=black] coordinates {(0.1111111111, -0.8) (0.1111111111, 4.0)};
    \end{axis}
\end{tikzpicture}
}
\caption*{$\hat{M} = \hat{N} = \hat{\alpha} = 1$}
\end{minipage}
\begin{minipage}{.33\textwidth}
\centering
\resizebox{\textwidth}{1.7in}{%
\begin{tikzpicture}
    \begin{axis}[
        domain=0:1,
        xmin=-0.05, xmax=1.1,
        ymin=-2.3, ymax=2.5,
        samples=200,
        axis y line=center,
        axis x line=middle,
        xlabel={$u$},
    ]
        \addplot+[mark=none, color=blue] {(3-4*x)/(4 * pi * (1 - x) * sqrt((0.93301270189 - x) * (x - 0.0669872981)))};
        \addplot+[mark=none, color=blue] coordinates {(1, 0) (1, 2.5)};
        \addplot+[mark=none, style=dashed, color=black] coordinates {(0.93301270189, -2.3) (0.93301270189, 2.5)};
        \addplot+[mark=none, style=dashed, color=black] coordinates {(0.0669872981, -2.3) (0.0669872981, 2.5)};
    \end{axis}
\end{tikzpicture}
}
\caption*{$\hat{N} = 2$, $\hat{M} = \hat{\alpha} = 1$}
\end{minipage}
\caption{Plots of the density $\frac{\varphi''}{2}.$ \label{Figure_plots}}
\end{figure}

\begin{rem}
As each $\mu_{\tilde{x}^N, \tilde{x}^{N-1}}$ is not a positive measure, Theorem \ref{thm:mea} does
not hold for general $f$. For example, take any $A\subset [0, 1]$, and let $f$ be the indicator
function of $A$; then \eqref{eq:thm:mea} takes only integer values, and can not converge to a
non-integer constant. This implies that the measures $d\mu_{\tilde{x}^N, \tilde{x}^{N-1}}$ \emph{do
not} weakly converge to the measure with density $\frac{\varphi''}{2}$.

Also, the measure with density $\frac{\varphi''}{2}$ is not necessarily positive (although it has
total mass $1$): when $\hat{M} < \hat{N}$ the density function can take negative values, cf.\
Figure \ref{Figure_plots}.  This measure is an instance of the \textit{interlacing measures}, which
were introduced and studied by Kerov (see \cite[Section 1.3]{kerov1998interlacing}).
\end{rem}

We note that the asymptotic objects in Theorem \ref{thm:mom}, \ref{thm:dia}, and \ref{thm:mea} are independent of the parameter $\theta$ in the underlying $\beta$--Jacobi corners process.
However, these results have a remarkable limit as $\theta \rightarrow \infty$.
They degenerate to statements about asymptotic separation of the roots of Jacobi orthogonal polynomials.

Let $\mathcal{F}_{n}^{p, q}$ be the Jacobi orthogonal polynomials
of degree $n$ with weight function $x^{p}(1-x)^{q}$ on $[0, 1]$, see e.g.
\cite{book:7334}.
Let $j_{M,N,\alpha,i}$ be the $i$th root (in increasing order) of $\mathcal{F}_{\min(M, N)}^{\alpha - 1, |M-N|}$,
for $1 \leq i \leq \min(M, N)$.
We further denote $j_{M,N,\alpha,i} = 1$,
for any fixed $M, N, \alpha$, and $\min(M, N) < i \leq N$.

\begin{theorem}  \label{thm:jpr}
There is an interlacing relationship for the roots:
\begin{equation}
j_{M,N,\alpha,1} \leq j_{M,N-1,\alpha,1} \leq j_{M,N,\alpha,2} \leq \cdots .
\end{equation}
Let $\iota_{M,N,\alpha}$ be the diagram corresponding to this interlacing sequence, as in
Definition \ref{defn:yod}, and let $\varphi$ be defined as in Theorem \ref{thm:dia}. Under the
limit scheme \eqref{eq:lsch}, the diagrams $\iota_{M,N,\alpha}$ converge to $\varphi$ in the uniform
topology.
\end{theorem}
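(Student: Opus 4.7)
The plan is to derive Theorem \ref{thm:jpr} from Theorems \ref{thm:mom} and \ref{thm:dia} by sending $\theta \to \infty$ in the $\beta$--Jacobi corners process: the limit objects $\varphi$ and
\[
I_k := \frac{1}{2\pi\im}\oint \left(\frac{v}{v+\hat N}\cdot\frac{v-\hat\alpha}{v-\hat\alpha-\hat M}\right)^k\frac{dv}{v+\hat N}
\]
do not depend on $\theta$, whereas the process itself ``freezes'' onto Jacobi-polynomial roots as $\theta\to\infty$.

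The first (Stieltjes) step is to fix $L,M,N,\alpha$ and write the density \eqref{eq:cp:defn1} as $\exp(\theta\,\Phi(x^N))$ times a $\theta$-independent factor, where
\[
\Phi(x^N) = 2\sum_{i<j}\log|x_i^N-x_j^N| + \alpha\sum_i\log x_i^N + (|M-N|+1)\sum_i\log(1-x_i^N).
\]
Stieltjes' classical electrostatic theorem identifies the unique interior critical point of $\Phi$ with the roots $(j_{M,N,\alpha,i})$ of $\mathcal F_{\min(M,N)}^{\alpha-1,|M-N|}$. Standard Laplace asymptotics then yield $x^N \to (j_{M,N,\alpha,\cdot})$ in probability as $\theta\to\infty$; applying this simultaneously at levels $N-1$ and $N$, and using that the corners process is supported on interlacing pairs (a closed condition, preserved under convergence in probability), gives the interlacing claim of Theorem \ref{thm:jpr}.

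For the deterministic convergence $\iota_{M,N,\alpha}\to\varphi$ I would proceed by matching moments. By Remark \ref{rem:secder} together with Definition \ref{defn:yod}, the $k$-th moment of $\tfrac12 \iota''_{M,N,\alpha}$ on $[0,1]$ equals $\mathfrak P_k(j_{M,N,\alpha,\cdot})-\mathfrak P_k(j_{M,N-1,\alpha,\cdot})$, while that of $\tfrac12\varphi''$ equals $I_k$ (combining Theorems \ref{thm:mea} and \ref{thm:mom}). From the freezing step and boundedness of $\mathfrak P_k$ on $[0,1]^N$,
\[
\lim_{\theta\to\infty} \E_{L,\theta}\bigl[\mathfrak P_k(\tilde x^N)-\mathfrak P_k(\tilde x^{N-1})\bigr] = \mathfrak P_k(j_{M,N,\alpha,\cdot})-\mathfrak P_k(j_{M,N-1,\alpha,\cdot})
\]
for each fixed $L$, while Theorem \ref{thm:mom} gives $\lim_{L\to\infty}\E_{L,\theta}[\mathfrak P_k(\tilde x^N)-\mathfrak P_k(\tilde x^{N-1})]=I_k$ for each fixed $\theta$. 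If these two limits commute, the moments of $\tfrac12\iota''_{M,N,\alpha}$ converge to those of $\tfrac12\varphi''$; weak convergence of measures on the compact interval $[0,1]$ then translates, via Definition \ref{defn:yod} and the $1$-Lipschitz property of diagrams, to uniform convergence $\iota_{M,N,\alpha}\to\varphi$.

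The main technical obstacle is justifying the exchange of the $\theta\to\infty$ and $L\to\infty$ limits --- equivalently, showing that the convergence in Theorem \ref{thm:mom} is uniform in $\theta\geq 1$. I expect this to be accessible from the exact contour-integral / shuffle-algebra representations underlying the proof of Theorem \ref{thm:mom}: those formulas are explicit rational expressions in $\theta$ whose leading-in-$L$ asymptotics are the $\theta$-independent $I_k$, so the subleading corrections should be controlled uniformly in $\theta$. Alternatively, once the quantitative estimates in the proof of Theorem \ref{thm:dia} are verified to be uniform in $\theta$, one may set $\theta=\theta(L)\to\infty$ sufficiently slowly so that the random diagram $w^{\tilde x^N,\tilde x^{N-1}}$ sits simultaneously close to $\iota_{M,N,\alpha}$ (by freezing) and to $\varphi$ (by Theorem \ref{thm:dia}), forcing $\|\iota_{M,N,\alpha}-\varphi\|_\infty\to 0$.
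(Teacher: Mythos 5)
Your plan and the paper's are closely aligned in outline (use the $\theta\to\infty$ ``freezing'' to identify $x^N$ with Jacobi roots, then match moments of the diagram's second derivative and pass to uniform convergence of diagrams via the Kerov/Ivanov equivalence, i.e.\ Lemma \ref{lemma:eqcov}), but you have correctly isolated a genuine gap that your write-up does not close: the interchange of the $\theta\to\infty$ and $L\to\infty$ limits. The paper sidesteps this commutation problem entirely. Rather than taking $\theta\to\infty$ of the \emph{asymptotic statement} of Theorem \ref{thm:mom}, the paper takes $\theta\to\infty$ \emph{inside the exact contour-integral formula} of Theorem \ref{thm:var:dis} (specialized to $l=1$, i.e.\ \eqref{eq:dismomdif}) at \emph{fixed} $M,N,\alpha$. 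Combined with Theorem \ref{thm:trt}, this yields an exact deterministic contour-integral expression for the finite-$N$ power sums $\sum_{i=1}^{N} j_{M,N,\alpha,i}^k$, with no $\theta$ left in it. One then performs the $L\to\infty$ asymptotics on this deterministic formula and applies Corollary \ref{cor:dr} and Lemma \ref{lemma:dia:id}. So the structure is ``$\theta\to\infty$ first, exactly, at finite $L$'', and the $L\to\infty$ step is a computation about an explicit one-parameter family of contour integrals; no uniformity-in-$\theta$ argument is required. Your closing remark (``those formulas are explicit rational expressions in $\theta$ \ldots'') is pointing at exactly this, but stays heuristic; made precise, it reduces to the paper's route rather than to the two-parameter limit-exchange you initially set up.

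Two further remarks. First, your derivation of the freezing via Stieltjes electrostatics and Laplace asymptotics is a legitimate alternative to the paper's citation of Theorem \ref{thm:trt} (from \cite[Theorem 5.1]{borodin2015general}); the interlacing claim then follows exactly as you say, since interlacing is a closed condition preserved under convergence in probability. Second, a small caution in the moment-matching step: the signed measures $\tfrac12\iota''_{M,N,\alpha}$ are not positive, so ``weak convergence of measures via moments'' should be replaced by convergence of the moments of the diagrams $\iota_{M,N,\alpha}$ themselves (obtained from moments of $\iota''$ by integrating by parts twice, exactly as in the proof of Proposition \ref{prop:dia:cd}); it is at that stage that Lemma \ref{lemma:eqcov} gives uniform convergence on the $1$-Lipschitz class. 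The paper makes this detour explicit, and your argument should too.
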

Kerov in \cite{kerov1993root} proved similar statements about Hermite and Chebyshev polynomials.

\bigskip

Now we switch to the Central Limit Theorems. The first result describes the asymptotic behavior of
fluctuations for the individual $\mathfrak{P}_{k_i}(x^{N}) - \mathfrak{P}_{k_i}(x^{N-1})$.

\begin{theorem} \label{thm:clt1}
For positive integers $k_1, \cdots, k_h$, $k'_1, \cdots, k'_{h'}$, $N_1, \cdots, N_h$, and $N'_1, \cdots, N'_{h'}$,
in addition to the limit scheme \eqref{eq:lsch} we also let
\begin{equation} \label{eq:lsch2}
\lim_{L\rightarrow \infty} \frac{N_i}{L} = \hat{N}_i , \quad 1\leq i \leq h, \quad
\quad \quad \lim_{L\rightarrow \infty} \frac{N'_i}{L} = \hat{N}'_i , \quad 1\leq i
\leq h' .
\end{equation}
The random vectors
\begin{equation} \label{eq:clt1:d:cov}
L^{\frac{1}{2}}\left( \mathfrak{P}_{k_i}(x^{N_i}) - \mathfrak{P}_{k_i}(x^{N_i-1}) - \mathbb{E}\left(\mathfrak{P}_{k_i}(x^{N_i}) - \mathfrak{P}_{k_i}(x^{N_i-1})\right) \right)_{i=1}^h
\end{equation}
and
\begin{equation} \label{eq:clt1:cov}
\left( \mathfrak{P}_{k'_i}(x^{N'_i}) - \mathbb{E}\left(\mathfrak{P}_{k'_i}(x^{N'_i}) \right) \right)_{i=1}^{h'}
\end{equation}
jointly converge (as $L\rightarrow \infty$) to centered Gaussian random vectors in distribution, and the two limiting vectors are independent.
Within the limiting vector of \eqref{eq:clt1:d:cov},
the covariance between the $i$th and $j$th component becomes
\begin{equation}
 - \mathds{1}_{\hat{N}_i = \hat{N}_j} \cdot \frac{k_i k_j}{k_i + k_j} \cdot
\frac{\theta^{-1}}{2\pi\im } \oint \frac{1}{(v + \hat{N}_i)^2} \left(
\frac{v}{v+\hat{N}_i}\cdot \frac{v-\hat{\alpha}}{v - \hat{\alpha} -
\hat{M}}\right)^{k_i + k_j} dv,
\end{equation}
where the contour encloses $-\hat{N}_i$ but not $\hat{\alpha} + \hat{M}$; within the
vector \eqref{eq:clt1:cov}, the covariance between the $i$th and $j$th component becomes
\begin{equation}  \label{eq:clt1:cov:wi}
\frac{\theta^{-1}}{(2\pi\im)^{2}}
\oint \oint
\frac{1}{(v_1 - v_2)^2}
\left( \frac{v_1}{v_1+\hat{N}'_i}\cdot \frac{v_1-\hat{\alpha}}{v_1 - \hat{\alpha} - \hat{M}} \right)^{k_i} 
\left( \frac{v_2}{v_2+\hat{N}'_j}\cdot \frac{v_2-\hat{\alpha}}{v_2 - \hat{\alpha} - \hat{M}} \right)^{k_j} 
dv_1 dv_2  ,
\end{equation}
where the contour of $v_1$ encloses $-\hat{N}'_i$ and the contour of $v_2$ encloses $-\hat{N}'_j$, and neither of them encloses $\hat{\alpha} + \hat{M}$.
We also require that $|v_1| \ll |v_2|$, assuming that $\hat{N}'_i \leq \hat{N}'_j$.
\end{theorem}

\begin{rem}
The Gaussianity of the vector \eqref{eq:clt1:cov} is actually \cite[Theorem
4.1]{borodin2015general}, and here we are more interested in its joint distribution
with (\ref{eq:clt1:d:cov}).
The proof presented in this text also gives an alternative derivation of the results of \cite[Theorem 4.1]{borodin2015general}. In particular, the covariance \eqref{eq:clt1:cov:wi} can
be directly computed using the approach of Section \ref{ssec:cc}.
\end{rem}

The asymptotic behavior is different for the weighted averages (in $N$) of
$\mathfrak{P}_{k_i}(x^{N}) - \mathfrak{P}_{k_i}(x^{N-1})$.
\begin{theorem} \label{thm:clt2}
Let $k_1, \cdots, k_h$ be integers, and $g_1, \cdots, g_h \in L^{\infty}([0, G])$, for some $G \in \mathbb{R}_{>0}$.
Under \eqref{eq:lsch}, the random vector
\begin{equation}  \label{eq:clt2:def}
\left( L \int_0^G  g_i(y)
\left( \mathfrak{P}_{k_i}(x^{\lfloor Ly \rfloor}) - \mathfrak{P}_{k_i}(x^{\lfloor Ly \rfloor - 1}) - \mathbb{E}\left(\mathfrak{P}_{k_i}(x^{\lfloor Ly \rfloor}) - \mathfrak{P}_{k_i}(x^{\lfloor Ly \rfloor - 1})\right) \right)
dy \right)_{i=1}^h
\end{equation}
converges in distribution to a centered Gaussian vector, with covariance
between the $i$th and $j$th component
\begin{multline} \label{eq:clt2:cov}
\iint_{0 \leq y_1 < y_2 \leq G}
\frac{\theta^{-1} }{(2\pi\im)^{2}}
\oint \oint
\frac{k_i k_j}{(v_1 - v_2)^2(v_1 + y_1 )(v_2 + y_2 )}
\\
\times
\left(
g_i(y_1) g_j(y_2)
\left( \frac{v_1}{v_1+y_1}\cdot \frac{v_1-\hat{\alpha}}{v_1- \hat{\alpha} - \hat{M}} \right)^{k_i}
\left( \frac{v_2}{v_2+y_2}\cdot \frac{v_2-\hat{\alpha}}{v_2- \hat{\alpha} - \hat{M}} \right)^{k_j}
\right.
\\
+
\left.
g_j(y_1) g_i(y_2)
\left( \frac{v_1}{v_1+y_1}\cdot \frac{v_1-\hat{\alpha}}{v_1- \hat{\alpha} - \hat{M}} \right)^{k_j}
\left( \frac{v_2}{v_2+y_2}\cdot \frac{v_2-\hat{\alpha}}{v_2- \hat{\alpha} - \hat{M}} \right)^{k_i}
\right)
dv_1 dv_2 dy_1 dy_2
\\
- \int_0^G
\frac{\theta^{-1} }{2\pi\im}
\oint
\frac{g_i(y) g_j(y) k_i k_j}{(k_i + k_j)(v + y)^2} \left( \frac{v}{v+y}\cdot \frac{v-\hat{\alpha}}{v - \hat{\alpha} - \hat{M}}\right)^{k_i + k_j} dv dy ,
\end{multline}
where in the first integral, the contours are nested:
$|v_1| \ll |v_2|$, and enclose $-y_1, -y_2$ but not $\hat{\alpha} + \hat{M}$;
in the second integral, the contour encloses $-y$ but not $\hat{\alpha} + \hat{M}$.
\end{theorem}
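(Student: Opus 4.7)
The plan is to use the method of joint cumulants: since the random vector \eqref{eq:clt2:def} is a $y$-weighted integral of the increments $\Delta\mathfrak{P}_{k_i}^{N}:=\mathfrak{P}_{k_i}(x^N)-\mathfrak{P}_{k_i}(x^{N-1})$ studied in Theorem \ref{thm:clt1}, I would reduce joint Gaussianity to two statements: (a) the joint covariance of the integrated quantities converges to \eqref{eq:clt2:cov}, and (b) all joint cumulants of order $m\geq 3$ vanish in the limit $L\to\infty$. The second statement should follow by bounding the $m$-th joint cumulant of the integrated vector by $L^m$ times an $m$-fold $y$-integral of the joint cumulant of the individual $\Delta\mathfrak{P}_{k_i}^{N_i}$'s, and invoking the cumulant estimates produced by the shuffle-algebra/Negut integral formulas used in the proof of Theorem \ref{thm:clt1}. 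Those formulas exhibit each $m$-th cumulant as a specific multi-contour integral whose size is uniformly $o(L^{-(m-1)})$ (the ``strong CLT'' bound alluded to in the introduction, cf.\ Proposition \ref{prop:gtp}); after the $L^m$ prefactor and $y$-integration, this becomes $o(1)$, so higher cumulants die.

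For the covariance, I would first replace the weighted integral by its Riemann sum: $L\int g_i(y)\Delta\mathfrak{P}_{k_i}^{\lfloor Ly\rfloor}dy=\sum_N G_{i,N}\Delta\mathfrak{P}_{k_i}^N+o(1)$ with $G_{i,N}\to g_i(N/L)$, and then expand
\begin{equation*}
\sum_{N,N'}G_{i,N}G_{j,N'}\,\mathrm{Cov}\bigl(\Delta\mathfrak{P}_{k_i}^N,\Delta\mathfrak{P}_{k_j}^{N'}\bigr).
\end{equation*}
The strategy is to split this double sum into the off-diagonal pairs $N\neq N'$ and the diagonal $N=N'$. For the off-diagonal part, I would use the shuffle-algebra formula giving $\mathrm{Cov}(\mathfrak{P}_{k_i}(x^N),\mathfrak{P}_{k_j}(x^{N'}))$ as a two-fold contour integral whose $(N,N')$-dependence enters only through the factors $(v_1+N/L)^{-k_i},(v_2+N'/L)^{-k_j}$ after rescaling $v\mapsto v/L$. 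A discrete difference in $N$ then generates $(-k_i/L)(v_1+N/L)^{-1}$ to leading order, and similarly in $N'$; multiplying by $L$ and summing against $G_{i,N}G_{j,N'}$ turns the double sum into a Riemann sum that converges, as $L\to\infty$, to the double integral over $[0,1]^2$ in \eqref{eq:clt2:cov}, with the nested contour structure preserved. For the diagonal contribution, direct application of Theorem \ref{thm:clt1} gives $\mathrm{Cov}(\Delta\mathfrak{P}_{k_i}^N,\Delta\mathfrak{P}_{k_j}^N)\sim L^{-1}c_{ij}(N/L)$ with $c_{ij}$ the explicit single contour integral appearing there, so $\sum_N G_{i,N}G_{j,N}\cdot L^{-1}c_{ij}(N/L)$ converges to the single-integral correction term in \eqref{eq:clt2:cov}.

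The main obstacle is rigorously executing the ``discrete difference to continuous $y$-derivative'' step uniformly in $y_1,y_2\in[0,1]$, including near the diagonal $y_1=y_2$ and the endpoints. The $1/(v_1-v_2)^2$ kernel keeps the $v$-contours separated, so no new singularity develops in $v$; however, the joint covariance of the undifferenced $\mathfrak{P}_{k_i}$'s behaves like $-\log|y_1-y_2|$ along the diagonal (the GFF logarithm of Lemma \ref{lemma:pbcov2}), and its formal second derivative in $y_1,y_2$ is not locally integrable. The resolution is that at the discrete level the ``diagonal'' contribution is naturally separated out (it is precisely the $N=N'$ term), and what looks like a singular distributional contribution in the continuous limit is captured exactly by the single-integral counterterm in \eqref{eq:clt2:cov}; one must show that the off-diagonal Riemann sum in fact converges as an absolutely convergent double integral away from $y_1=y_2$, with any near-diagonal pieces giving $o(1)$ after subtracting the leading counterterm. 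Once both the covariance convergence and the higher-cumulant vanishing are in place, the method of moments yields the joint Gaussian limit with covariance \eqref{eq:clt2:cov}; the required regularity of $g_i$ (bounded and a.e.\ continuous) is exactly what is needed to justify the Riemann sum approximations by dominated convergence.
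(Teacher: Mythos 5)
Your overall strategy is the same as the paper's: reduce joint Gaussianity to the vanishing of higher cumulants plus convergence of covariances, express everything as a Riemann-type sum over levels, separate the coincident-level (``diagonal'') contribution from the rest, and pass to the limit by dominated convergence. The covariance decomposition into the $N\neq N'$ double integral plus the $N=N'$ diagonal counterterm matches the paper's equation \eqref{eq:thm:clt2:pf03}, and your observation that the counterterm captures what a naive continuum computation would mistake for a singular distributional contribution is the right interpretation. One small caveat: the diagonal worry you raise about a nonintegrable $-\log|y_1-y_2|$ second derivative is less delicate than you fear, because what is integrated is $L^2\,\mathrm{Cov}(\Delta\mathfrak{P}_{k_i}^{N},\Delta\mathfrak{P}_{k_j}^{N'})$ for $N<N'$, and Lemma \ref{lemma:cov:d} exhibits its limit as a two-fold contour integral that remains bounded as $\hat N_1\to\hat N_2$ (the contours stay nested and encircle both poles). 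So the off-diagonal integrand is uniformly bounded (Corollary \ref{cor:clt2:bd}) and DCT applies directly with no near-diagonal counterterm subtraction.

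Where the proposal has a genuine gap is the higher-cumulant step. You claim the $m$-th joint cumulant of the individual increments is ``uniformly $o(L^{-(m-1)})$'' and that multiplying by $L^m$ and integrating in $y$ kills it. That arithmetic does not close: $L^m\cdot o(L^{-(m-1)})=o(L)$, which does not tend to zero. The rate $o(L^{-(m-1)})$ is what Proposition \ref{prop:gtp} gives in the \emph{worst} case $s=1$ (all levels coinciding), but then the measure of the coincidence set in $[0,1]^m$ is $\sim L^{-(m-1)}$ and must be used. Conversely, when the levels are all distinct, the decay is actually much faster -- $o(L^{-\eta})$ for any $\eta<2m-2$, from Proposition \ref{prop:gtp} with $D=\{1,\dots,m\}$ -- but this rate is not uniform in a neighborhood of coincidences. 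The paper resolves this by partitioning the $y$-domain according to which $\lfloor Ly_j\rfloor$ coincide, changing variables so that each equal block contributes a single integration variable (picking up the $L^{-(h-s)}$ volume factor and reducing the prefactor from $L^h$ to $L^s$), and then applying Corollary \ref{cor:clt2s} for the pointwise $o(L^{-(h-2+s)+\epsilon})$ decay together with Corollary \ref{cor:clt2:bd} for a uniform bound on each piece, so DCT applies. Without this case split and the matching of coincidence pattern to decay rate, the argument as written does not establish that the higher cumulants vanish.
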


\begin{rem}
Let us emphasize that the scalings in \eqref{eq:clt1:d:cov} and \eqref{eq:clt2:def}
are different: for a single difference the scale is $L^{\frac{1}{2}}$, while for the
weighted average the scale is $L$.
The conceptual reason is that the limiting
field $\mathcal{K}$ (of Definition \ref{def_GFF_pullback}) is differentiable only in \emph{generalized sense} in a
spatial direction even after smoothing in another direction; see
Theorems \ref{thm:gff:la}, \ref{thm:gff:sm} for more details on the field $\mathcal K$.
One vague analogy
is with the functional central limit theorem describing the convergence
of the random walk to the Brownian motion: the latter is
non-differentiable, and we need to rescale increments of the random walk
differently (in fact, there will be no rescaling at all) to see a finite
random variable as their limit.
\end{rem}

\begin{rem}
If $g_i = \mathds{1}_{y\leq \hat{N}'_i}$, then \eqref{eq:clt2:def} has the same asymptotic behavior as \eqref{eq:clt1:cov}.
In particular, one can get \eqref{eq:clt1:cov:wi} from \eqref{eq:clt2:cov}, by integrating in $y_1, y_2$ in the first integral in \eqref{eq:clt2:cov}.
In doing this it suffices to consider the anti-derivative of its integrand.
At $\hat{N}'_i$ it is \eqref{eq:clt1:cov:wi}, at zero it vanishes, and along the line $y_1 = y_2$ it is the second integral of \eqref{eq:clt2:cov} (using integral identities from Appendix \ref{sec:dr}).
\end{rem}

Theorems \ref{thm:clt1} and \ref{thm:clt2} have an interpretation in terms of the
Gaussian Free Field. For that we define (random) height functions, similar to
\cite{Borodin_CLT}, \cite{borodin2015general}.

\begin{defn} \label{defn:hf}
Let the sequences $x^1, x^2, \cdots$ be distributed as $\mathbb{P}^{\alpha, M, \theta}$.
For any $L > 0$ and $(u, y) \in [0, 1]\times \mathbb{R}_{>0}$, define $\mathcal{H}_L(u, y)$ to be the number of $i$ such that $x_i^{\lfloor Ly \rfloor}$ is less than $u$.
For $y > 1$, let $\mathcal{W}_L(u, y) = L\left(\mathcal{H}_L(u, y) - \mathcal{H}_L(u, y-L^{-1})\right)$.
\end{defn}

In \cite{borodin2015general},
the authors proved that centered $\mathcal{H}_L$ converges to the random field
$\mathcal{K}$ of Definition \ref{def_GFF_pullback}.
Our Central Limit
Theorems imply the convergence of centered $\mathcal{W}_L$ to a derivative of the random field
$\mathcal{K}$, as $L \rightarrow \infty$. In more details, Theorem \ref{thm:clt1} leads to the weak convergence
to a ``renormalized derivative'' of the random field $\mathcal{K}$, in the following
sense.

\begin{theorem} \label{thm:gff:la}
Under the limit scheme (\ref{eq:lsch}), for any integers $k_1, \cdots, k_h$ and $k'_1, \cdots, k'_{h'}$, real numbers $0 < \hat{N}_1 \leq \cdots \leq \hat{N}_h$ and $0 < \hat{N}'_1 \leq \cdots \leq \hat{N}'_{h'}$, the vector
\begin{equation}  \label{eq:prop:gff:la1}
\left( L^{-\frac{1}{2}} \int_0^1 u^{k_i} \left( \mathcal{W}_L(u, \hat{N}_i) - \mathbb{E} \left( \mathcal{W}_L(u, \hat{N}_i) \right) \right) du \right)_{i=1}^h
\end{equation}
as $L\rightarrow \infty$ converges in distribution to a Gaussian vector, which is the same as the weak limit
\begin{equation}  \label{eq:prop:gff:la2}
\lim_{\delta\rightarrow 0_+} \delta^{-\frac{1}{2}} \left( \int_0^1 u^{k_i}
\mathcal{K}(u, \hat{N}_i + \delta) du - \int_0^1 u^{k_i} \mathcal{K}(u, \hat{N}_i)
du \right)_{i=1}^h.
\end{equation}
In addition,
\begin{equation}  \label{eq:lemma:if:2}
\left(
\int_0^1 u^{k'_i} \left( \mathcal{H}_L(u, \hat{N}'_i) - \mathbb{E} \left( \mathcal{H}_L(u, \hat{N}'_i) \right) \right) du \right)_{i=1}^{h'}
\end{equation}
and \eqref{eq:prop:gff:la1} jointly converge (in distribution) as $L \rightarrow \infty$, while the limit vectors are independent.
\end{theorem}

In words, Theorem \ref{thm:gff:la} means that the limiting field for $1d$--slices of
$L^{-\frac{1}{2}} \mathcal{W}_L$ (in the $u$--direction), is the same as the renormalized
$y$--derivative of the limiting field for $\mathcal{H}_L$; but when letting
$L\rightarrow \infty$ simultaneously for $1d$ slices of $L^{-\frac{1}{2}}\mathcal{W}_L$ and
$\mathcal{H}_L$, one gets independent fields.

\begin{rem}
By \cite[Theorem 4.13]{borodin2015general}, \eqref{eq:lemma:if:2} converges to
\begin{equation}  \label{eq:rem:cit}
\left(\int_{0}^{1} u^{k'_i} \mathcal{K}\left(u, \hat{N}'_i\right) du \right)_{i=1}^{h'} 
\end{equation}
as $L \rightarrow \infty$.
A simple computation involving Lemma \ref{lemma:pbcov1} shows that \eqref{eq:prop:gff:la2} is
independent from \eqref{eq:rem:cit},
in the sense that as $\delta \rightarrow 0_+$, the covariances tend to zero.
Note, however, that
\eqref{eq:prop:gff:la2} is defined as a weak limit, and may not actually exist in the
probability space of $\mathcal{K}$.
\end{rem}

In contrast to Theorem \ref{thm:gff:la}, when we deal with $2d$--integrals of $
\mathcal{W}_L$ and $\mathcal{H}_L$, then the limiting fields turn out to be much more closely
related. Namely, we define the pairings $\mathfrak{Z}_{g, k}$ of the $y$--derivative
of the field $\mathcal K$ with test functions $u^k g(y)$ through the following
procedure based on integration by parts in the $y$--direction.
\begin{defn}   \label{defn:pair}
For any $G \in \mathbb{R}_{>0}$ and $g \in C^{\infty}([0, G])$,
with $g(G) = 0$,
define
\begin{equation}
\mathfrak{Z}_{g, k} = \int_0^G\int_0^1 u^{k} \left(\frac{d}{dy}g(y)\right) \mathcal{K}(u, y) du dy  .
\end{equation}
\end{defn}

\begin{lemma}   \label{lemma:derk:defn}
For any $G \in \mathbb{R}_{>0}$ and $g \in L^2([0, G])$, and positive integer $k$, there exists a sequence of functions $g_1, g_2,
\cdots$, satisfying that
\begin{enumerate}
\item Each $g_n \in C^{\infty}([0, G])$, and $g_n(G) = 0$.
\item $\lim_{n \rightarrow \infty} g_n = g$ in $L^2([0, G])$.
\item The sequence of random variables $\mathfrak{Z}_{g_1, k}, \mathfrak{Z}_{g_2, k}, \cdots$ converges almost surely.
\end{enumerate}
If there is another sequence $\tilde{g}_1, \tilde{g}_2, \cdots$ satisfying the same conditions, then the limits $\lim_{n \rightarrow \infty} \mathfrak{Z}_{g_n, k}$ and $\lim_{n \rightarrow \infty} \mathfrak{Z}_{\tilde{g}_n, k}$ are almost surely the same.
\end{lemma}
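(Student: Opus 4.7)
The plan is to extend $\mathfrak{Z}_{g,k}$ by continuity from smooth functions $g$ with $g(1) = 0$ to all of $L^2([0,1])$. For $g \in C^\infty([0,1])$ with $g(1) = 0$, Lemma \ref{lemma:pbcov2} (applied with test function $g'$ in place of $g$) guarantees that $\mathfrak{Z}_{g,k}$ is a centered Gaussian random variable whose variance equals the bilinear form
\begin{equation*}
B(g,g) = \int_0^1 \int_0^1 g'(y_1)\, g'(y_2)\, F(y_1,y_2)\, dy_1\, dy_2,
\end{equation*}
where $F(y_1, y_2)$ is the double contour integral from \eqref{eq_2d_cov} (with $k_i = k_j = k$). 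The heart of the argument will be to show that, despite the appearance of the derivatives $g'$, the bilinear form $B$ extends to a bounded bilinear form on $L^2([0,1])$, namely $|B(g,g)| \leq C_k \|g\|_{L^2}^2$.

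To establish this estimate I would integrate by parts in both $y_1$ and $y_2$, treating the regions $\{y_1 < y_2\}$ and $\{y_1 > y_2\}$ separately, since $F$ is smooth in each region but has a jump across the contour-switching locus $y_1 = y_2$. The boundary terms at $y_i = 1$ vanish by the condition $g(1) = 0$, while those at $y_i = 0$ vanish because the substitution $y_i = 0$ collapses the factor $v_i/(v_i + y_i)$ to $1$, leaving an integrand in $v_i$ that is analytic inside its contour; hence $F(y_1, 0) = F(0, y_2) = 0$, and the same holds for the relevant $y$-derivatives. The jump contributions from the diagonal $y_1 = y_2$ produce a one-dimensional correction, and the resulting expression rearranges into exactly the covariance formula stated in Theorem \ref{thm:clt2} with $g_i = g_j = g$: an off-diagonal double integral plus a diagonal single integral. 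Because the contours can be fixed independently of $y_1, y_2 \in [0,1]$ (enclosing $[-1,0]$ while staying away from $\hat\alpha + \hat M$ and keeping the two nested contours at positive distance from each other), all remaining integrands in $(y_1, y_2)$ are uniformly bounded; Cauchy--Schwarz on the compact domain $[0,1]$ then yields $|B(g,g)| \leq C_k \|g\|_{L^2}^2$.

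With this estimate in hand, the lemma follows by a standard density argument. For arbitrary $g \in L^2([0,1])$, first pick a sequence $\hat g_n \in C^\infty_c((0,1))$ with $\hat g_n \to g$ in $L^2$; each $\hat g_n$ automatically satisfies $\hat g_n(1) = 0$. By linearity and the bound,
\begin{equation*}
\mathrm{Var}\bigl(\mathfrak{Z}_{\hat g_n, k} - \mathfrak{Z}_{\hat g_m, k}\bigr) = B(\hat g_n - \hat g_m,\, \hat g_n - \hat g_m) \leq C_k \|\hat g_n - \hat g_m\|_{L^2}^2 \xrightarrow[n,m\to\infty]{} 0,
\end{equation*}
so $\{\mathfrak{Z}_{\hat g_n, k}\}$ is Cauchy in $L^2(\Omega)$. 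Passing to a subsequence $g_n := \hat g_{n_j}$ with $\|g_n - g\|_{L^2} \leq 2^{-n}$, Chebyshev's inequality together with Borel--Cantelli upgrades the $L^2(\Omega)$-convergence of $\mathfrak{Z}_{g_n, k}$ to almost sure convergence, producing a sequence satisfying all three conditions. For the uniqueness claim, if $\tilde g_n$ is any other such sequence, then $\|g_n - \tilde g_n\|_{L^2} \to 0$, so by the same bound $\mathrm{Var}(\mathfrak{Z}_{g_n, k} - \mathfrak{Z}_{\tilde g_n, k}) \to 0$; thus the difference of the two a.s.\ limits is zero in probability and hence zero a.s.

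The main obstacle is the integration-by-parts analysis in the second paragraph: one must carefully track the jump of $F$ across $y_1 = y_2$ (which is exactly what produces the one-dimensional diagonal term of Theorem \ref{thm:clt2}) and verify that each boundary contribution at $y_i \in \{0,1\}$ vanishes. Once the $L^2$-boundedness of $B$ is secured, the remainder of the proof is routine.
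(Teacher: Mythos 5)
Your argument is correct and follows essentially the same route as the paper: you re-derive the integration-by-parts computation of the covariance and the resulting $L^2$-boundedness of $g \mapsto \mathfrak{Z}_{g,k}$ (which the paper isolates as Proposition~\ref{prop:derk:smt} and Corollary~\ref{cor:l2bd}), then run the standard density/Cauchy argument. The only cosmetic differences are your cleaner choice of $C^\infty_c((0,1))$ approximants in place of the paper's cutoff construction $g_n=(1-\lambda_n)h_n$, and your use of Chebyshev plus Borel--Cantelli where the paper bounds $L^1$ increments and invokes dominated convergence.
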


With this we can extend the definition of $\mathfrak{Z}_{g, k}$ to any $G \in \mathbb{R}_{>0}$ and $g \in L^2([0, G])$.

\begin{defn}  \label{defn:pair2}
For any $G \in \mathbb{R}_{>0}$ and $g \in L^2([0, G])$ we define $\mathfrak{Z}_{g, k}$ to be the limit in Lemma \ref{lemma:derk:defn}.
\end{defn}

Now we state the convergence of $\mathcal{W}_L$ to the $y$--derivative of $\mathcal{K}$ in the following sense.
\begin{theorem} \label{thm:gff:sm}
Let $k_1, \cdots, k_h$ be positive integers, $G \in \mathbb{R}_{>0}$, and $g_1, \cdots, g_h \in L^{\infty}([0, G])$.
As $L \rightarrow \infty$,
the vector
\begin{equation}  \label{eq:prop:gff:sm1}
\left( \int_0^G \int_0^1 u^{k_i} g_i(y) \left( \mathcal{W}_L(u, y) - \mathbb{E} \left( \mathcal{W}_L(u, y) \right) \right) du dy \right)_{i=1}^h
\end{equation}
converges in distribution to the vector
$\left( \mathfrak{Z}_{g_i, k_i} \right)_{i=1}^h$.
Moreover, take differentiable functions $\tilde{g}_1, \cdots, \tilde{g}_{h'} \in L^{\infty}([0, G])$, such that
$\frac{d}{dy}\tilde{g}_{i} \in L^{\infty}([0, G])$ for each $1 \leq i \leq h'$,
and positive integers $k_1', \cdots, k_{h'}'$.
Then consider the vector
\begin{equation}  \label{eq:prop:gff:sm2}
\left( \int_0^G \int_0^1 - u^{k_i'} \left(\frac{d}{dy}\tilde{g}_{i}(y)\right) \left( \mathcal{H}_L(u, y) - \mathbb{E} \left( \mathcal{H}_L(u, y) \right) \right) du dy \right)_{i=1}^{h'} .
\end{equation}
As $L \rightarrow \infty$, \eqref{eq:prop:gff:sm1} and \eqref{eq:prop:gff:sm2} jointly converge in distribution to the $h + h'$ dimensional vector 
$\left( \mathfrak{Z}_{g_i, k_i} \right)_{i=1}^h \bigcup \left( \mathfrak{Z}_{\tilde{g}_i, k_i'} \right)_{i=1}^{h'}$.
\end{theorem}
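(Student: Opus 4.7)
The plan is to reduce both (\ref{eq:prop:gff:sm1}) and (\ref{eq:prop:gff:sm2}) to Theorem \ref{thm:clt2} via integration by parts, and then identify the resulting limiting covariance with that of the random vector $(\mathfrak{Z}_{g_i, k_i})$ furnished by Definition \ref{defn:pair2}.

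First I would apply integration by parts in the $u$-variable. Since $\mathcal{H}(0,N)=0$ and $\mathcal{H}(1,N)=\min(N,M)$, comparison with (\ref{eq:defpk}) yields $\mathfrak{P}_{k_i+1}(x^N)-\mathfrak{P}_{k_i+1}(x^{N-1})=1-(k_i+1)\int_0^1 u^{k_i}\mathcal{W}(u,N)\,du$, and after centering
\begin{equation*}
\int_0^1 u^{k_i}\bigl(\mathcal{W}(u,N)-\mathbb{E}\mathcal{W}(u,N)\bigr)du = -\frac{1}{k_i+1}\Bigl[\mathfrak{P}_{k_i+1}(x^N)-\mathfrak{P}_{k_i+1}(x^{N-1})-\mathbb{E}(\cdots)\Bigr].
\end{equation*}
Thus the vector (\ref{eq:prop:gff:sm1}) equals, up to scalar prefactors $-1/(k_i+1)$, the vector (\ref{eq:clt2:def}) with $k_i\mapsto k_i+1$. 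Theorem \ref{thm:clt2} therefore delivers the joint Gaussian convergence with explicit limiting covariance inherited from (\ref{eq:clt2:cov}).

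The next task is to identify this covariance with $\mathrm{Cov}(\mathfrak{Z}_{g_i,k_i},\mathfrak{Z}_{g_j,k_j})$. For smooth $g_i,g_j\in C^\infty([0,1])$ with $g_i(1)=g_j(1)=0$, Definition \ref{defn:pair} and Lemma \ref{lemma:pbcov2} applied with $g_i\mapsto g_i'$, $g_j\mapsto g_j'$ give a double $y$-integral against $g_i'(y_1)g_j'(y_2)$ of a function whose contour prescription depends on the ordering of $y_1$ and $y_2$. Integrating by parts in both $y_1$ and $y_2$: the hypotheses $g_i(1)=g_j(1)=0$ eliminate the outer boundary terms, and the resulting expression should match (\ref{eq:clt2:cov}) after the substitution $k\mapsto k+1$ and division by $(k_i+1)(k_j+1)$. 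The step I expect to be the main obstacle is tracking where the second, single-integral diagonal term in (\ref{eq:clt2:cov}) comes from: the nesting of $v_1,v_2$ in Lemma \ref{lemma:pbcov2} switches across $y_1=y_2$, and integration by parts in $y$ produces a boundary contribution along this diagonal that must be evaluated by residues and shown to reproduce precisely the missing diagonal term. Once the covariance matches for smooth $g$, one extends to $g\in L^\infty([0,1])$ continuous almost everywhere through Lemma \ref{lemma:derk:defn} and Definition \ref{defn:pair2}, using the explicit covariance formula to verify the uniform $L^2$-continuity needed for the approximation.

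Finally, for (\ref{eq:prop:gff:sm2}) I would perform Abel summation in the $y$-variable. Since $\mathcal{H}(u,Ly)$ is constant on each dyadic interval $[n/L,(n+1)/L)$, the boundary terms telescope using $\tilde g_i(1)=0$ and $\mathcal{H}(u,0)=0$ to give
\begin{equation*}
\int_0^1\!\int_0^1 -u^{k_i'}\tilde g_i'(y)\,\mathcal{H}^c(u,Ly)\,du\,dy=\sum_{n=1}^{L-1}\tilde g_i(n/L)\int_0^1 u^{k_i'}\mathcal{W}^c(u,n)\,du.
\end{equation*}
Comparing with the Riemann representation of $L\int_0^1\!\int_0^1 u^{k_i'}\tilde g_i(y)\mathcal{W}^c(u,Ly)\,du\,dy$, each coefficient differs by $O(\|\tilde g_i'\|_\infty/L)$; per-level fluctuations have variance $O(1/L)$ by Theorem \ref{thm:clt1}, so a Cauchy--Schwarz bound (using the approximate cross-level independence also from Theorem \ref{thm:clt1}) shows the discrepancy is $O(L^{-1/2})$ in $L^2$. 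Hence (\ref{eq:prop:gff:sm2}) has the same $L\to\infty$ limit as (\ref{eq:prop:gff:sm1}) taken with $g_i=\tilde g_i$ and $k_i=k_i'$, and the joint convergence of the two families follows from the joint statement already provided by Theorem \ref{thm:clt2}.
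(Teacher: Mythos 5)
Your proposal reproduces the paper's argument in all essential respects: the reduction of (\ref{eq:prop:gff:sm1}) to Theorem \ref{thm:clt2} by integration by parts in $u$; the identification of the limiting covariance with that of $(\mathfrak{Z}_{g_i,k_i})$ by integrating by parts in $y_1,y_2$ in the formula of Lemma \ref{lemma:pbcov2} (this is precisely Proposition \ref{prop:derk:smt}, including the diagonal residue term you correctly flag as the main subtlety, with the extension to $L^2$ via Corollary \ref{cor:l2bd}/Proposition \ref{prop:derk:smte}); and the Abel-summation argument relating (\ref{eq:prop:gff:sm2}) to (\ref{eq:prop:gff:sm1}) with a discrepancy that is $o(1)$ in probability. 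The only cosmetic difference is the final estimate: the paper bounds the discrepancy pointwise by $\|\tilde g_i'\|_\infty\int_0^1\int_0^1 u^{k_i'}|\mathcal{W}(u,Ly)-\mathbb{E}\mathcal{W}(u,Ly)|\,du\,dy$ and invokes Theorem \ref{thm:gff:la}, whereas you use a Cauchy--Schwarz $L^2$ estimate; both ultimately rest on the same uniform-in-$N$ variance bound (cf.\ Corollary \ref{cor:clt2:bd}) to control the fluctuations at each individual level.
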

\begin{rem}
We point out that for $g_i \in C^{\infty}([0, G])$, Theorem \ref{thm:gff:sm} can be obtained from \cite[Theorem 4.13]{borodin2015general} via integration by parts.
However, we are unaware of any approach that extends to the case of $g_i \in L^{\infty}([0, G])$ without using Theorem \ref{thm:clt2}.
\end{rem}

\subsection*{Organization of remaining text}
The remaining sections are devoted to proofs of the above stated results.

Section \ref{sec:dj} presents the formulas for the expectations of the joint moments of $\beta$--Jacobi corners processes, using Macdonald processes and difference operators.
The proofs of the Law of Large Numbers (Theorem \ref{thm:mom}) and the related convergence of diagrams and measures
(Theorems \ref{thm:dia}, \ref{thm:mea}) can be found in Section \ref{sec:tm}, except
that the decay of variance in Theorem \ref{thm:mom} is left for Section
\ref{sec:clt}, which contains the proofs of the Central Limit Theorems (Theorems \ref{thm:clt1}, \ref{thm:clt2}).
Section \ref{sec:gff} contains the proofs of Theorems
\ref{thm:gff:la} and \ref{thm:gff:sm}, and Lemma \ref{lemma:derk:defn}.
Throughout the proofs we will widely use some contour integral identities, which are given in Appendix \ref{sec:dr}, to simplify the computations.

\section{Discrete joint moments}  \label{sec:dj}

In this section we compute the joint moments in $\beta$--Jacobi corners processes.
The main goal is to prove the following result.

\begin{theorem} \label{thm:var:dis}
Let $(x^1, x^2, \cdots) \in \chi^M$ be distributed as $\mathbb{P}^{\alpha, M, \theta}$,
and let $\mathfrak{P}_k(x^N)$ be defined as (\ref{eq:defpk}).
Let $l$, $N_1 \leq \cdots \leq N_l$, and $k_1, \cdots, k_l$ be positive integers, satisfying $\alpha + M > k_1 + \cdots + k_l$.

For any positive integers $n$, $m$, $\tilde{m}$, and variables $w_1, \cdots, w_m$,
$\tilde{w}_1, \cdots, \tilde{w}_{\tilde{m}}$,
denote
\begin{multline}
\mathfrak{I}(w_1, \cdots, w_m; \alpha, M, \theta, n) = \frac{1}{(w_2-w_1+1-\theta)\cdots (w_{m} - w_{m-1}+1-\theta)} \\
\times \prod_{1\leq i<j\leq m}\frac{(w_j-w_i) (w_j-w_i+1-\theta)}{(w_j-w_i-\theta) (w_j-w_i+1)}
\prod_{i=1}^{m}\frac{w_i-\theta}{w_i+\theta(n-1)}\cdot \frac{w_i-\theta\alpha}{w_i-\theta\alpha - \theta M} ,
\end{multline}
and
\begin{equation}
\mathfrak{L}(w_1, \cdots, w_m; \tilde{w}_1, \cdots, \tilde{w}_{\tilde{m}}; \theta) =
\prod_{1\leq i\leq \tilde{m}, 1\leq j\leq m}\frac{(\tilde{w}_i - w_j) (\tilde{w}_i - w_j+1-\theta)}{(\tilde{w}_i - w_j-\theta) (\tilde{w}_i - w_j+1)} .
\end{equation}
Then the moments of $\mathfrak{P}_k(x^N)$ can be computed via
\begin{multline}   \label{eq:thm:var:dis:st}
\mathbb{E}\left( \mathfrak{P}_{k_1}(x^{N_1}) \cdots \mathfrak{P}_{k_l}(x^{N_l}) \right)
=\frac{(-\theta)^{-l}}{(2\pi\im)^{k_1 + \cdots + k_l}} \oint \cdots \oint
\prod_{i=1}^{l} \mathfrak{I}(u_{i,1}, \cdots, u_{i,k_i}; \alpha, M, \theta, N_i)
\\
\times \prod_{i < j} \mathfrak{L}(u_{i,1}, \cdots, u_{i,k_i}; u_{j,1}, \cdots, u_{j,k_j}; \theta)
\prod_{i=1}^l \prod_{i'=1}^{k_i} du_{i,i'}
,
\end{multline}
where for each $i = 1, \cdots, l$,
the contours of $u_{i,1}, \cdots, u_{i, k_i}$ enclose $-\theta (N_i - 1)$ but not $\theta(\alpha + M)$,
and $|u_{i,1}|\ll \cdots \ll |u_{i,k_i}|$.
For $1\leq i < l$,
we also require that $|u_{i, k_{i}}| \ll |u_{i+1, 1}|$.
\end{theorem}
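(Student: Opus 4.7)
The plan is to derive \eqref{eq:thm:var:dis:st} by representing each power sum $\mathfrak{P}_k(x^N)$ as the eigenvalue of an integral-form difference operator $\mathcal{D}^{(k)}_N$ acting on the $\beta$--Jacobi partition function, following the shuffle-algebra framework of Negut \cite{Negut, Negut_Shuffle} degenerated from Macdonald processes to the Heckman--Opdam level, as sketched by Borodin and the first author in the appendix to \cite{Fyodorov2016}. The $\beta$--Jacobi corners process is a particular limit of a Macdonald corners process, and on the Macdonald side the power-sum type observables are simultaneously diagonalized by a commuting family of $q$-difference operators with known integral representations; taking $q\to 1$ together with the appropriate scaling of the spectral variables turns these $q$-difference operators into integral operators whose kernels are precisely the building blocks of $\mathfrak{I}$ and $\mathfrak{L}$.

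For a single factor $\mathfrak{P}_k(x^N)$, the operator $\mathcal{D}^{(k)}_N$ admits a $k$-fold nested contour-integral form over $|u_1|\ll\cdots\ll|u_k|$ in which the three structural pieces of $\mathfrak{I}$ have clean origins: the one-body factor $\frac{w-\theta}{w+(N-1)\theta}\cdot\frac{w-\theta\alpha}{w-\theta\alpha-\theta M}$ at each variable comes from the Jacobi ground-state weight $x^{\theta\alpha-1}(1-x)^{\theta(|M-N|+1)-1}$ via residues of the kernel, the ladder denominator $\prod_j(u_{j+1}-u_j+1-\theta)^{-1}$ comes from the shuffle coproduct, and the pairwise symmetric factor $\prod_{i<j}\frac{(u_j-u_i)(u_j-u_i+1-\theta)}{(u_j-u_i-\theta)(u_j-u_i+1)}$ comes from the regularized product rule in the shuffle algebra. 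The contour prescription, enclosing $-\theta(N-1)$ but not $\theta(\alpha+M)$, is forced by which singularities of the ground state are picked up by the operator. The $N>M$ case, where $\mathfrak{P}_k$ carries the extra $N-M$ from trivial eigenvalues at $1$, is absorbed automatically: the excluded pole at $\theta(\alpha+M)$ of the factor $\frac{w-\theta\alpha}{w-\theta\alpha-\theta M}$ accounts precisely for the constant shift.

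For the joint moment with $N_1\leq\cdots\leq N_l$, I apply the operators sequentially in order of increasing level, so $\mathcal{D}^{(k_1)}_{N_1}$ acts innermost and $\mathcal{D}^{(k_l)}_{N_l}$ outermost. By the intertwining/Markov structure of the corners process, as encoded in the conditional densities \eqref{eq:cp:defn2}--\eqref{eq:cp:defn3}, this sequential application computes exactly $\mathbb{E}\bigl(\prod_i\mathfrak{P}_{k_i}(x^{N_i})\bigr)$ and each level contributes its own packet $(u_{i,1},\ldots,u_{i,k_i})$ with its own factor $\mathfrak{I}(\cdot;\alpha,M,\theta,N_i)$. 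The cross factor $\mathfrak{L}$ between groups $i<j$ emerges because, in order to place all the integrals into the standard globally nested form with $|u_{i,k_i}|\ll|u_{i+1,1}|$, one must commute $\mathcal{D}^{(k_j)}_{N_j}$ past $\mathcal{D}^{(k_i)}_{N_i}$; the shuffle algebra yields exactly the claimed rational function as the commutation cocycle. The overall prefactor $(-\theta)^{-l}$ records the normalization between the operator eigenvalues and the bare power sums in the $q\to 1$ scaling.

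The main technical obstacle is justifying this $q\to 1$ degeneration of Negut's integral formulas rigorously: the Macdonald operators are naturally $q$-difference operators whose shuffle-algebra presentation has $q$-rational kernels with full lattices of poles, and one must verify that after the scaling $q=e^{-\varepsilon}$ with $\varepsilon\to 0$ the nested contour prescriptions, the residue contributions from coalescing poles, and the commutation cocycles all converge to $\mathfrak{I}$ and $\mathfrak{L}$ without spurious residues. A secondary but substantial bookkeeping challenge is controlling the contour deformations after each commutation; for this Theorem \ref{thm:dr} together with Corollary \ref{cor:dr} is the decisive tool, since it reduces cyclic sums of multi-contour integrals generated during commutations back to the single-contour integrals in \eqref{eq:thm:var:dis:st}.
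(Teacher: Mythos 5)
Your high-level framework is the same as the paper's: the $\beta$-Jacobi corners process is accessed via the Macdonald process, the relevant moments are extracted by Negut-type difference operators $\mathcal{D}_{-n}^N$ with nested contour integral kernels (the paper's Proposition \ref{prop:act}, adapted from the appendix to \cite{Fyodorov2016}), and one then degenerates $q=e^{-\epsilon}$, $t=e^{-\theta\epsilon}$, $\epsilon\to 0_+$. But two of the concrete mechanisms you rely on do not match what the paper does, and those are exactly the places where the real work is.

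First, the cross factor $\mathfrak{L}$ is \emph{not} a commutation cocycle between the operators $\mathcal{D}_{-k_i}^{N_i}$ and $\mathcal{D}_{-k_j}^{N_j}$. In the paper's Proposition \ref{prop:act:mul}, the operators are applied in order of \emph{decreasing} level ($\mathcal{D}_{-k_l}^{N_l}$ first, $\mathcal{D}_{-k_1}^{N_1}$ last), they are never reordered, and no operator-algebra relation is ever invoked. What happens is that after $\mathcal{D}_{-k_2}^{N_2},\ldots,\mathcal{D}_{-k_l}^{N_l}$ have acted, the resulting integrand contains the factors $\mathfrak{F}(z_{i,\cdot};a_\cdot)$ in the variables $a_1,\ldots,a_{N_1}$; when $\mathcal{D}_{-k_1}^{N_1}$ is then applied via the formula of Proposition \ref{prop:act} (which involves the ratio $g(z)=f(z)/f(q^{-1}z)$), these residual $\mathfrak{F}$-factors feed directly into $g$ and produce the cross terms $\mathfrak{C}$ that degenerate to $\mathfrak{L}$. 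So $\mathfrak{L}$ comes from the \emph{composition} of the integral kernels, not from commuting them into a canonical order; the global nesting $|u_{i,k_i}|\ll|u_{i+1,1}|$ is simply chosen from the start and never needs to be restored.

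Second, and more seriously, you dismiss the $q\to 1$ limit as a bookkeeping step to be handled by Theorem \ref{thm:dr} and Corollary \ref{cor:dr}. Those results do not appear in the paper's proof of Theorem \ref{thm:var:dis} at all; they are used only later, in Sections 5 and 6, to simplify the already-established contour formulas in the large-$L$ asymptotics. The actual $q\to 1$ step is subtle for a different reason: under the substitution $z_{i,i'}=\exp(\epsilon u_{i,i'})$ the contour variables are pushed to $1$, whereas the original contours must enclose $0$. The paper resolves this by splitting each contour into a piece around $0$ and a piece around the $qa_j$'s (Lemmas \ref{lemma:oform}--\ref{lemma:lmt}), and then proving a nontrivial cancellation/vanishing structure among the $2^{k_1+\cdots+k_l}$ resulting terms so that only the all-outer term $\mathfrak{Q}_{\emptyset}$ survives in the limit, after the subtraction of $t^{-N_ik_i}$. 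This contour-splitting argument, not a dimension-reduction identity, is the substantive content that your proposal is missing; without it, the limit formula \eqref{eq:thm:var:dis:st} (and in particular the contour prescription ``around $-\theta(N_i-1)$ but not $\theta(\alpha+M)$'') is not justified.

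A smaller point: the constraint $M>k_1+\cdots+k_l$ enters precisely to ensure that the auxiliary function $f(z)/f(q^{-1}z)$ appearing in each application of Proposition \ref{prop:act} remains analytic on a large enough disc that the required nested contours exist. Your proposal does not indicate where this hypothesis is used.
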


We remark that a \emph{different} contour integral expression for the left hand side of (\ref{eq:thm:var:dis:st}) is given in \cite[Section 3]{borodin2015general}.
The authors are not aware of a direct way to match the two expressions.

The proof of Theorem \ref{thm:var:dis} relies on the formalism of
\textit{Macdonald processes}.
Under certain limit transition it weakly converges to $\mathbb{P}^{\alpha, M, \theta}$. In turn, we compute the moments of
Macdonald processes by applying a remarkable family of difference operators coming
from the work \cite{Negut} on the symmetric functions. A particular case ($l = 1$) of Theorem \ref{thm:var:dis} was proven by one of the authors and
Borodin in the appendix to \cite{Fyodorov2016}.

\medskip

We recall the definition and some basic asymptotic relations of Macdonald processes in Section \ref{ssec:mpa}.
Then in Section \ref{ssec:do}, we introduce the differential operators, which help to extract moments of Macdonald processes; we also give another expression of these operators on a special class of functions, in terms of nested contour integrals (Proposition \ref{prop:act}).
In Section \ref{ssec:hom}, we first show that applying the operators repeatedly can get moments of Macdonald processes (Proposition \ref{prop:ev:lhs}).
Then by using Proposition \ref{prop:act} repeatedly, we evaluate the result of applying the operators repeatedly, as nested contour integrals (Proposition \ref{prop:act:mul}). 
In Section \ref{ssec:lt}, we do a limit transition in Macdonald processes, and get the desired expression for moments of $\beta$--Jacobi corners processes.

\subsection{Macdonald processes and asymptotic relations}  \label{ssec:mpa}

Let $\Lambda_N$ denote the \emph{ring of symmetric polynomials in $N$ variables},
and $\Lambda$ denote the \emph{ring of symmetric polynomials in countably many variables}
(see \cite[Chapter I, Section 2]{book:491124}).
Let $\mathbb{Y}$ be the set of \emph{partitions}, i.e. infinite non-increasing sequence of non-negative integers,
which are eventually zero:
$$\mathbb{Y} = \left\{ \lambda = (\lambda_1, \lambda_2, \cdots) \in \mathbb{Z}^{\infty} \left|  \lambda_1 \geq  \lambda_2 \geq \cdots \geq 0, \; \exists N \in \mathbb{Z}_+ , \lambda_N = 0 \right. \right\}, $$
and $\mathbb{Y}_N \subset \mathbb{Y}$ consists of sequences $\lambda$ such that $\lambda_{N+1} = 0$.
Let $|\lambda| = \sum_{i=1}^{\infty} \lambda_i$ be the \emph{size} of partition $\lambda$.

We can make $\mathbb{Y}$ a partially ordered set, by using dominance order:
\begin{equation}
\lambda < \mu \iff |\lambda| = |\mu|,\quad \lambda \neq \mu, \quad \lambda_1 + \cdots + \lambda_i \leq \mu_1 + \cdots + \mu_i, \; \forall i = 1, 2, \cdots .
\end{equation}

For any $\lambda \in \mathbb{Y}$,
denote $P_{\lambda}(\cdot; q, t)\in \Lambda$ to be the \emph{normalized Macdonald polynomial},
\begin{equation}
P_{\lambda}(\cdot; q, t) = m_{\lambda} + \sum_{\mu < \lambda} u_{\lambda \mu} m_{\mu}
\end{equation}
where $m_{\mu}$ are the monomial symmetric polynomials, and $u_{\lambda \mu}$ are certain real coefficients depending on $q, t$, see \cite[Section VI.4]{book:491124}.
Here $q$ and $t$ are real parameters, and we assume that $0<q<1$ and $0<t<1$.
From this definition, each $P_{\lambda}(\cdot; q, t)$ is homogeneous with degree $|\lambda|$,
and the collection
\begin{equation}  \label{eq:coll}
\left\{ P_{\lambda}(\cdot; q, t) \; | \; \lambda \in \mathbb{Y} \right\}
\end{equation}
is a basis of $\Lambda$.
We also denote $Q_{\lambda}(\cdot; q, t) = b_{\lambda}(q,t)P_{\lambda}(\cdot; q, t)$,
where $b_{\lambda}(q,t)$ is a constant uniquely defined by the identity \eqref{eq:cay} below and with an explicit expression given by \cite[Chapter VI (4.11)]{book:491124}.
We further define the \emph{skew Macdonald polynomials} $P_{\lambda/\mu}$ and $Q_{\lambda/\mu}$, where $\lambda, \mu \in \mathbb{Y}$, to be the coefficients of the following expansions (see \cite[Chapter VI, (7.9)]{book:491124}):
\begin{equation}  \label{eq:mpexp}
\begin{split}
P_{\lambda}(a_1, \cdots, a_N, b_1, \cdots, b_N; q, t) = \sum_{\mu\in \mathbb{Y}_N} P_{\lambda/\mu}(a_1, \cdots, a_N; q, t)P_{\mu}(b_1, \cdots, b_N; q, t) \\
Q_{\lambda}(a_1, \cdots, a_N, b_1, \cdots, b_N; q, t) = \sum_{\mu\in \mathbb{Y}_N} Q_{\lambda/\mu}(a_1, \cdots, a_N; q, t)Q_{\mu}(b_1, \cdots, b_N; q, t) .
\end{split}
\end{equation}

\begin{prop}[see \protect{\cite[Chapter VI]{book:491124}}]
For any finite sequences $a_1, \cdots, a_{M_1}$ and $b_1, \cdots, b_{M_2} \in \mathbb{C}$,
with $|a_ib_j| < 1$, $\forall 1\leq i \leq M_1$, $1\leq j \leq M_2$,
we have
\begin{equation} \label{eq:cay}
\sum_{\lambda\in \mathbb{Y}} P_{\lambda}(a_1, \cdots, a_{M_1}; q, t) Q_{\lambda}(b_1, \cdots, b_{M_2}; q, t) = \prod_{1\leq i\leq M_1, 1\leq j\leq M_2} \frac{\prod_{k=1}^{\infty}(1 - ta_ib_jq^{k-1}) }{\prod_{k=1}^{\infty}(1 - a_ib_jq^{k-1}) },
\end{equation}
\begin{equation} \label{eq:tran}
\sum_{\lambda\in \mathbb{Y}} P_{\mu/\lambda}(a_1, \cdots, a_{M_1}; q, t) P_{\lambda/\nu}(b_1, \cdots, b_{M_2}; q, t)
\\ = P_{\mu/\nu}(a_1, \cdots, a_{M_1}, b_1, \cdots, b_{M_2}; q, t) .
\end{equation}
\end{prop}

Let $\Psi^M$ be the set of all infinite families of sequences $\{\lambda^i\}_{i=1}^{\infty}$,
which satisfy
\begin{enumerate}
\item For $N\geq 1$, $\lambda^N \in \mathbb{Y}_{\min\{M, N\}}$.
\item For $N\geq 2$, the sequences $\lambda^N$ and $\lambda^{N-1}$ interlace: $\lambda_1^N \geq \lambda_1^{N-1}\geq \lambda_2^N \geq \cdots$.
\end{enumerate}

\begin{defn}
The infinite ascending \emph{Macdonald process} with positive parameters $M \in \mathbb{Z}_{>0}$, $\{a_i\}_{i=1}^{\infty}$, $\{b_i\}_{i=1}^M$,
$0< a_i < 1$, $0 < b_i < 1$,
is the distribution on $\Psi^M$, such that the marginal distribution for $\lambda^N$ is
\begin{equation} \label{eq:mpmar}
\pb(\lambda^N = \mu) =
\prod_{1\leq i\leq N, 1\leq j\leq M} \frac{\prod_{k=1}^{\infty}(1 - a_ib_jq^{k-1}) }{\prod_{k=1}^{\infty}(1 - ta_ib_jq^{k-1}) }
P_{\mu}(a_1, \cdots, a_N; q, t) Q_{\mu}(b_1, \cdots, b_M; q, t),
\end{equation}
and $\{\lambda^N\}_{N\geq 1}$ is a trajectory of a Markov chain with (backward) transition probabilities
\begin{equation} \label{eq:mptra}
\pb(\lambda^{N-1} = \mu \mid \lambda^N = \nu) = P_{\nu/\mu}(a_N;q,t) \frac{P_{\mu}(a_1, \cdots, a_{N-1}; q, t)}{P_{\nu}(a_1, \cdots, a_{N}; q, t)} .
\end{equation}
\end{defn}

\begin{rem}
The consistency of formulas (\ref{eq:mpmar}) and (\ref{eq:mptra}) follows from properties of Macdonald polynomials.
See \cite[Section 2.2.2]{borodin2014macdonald}, \cite[Section 3.1]{borodin2016observables} for more details.
\end{rem}

From this definition and (\ref{eq:tran}), the following Proposition follows by induction, cf.\ \cite[Definition 2.2.7]{borodin2014macdonald} \cite[Definition 3.2]{borodin2016observables} \cite[Definition 7.8]{borodin2012lectures}.
\begin{prop} \label{prop:jotdis}
Let $\{\lambda^N\}_{N\geq 1}$ be distributed as a Macdonald process with parameters $M \in \mathbb{Z}_{>0}$, $\{a_i\}_{i=1}^{\infty}$, $\{b_i\}_{i=1}^M$, where
each $0< a_i, b_i < 1$.
For integers $0< N_1 < \cdots < N_l$, and $\mu^1 \in \mathbb{Y}_{N_1}, \cdots, \mu^l \in \mathbb{Y}_{N_l}$,
their joint distribution is
\begin{multline} \label{eq:jotdis}
\pb(\lambda^{N_1} = \mu^1, \cdots, \lambda^{N_l} = \mu^{l}) =
\prod_{1\leq i\leq N_l, 1\leq j\leq M} \frac{\prod_{k=1}^{\infty}(1 - a_ib_jq^{k-1}) }{\prod_{k=1}^{\infty}(1 - ta_ib_jq^{k-1}) } \\
\times P_{\mu^1}(a_1, \cdots, a_{N_1}; q, t) \left( \prod_{i=1}^{l-1} P_{\mu^{i+1}/\mu^{i}}(a_{N_i + 1}, \cdots, a_{N_{i+1}}; q, t) \right) Q_{\mu^l}(b_1, \cdots, b_M; q, t) .
\end{multline}
\end{prop}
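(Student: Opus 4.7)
The plan is to first compute the joint distribution of the entire array $(\lambda^1, \lambda^2, \ldots, \lambda^{N_l})$ from the Markov chain definition, and then marginalize out the levels with index $N \notin \{N_1, \ldots, N_l\}$ by iterated application of the skew identity \eqref{eq:tran}.

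For the first step, combining \eqref{eq:mpmar} with \eqref{eq:mptra} and the Markov property gives
\begin{equation*}
\pb(\lambda^1 = \nu^1, \ldots, \lambda^{N_l} = \nu^{N_l}) = \pb(\lambda^{N_l} = \nu^{N_l}) \prod_{N=1}^{N_l - 1} \pb(\lambda^N = \nu^N \mid \lambda^{N+1} = \nu^{N+1}).
\end{equation*}
Substituting the explicit formulas, the telescoping ratios $P_{\nu^N}(a_1, \ldots, a_N; q, t)/P_{\nu^{N+1}}(a_1, \ldots, a_{N+1}; q, t)$ collapse, and the remaining factor $P_{\nu^{N_l}}$ from the marginal cancels, leaving an expression of the form
\begin{equation*}
C_{N_l} \cdot P_{\nu^1}(a_1; q, t) \cdot Q_{\nu^{N_l}}(b_1, \ldots, b_M; q, t) \cdot \prod_{N=1}^{N_l - 1} P_{\nu^{N+1}/\nu^N}(a_{N+1}; q, t),
\end{equation*}
where $C_{N_l}$ denotes the normalization prefactor from \eqref{eq:mpmar} at level $N_l$.

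For the second step, I would marginalize over each intermediate $\lambda^N$ with $N \notin \{N_1, \ldots, N_l\}$, using the skew identity \eqref{eq:tran}, i.e.\ $\sum_\lambda P_{\mu/\lambda}(x;q,t) P_{\lambda/\rho}(y;q,t) = P_{\mu/\rho}(x, y; q, t)$, together with its $\rho = \emptyset$ specialization $\sum_\lambda P_{\mu/\lambda}(x;q,t) P_\lambda(y;q,t) = P_\mu(x, y; q, t)$. Applied iteratively, the sum over $\lambda^1, \ldots, \lambda^{N_1-1}$ collapses the initial segment $P_{\nu^1}(a_1) \prod_{N=1}^{N_1 - 1} P_{\nu^{N+1}/\nu^N}(a_{N+1})$ into $P_{\mu^1}(a_1, \ldots, a_{N_1}; q, t)$, and similarly the sum over each intermediate block $\lambda^{N_i + 1}, \ldots, \lambda^{N_{i+1} - 1}$ collapses $\prod_{N = N_i}^{N_{i+1} - 1} P_{\nu^{N+1}/\nu^N}(a_{N+1})$ into $P_{\mu^{i+1}/\mu^i}(a_{N_i + 1}, \ldots, a_{N_{i+1}}; q, t)$. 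Assembling all factors reproduces the right-hand side of \eqref{eq:jotdis}.

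The only nontrivial ingredient is the skew identity \eqref{eq:tran}; the remainder is routine bookkeeping of indices, and no serious obstacle is anticipated. An equivalent packaging is induction on $l$: the base case $l = 1$ is precisely \eqref{eq:mpmar} (with $N = N_1$), while the inductive step uses the same iterated summation to compute $\pb(\lambda^{N_{l-1}} = \mu^{l-1} \mid \lambda^{N_l} = \mu^l) = P_{\mu^l/\mu^{l-1}}(a_{N_{l-1}+1}, \ldots, a_{N_l}; q, t) \cdot P_{\mu^{l-1}}(a_1, \ldots, a_{N_{l-1}}; q, t)/P_{\mu^l}(a_1, \ldots, a_{N_l}; q, t)$ and then combine with the induction hypothesis, matching the authors' remark that the proposition follows by simple induction.
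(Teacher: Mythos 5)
Your proposal is correct and takes essentially the same approach the paper indicates: the paper states that the proposition "follows by simple induction" from the definition and the skew identity \eqref{eq:tran}, which is precisely the mechanism you deploy (whether packaged as compute-then-marginalize or as induction on $l$). The telescoping of the one-step backward transitions and the iterated application of \eqref{eq:tran} to collapse the intermediate levels are exactly the intended steps, so your argument faithfully reconstructs the paper's omitted proof.
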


There is a limit transition which links Macdonald processes with $\mathbb{P}^{\alpha, M, \theta}$.

\begin{theorem}\protect{\cite[Theorem 2.8]{borodin2015general}} \label{thm:lim}
Given positive parameters $M\in \mathbb{Z}$, $\alpha$, $\theta \in \mathbb{R}_{>0}$.
Let the random family of sequences $\{\lambda^N\}_{N \geq 1}$, which takes values in $\Psi^M$,
be distributed as a Macdonald process with parameters $M$, $\{a_i\}_{i=1}^{\infty}$, $\{b_i\}_{i=1}^M$.
For $\epsilon > 0$, set
\begin{equation} \label{eq:lim:trans}
\begin{split}
q = & \exp(-\epsilon), \quad t = \exp(-\theta \epsilon) \\
a_i = t^{i - 1} = & \exp(-\theta \epsilon(i-1)) , \quad i = 1, 2, \cdots, \\
b_i = t^{\alpha + i - 1} = & \exp(-\theta \epsilon(\alpha + i-1)) , \quad 1 \leq i \leq M , \\
x_i^N(\epsilon) = \exp(-\epsilon \lambda_i^N), & \quad N = 1, 2, \cdots,  1 \leq i \leq \min\{M, N\} ,
\end{split}
\end{equation}
then as $\epsilon \rightarrow 0_+$, the finite dimensional distributions of $x^1, x^2, \cdots$ weakly converge to $\mathbb{P}^{\alpha, M, \theta}$.
\end{theorem}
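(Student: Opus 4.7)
The plan is to carry out the $\epsilon\to 0_+$ limit directly in the joint distribution formula (\ref{eq:jotdis}) of Proposition \ref{prop:jotdis}, and to match it term by term against the density of $\mathbb{P}^{\alpha,M,\theta}$ given by (\ref{eq:cp:defn1})--(\ref{eq:cp:defn3}).  Under the substitution $x_j^i=\exp(-\epsilon\lambda_j^i)$, the lattice $\lambda_j^i\in\mathbb{Z}$ becomes a lattice in $(0,1]$ with spacing of order $\epsilon$, so the counting measure (rescaled by the appropriate power of $\epsilon$) approximates Lebesgue measure with Jacobian factor $\prod(x_j^i)^{-1}$.  The task then reduces to showing that the various Macdonald-polynomial factors, after this substitution, converge pointwise to the stated densities, with uniform control near the boundary.

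I would handle each ingredient of (\ref{eq:jotdis}) separately using the classical explicit evaluations from Macdonald's book.  Under the principal specialization $a_i=t^{i-1}$, both $P_\mu(a_1,\dots,a_N;q,t)$ and $Q_\mu(b_1,\dots,b_M;q,t)$ (with $b_j=t^{\alpha+j-1}$) become explicit products of terms of the form $(1-q^{a}t^{b})/(1-q^{c}t^{d})$ indexed by boxes of $\mu$, while the Cauchy-kernel prefactor $\prod (a_ib_j;q)_\infty/(ta_ib_j;q)_\infty$ telescopes into a double product of $q$-Pochhammer symbols.  For the transition factors $P_{\mu^{i+1}/\mu^{i}}(a_{N_i+1},\dots,a_{N_{i+1}};q,t)$, a repeated application of the single-variable branching rule gives a product over the interlacing differences $\mu^{i+1}_j-\mu^{i}_j$ and $\mu^{i}_j-\mu^{i+1}_{j+1}$.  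Applying the uniform $q\to 1$ asymptotics of $q$-Pochhammer symbols (equivalently, of the $q$-Gamma function), and substituting $q^{\lambda_j^i}=x_j^i$, $t^\alpha=e^{-\theta\alpha\epsilon}$, etc., converts all these products into their continuous counterparts, reassembling into the factors $(x_j^i)^{\theta\alpha-1}(1-x_j^i)^{\theta(|M-N|+1)-1}$, the $2\theta$-Vandermonde, and the $(\theta-1)$-power interlacing factors of (\ref{eq:cp:defn1})--(\ref{eq:cp:defn3}).

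The main obstacle is careful bookkeeping: one must pair the $O(1)$ constants from the individual $q$-Pochhammer expansions so as to reproduce the $\Gamma(N\theta)/\Gamma(\theta)^N$-type normalization of (\ref{eq:cp:defn2}) and (\ref{eq:cp:defn3}), and verify that all exponents of $x_j^i$, $(1-x_j^i)$, and the pairwise differences combine with the correct signs (in particular, the case split $N\le M$ versus $N>M$ arises because some of the variables become pinned at $1$ in the limit, corresponding to partition parts $\lambda_j^i=0$).  Weak convergence then follows once one establishes uniform integrability near the boundary $\{x=0,1\}$ and near the interlacing walls; this is straightforward from the power-law bounds supplied by the explicit densities and the compactness of the support.
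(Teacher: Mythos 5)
The paper does not prove Theorem \ref{thm:lim}; it is imported as a black box from \cite[Theorem 2.8]{borodin2015general}, so there is no in-text argument against which your proposal can be directly measured. On its own merits, your outline is the natural and expected route: write the joint law of finitely many levels via Proposition \ref{prop:jotdis}, insert the explicit principal-specialization evaluations of $P_\mu(t^0,\dots,t^{N-1};q,t)$, $Q_\mu(t^\alpha,\dots,t^{\alpha+M-1};q,t)$, and the one-variable branching rule for $P_{\mu/\nu}$ (all products of ratios of $q$-Pochhammer symbols), then take $q,t\to 1$ using $q$-Gamma asymptotics and the substitution $q^{\lambda_j^i}\to x_j^i$, matching the result against the densities (\ref{eq:cp:defn1})--(\ref{eq:cp:defn3}). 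That is, with high confidence, essentially what happens in the cited reference.

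Two points deserve more care than your sketch allows. First, pointwise convergence of lattice probabilities times $\epsilon^{-(\#\text{variables})}$ to the limiting density does not by itself give weak convergence on $\chi^M$; you need either tightness (which is free here by compact support in $[0,1]$) plus finite-dimensional convergence, or a dominated-convergence argument, and the dominating bound must be uniform in $\epsilon$ in the regions where the limiting density blows up --- near $x_j^i\in\{0,1\}$ and near coincidences $x_j^N=x_{j'}^{N-1}$. This forces genuinely quantitative $q$-Pochhammer estimates in the regime where the relevant $\lambda_j^i-\lambda_{j'}^{i'}$ stay bounded as $\epsilon\to 0$, which is more than the ``straightforward from power-law bounds'' you assert. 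Second, the statement concerns weak convergence of the full sequence $(x^1,x^2,\dots)$, not just of $l$ levels at a time; you should note explicitly that finite-dimensional convergence plus the Markov property on both sides (and the consistency of the limiting kernels, i.e.\ that (\ref{eq:cp:defn1})--(\ref{eq:cp:defn3}) are compatible, which the paper cites separately) upgrades this to convergence of the whole process. Neither point is a fatal flaw, but both need to be said in a complete write-up.
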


\subsection{Differential operators}   \label{ssec:do}

We introduce operators acting on analytic symmetric functions. Such operators were
originally defined to act on $\Lambda$, and more algebraic discussions of them can
be found in \cite{feigin2009commutative} or \cite{Negut}. We will use them to
extract moments of $\mathbb{P}^{\alpha, M, \theta}$.



\begin{defn}   \label{def:opt:egn}
Let $r > 0$ and $q, t \in (0, 1)$ be parameters.
Let $\mathcal{D}_{-n}^N$ be an operator acting on symmetric analytic functions defined on $B_r^N$, where $B_r = \{ x \in \mathbb{C} : |x| < r\}$.
For any analytic symmetric $F : B_r^N \rightarrow \mathbb{C}$, since \eqref{eq:coll} is a basis of $\Lambda$, we can expand
\begin{equation}  \label{eq:op:defn1}
F(x_1, \cdots, x_N) = \sum_{\lambda \in \mathbb{Y}_N} c_{\lambda} P_{\lambda}(x_1, \cdots, x_N; q, t),
\end{equation}
where $c_{\lambda}$ are complex coefficients.
We set $\mathcal{D}_{-n}^N F : B_r^N \rightarrow \mathbb{C}$ to be the sum of the series
\begin{equation}  \label{eq:op:defn2}
\mathcal{D}_{-n}^N F(x_1, \cdots, x_N) = \sum_{\lambda \in \mathbb{Y}_N} c_{\lambda} \left( ( 1-t^{-n}) \sum_{i=1}^{N} (q^{\lambda_i}t^{-i+1})^n + t^{-Nn} \right) P_{\lambda}(x_1, \cdots, x_N; q, t) .
\end{equation}
\end{defn}

\begin{prop}   \label{prop:dowd}
The series \eqref{eq:op:defn1} and \eqref{eq:op:defn2} converges uniformly on compact subsets of $B_r^N$,
thus the operator $\mathcal{D}_{-n}^N$ is well-defined and linear.
Further, it is continuous in the following sense: for a sequence $\{F_i\}_{i=1}^{\infty}$ of symmetric analytic functions converging to $0$ uniformly on every compact subset of $B_r^N$, then so is the sequence $\{\mathcal{D}_{-n}^N F_i\}_{i=1}^{\infty}$.
\end{prop}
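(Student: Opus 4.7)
The plan is to realize $\mathcal{D}_{-n}^N$ as a polynomial in the classical Macdonald difference operators $D_N^1,\ldots,D_N^N$, which are given by explicit shift formulae and act continuously on the space of symmetric analytic functions on $B_r^N$. Once this identification is established, the series (\ref{eq:op:defn2}) is identified with the output of this polynomial operator, which is manifestly well-defined and continuous.

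First I would apply Newton's identity: the power sum $p_n$ is a polynomial in the elementary symmetric functions $e_1,\ldots,e_n$ with universal coefficients. Setting $\xi_i := q^{\lambda_i}t^{N-i}$, one has $\sum_{i=1}^N (q^{\lambda_i}t^{-(i-1)})^n = t^{-n(N-1)} p_n(\xi_1,\ldots,\xi_N)$, so the eigenvalue
\begin{equation*}
\Lambda_\lambda^{(-n)} := (1-t^{-n}) \sum_{i=1}^N (q^{\lambda_i}t^{-(i-1)})^n + t^{-Nn}
\end{equation*}
admits a representation $\Lambda_\lambda^{(-n)} = R_n\bigl(e_1(\xi),\ldots,e_N(\xi)\bigr)$ for some polynomial $R_n$ depending only on $n$, $N$, $t$. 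Each $e_k(\xi_1,\ldots,\xi_N)$ is exactly the eigenvalue on $P_\lambda$ of the classical Macdonald operator
\begin{equation*}
D_N^k F(x) := t^{\binom{k}{2}} \sum_{|I|=k} \biggl( \prod_{i \in I,\, j\notin I} \frac{tx_i - x_j}{x_i - x_j} \biggr) F(T_{q,I}x),
\end{equation*}
where $T_{q,I}$ scales $x_i$ by $q$ for $i \in I$ and fixes the rest (see \cite{book:491124}).

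Next I would show that each $D_N^k$ extends to a continuous operator on symmetric analytic functions on $B_r^N$ equipped with the topology of uniform convergence on compact subsets. Because $0<q<1$, the map $T_{q,I}$ sends $B_r^N$ into itself, so each summand is well defined; although each individual summand has apparent poles along diagonals $\{x_i=x_j\}$, the total sum restricted to symmetric inputs is regular and again analytic. Continuity can then be verified by representing $F$ via its Cauchy integral on a torus $\prod_i\{|y_i|=r'\}$, with $r<r'$, and observing that the resulting integral kernel is uniformly bounded on any compact subset of $B_r^N$. Hence $\widetilde{\mathcal{D}}_{-n}^N := R_n(D_N^1,\ldots,D_N^N)$ is a continuous linear operator on this space, acting as multiplication by $\Lambda_\lambda^{(-n)}$ on each $P_\lambda$.

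To finish, I would let $F_K$ denote the truncation of the Taylor series of $F$ to total degree at most $K$. Because the change of basis between $\{m_\lambda\}$ and $\{P_\lambda\}$ is triangular in dominance order and preserves total degree, $F_K$ coincides with $\sum_{|\lambda|\leq K} c_\lambda P_\lambda$. Since $F_K \to F$ uniformly on compact subsets of $B_r^N$ (Taylor series of analytic functions), continuity of $\widetilde{\mathcal{D}}_{-n}^N$ yields $\widetilde{\mathcal{D}}_{-n}^N F_K \to \widetilde{\mathcal{D}}_{-n}^N F$ uniformly on compacta; and $\widetilde{\mathcal{D}}_{-n}^N F_K$ is exactly the degree-$\leq K$ partial sum of (\ref{eq:op:defn2}). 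This gives the uniform convergence asserted by the proposition, and the continuity claim is inherited from that of $\widetilde{\mathcal{D}}_{-n}^N$. The main technical obstacle is the continuity step for the Macdonald operators on analytic inputs: although the polar parts cancel for symmetric $F$, obtaining locally uniform bounds requires either an explicit integral representation for $D_N^k$ or a careful analysis of the symmetrized rational kernel on and near the diagonals $x_i=x_j$.
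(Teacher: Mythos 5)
Your plan is correct in outline but takes a genuinely different route from the paper. The paper's proof is self-contained and purely analytic: it establishes (Lemma \ref{lemma:cout}) an exponential bound $|c_\lambda P_\lambda(x;q,t)| \leq C(1-\delta^3)^{|\lambda|}$ valid on $B_{r(1-\delta)}^N$ whenever $|F|\leq 1$ on $B_r^N$, by comparing $F$ with $P_\lambda$ in the Macdonald scalar product $\langle \cdot,\cdot\rangle$ on the torus and invoking Cauchy--Schwarz together with the boundedness of the weight $\Delta$. Since the eigenvalue sequence $\Lambda_\lambda^{(-n)}$ is uniformly bounded, this bound instantly gives absolute, locally uniform convergence of the series (\ref{eq:op:defn2}) as well as the continuity claim. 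Your approach instead realizes $\mathcal{D}_{-n}^N$ structurally as $R_n(D_N^1,\ldots,D_N^N)$ where $R_n$ is the Newton-identity polynomial expressing $p_n$ through $e_1,\ldots,e_N$, and then reduces the proposition to the continuity of the classical first-order Macdonald difference operators on symmetric analytic functions. That reduction is sound (and the eigenvalue computation $\Lambda_\lambda^{(-n)} = (1-t^{-n})t^{-n(N-1)}p_n(\xi) + t^{-Nn}$ with $\xi_i = q^{\lambda_i}t^{N-i}$ is correct), and the truncation argument via homogeneity of $P_\lambda$ is fine. What your route buys is a concrete realization of $\mathcal{D}_{-n}^N$ as a finite-difference operator; what the paper's route buys is avoiding the continuity-of-$D_N^k$ step altogether, which, as you note yourself, is the technically delicate part: one must symmetrize the Cauchy kernel before estimating, since the individual $A_I$ factors $\prod_{i\in I, j\notin I}\frac{tx_i-x_j}{x_i-x_j}$ are singular on the diagonals and the cancellation uses the symmetry of $F$ (equivalently, the $y$-symmetrization of the kernel). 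This cancellation does hold, so your argument can be completed, but as written it leaves a nontrivial estimate unproved, whereas the paper's Lemma \ref{lemma:cout} closes the argument directly.
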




We need the following Lemma in the proof of Proposition \ref{prop:dowd}.
\begin{lemma}  \label{lemma:cout}
For any $r, \delta > 0$, $q, t \in (0, 1)$, and $N \in \mathbb{Z}_{>0}$, there is a constant $C_N > 0$ satisfying the following: for any symmetric analytic function $F : B_r^N \rightarrow \mathbb{C}$ given by (\ref{eq:op:defn1}), if $|F(x_1, \cdots, x_N)| \leq 1$ for every $x_1, \cdots, x_N \in B_r$,
then for every $x_1, \cdots, x_N \in B_{r(1 - \delta)}$,
and $\lambda \in \mathbb{Y}_N$,
one has $|c_{\lambda}P_{\lambda}(x_1, \cdots, x_N ; q, t)| < (1 - \delta^3)^{|\lambda|}C$.
\end{lemma}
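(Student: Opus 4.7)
The plan is to bound $|c_\lambda|$ and $|P_\lambda(x_1,\ldots,x_N;q,t)|$ separately and then combine them. The exponent $(1-\delta^3)^{|\lambda|}$ (weaker than the naive $(1-\delta)^{|\lambda|}$ one would get from homogeneity alone) is a buffer designed to absorb polynomial-in-$|\lambda|$ overheads arising from the change of basis between monomial symmetric polynomials and Macdonald polynomials.

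For the bound on $|c_\lambda|$, I would first exploit homogeneity of $P_\lambda$: writing $F(\xi x_1,\ldots,\xi x_N) = \sum_{n\ge 0}\xi^n F_n(x)$ with $F_n = \sum_{|\lambda|=n} c_\lambda P_\lambda$ (since $P_\lambda$ is homogeneous of degree $|\lambda|$), and applying Cauchy's formula in $\xi$ on a circle $|\xi|=r/R$ (for fixed $x\in B_R^N$ with $R<r$, on which $|F(\xi x)|\le 1$) yields $|F_n(x)|\le (R/r)^n$. Expanding each $F_n$ in the monomial basis as $F_n = \sum_{|\mu|=n} b_\mu m_\mu$ and using Cauchy's formula on the polytorus $|x_i|=R$ followed by $R\to r^-$ yields $|b_\mu|\le r^{-|\mu|}$. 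Inverting the triangular relation $P_\lambda = m_\lambda + \sum_{\mu<\lambda} u_{\lambda\mu}(q,t)\,m_\mu$ within each finite-dimensional degree-$n$ subspace expresses $c_\lambda$ as a finite linear combination of the $b_\mu$'s with $|\mu|=|\lambda|$, giving $|c_\lambda|\le C_1(|\lambda|)\,r^{-|\lambda|}$ with $C_1$ of polynomial growth in $|\lambda|$ (for fixed $q,t,N$).

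For the bound on $|P_\lambda|$ on $B_{r(1-\delta)}^N$, the same monomial expansion gives $|P_\lambda(x;q,t)| \le \bigl(\sum_{\mu\le\lambda}|u_{\lambda\mu}(q,t)|\bigr)\,N!\,(r(1-\delta))^{|\lambda|} = C_2(|\lambda|)(r(1-\delta))^{|\lambda|}$ with $C_2$ again polynomial. Multiplying, $|c_\lambda P_\lambda(x)|\le C_1(|\lambda|)C_2(|\lambda|)(1-\delta)^{|\lambda|}$; since $\delta^3<\delta$ implies $(1-\delta)/(1-\delta^3)<1$, the polynomial factor $C_1 C_2$ is eventually dominated by a constant times $((1-\delta^3)/(1-\delta))^{|\lambda|}$, whence $|c_\lambda P_\lambda(x)|<C(1-\delta^3)^{|\lambda|}$ with $C=C(r,\delta,q,t,N)$, as claimed.

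The main technical obstacle is obtaining uniform polynomial control in the partition of the entries of the Macdonald-monomial transition matrix and its inverse; this can be extracted from Haglund's combinatorial formula for $P_\lambda$ or from a careful dominance induction, but either route requires bookkeeping. A cleaner workaround, should this estimate prove recalcitrant, is to bypass the explicit inversion via orthogonality of $\{P_\lambda\}$ under the torus inner product of Macdonald (Chapter VI, \S 9): write $c_\lambda$ as a single contour integral of $F$ against $Q_\lambda(x^{-1};q,t)$ weighted by the Macdonald density, and dominate the integrand using the Cauchy kernel $\Pi(x,y;q,t)=\sum_\mu P_\mu(x)Q_\mu(y;q,t)$, which is uniformly summable on the contour and yields the desired decay directly.
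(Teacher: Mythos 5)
Your primary route has a genuine gap at exactly the point you flag. You need $|c_\lambda|\le C_1(|\lambda|)\,r^{-|\lambda|}$ with $C_1$ of polynomial growth, and you propose to get this by inverting the unit upper triangular change-of-basis matrix $(u_{\lambda\mu})$ within each graded piece. But for a unit upper triangular matrix of polynomial size whose off-diagonal entries are merely polynomially bounded, the entries of the inverse need \emph{not} be polynomially bounded: the inverse is a signed sum over chains in the dominance order, the height of that poset grows with $|\lambda|$, and the number of chains can grow exponentially in the height. The step-2 bound on $|P_\lambda|$ is fine (for $0<q,t<1$ the $u_{\lambda\mu}$ are nonnegative, so $\sum_\mu u_{\lambda\mu}\mathcal N_\mu\le P_\lambda(1,\dots,1)$ is polynomial in $|\lambda|$ for fixed $N$), but the inverse bound is a substantial unproved claim, not bookkeeping. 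Your ``workaround'' gesture toward the Cauchy kernel is also not yet a proof: $\Pi(x,y)=\sum_\mu P_\mu(x)Q_\mu(y)$ is a sum with oscillating terms for $x$ on the torus, so its convergence does not directly dominate a single $|Q_\lambda|$.

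The paper avoids needing \emph{any} bound on $u_{\lambda\mu}$ or its inverse by staying entirely within the orthogonality framework and applying Cauchy--Schwarz twice. First, from $\langle F,P_\lambda\rangle=c_\lambda\langle P_\lambda,P_\lambda\rangle$ and $\langle F,F\rangle\le\langle 1,1\rangle$ one gets $|c_\lambda|\le\sqrt{\langle 1,1\rangle/\langle P_\lambda,P_\lambda\rangle}$, where $\langle P_\lambda,P_\lambda\rangle$ is lower-bounded (using $\Delta\ge\tau>0$ on the torus and orthogonality of monomials) by a multiple of $\sum_\mu|u_{\lambda\mu}|^2\mathcal N_\mu$. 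Combined with the monomial-expansion upper bound $|P_\lambda(x)|\le (1-\delta^2)^{|\lambda|}\sum_\mu|u_{\lambda\mu}|\mathcal N_\mu$, the product $|c_\lambda P_\lambda(x)|$ involves the ratio
\begin{equation*}
\frac{\sum_\mu|u_{\lambda\mu}|\mathcal N_\mu}{\sqrt{\sum_\mu|u_{\lambda\mu}|^2\mathcal N_\mu}}\le\sqrt{\sum_{\mu\le\lambda}\mathcal N_\mu},
\end{equation*}
a second Cauchy--Schwarz in which the unknown coefficients $|u_{\lambda\mu}|$ cancel completely. What remains is a factor of size at most $\sqrt{N!\cdot\#\{\mu\le\lambda\}}$, polynomial in $|\lambda|$ for fixed $N$, which is absorbed into the gap between $(1-\delta^2)^{|\lambda|}$ and $(1-\delta^3)^{|\lambda|}$. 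I'd suggest adopting this self-cancelling Cauchy--Schwarz rather than attempting to control the inverse transition matrix directly.
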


\begin{proof}
Since each $P_\lambda$ is homogeneous, by rescaling $x_1, \cdots, x_N$ the decomposition \eqref{eq:op:defn1} is unchanged, and then it suffices to consider the case where $r = 1 + \delta$.

We define a scalar product for any two symmetric analytic functions $f, g$ on $B_{1 + \delta}^N$:
\begin{equation}
\langle f, g \rangle = \int_T f(z_1, \cdots, z_N) \overline{g(z_1, \cdots, z_N)} \Delta(z_1, \cdots, z_N; q ,t) d\vec{z} ,
\end{equation}
where $T$ is the torus $T = \left\{ (z_1, \cdots, z_N) \in \mathbb{C}^N : |z_i| = 1 \right\}$, $d \vec{z}$ is the uniform measure on $T$, and
\begin{equation}
\Delta(z_1, \cdots, z_N) = \prod_{i\neq j} \left(  \prod_{r=0}^{\infty} \frac{1 - z_i z_j^{-1}q^r}{1 - tz_i z_j^{-1}q^r}  \right) .
\end{equation}
This definition follows \cite[Section VI.9]{book:491124}, where one can find more discussions.
We immediately see that in $T$, $\Delta(z_1, \cdots, z_N)$ is always real and takes values in an interval $(\tau, 1)$, where $\tau > 0$ depends on $t$ and $q$.

By \cite[Chapter VI (9.5)]{book:491124}, the Macdonald polynomials $P_{\lambda}(\cdot; q, t)$ are pairwise orthogonal with respect to this scalar product.
Thus we have
\begin{equation}
\langle F, P_{\lambda}(\cdot; q, t) \rangle = c_{\lambda} \langle P_{\lambda}(\cdot; q, t), P_{\lambda}(\cdot; q, t) \rangle .
\end{equation}
By the Cauchy-Schwarz inequality, we have
\begin{equation}
\left| \langle F, P_{\lambda}(\cdot; q, t) \rangle \right|^2
\leq \langle F, F \rangle \langle P_{\lambda}(\cdot; q, t), P_{\lambda}(\cdot; q, t)\rangle ,
\end{equation}
then
\begin{equation}   \label{eq:put1}
|c_{\lambda}| \leq \sqrt{\frac{\langle F, F \rangle}{\langle P_{\lambda}(\cdot; q, t), P_{\lambda}(\cdot; q, t) \rangle }} .
\end{equation}
For $\langle F, F \rangle$, since $|F|$ is bounded by $1$ in $B_{1 + \delta}^N$, one has $\langle F, F \rangle \leq \langle 1, 1 \rangle$.

To lower bound $\langle P_{\lambda}(\cdot; q, t), P_{\lambda}(\cdot; q, t) \rangle$,
recall that $P_{\lambda}(z_1, \cdots, z_N; q, t) = \sum_{\substack{\mu \in \mathbb{Y}_N, \\ \mu \leq \lambda}} u_{\lambda\mu} m_{\mu}$, with $u_{\lambda \lambda} = 1$.
Denote $\mathcal{N}_{\mu}$ to be the number of different permutations of $(\mu_1, \cdots, \mu_N)$,
then we have
\begin{equation}   \label{eq:put2}
\langle P_{\lambda}(\cdot; q, t), P_{\lambda}(\cdot; q, t) \rangle
\geq
\int_T \tau \left| P_{\lambda}(z_1, \cdots, z_N; q, t) \right|^2 d\vec{z} =
(2\pi)^N \tau \sum_{\substack{\mu \in \mathbb{Y}_N, \\ \mu \leq \lambda}} |u_{\lambda \mu}|^2 \mathcal{N}_{\mu},
\end{equation}
where the last equality is due to the orthogonality of the monomials $m_{\mu}$, with respect to integrating against $d\vec{z}$ on $T$.

Also note that for any $x_1, \cdots, x_N \in B_{1 - \delta^2}$, we have
\begin{equation}   \label{eq:put3}
|P_{\lambda}(x_1, \cdots, x_N ; q, t)| \leq \sum_{\mu \leq \lambda} |u_{\lambda \mu}| \mathcal{N}_{\mu} (1 - \delta^2)^{|\lambda|} ,
\end{equation}
since for each $\mu \leq \lambda$, $m_{\mu}$ has degree $|\mu| = |\lambda|$.

Then by putting \eqref{eq:put1}, \eqref{eq:put2}, and \eqref{eq:put3} together we get
\begin{multline}
|c_{\lambda}P_{\lambda}(x_1, \cdots, x_N ; q, t)| \leq
\sqrt{\frac{\langle 1, 1 \rangle }{\tau(2\pi)^N }}  (1 - \delta^2)^{|\lambda|}
\frac{\sum_{\mu \leq \lambda} |u_{\lambda \mu}| \mathcal{N}_{\mu} }{\sqrt{ \sum_{\mu \leq \lambda} |u_{\lambda \mu}|^2 \mathcal{N}_{\mu}}}
\\ \leq
\sqrt{\frac{\langle 1, 1\rangle }{\tau(2\pi)^N }}  (1 - \delta^2)^{|\lambda|}
\sqrt{\sum_{\mu \leq \lambda} \mathcal{N}_{\mu} } ,
\end{multline}
where the last inequality is Cauchy-Schwarz.

Note that each $\mathcal{N}_{\mu}$ is bounded by $N!$, and the number of $\mu \in \mathbb{Y}_N, \mu \leq \lambda$ grows polynomially in $|\lambda|$.
Then we conclude that there is a constant $C_N$ such that
$
|c_{\lambda}P_{\lambda}(x_1, \cdots, x_N ; q, t)| \leq (1 - \delta^3)^{|\lambda|} C_N
$.
\end{proof}

\begin{proof}[Proof of Proposition \ref{prop:dowd}]
The uniform convergence of \eqref{eq:op:defn1} and \eqref{eq:op:defn2} on any compact subset of $B_r^N$ follows from Lemma \ref{lemma:cout}, and that
\begin{equation}
\left( ( 1-t^{-n}) \sum_{i=1}^{N} (q^{\lambda_i}t^{-i+1})^n + t^{-Nn} \right)
\end{equation}
is uniformly bounded.

For the continuity, expand
\begin{equation}
F_i(x_1, \cdots, x_N) = \sum_{\lambda \in \mathbb{Y}_N} c_{i, \lambda} P_{\lambda}(x_1, \cdots, x_N; q, t), \quad i = 1, 2, \cdots .
\end{equation}
By Lemma \ref{lemma:cout}, for any small $\delta > 0$ and $x_1, \cdots, x_N \in B_{r(1 - \delta)^2}^N$, we have
\begin{equation}
\left| \mathcal{D}_{-n}^N F_i(x_1, \cdots, x_N) \right|
\leq
\sum_{\lambda \in \mathbb{Y}_N }
\left|
( 1-t^{-n}) \sum_{j=1}^{N} (q^{\lambda_j}t^{-j+1})^n + t^{-Nn}
\right|
(1 - \delta^3)^{|\lambda|}C_N
\sup_{B_{r(1 - \delta)}^N} |F_i|  ,
\end{equation}
and this converges to $0$ as $i \rightarrow \infty$.
\end{proof}

So far we've defined the operator $\mathcal{D}_{-n}^N$ via its eigenvectors and eigenvalues, in Definition \ref{def:opt:egn}, and proved continuity.
Next, we evaluate the action of $\mathcal{D}_{-n}^N$ on a special class of functions, and give an expression as a nested contour integral.
\begin{prop}  \label{prop:act}
For any positive integer $m$, $\tilde{m}$, variables $w_1, \cdots, w_m$,
$\tilde{w}_1, \cdots, \tilde{w}_{\tilde{m}}$, and parameters $q, t$,
denote
\begin{equation}
\mathfrak{B}(w_1, \cdots, w_m; q, t) =
\frac{\sum_{i=1}^m \frac{w_m t^{m-i}}{w_i q^{m-i}} }{\left(1 - \frac{tw_2}{qw_1}\right) \cdots \left(1 - \frac{tw_m}{qw_{m-1}}\right)}
\prod_{i<j} \frac{\left(1 - \frac{w_i}{w_j}\right)\left(1 - \frac{qw_i}{tw_j}\right)}{\left(1 - \frac{w_i}{tw_j}\right)\left(1 - \frac{qw_i}{w_j}\right)} ,
\end{equation}
\begin{equation}
\mathfrak{F}(w_1, \cdots, w_m; \tilde{w}_1, \cdots, \tilde{w}_{\tilde{m}}; q, t) =
\prod_{i=1}^m \prod_{i'=1}^{\tilde{m}} \frac{w_i - t^{-1}q \tilde{w}_{i'}}{w_i - q \tilde{w}_{i'}} ,
\end{equation}

Let $f: B_r \rightarrow \mathbb{C}$ be analytic, such that $f(0) \neq 0$;
and $g: B_{r'} \rightarrow \mathbb{C}$ be analytic, with $r' > r$, such that $g(z)f(q^{-1}z) = f(z)$ for any $z \in B_r$.
The action of $\mathcal{D}_{-n}^N$ can be identified with a integral:
\begin{multline}  \label{eq:prop:act:int}
\mathcal{D}_{-n}^N \left( \prod_{i=1}^N f(a_i) \right)
= \left( \prod_{i=1}^N f(a_i) \right) \frac{(-1)^{n-1}}{(2 \pi \im)^n} 
\\
\times
\oint \cdots \oint
\mathfrak{B}(z_{1}, \cdots, z_{n}; q, t)
\mathfrak{F}(z_{1}, \cdots, z_{n}; a_1, \cdots, a_{N}; q, t)
\prod_{i=1}^n \frac{g(z_i) dz_i}{z_i}  ,
\end{multline}
where the contours are in $B_{r'}$ and nested: all enclose $0$ and $qa_{1}, \cdots, qa_N$, and $|z_i| < |tz_{i+1}|$ for each $1\leq i \leq n-1$.
\end{prop}


\begin{proof}
We prove Proposition \ref{prop:act} by introducing an algebraic version of the operator $\mathcal{D}_{-n}^N$, which acts on formal power series, and using \cite[Theorem 1.2]{Negut}, a formula for that algebraic operator.

Define $\mathbf{Z}$ to be a ring which
contains all elements of the form $\sum_{m \in \mathbb{Z}^n}d_{m} \prod_{i=1}^{n-1} \left(\frac{z_{i}}{z_{i+1}}\right)^{m_i} z_n^{m_n}$, where $d_m$ are complex coefficients, and satisfy that for some $m' \in \mathbb{Z}$, all coefficients $d_m$ with $\min_{i} m_i < m'$ vanish.
In other words, $\mathbf{Z}$ is the ring of Laurent power series in $z_n, \frac{z_i}{z_{i+1}}$, $i = 1, \cdots, n-1$.

Let $\Res: \mathbf{Z} \rightarrow \mathbb{C}$ be the ($\mathbb{C}$-linear) map, sending every such element to its coefficient of $\prod_{i=1}^n z_i^{-1}$.
This is an analogue of computing contour integrals around $0$.

Define $\tilde{\Lambda}$ to be the ring of symmetric formal power series with complex coefficients in countably many variables $a_1, a_2, \cdots $,
and $\tilde{\Lambda}[\mathbf{Z}]$ to be the ring of symmetric formal power series in countably many variables $a_1, a_2, \cdots$, with coefficients in $\mathbf{Z}$.

For any $\mathbf{F} \in \tilde{\Lambda}[\mathbf{Z}]$, it can be uniquely written as
\begin{equation}
\mathbf{F} = \sum_{\lambda \in \mathbb{Y}} \mathbf{c}_{\lambda} P_{\lambda}(\cdot; q, t),
\end{equation}
where each $\mathbf{c}_{\lambda} \in \mathbf{Z}$,
so we can also define $\Res$ as an operator $\tilde{\Lambda}[\mathbf{Z}] \rightarrow \tilde{\Lambda}$, by acting on each coefficient.

Define $\mathbf{D}_{-n} : \tilde{\Lambda} \rightarrow \tilde{\Lambda}$, through
\begin{equation}
\mathbf{D}_{-n} \left( \sum_{\substack{N \in \mathbb{Z}_+ \\ \lambda \in \mathbb{Y}_N}} c_{\lambda} P_{\lambda}(\cdot; q, t) \right) :=
\sum_{\substack{N \in \mathbb{Z}_+ \\ \lambda \in \mathbb{Y}_N}} c_{\lambda}
\left( ( 1-t^{-n}) \sum_{i=1}^{N} (q^{\lambda_i}t^{-i+1})^n + t^{-Nn}\right)
P_{\lambda}(\cdot; q, t) ,
\end{equation}
where each $c_{\lambda} \in \mathbb{C}$.

For each $k \in \mathbb{Z}_{+}$, denote $p_k = \sum_{i=1}^{\infty} a_i^k \in
\tilde{\Lambda}$. Note that any element in $\tilde{\Lambda}$ can be uniquely written
as a formal power series in $p_1, p_2, \cdots$; then $\frac{\partial}{\partial p_k}$
defines an operator from $\tilde{\Lambda}$ to itself. Now we present the ``integral form'' of the operator $\mathbf{D}_{-n}$, which is a reformulation of
\cite[Theorem 1.2]{Negut}:
\begin{multline}  \label{eq:prop:act:pf}
\mathbf{D}_{-n}
= (-1)^{n-1}
\Res \left[
\mathfrak{B}(z_{1}, \cdots, z_{n}; q, t)
\exp\left( \sum_{k=1}^{\infty}q^k (1 - t^{-k}) \frac{z_1^{-k} + \cdots + z_n^{-k}}{k} p_k \right)
\right.
\\
\left.
\times
\exp\left(
\sum_{k=1}^{\infty}(z_1^k + \cdots z_n^k)(1 - q^{-k}) \frac{\partial}{\partial p_k}
\right)
\prod_{i=1}^n z_i^{-1}
\right],
\end{multline}
where, inside $\mathfrak{B}(z_{1}, \cdots, z_{n}; q, t)$, the factors $\left(1 - \frac{tz_{i+1}}{qz_i}\right)^{-1}$ and $\left(\left(1 - \frac{z_i}{tz_j}\right)\left(1 - \frac{qz_i}{z_j}\right)\right)^{-1}$ 
are elements in $\mathbf{Z}$, by expanding in $z_i / z_{i+1}$ for $1 \leq i \leq n-1$, and in $z_i / z_j$ for $1 \leq i < j \leq n$ (that's where the condition $|z_i| < |t z_{i + 1}|$ arrives from).
The exponential expressions are operators from $\tilde{\Lambda}$ to $\tilde{\Lambda}[\mathbf{Z}]$, by expanding them into power series in the usual way (where $p_k$ is the operator of multiplying by $p_k$).

We need to emphasize that under the setting of Negut, \eqref{eq:prop:act:pf} 
is an identity of operators acting on \emph{polynomials}.
Since $\mathbf{D}_{-n}$ preserves degree, and the vector space of fixed degree polynomials is finitely generated, we can extend it to formal power series of $\tilde{\Lambda}$.
In the rest of the proof, we translate this algebraic statement into analytic formulation of Proposition \ref{prop:act}.

We rewrite the factor in \eqref{eq:prop:act:pf} as
\begin{multline}
\exp\left( \sum_{k=1}^{\infty}q^k (1 - t^{-k}) \frac{z_1^{-k} + \cdots + z_n^{-k}}{k} p_k \right)
\\
=
\prod_{i=1}^n \prod_{i'=1}^{\infty} \exp\left( \sum_{k=1}^{\infty} q^{k}(1 - t^{-k}) \frac{z_i^{-k} a_{i'}^k }{k} \right)
=
\prod_{i=1}^n \prod_{i'=1}^{\infty} \frac{1 - t^{-1}q \frac{a_{i'}}{z_i} }{1 - q \frac{a_{i'}}{z_i}}.
\end{multline}

Take any complex coefficient formal power series $f(x) = \sum_{i=0}^{\infty} s_i x^i$, with $s_0 = 1$.
Using the expansion $\ln(1 + x) = x - \frac{x^2}{2} + \frac{x^3}{3} - \cdots$, we define $\sum_{k=1}^{\infty} w_k x^k := \ln(f(x))$.
Then we have $f(x) = \exp(\sum_{i=1}^{\infty} w_i x^i)$.
Note that for any $k \in \mathbb{Z}_+$, any power series $h(p_k)$ in $p_k$, and any $C \in \mathbf{Z}$, by expanding the operators one can check that $\exp\left(\frac{C \partial}{\partial p_k} \right)h(p_k) = h(p_k + C)$.
Therefore,
\begin{multline}
\exp\left(
\sum_{k=1}^{\infty}(z_1^k + \cdots z_n^k)(1 - q^{-k}) \frac{\partial}{\partial p_k}
\right)
\prod_{i=1}^{\infty} f(a_i)
\\
=
\prod_{k=1}^{\infty} \left(
\exp\left(
(z_1^k + \cdots z_n^k)(1 - q^{-k}) \frac{\partial}{\partial p_k}
\right)
\exp\left( w_k p_k\right)
\right)
\\
=
\exp\left(
\sum_{k=1}^{\infty}w_k \left( (z_1^k + \cdots z_n^k)(1 - q^{-k}) + p_k \right)
\right)
= \prod_{i=1}^n \frac{f(z_i)}{f(q^{-1}z_i)} \prod_{i=1}^{\infty} f(x_i) ,
\end{multline}
where $f(x)^{-1}$ is understood as the power series $\sum_{j=0}^{\infty} \left( -\sum_{i=1}^{\infty} s_i x^i \right)^{j}$.

Applying both sides of (\ref{eq:prop:act:pf}) to $\prod_{i=1}^{\infty} f(a_i) \in \tilde{\Lambda}$, we obtain the following formula:
\begin{multline} \label{eq:prop:act:pf2}
\mathbf{D}_{-n} \left( \prod_{i=1}^{\infty} f(a_i) \right)
= (-1)^{n-1}
\left(\prod_{i=1}^{\infty} f(a_i)\right)
\\
\times
\Res \left[
\mathfrak{B}(z_{1}, \cdots, z_{n}; q, t)
\prod_{i=1}^n \left( \prod_{i'=1}^{\infty} \frac{1 - t^{-1}q \frac{a_{i'}}{z_i} }{1 - q \frac{a_{i'}}{z_i}}
\cdot
\frac{f(z_i)}{f(q^{-1}z_i)}
z_i^{-1} \right)
\right].
\end{multline}

Now we pass from infinitely many to finitely many variables.
Define $\tilde{\Lambda}_N$ to be the ring of symmetric formal power series (with complex coefficients) in $N$ variables $a_1, \cdots, a_N$,
and $\tilde{\Lambda}_N[\mathbf{Z}]$ to be the ring of symmetric formal power series in $a_1, \cdots, a_N$, with coefficients in $\mathbf{Z}$.
Then $\Res$ can also be defined to act on $\tilde{\Lambda}_N[\mathbf{Z}]$, with image in $\tilde{\Lambda}_N$.
Let $\pi_N: \tilde{\Lambda} \rightarrow \tilde{\Lambda}_N$ be the projection setting $0 = a_{N+1} = a_{N+2} = \cdots$;
and $\iota_N : \tilde{\Lambda}_N \rightarrow \tilde{\Lambda}$ be an embedding, sending each $P_{\lambda}(a_1, \cdots, a_N; q, t)$, $\lambda \in \mathbb{Y}_N$, to $P_{\lambda}(\cdot ; q, t)$.
Then $\pi_N \circ \iota_N$ is the identity map of $\tilde{\Lambda}_N$.

We claim that for any $F \in \tilde{\Lambda}$, if $\pi_N(F) = 0$, then $\pi_N(\mathbf{D}_{-n}(F)) = 0$.
Indeed, write $F = \sum_{\lambda \in \mathbb{Y}} u_{\lambda} P_{\lambda}(\cdot; q, t)$;
by $\pi_N(F) = 0$, $u_{\lambda} = 0$ for any $\lambda \in \mathbb{Y}_N$.
Since $P_{\lambda}(\cdot; q, t)$ are eigenvectors of $\mathbf{D}_{-n}$, the coefficient of $P_{\lambda}(\cdot; q, t)$ in $\mathbf{D}_{-n}(F)$ is zero for any $\lambda \in \mathbb{Y}_N$.
As $\pi_N$ sends every $P_{\lambda}(\cdot; q, t)$ to $0$ for $\lambda \in \mathbb{Y} \backslash \mathbb{Y}_N$, we conclude that $\pi_N(\mathbf{D}_{-n}(F)) = 0$.

Define $\mathbf{D}_{-n}^N : \tilde{\Lambda}_N \rightarrow \tilde{\Lambda}_N$ by $\mathbf{D}_{-n}^N = \pi_N \circ \mathbf{D}_{-n} \circ \iota_N$.
Then $\mathbf{D}_{-n}^N \circ \pi_N = \pi_N \circ \mathbf{D}_{-n}$, since for any $F \in \tilde{\Lambda}$,
$\pi_N ( \iota_N \circ \pi_N (F) - F) = \pi_N \circ \iota_N \circ \pi_N (F) - \pi_N (F) = 0$, thus $\pi_N ( \mathbf{D}_{-n} \circ \iota_N \circ \pi_N (F) - \mathbf{D}_{-n}(F)) = 0$, which is just $\mathbf{D}_{-n}^N \circ \pi_N (F) - \pi_N \circ \mathbf{D}_{-n}(F) = 0$.

It's also easy to check that for each $\lambda \in \mathbb{Y}_N$, $P_{\lambda}(a_1, \cdots, a_N; q, t)$ is an eigenvector of $\mathbf{D}_{-n}^N$, with eigenvalue
$\left( ( 1-t^{-n}) \sum_{i=1}^{N} (q^{\lambda_i}t^{-i+1})^n + t^{-Nn} \right)$.
Then for any power series in $\tilde{\Lambda}$ that converges on $B_r^N$, 
the action of $\mathbf{D}_{-n}^N$ is the same as the action of $\mathcal{D}_{-n}^N$.

Note that $\pi_N \prod_{i=1}^{\infty} f(a_i) = \prod_{i=1}^N f(a_i)$, for formal power series $f(x) = \sum_{i=0}^{\infty} s_i x^i$ with $s_0 = 1$.
Hence one has $\mathbf{D}_{-n}^N \prod_{i=1}^N f(a_i) = \pi_N \circ \mathbf{D}_{-n} \prod_{i=1}^{\infty} f(a_i)$; and by \eqref{eq:prop:act:pf2} this equals
\begin{multline} \label{eq:prop:act:pf3}
(-1)^{n-1}
\left( \prod_{i=1}^{N} f(a_i) \right)
\\
\times
\Res \left[
\mathfrak{B}(z_{1}, \cdots, z_{n}; q, t)
\mathfrak{F}(z_{1}, \cdots, z_{n}; a_1, \cdots, a_{N}; q, t)
\prod_{i=1}^n
\frac{f(z_i)}{f(q^{-1}z_i)}
z_i^{-1}
\right].
\end{multline}

When $f: B_r \rightarrow \mathbb{C}$ is an analytic function with $f(0) = 1$, the action of $\mathcal{D}_{-n}^N$ can be computed as \eqref{eq:prop:act:pf3}, by expanding $f(z_i)$ and $f(q^{-1}z_i)$ as power series.
The same is true for any analytic $f: B_r \rightarrow \mathbb{C}$ such that $f(0) \neq 0$, by multiplying a constant.
Further, the map $\Res$ can be identified with contour integrals of $z_1, \cdots, z_n$, with the part inside $\Res$ being the integrand, and the contours must be taken in a way such that:
first, the coefficient for each $\prod_{i=1}^N x_i^{m_i}$, which is a series in $\mathbf{Z}$, converges;
second, the power series for $a_1, \cdots, a_N$ converges.
It suffices to ensure that each $|tz_{i+1}| > |qz_{i}|$ for $1 \leq i \leq n-1$,
$|z_i| < |tz_j|$ for $1\leq i < j \leq n$, $|qa_{i'}| < |z_i|$ for $1 \leq i \leq n$, $1 \leq i' \leq N$,
and each power series $\frac{f(z_i)}{f(q^{-1}z_i)}$ converges.
These are guaranteed by the conditions given in Proposition \ref{prop:act}.
\end{proof}

\subsection{Joint higher order moments}   \label{ssec:hom}
In this section, we first prove a formula obtained by applying the operators
\begin{equation}
\frac{1}{\theta k_l} \epsilon^{-1}(t^{-N_{l}k_{l}} - \mathcal{D}_{-k_{l}}^{N_{l}}), \quad \cdots,\quad \frac{1}{\theta k_1} \epsilon^{-1}(t^{-N_{1}k_1} - \mathcal{D}_{-k_1}^{N_1})
\end{equation}
one by one to both sides of \eqref{eq:cay}.
The formula implies that these operators can be used to compute certain expectations of Macdonald processes .
Under \eqref{eq:lim:trans} and $\epsilon \rightarrow 0_+$, these expectations become moments of $\beta$--Jacobi corners processes.
We also evaluate the action of these operators as nested contour integrals.
Then in the next section, we take $\epsilon \rightarrow 0_+$ in the contour integrals as well, to obtain the integral formula of moments of $\beta$--Jacobi corners processes.
This is somewhat standard in the study of Macdonald processes, see \cite{borodin2014macdonald}, \cite{borodin2016observables}, and \cite{borodin2015general}.
However, the operators $\mathcal{D}_{-n}^N$ are very different from the ones used in those articles.

\begin{prop} \label{prop:ev:lhs}
Let $N_1 \leq \cdots \leq N_l, k_1, \cdots, k_l$ be positive integers, $b_1, \cdots, b_M \in \mathbb{C}$, 
and $0 < q, t < 1$.
Then we have the following identity of functions defined on $B_{r}^{N_l}$, $r = \min_{1\leq i \leq M} |b_i|^{-1}$:
\begin{multline} \label{eq:var:ind}
\prod_{i=1}^{l} \frac{1}{\theta k_i} (t^{-N_{i}k_{i}} - \mathcal{D}_{-k_{i}}^{N_i})
\prod_{1\leq i\leq N_l, 1\leq j\leq M} \frac{\prod_{k=1}^{\infty}(1 - ta_ib_jq^{k-1}) }{\prod_{k=1}^{\infty}(1 - a_ib_jq^{k-1}) } 
\\
= \sum_{\mu^1 \in \mathbb{Y}_{N_1}, \cdots, \mu^l \in \mathbb{Y}_{N_l} }
\prod_{i=1}^{l}
\left( \frac{1}{\theta k_i}  (t^{-k_i}-1) \sum_{j=1}^{N_i} (q^{\mu^i_j}t^{-j+1})^{k_i} \right)
Q_{\mu^l}(b_1, \cdots, b_M; q, t) \\
\times P_{\mu^1}(a_1, \cdots, a_{N_1}; q, t)
\prod_{1\leq i < l} \mathfrak{T}_{i+1\rightarrow i},
\end{multline}
where
\begin{equation}
\mathfrak{T}_{i+1\rightarrow i} =
\begin{cases}
    P_{\mu^{i+1}/\mu^{i}}(a_{N_i + 1}, \cdots, a_{N_{i+1}}; q, t),  & N_{i} < N_{i+1}\\
    \mathds{1}_{\mu^i = \mu^{i+1}},  & N_{i} = N_{i+1} .
\end{cases}
\end{equation}
The first product on the left hand side means applying the operators in the following way: 
first we apply $\frac{1}{\theta k_l} (t^{-N_{l}k_{l}} - \mathcal{D}_{-k_{l}}^{N_l})$ on variables $a_1, \cdots, a_{N_l}$, then $\frac{1}{\theta k_{l-1} } (t^{-N_{l-1}k_{l-1}} - \mathcal{D}_{-k_{l-1}}^{N_{l-1} })$ on variables $a_1, \cdots, a_{N_{l-1}}$, $\cdots$, $\frac{1}{\theta k_1} (t^{-N_{1}k_{1}} - \mathcal{D}_{-k_{1}}^{N_1})$ on variables $a_1, \cdots, a_{N_1}$.
\end{prop}

\begin{proof}
The proof is similar to \cite[Proposition 4.9]{borodin2016observables}. We argue by induction on $l$.
For $l = 1$,
in the Cauchy identity \eqref{eq:cay}, set $M_1=N_1$, $M_2=M$.
By applying the operator $\frac{1}{\theta k_1}(t^{-N_1k_1} - \mathcal{D}_{-k_1}^{N_1})$,
acting on variables $a_1, \cdots, a_{N_1}$, to both sides,
we get the desired equation.

For general $l$,
we assume that the statement is true for $l - 1$; specifically, we have that
\begin{multline} \label{eq:var:ind:mid}
\prod_{i=2}^{l} \frac{1}{\theta k_i} (t^{-N_{i}k_{i}} - \mathcal{D}_{-k_{i}}^{N_i})
\prod_{1\leq i\leq N_l, 1\leq j\leq M} \frac{\prod_{k=1}^{\infty}(1 - ta_ib_jq^{k-1}) }{\prod_{k=1}^{\infty}(1 - a_ib_jq^{k-1}) } \\
= \sum_{\mu^{2} \in \mathbb{Y}_{N_{2}}, \cdots, \mu^l \in \mathbb{Y}_{N_l} }
\prod_{i=2}^{l}
\left( \frac{1}{\theta k_i}  (t^{-k_i}-1) \sum_{j=1}^{N_i} (q^{\mu^i_j}t^{-j+1})^{k_i} \right)
Q_{\mu^l}(b_1, \cdots, b_M; q, t) \\
\times P_{\mu^{2}}(a_1, \cdots, a_{N_{2}}; q, t)
\prod_{2\leq i < l} \mathfrak{T}_{i+1\rightarrow i} .
\end{multline}
If $N_1 = N_{2}$,
\begin{equation}
P_{\mu^{2}}(a_1, \cdots, a_{N_{2}}; q, t)  = \sum_{\mu^1 \in \mathbb{Y}_{N_1}} P_{\mu^{1}}(a_1, \cdots, a_{N_{1}}; q, t) \mathds{1}_{\mu^1 = \mu_{2}} ;
\end{equation}
If $N_1 < N_{2}$, by using \eqref{eq:mpexp} repeatedly, we get
\begin{multline}
P_{\mu^{2}}(a_1, \cdots, a_{N_{2}}; q, t)  \\
= \sum_{\mu^1 \in \mathbb{Y}_{N_1}} P_{\mu^{1}}(a_1, \cdots, a_{N_{1}}; q, t) \sum_{\nu^1 \in \mathbb{Y}_{N_1 + 1}, \cdots, \nu^{N_{2} - N_1} \in \mathbb{Y}_{N_{2}}} \prod_{1\leq i \leq N_{2} - N_1} P_{\nu^{i}/\nu^{i-1}} (a_{N_{1} + i}; q, t) \\
= \sum_{\mu^1 \in \mathbb{Y}_{N_1}} P_{\mu^{1}}(a_1, \cdots, a_{N_{1}}; q, t)
P_{\mu^{2}/\mu^{1}} (a_{N_{1}+1}, \cdots, a_{N_{2}}; q, t),
\end{multline}
where $\nu^0 = \mu_r$ and $\nu^{N_{2} - N_1} = \mu_{2}$,
and the last line follows from (\ref{eq:tran}).

In either case we have
\begin{equation}
P_{\mu^{2}}(a_1, \cdots, a_{N_{2}}; q, t)
= \sum_{\mu^1 \in \mathbb{Y}_{N_1}} P_{\mu^{1}}(a_1, \cdots, a_{N_{1}}; q, t)\mathfrak{T}_{i+1\rightarrow i}.
\end{equation}
Plug this into (\ref{eq:var:ind:mid}) and
apply the operator $\frac{1}{\theta k_1} (t^{-N_{1}k_{1}} - \mathcal{D}_{-k_{1}}^{N_1})$
to both sides,
we immediately obtain \eqref{eq:var:ind}.
\end{proof}

Now we evaluate the action of the operators, in the special case where $b_i = t^{\alpha + i - 1}$ for $1 \leq i \leq M$, by using Proposition \ref{prop:act} multiple times.
\begin{prop} \label{prop:act:mul}
Let $\mathfrak{B}$ and $\mathfrak{F}$ as defined in Proposition \ref{prop:act}.
In addition,
for any positive integer $m$, $\tilde{m}$, variables $w_1, \cdots, w_m$,
$\tilde{w}_1, \cdots, \tilde{w}_{\tilde{m}}$, and parameters $q, t$,
denote
\begin{equation}
\mathfrak{C}(w_1, \cdots, w_m; \tilde{w}_1, \cdots, \tilde{w}_{\tilde{m}}; q, t) =
\prod_{i=1}^m \prod_{i'=1}^{\tilde{m}} \frac{\left(1 - \frac{w_i}{\tilde{w}_{i'}}\right) \left(1 - \frac{qw_i}{t\tilde{w}_{i'}}\right)}  {\left(1 - \frac{w_i}{t\tilde{w}_{i'} }\right)\left(1 - \frac{qw_i}{\tilde{w}_{i'}}\right)}  .
\end{equation}
Then for fixed real parameters $0 < q, t < 1$, $\alpha \in \mathbb{R}$,
and positive integers
$N_1 \leq \cdots \leq N_l, k_1, \cdots, k_l$, and $\alpha + M > k_1 + \cdots + k_l$,
we have the following identity of functions defined on $B_{1}^{N_l}$:
\begin{multline}  \label{eq:act:mul:exp}
\prod_{i=1}^{l} \mathcal{D}_{-k_{i}}^{N_i}
\prod_{i = 1}^{N_l} \frac{\prod_{k=1}^{\infty}(1 - a_i t^{\alpha + M} q^{k-1}) }{\prod_{k=1}^{\infty}(1 - a_i t^{\alpha} q^{k-1}) }
= 
\frac{(-1)^{k_1 + \cdots + k_l - l}}{(2 \pi \im)^{k_1 + \cdots + k_l}}
\oint \cdots \oint
\prod_{i = 1}^{N_l} \frac{\prod_{k=1}^{\infty}(1 - a_i t^{\alpha + M} q^{k-1}) }{\prod_{k=1}^{\infty}(1 - a_i t^{\alpha} q^{k-1}) } \\
\times
\prod_{i=1}^{l}\mathfrak{B}(z_{i,1}, \cdots, z_{i,k_i}; q, t)
\prod_{i=1}^{l}\mathfrak{F}(z_{i,1}, \cdots, z_{i,k_i}; a_1, \cdots, a_{N_i}; q, t) \\
\times
\prod_{i<j} \mathfrak{C}(z_{i,1}, \cdots, z_{i,k_i}; z_{j,1}, \cdots, z_{j,k_j}; q, t)
\prod_{i=1}^l \prod_{i'=1}^{k_i} \left( \frac{1 - q^{-1}t^{\alpha}z_{i,i'}}{1 - q^{-1}t^{\alpha + M} z_{i,i'}} \frac{dz_{i,i'}}{z_{i,i'}} \right)  ,
\end{multline}
where each operator $\mathcal{D}_{-k_{i}}^{N_i}$ acts on variables $a_1, \cdots, a_{N_i}$,
and the contours are nested and satisfy the following:
for each $1\leq i\leq l$ and $1 \leq i' < k_i$ we have $|z_{i,i'}| < t|z_{i,i'+1}|$;
and for each $1\leq i < l$, we have $|z_{i,k_i}| < t|z_{i+1,1}|$ ;
also, $q < |z_{1, 1}|$, and $|z_{l, k_l}| < qt^{-\alpha - M}$.
\end{prop}

\begin{rem}
The constraints we impose on the contours imply that each of the contours encloses $0$ and all $qa_i$, but none of them encloses $qt^{-\alpha - M}$.
The requirement that $\alpha + M > k_1 + \cdots + k_l$ ensures the existence of the desired contours.
\end{rem}

\begin{proof}[Proof of Proposition \ref{prop:act:mul}]
We prove by induction on $l$.

For the base case where $l = 1$, we apply Proposition \ref{prop:act} to the function
\begin{equation}
f(x) = \frac{\prod_{k=1}^{\infty}(1 - xt^{\alpha + M}q^{k-1}) }{\prod_{k=1}^{\infty}(1 - xt^{\alpha}q^{k-1}) }  .
\end{equation}
Specifically, $f(x)$ is analytic in $B_{t^{-\alpha}}$, with $f(0) = 1$.
And 
\begin{equation}
\frac{f(x)}{f(q^{-1}x)}
=
\frac{1 - xt^{\alpha}q^{-1}}{1 - xt^{\alpha + M}q^{-1}}
\end{equation}
is analytic in $B_{qt^{-\alpha - M}}$.
For $a_1,\cdots, a_{N_1} \in B_1 \subset B_{t^{-\alpha}}$,
we can construct contours of $z_{1,1}, \cdots, z_{1, k_1}$ such that $q < |z_{1, 1}|$, $|z_{1, k_1}| < qt^{-\alpha - M}$, and $|z_{1, i}| < t |z_{1, i+1}|$ for each $1 \leq i < k_1$, which satisfies the requirements in Proposition \ref{prop:act}.
The expression given by Proposition \ref{prop:act} is precisely (\ref{eq:act:mul:exp}) for $l=1$.

For more general $l \geq 2$,
assume that the statement is true for $l-1$;
then we have the following identity for functions defined on $B_1^{N_l}$:
\begin{multline}  \label{eq:act:mul:pf1}
\prod_{i=2}^{l} \mathcal{D}_{-k_{i}}^{N_i}
\prod_{i=1}^{N_l} \frac{\prod_{k=1}^{\infty}(1 - a_it^{\alpha + M}q^{k-1}) }{\prod_{k=1}^{\infty}(1 - a_it^{\alpha}q^{k-1}) }
=
\frac{(-1)^{k_2 + \cdots + k_l - l}}{(2 \pi \im)^{k_2 + \cdots + k_l}}
\oint \cdots \oint
\prod_{i=1}^{N_l} \frac{\prod_{k=1}^{\infty}(1 - a_it^{\alpha + M}q^{k-1}) }{\prod_{k=1}^{\infty}(1 - a_it^{\alpha}q^{k-1}) }
\\
\times
\prod_{i=2}^{l}\mathfrak{B}(z_{i,1}, \cdots, z_{i,k_i}; q, t)
\prod_{i=2}^{l}\mathfrak{F}(z_{i,1}, \cdots, z_{i,k_i}; a_1, \cdots, a_{N_i}; q, t) \\
\times
\prod_{1<i<j} \mathfrak{C}(z_{i,1}, \cdots, z_{i,k_i}; z_{j,1}, \cdots, z_{j,k_j}; q, t)
\prod_{i=2}^l \prod_{i'=1}^{k_i} \left( \frac{1 - q^{-1}t^{\alpha}z_{i,i'}}{1 - q^{-1}t^{\alpha + M} z_{i,i'}} \cdot \frac{1}{z_{i,i'}} \right)
\prod_{i=2}^l \prod_{i'=1}^{k_i} dz_{i, i'} ,
\end{multline}
where the contours are constructed in the following way:
for each $2\leq i\leq l$ and $1 \leq i' < k_i$ we have $|z_{i,i'}| < t|z_{i,i'+1}|$;
and for each $2\leq i < l$, we have $|z_{i,k_i}| < t|z_{i+1,1}|$ ;
also, $q < |z_{2, 1}|$, and $|z_{l, k_l}| < qt^{-\alpha - M}$.
Since $\alpha + M > k_1 + \cdots + k_l$, we can let the contours move continuously and require that $|z_{2, 1}| > qt^{-k_1 - 1}$. 

Now apply the operator $\mathcal{D}_{-k_1}^{N_1}$ to both sides of (\ref{eq:act:mul:pf1}), acting on variables $a_1, \cdots, a_{N_1}$.
On the right hand side, we can change the order of the operator $\mathcal{D}_{-k_1}^{N_1}$ and the integral, by the linearity and continuity of $\mathcal{D}_{-k_1}^{N_1}$ stated in Proposition \ref{prop:dowd}.
To apply $\mathcal{D}_{-k_1}^{N_1}$ to the integrand, 
we just need to consider the following function
\begin{equation}
f(x) = \frac{\prod_{k=1}^{\infty} (1 - x t^{\alpha + M} q^{k-1}) }{\prod_{k=1}^{\infty} (1-x t^{\alpha} q^{k-1})} \prod_{i=2}^l \prod_{i'=1}^{k_i} \frac{z_{i,i'} - t^{-1}qx}{z_{i,i'} - qx} .
\end{equation}
This $f(x)$ is analytic in $B_1$, with $f(1) = 0$.
Also note that, as $|z_{i, i'}| > qt^{-k_1 - 1}$ for each $2\leq i \leq l$ and $1\leq i' \leq k_i$, the function
\begin{equation}
\frac{f(x)}{f(q^{-1}x)}
=
\frac{1 - xt^{\alpha}q^{-1}}{1 - xt^{\alpha + M}q^{-1}}
\prod_{i=2}^l \prod_{i'=1}^{k_i} \frac{z_{i,i'} - t^{-1}qx}{z_{i,i'} - qx}
\cdot
\frac{z_{i,i'} - x}{z_{i,i'} - t^{-1}x}
\end{equation}
is analytic inside $B_{qt^{-k_1}}$.
By Proposition \ref{prop:act}, one has
\begin{multline}  \label{eq:act:mul:pf2}
\mathcal{D}_{-k_{1}}^{N_1}
\left(
\prod_{i=1}^{N_l} \frac{\prod_{k=1}^{\infty}(1 - a_i t^{\alpha+M} q^{k-1}) }{\prod_{k=1}^{\infty}(1 - a_i t^{\alpha} q^{k-1}) }
\prod_{i=2}^{l}\mathfrak{F}(z_{i,1}, \cdots, z_{i,k_i}; a_1, \cdots, a_{N_i}; q, t)
\right)
\\
=
\prod_{i=1}^{N_l} \frac{\prod_{k=1}^{\infty}(1 - a_i t^{\alpha+M} q^{k-1}) }{\prod_{k=1}^{\infty}(1 - a_i t^{\alpha} q^{k-1}) }
\prod_{i=2}^{l}\mathfrak{F}(z_{i,1}, \cdots, z_{i,k_i}; a_1, \cdots, a_{N_i}; q, t)
\\
\times \frac{(-1)^{k_1 - 1}}{(2\pi \im)^{k_1}} \oint \cdots \oint
\mathfrak{B}(z_{1,1}, \cdots, z_{1,k_1}; q, t)
\mathfrak{F}(z_{1,1}, \cdots, z_{1,k_1}; a_1, \cdots, a_{N_1}; q, t)
\\
\times
\prod_{i=2}^l \mathfrak{C}(z_{1,1}, \cdots, z_{1,k_1}; z_{i,1}, \cdots, z_{i,k_i}; q, t)
\prod_{i'=1}^{k_1} \left( \frac{1 - q^{-1}t^{\alpha}z_{1,i'}}{1 - q^{-1}t^{\alpha + M} z_{1,i'}} \frac{dz_{1,i'}}{z_{1,i'}} \right)  ,
\end{multline}
for any $a_1, \cdots, a_{N_1} \in B_1$,
and the contours are constructed such that
for each $1\leq i < k_1$ we have $|z_{1,i}| < t|z_{1,i+1}|$,
$q < |z_{1, 1}|$, and $|z_{1, k_1}| < qt^{-k_1}$.

Putting (\ref{eq:act:mul:pf1}) and (\ref{eq:act:mul:pf2}) together we get exactly (\ref{eq:act:mul:exp}).
\end{proof}

\subsection{Limit transition}  \label{ssec:lt}
We finish the proof of Theorem \ref{thm:var:dis} in this section.
We changes variables as \eqref{eq:lim:trans}, and send $\epsilon \rightarrow 0_+$ in formula \eqref{eq:act:mul:exp}.
In the integral we want to set
\begin{equation}  \label{eq:lim:trans:more}
z_{i,i'} = \exp(\epsilon u_{i,i'}), \quad 1\leq i \leq l, 1\leq i' \leq k_i ,
\end{equation}
and all the contours $u_{i,i'}$ are nested in a certain way to give valid contours of $z_{i, i'}$.

There is a difficulty in doing this: \eqref{eq:lim:trans:more} implies that as $\epsilon \rightarrow 0_+$, $z_{i, i'}$ approaches to $1$.
However, originally each $z_{i, i'}$ encloses $0$.
The idea to resolve this is to split each contour of $z_{i, i'}$ (\ref{eq:act:mul:exp}) into two: one enclosing $0$ and another enclosing all of $qa_1, \cdots, qa_{N_l}$.
It turns out that most terms with contours enclosing $0$ are evaluated to zero or cancel out.

In more details, we associate each $z_{i,i'}$ with two contours $\mathfrak{U}_{i,i'}$ and $\mathfrak{V}_{i,i'}$, satisfying:
for each $1 \leq i \leq l$ and $1 \leq i' < k_i$, $\mathfrak{U}_{i, i'}$ is inside $t\mathfrak{U}_{i, i'+1}$, $\mathfrak{V}_{i, i'}$ is inside $t\mathfrak{V}_{i, i'+1}$;
for each $1\leq i < l$, $\mathfrak{U}_{i, k_i}$ is inside $t\mathfrak{U}_{i + 1, 1}$, $\mathfrak{V}_{i, k_i}$ is inside $t\mathfrak{V}_{i+1, 1}$.
Also, each of $\mathfrak{U}_{i,i'}$ encloses $0$, but none of $qa_1, \cdots, qa_{N_l}$,
while each of $\mathfrak{V}_{i, i'}$ encloses $qa_1, \cdots, qa_{N_l}$, but not $0$.
All of these contours are inside $B_{qt^{-\alpha-M}}$.
Such contours exist as long as $1 - t$ is small enough.

Let $\Pi$ be the power set of $\{z_{i,i'} | 1\leq i \leq l, 1\leq i' \leq k_i\}$, then for each $\Upsilon \in \Pi$, denote $\Upsilon_{i,i'}$ to be $\mathfrak{U}_{i,i'}$ if $z_{i,i'} \in \Upsilon$, and $\mathfrak{V}_{i,i'}$ if $z_{i,i'} \not\in \Upsilon$.
Let
\begin{multline}   \label{eq:dcfu}
\mathfrak{Q}_{\Upsilon} =
\oint_{\Upsilon_{1,1}} \cdots \oint_{\Upsilon_{l, k_l}}
\prod_{i=1}^{l}\mathfrak{B}(z_{i,1}, \cdots, z_{i,k_i}; q, t)
\prod_{i=1}^{l}\mathfrak{F}(z_{i,1}, \cdots, z_{i,k_i}; a_1, \cdots, a_{N_i}; q, t) \\
\times
\prod_{i<j} \mathfrak{C}(z_{i,1}, \cdots, z_{i,k_i}; z_{j,1}, \cdots, z_{j,k_j}; q, t)
\prod_{i=1}^l \prod_{i'=1}^{k_i} \left( \frac{1 - q^{-1}t^{\alpha}z_{i,i'}}{1 - q^{-1}t^{\alpha + M} z_{i,i'}} \frac{dz_{i,i'}}{z_{i,i'}} \right)  .
\end{multline}
Then \eqref{eq:act:mul:exp} can be written as
\begin{equation}   \label{eq:dcf}
\frac{(-1)^{k_1 + \cdots + k_l - l}}{(2 \pi \im)^{k_1 + \cdots + k_l}}
\prod_{i = 1}^{N_l} \frac{\prod_{k=1}^{\infty}(1 - a_i t^{\alpha + M} q^{k-1}) }{\prod_{k=1}^{\infty}(1 - a_i t^{\alpha} q^{k-1}) } \\
\sum_{\Upsilon \in \Pi}
\mathfrak{Q}_{\Upsilon} .
\end{equation}

In the following Lemmas, we show that under the limit transition \eqref{eq:lim:trans}, 
most of $\mathfrak{Q}_{\Upsilon}$ converge to zero, or are canceled out with one another, and the only left one $\mathfrak{Q}_{\emptyset}$.

The first Lemma is an extension of \cite[Appendix A, Lemma 5]{Fyodorov2016}.
\begin{lemma}  \label{lemma:oform}
$\mathfrak{Q}_{\Upsilon} = 0$ unless
for each $1 \leq i \leq l$,
$\Upsilon \bigcap \left\{ z_{i,1}, \cdots, z_{i, k_i} \right\}$
is either empty or of the form
$\left\{ z_{i,s_i}, z_{i,s_i + 1}, \cdots, z_{i, r_i} \right\}$,
where $1 \leq s_i \leq r_i \leq k_i$.
\end{lemma}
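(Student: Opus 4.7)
The plan is to prove $\mathfrak{Q}_\Upsilon = 0$ for non-block $\Upsilon$ by a residue computation showing that the poles inside the $\mathfrak{U}$-contours either fail to exist or cancel. A useful preliminary observation is that the $\mathfrak{B}$ kernel has fewer singularities than its explicit formula suggests: its chain denominator $(1 - tw_{j+1}/(qw_j))^{-1}$ is cancelled by the factor $(1 - qw_j/(tw_{j+1}))$ coming from the $\prod_{i<j}$ numerator at $j = i+1$, since both vanish at $w_{j+1} = qw_j/t$. After this cancellation, the intra-level singularities of the integrand are only $(1 - w_i/(tw_j))^{-1}$, $(1 - qw_i/w_j)^{-1}$, the measure $dw/w$, and the $\mathfrak{F}$ factor $(w - qa_{i'})^{-1}$.

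I would first reduce to a single-level claim. The inter-level factors $\mathfrak{C}(z_{i,\cdot}; z_{j,\cdot}; q, t)$ are analytic for the prescribed nesting of $\mathfrak{U}, \mathfrak{V}$ contours and contribute no new poles inside the tiny $\mathfrak{U}$-contours of any one level. So, fixing a level $i^*$ whose intersection $\Upsilon \cap \{z_{i^*,1},\ldots,z_{i^*,k_{i^*}}\}$ is non-empty and non-block, and treating the other variables as parameters, it suffices to show that the integral over the level-$i^*$ variables vanishes.

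Within level $i^*$ I would pick a minimal bad pair: indices $a < b$ with $z_{i^*,a}, z_{i^*,b} \in \Upsilon$ and every $z_{i^*,c}$ for $a < c < b$ outside $\Upsilon$ (so on $\mathfrak{V}$), and then argue by induction on $b - a$. In the base case $b - a = 2$ one integrates first the single $\mathfrak{V}$-variable $z_{i^*,a+1}$, whose $\mathfrak{V}$-contour encloses only the poles from $\mathfrak{F}$ at $z_{i^*,a+1} = qa_{i'}$. The remaining task is to show that the resulting sum of residues, once paired with the tiny $\mathfrak{U}$-integrals in $z_{i^*,a}$ and $z_{i^*,b}$, vanishes by the antisymmetry produced by the Vandermonde factor $(1 - z_{i^*,a}/z_{i^*,b})$ of $\mathfrak{B}$. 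The inductive step peels off the outermost intermediate $\mathfrak{V}$-variable and reduces to a gap of length $b - a - 1$.

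The main obstacle I anticipate is making the antisymmetric cancellation precise, because the $\mathfrak{B}$ numerator $\sum_{j} t^{k_{i^*}-j} z_{i^*,k_{i^*}} / (q^{k_{i^*}-j} z_{i^*,j})$ couples all level-$i^*$ variables and not only those in the gap $[a,b]$. I plan to handle this by using the partial-sum recursion of the numerator to telescope over the gap, isolating the dependence on the intermediate variables so that the cancellation identity reduces to a local computation. This would generalize the mechanism of \cite[Appendix A, Lemma 5]{Fyodorov2016} to the multi-level setting, and the remaining non-trivial verification is that the inter-level factor $\mathfrak{C}$ does not interfere with the local cancellation; this can be arranged by choosing the $\mathfrak{V}$ contour radii independently of the other levels.
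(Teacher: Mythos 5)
Your proposal takes a genuinely different route from the paper's proof, but the route contains a gap that I do not think can be filled without essentially rediscovering the paper's argument.

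The paper's mechanism is a direct and entirely local residue computation. One integrates the variables from inner to outer in the nesting order. Suppose $z_{i,s_i}$ is the smallest index in level $i$ lying in $\Upsilon$. Taking the residue at $z_{i,s_i}=0$ replaces the $\mathfrak{B}$-numerator $\sum_{i'} z_{i,k_i} t^{k_i-i'}/(z_{i,i'}q^{k_i-i'})$ by the single term $-z_{i,k_i}t^{k_i-s_i-1}/(z_{i,s_i+1}q^{k_i-s_i-1})$ and deletes one chain-denominator factor. If one now reaches a variable $z_{i,s_i+1}\notin\Upsilon$ (a $\mathfrak{V}$-contour), then after that integration the replaced numerator no longer has a pole at $z_{i,i''}=0$ for any $s_i+1< i''\le k_i$: for $i''<k_i$ the chain factor $(1-tz_{i,i''+1}/(qz_{i,i''}))^{-1}$ contributes an order-one zero that cancels $dz_{i,i''}/z_{i,i''}$, and for $i''=k_i$ the explicit $z_{i,k_i}$ in the numerator provides the zero. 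Hence any later $\mathfrak{U}$-contour in level $i$ encircles no pole and the integral is zero. There is no cancellation between different residues and no symmetrization involved.

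Your proposal instead invokes ``the antisymmetry produced by the Vandermonde factor $(1-z_{i^*,a}/z_{i^*,b})$'' to make the sum of $\mathfrak{V}$-residues at $z_{i^*,a+1}$ vanish. I do not see how this can work. The kernel $\mathfrak{B}$ is not symmetric under $z_{i^*,a}\leftrightarrow z_{i^*,b}$ (its chain structure distinguishes the arguments), the two $\mathfrak{U}$-contours are nested rather than interchangeable, and in the regime $|z_{i^*,a}|\ll|z_{i^*,b}|\ll 1$ the factor $1-z_{i^*,a}/z_{i^*,b}$ tends to $1$, not to something that changes sign. So the claimed cancellation mechanism is unjustified. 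You yourself flag the obstacle that the $\mathfrak{B}$ numerator couples all level-$i^*$ variables, and propose a ``telescoping'' fix, but that fix is not spelled out and I am skeptical it leads anywhere other than back to the pole-by-pole argument of the paper, which does not rely on antisymmetry at all.

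Your preliminary observation that the chain-denominator factor $(1-tw_{j+1}/(qw_j))^{-1}$ is cancelled (up to $-qw_j/(tw_{j+1})$) by the Vandermonde-type numerator factor $(1-qw_j/(tw_{j+1}))$ is correct, and the single-level reduction via analyticity of $\mathfrak{C}$ on the chosen contours is reasonable. But the core of the argument — why the iterated residues vanish for non-block $\Upsilon$ — is missing. You would need to replace the antisymmetry claim with an explicit tracking of how the $\mathfrak{B}$-numerator and chain factors transform under successive residues, which is what the paper does; the vanishing is then a statement about the absence of a pole at $0$ for the outer $\mathfrak{U}$-variables, not a cancellation identity.
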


\begin{proof}
Let us order the variables $z_{i, i'}$ as in the nesting of the contours, from inner to outer:
$z_{1,1}, z_{1, 2}, \cdots, z_{1, k_1}, z_{2,1}, \cdots, z_{l, k_l}$.
For any $\Upsilon \in \Pi$, we evaluate the integral \eqref{eq:dcfu} for those variables that are in $\Upsilon$,
and in that order.
The order is to ensure that the integrals are evaluated from inner to outer, then when evaluate the integral for each variable, the only possible pole is at the origin.

For any given $1 \leq i \leq l$, suppose that we've evaluated the integrals for all $z_{j, i'} \in \Upsilon$, $1 \leq j < i$, $1 \leq i' \leq k_j$.
Let $1 \leq s_{i} \leq k_{i}$ be the smallest index (if any) such that $z_{i, s_{i}}$ belongs to $\Upsilon$.
Now let's evaluate the integral $z_{i,s_{i}}$.
This is done by multiplying the integrand by 
$z_{i, s_{i}}$, and sending $z_{i, s_{i}} \rightarrow 0$.
Most factors are computed in an obvious way, except for
\begin{equation}  \label{eq:lemma:sus:fa}
\frac{\sum_{i'=1}^{k_{i}} \frac{z_{i,k_{i}} t^{k_{i}-i'}}{z_{i,i'} q^{k_{i}-i'}} }{\left(1 - \frac{tz_{i,2}}{qz_{i,1}}\right) \cdots \left(1 - \frac{tz_{i,k_{i}}}{qz_{i,k_{i}-1}}\right)}
\end{equation}
in $\mathfrak{B}(z_{i,1}, \cdots, z_{i,k_{i}}; q, t)$.
As $z_{i, s_{i}} \rightarrow 0$, it becomes
\begin{equation}  \label{eq:intoo}
\frac{ - \frac{z_{i,k_{i}} t^{k_{i} - s_{i} - 1}}{z_{i,s_{i} + 1} q^{k_{i}-s_{i}-1}} }{\left(1 - \frac{tz_{i,2}}{qz_{i,1}}\right) \cdots \left(1 - \frac{tz_{i,s_{i}-1}}{qz_{i,s_{i}-2}}\right)
\left(1 - \frac{tz_{i,s_{i}+2}}{qz_{i,s_{i}+1}}\right) \cdots \left(1 - \frac{tz_{i,k_{i}}}{qz_{i,k_{i}-1}}\right)}  .
\end{equation}
If there is any $s_i < i' \leq k_i$ such that $z_{i, i'} \not\in \Upsilon$, we let $r_i$, with $s_i \leq r_{i} < k_i$, be the index satisfying that $z_{i, i'} \in \Upsilon$ for any $s_i \leq i' \leq r_{i}$, but $z_{i, r_{i} + 1} \neq \Upsilon$.
Then we evaluate the integral of $z_{i, i'}$ for all $s_{i} < i' \leq r_{i}$.
This is done in the same way as evaluating the integral of $z_{i, s_{i}}$: we multiply the integrand by $z_{i, i'}$, then send $z_{i, i'} \rightarrow 0$.
Under $z_{i, i'} \rightarrow 0$, for all $s_{i} < i' \leq r_{i}$, the factor \eqref{eq:intoo} finally becomes
\begin{equation}  \label{eq:intooo}
\frac{ (-1)^{r_{i} - s_{i} + 1} \frac{z_{i,k_{i}} t^{k_{i} - r_{i} - 1}}{z_{i,r_{i} + 1} q^{k_{i}-r_{i}-1}} }{\left(1 - \frac{tz_{i,2}}{qz_{i,1}}\right) \cdots \left(1 - \frac{tz_{i,s_{i}-1}}{qz_{i,s_{i}-2}}\right)
\left(1 - \frac{tz_{i,r_{i}+2}}{qz_{i,r_{i}+1}}\right) \cdots \left(1 - \frac{tz_{i,k_{i}}}{qz_{i,k_{i}-1}}\right)}  .
\end{equation}
If there is any $i'$, such that $r_i < i' \leq k_i$ and $z_{i, i'} \in \Upsilon$, let $w_{i}$ be the smallest one.
The next integral to evaluate is the one of $z_{i, w_{i}}$.
However, when sending $z_{i, w_{i}} \rightarrow 0$, the factor \eqref{eq:intooo} becomes zero. Thus the integral is zero.
\end{proof}

Using the arguments in the proof of Lemma \ref{lemma:oform}, we have the following identity.
\begin{lemma}   \label{lemma:new}
Let $\Pi'$ be the collection of all $\Upsilon \in \Pi$, such that for each $1 \leq i \leq l$,
$\Upsilon \bigcap \left\{ z_{i,1}, \cdots, z_{i, k_i} \right\}$
is either empty or of the form
$\left\{ z_{i,s_i}, z_{i,s_i+1} \cdots, z_{i, r_i} \right\}$
for some $1 \leq s_i \leq r_i \leq k_i$.
Then for any $\Upsilon \in \Pi'$, there is
\begin{multline}  \label{eq:new:0}
\mathfrak{Q}_{\Upsilon}
=
\oint \cdots \oint_{\{ \Upsilon_{i, i'} : z_{i, i'} \not\in \Upsilon \}}
\prod_{i=1}^{l}\mathfrak{Y}_i(\Upsilon)
\prod_{i=1}^{l}\mathfrak{F}(\{ z_{i,1}, \cdots, z_{i,k_i}\} \backslash \Upsilon ; a_1, \cdots, a_{N_i}; q, t) \\
\times
\prod_{i<j} \mathfrak{C}(\{ z_{i,1}, \cdots, z_{i,k_i} \} \backslash \Upsilon ; \{ z_{j,1}, \cdots, z_{j,k_j} \} \backslash \Upsilon ; q, t)
\prod_{z_{i,i'} \not\in \Upsilon } \left( \frac{1 - q^{-1}t^{\alpha}z_{i,i'}}{1 - q^{-1}t^{\alpha + M} z_{i,i'}} \frac{dz_{i,i'}}{z_{i,i'}} \right)  ,
\end{multline}
where
\begin{multline}  \label{eq:new:1}
\mathfrak{Y}_i(\Upsilon) = 
(2\pi\im)^{r_i - s_i + 1} (-1)^{r_i - s_i + \mathds{1}_{r_i \neq k_i}} 
\\
\times
\frac{\frac{z_{i,k_i} t^{k_i - r_i - \mathds{1}_{r_i \neq k_i} }}{z_{i,r_i + \mathds{1}_{r_i \neq k_i} } q^{k_i-r_i- \mathds{1}_{r_i \neq k_i} }} t^{-N_i(r_i - s_i + 1)}}{
\prod_{\substack{1\leq i' < s_i - 1, \\ \mathrm{or}\; r_i + 1 \leq i' < k_i}}\left(1 - \frac{tz_{i,i'+1}}{qz_{i,i'}}\right)
}
\prod_{i' < j': z_{i,i'}, z_{i,j'} \not\in \Upsilon }
\frac{ \left(1 - \frac{z_{i,i'}}{z_{i,j'}}\right) \left(1 - \frac{qz_{i,i'}}{tz_{i,j'}}\right) }{ \left(1 - \frac{z_{i,i'}}{tz_{i,j'}}\right)\left(1 - \frac{qz_{i,i'}}{z_{i,j'}}\right) }
,
\end{multline}
if $\left\{ z_{i,1}, \cdots, z_{i, k_i} \right\} \bigcap \Upsilon = \left\{ z_{i,s_i}, \cdots, z_{i, r_i} \right\}$ for some $1 \leq s_i \leq r_i \leq k_i$;
and $\mathfrak{Y}_i(\Upsilon) = \mathfrak{B}(z_{i,1}, \cdots, z_{i, k_i}; q, t)$ if $\left\{ z_{i,1}, \cdots, z_{i, k_i} \right\} \bigcap \Upsilon = \emptyset$.
\end{lemma}

Based on Lemma \ref{lemma:oform}, we use \eqref{eq:new:0} to compute the action of $t^{- N k} - \mathcal{D}_{-k}^{N}$.
\begin{lemma}  \label{lemma:tipi}
Let $\tilde{\Pi} \subset \Pi'$ contain all $\Upsilon$ such that $\left\{ z_{i,1}, \cdots, z_{i, k_i} \right\} \not\subset \Upsilon$ for any $1 \leq i \leq l$.
Then we have
\begin{multline}   \label{eq:lemma:tipi}
\prod_{i=1}^{l} \left( t^{-N_i k_i} - \mathcal{D}_{-k_{i}}^{N_i} \right)
\prod_{i = 1}^{N_l} \frac{\prod_{k=1}^{\infty}(1 - a_i t^{\alpha + M} q^{k-1}) }{\prod_{k=1}^{\infty}(1 - a_i t^{\alpha} q^{k-1}) }
\\
=
\frac{(-1)^{k_1 + \cdots + k_l}}{(2 \pi \im)^{k_1 + \cdots + k_l}}
\prod_{i = 1}^{N_l} \frac{\prod_{k=1}^{\infty}(1 - a_i t^{\alpha + M} q^{k-1}) }{\prod_{k=1}^{\infty}(1 - a_i t^{\alpha} q^{k-1}) }
\sum_{\Upsilon \in \tilde{\Pi}}
\mathfrak{Q}_{\Upsilon}
.
\end{multline}
\end{lemma}

\begin{proof}
The difference between the left hand side of \eqref{eq:act:mul:exp} and \eqref{eq:lemma:tipi} is in shifts by $t^{-N_i k_i}$, which are from $\left\{z_{i,1}, \cdots, z_{i,k_i} \right\} \subset \Upsilon$.

For any $U \subset \{1, \cdots, l\}$, let $\Pi_U \subset \Pi'$ contain all $\Upsilon$, satisfying that $\{ z_{i, 1}, \cdots, z_{i, k_i} \} \in \Upsilon$ for any $i \not\in U$.
For any $\Upsilon \in \Pi_U$, define $\mathfrak{Q}_{U, \Upsilon}$ to be a nested contour integral expression, obtained as following: starting from the integrand of \eqref{eq:dcfu}, we first remove every factor that contains $z_{i, i'}$ for $i \in \{1, \cdots, l\} \backslash U$, and then integrate $z_{i,i'}$ for all $i \in U$, $1 \leq i' \leq k_i$.
Now we evaluate both the expressions $\mathfrak{Q}_{\Upsilon}$ and $\mathfrak{Q}_{U, \Upsilon}$, from the inner contours to outer ones.
We apply Lemma \ref{lemma:new} to $\mathfrak{Q}_{\Upsilon}$, then for each $i \in \{1, \cdots, l\} \backslash U$, the $\mathfrak{Y}_i(\Upsilon)$ in \eqref{eq:new:1} equals $(-1)^{k_i - 1}(2\pi\im)^{k_i}t^{-N_i k_i}$.
Then we conclude that
\begin{equation}   \label{eq:lemma:tipip}
\mathfrak{Q}_{\Upsilon} = \prod_{i \in \{1, \cdots, l\} \backslash U}\left( (-1)^{ k_i - 1} (2\pi\im)^{ k_i} t^{-  N_i k_i} \right)\mathfrak{Q}_{U, \Upsilon} .
\end{equation}
By Proposition \ref{prop:act:mul}, we have that
\begin{multline}   \label{eq:lemma:tipi2}
\prod_{i\in U} \mathcal{D}_{-k_{i}}^{N_i}
\prod_{i = 1}^{N_l} \frac{\prod_{k=1}^{\infty}(1 - a_i t^{\alpha + M} q^{k-1}) }{\prod_{k=1}^{\infty}(1 - a_i t^{\alpha} q^{k-1}) }
\\
=
\frac{(-1)^{ \sum_{i \in U} k_i - |U|}}{(2 \pi \im)^{ \sum_{i \in U} k_i }}
\prod_{i = 1}^{N_l} \frac{\prod_{k=1}^{\infty}(1 - a_i t^{\alpha + M} q^{k-1}) }{\prod_{k=1}^{\infty}(1 - a_i t^{\alpha} q^{k-1}) }
\sum_{\Upsilon \in \Pi_U}
\mathfrak{Q}_{U, \Upsilon}
.
\end{multline}
Plugging \eqref{eq:lemma:tipip} into \eqref{eq:lemma:tipi2}, we obtain 
\begin{multline}   \label{eq:lemma:tipi1}
\prod_{i \in U} \mathcal{D}_{-k_{i}}^{N_i} \prod_{i \in \{1, \cdots, l\}\backslash U } t^{-N_i k_i}
\prod_{i = 1}^{N_l} \frac{\prod_{k=1}^{\infty}(1 - a_i t^{\alpha + M} q^{k-1}) }{\prod_{k=1}^{\infty}(1 - a_i t^{\alpha} q^{k-1}) }
\\
=
\frac{(-1)^{k_1 + \cdots + k_l + l }}{(2 \pi \im)^{k_1 + \cdots + k_l}}
\prod_{i = 1}^{N_l} \frac{\prod_{k=1}^{\infty}(1 - a_i t^{\alpha + M} q^{k-1}) }{\prod_{k=1}^{\infty}(1 - a_i t^{\alpha} q^{k-1}) }
\sum_{\Upsilon \in \Pi_U}
\mathfrak{Q}_{\Upsilon}
.
\end{multline}
We obtain \eqref{eq:lemma:tipi} by multiplying $(-1)^{|U|}$ to both sides of \eqref{eq:lemma:tipi1} and summing over all $U \subset \{1, \cdots, l\}$.
\end{proof}

Next, we send $\epsilon \rightarrow 0_+$ in the result of Lemma \ref{lemma:tipi}. 
\begin{lemma}   \label{lemma:lmt}
For $\Upsilon \in \tilde{\Pi}$, if there is $1 \leq w \leq l$ such that $\left\{ z_{w,1}, \cdots, z_{w, k_w} \right\} \bigcap \Upsilon = \left\{ z_{w,s_w}, \cdots, z_{w, r_w} \right\}$, for some $1 < s_w \leq r_w < k_w$,
then
\begin{equation}  \label{eq:lemma:lmt1}
\lim_{\epsilon \rightarrow 0_+}
\left. \epsilon^{-l}
\mathfrak{Q}_{\Upsilon}
\right\vert_{a_i = t^{i-1}, t = \exp(-\theta \epsilon), q = \exp(-\epsilon)}
= 0.
\end{equation}
For $\Upsilon', \Upsilon'' \in \tilde{\Pi}$, if there is $1 \leq w \leq l$ such that $\left\{ z_{w,1}, \cdots, z_{w, k_w} \right\} \bigcap \Upsilon' = \left\{ z_{w, 1}, \cdots, z_{w, r_w} \right\}$,
$\left\{ z_{w,1}, \cdots, z_{w, k_w} \right\} \bigcap \Upsilon'' = \left\{ z_{w, s_w}, \cdots, z_{w, k_w} \right\}$,
with $r_w = k_w - s_w + 1$,
and for any $i \neq w$ we have
$\left\{ z_{i,1}, \cdots, z_{i, k_{i}} \right\} \bigcap \Upsilon' = \left\{ z_{i,1}, \cdots, z_{i, {k_{i}}} \right\} \bigcap \Upsilon''$,
then
\begin{equation}    \label{eq:lemma:lmt2}
\lim_{\epsilon \rightarrow 0_+}
\left.
\epsilon^{-l}
(\mathfrak{Q}_{\Upsilon'} + \mathfrak{Q}_{\Upsilon''})
\right\vert_{a_i = t^{i-1}, t = \exp(-\theta \epsilon), q = \exp(-\epsilon)}
= 0.
\end{equation}
\end{lemma}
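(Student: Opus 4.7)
Plan. I will analyze the iterated contour integral $\mathfrak{Q}_\Upsilon$ in~\eqref{eq:dcfu} as $\epsilon \to 0_+$ by splitting variables into two regimes. For each $z_{i,i'}$ assigned to a $\mathfrak{V}$-contour I substitute $z_{i,i'} = \exp(\epsilon u_{i,i'})$, so $dz_{i,i'}/z_{i,i'} = \epsilon\,du_{i,i'}$ and every $\mathfrak{V}$-integration contributes a factor of $\epsilon$. For $z_{i,i'}$ on a $\mathfrak{U}$-contour (a fixed small circle around $0$), I integrate by residue at $z_{i,i'} = 0$; the contour-separation set up in Proposition~\ref{prop:act:mul} ensures that this is the only pole inside $\mathfrak{U}_{i,i'}$. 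The basic $\epsilon$-bookkeeping is then: the factor $(1 - tz_{w,j+1}/(qz_{w,j}))^{-1}$ in $\mathfrak{B}$ is $O(\epsilon^{-1})$ iff both $z_{w,j}, z_{w,j+1}$ lie on $\mathfrak{V}$ (otherwise $O(1)$), while $N_B$, the pair-factor product inside $\mathfrak{B}$, $\mathfrak{F}$, the cross-$\mathfrak{C}$'s, and the extra rational factor are all $O(1)$.

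For~\eqref{eq:lemma:lmt1} the two transitions at $j = s_w - 1$ and $j = r_w$ both cross the $\mathfrak{V}/\mathfrak{U}$ interface, so these two denominator factors are only $O(1)$, costing two powers of $\epsilon^{-1}$ relative to the fully-$\mathfrak{V}$ baseline $O(\epsilon)$ at level $w$. The level-$w$ contribution therefore becomes $O(\epsilon^{2})$; combined with the baseline $O(\epsilon^{l-1})$ from the other levels one concludes $\mathfrak{Q}_\Upsilon = O(\epsilon^{l+1})$, so $\epsilon^{-l}\mathfrak{Q}_\Upsilon \to 0$.

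For~\eqref{eq:lemma:lmt2} I compute each $\mathfrak{Q}$ by iterated residues on its block variables and exhibit an exact algebraic cancellation. Taking residues in the inner-to-outer order $z_{w,1}, \ldots, z_{w,r_w}$ for $\Upsilon'$ produces a telescoping cascade: at each step the $1/z_{w,i}$ piece of $N_B$ cancels against $(1 - tz_{w,i+1}/(qz_{w,i}))^{-1} \sim -qz_{w,i}/(tz_{w,i+1})$, so the denominator product loses one factor, the leading variable index advances by one, and an overall $-1$ is produced; the pair factors involving $z_{w,i}$ tend to $1$. After $r_w$ steps the reduced $\mathfrak{B}$ is
\[
(-1)^{r_w}\,\frac{z_{w,k_w}\, t^{k_w-r_w-1}}{z_{w,r_w+1}\, q^{k_w-r_w-1}}\cdot \frac{\text{(remaining pair factors)}}{\prod_{j=r_w+1}^{k_w-1}(1 - tz_{w,j+1}/(qz_{w,j}))}.
\]
An analogous cascade for $\Upsilon''$ starting at $z_{w,s_w}$ gives $r_w - 1$ cascading steps (sign $(-1)^{r_w-1}$) followed by a non-cascading residue at $z_{w,k_w}$ that introduces no further sign flip; the result is $(-1)^{r_w-1}/\prod_{j=1}^{s_w-2}(1 - tz_{w,j+1}/(qz_{w,j}))$ times the analogous pair-factor product. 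Relabelling the $k_w - r_w = s_w - 1$ remaining variables by a common $(z'_1,\dots,z'_{k_w-r_w})$, one verifies that $\mathfrak{F}$, the cross-$\mathfrak{C}$'s, the remaining pair factors, the extra factor, and the measure $\prod dz'/z'$ coincide on the two sides (all are symmetric in the level-$w$ variables).

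The cancellation then rests on the identity that the sum of the two reduced main parts simplifies, after clearing denominators, to
\[
(-1)^{r_w}\,\frac{z'_2 \cdots z'_{k_w-r_w-1}\bigl(z'_{k_w-r_w}\, t^{k_w-r_w-1} - z'_1\, q^{k_w-r_w-1}\bigr)}{\prod_{j=1}^{k_w-r_w-1}(qz'_j - tz'_{j+1})},
\]
whose numerator vanishes identically at $\epsilon = 0$ (where $t = q = 1$ and $z'_i = 1$) and is therefore $O(\epsilon)$. This extra $\epsilon$ upgrades the level-$w$ contribution in the sum from $O(\epsilon)$ to $O(\epsilon^{2})$, so $\mathfrak{Q}_{\Upsilon'}+\mathfrak{Q}_{\Upsilon''} = O(\epsilon^{l+1})$ and hence $\epsilon^{-l}(\mathfrak{Q}_{\Upsilon'}+\mathfrak{Q}_{\Upsilon''}) \to 0$. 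The main obstacle is to run the cascade cleanly for arbitrary $r_w, k_w, s_w$: I must track both the reduction of the rational main part and the vanishing of every pair factor involving an integrated block variable, and then verify that the non-$\mathfrak{B}$ pieces really match under the relabelling (in particular that the remaining $\mathfrak{V}$-contours can be deformed to a common shape without crossing any pole of the reduced integrand).
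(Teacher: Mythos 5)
Your approach coincides with the paper's: you integrate the $\mathfrak{U}$-contour variables by residue, do $\epsilon$-order bookkeeping on what remains, and for the second claim relabel the surviving variables of $\Upsilon'$ and $\Upsilon''$ to produce an explicit cancellation --- the paper packages the residue step in the closed-form factors $\mathfrak{Y}_i(\Upsilon)$ of (\ref{eq:ev0:1})--(\ref{eq:ev0:3}), and your cleared-denominator expression for the combined main part is exactly equivalent to the paper's formula for $\mathfrak{Y}_w(\Upsilon')+\mathfrak{Y}_w(\Upsilon'')$. One wording in your sketch for~\eqref{eq:lemma:lmt1} is internally inconsistent and worth flagging: ``costing two powers of $\epsilon^{-1}$ relative to the fully-$\mathfrak{V}$ baseline $O(\epsilon)$ \ldots\ therefore becomes $O(\epsilon^{2})$'' reads as $O(\epsilon^{3})$; the accounting that actually gives $O(\epsilon^{2})$ has to record that the $m=r_w-s_w+1$ residue variables remove $m$ integration factors of $\epsilon$ and $m+1$ denominator factors of $\epsilon^{-1}$ (the two interface factors plus the $m-1$ internal $\mathfrak{U}$--$\mathfrak{U}$ ones), a net gain of a single $\epsilon$ --- which is precisely what the exponent $-|\{z_{i,1},\dots,z_{i,k_i}\}\setminus\Upsilon|+2$ in the paper's analysis of (\ref{eq:ev0:3}) encodes.
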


\begin{proof}
For any $\Upsilon \in \tilde{\Pi}$, 
in \eqref{eq:new:0}
do the change of variables $a_i = t^{i-1}, t = \exp(-\theta \epsilon), q = \exp(-\epsilon), z_{i, i'} = \exp(\epsilon u_{i, i'})$, and send $\epsilon \rightarrow 0_+$.
Since the right hand side of \eqref{eq:new:0} involves only $z_{i, i'} \not\in \Upsilon$, those $u_{i, i'}$ are well defined.
We have $d z_{i, i'} = \epsilon \exp(\epsilon u_{i, i'}) du_{i, i'}$.
Therefore, each variable in the integral \eqref{eq:new:0} produces an $\epsilon$ factor.
The term $\mathfrak{Y}_i(\Upsilon)$ in \eqref{eq:new:0} is of order $\epsilon^{- (s_i - 2 + \mathds{1}_{s_i = 1}) - (k_i - r_i - 1 + \mathds{1}_{r_i = k_i})} = \epsilon^{- |\{ z_{i,1}, \cdots, z_{i,k_i} \} \backslash \Upsilon| + 2 - \mathds{1}_{s_i = 1} - \mathds{1}_{r_i = k_i} }$,
so $\left. \epsilon^{-l}
\mathfrak{Q}_{\Upsilon}
\right\vert_{a_i = t^{i-1}, t = \exp(-\theta \epsilon), q = \exp(-\epsilon)}$
is of order $\epsilon^{l - \sum_{i=1}^l (\mathds{1}_{s_i = 1} + \mathds{1}_{r_i = k_i})}$.
Since $\Upsilon \in \tilde{\Pi}$, $\mathds{1}_{s_i = 1} + \mathds{1}_{r_i = k_i} \leq 1$; and if $\mathds{1}_{s_i = 1} + \mathds{1}_{r_i = k_i} = 0$ for any $i$, we have \eqref{eq:lemma:lmt1}.

Now we consider a pair $\Upsilon'$ and $\Upsilon''$ as described in the statement of this Lemma.
Then for any $i \neq w$, $\mathfrak{Y}_{i}(\Upsilon') = \mathfrak{Y}_{i}(\Upsilon'')$.
For $w$, we can identify $z_{w,i'}$ in $\mathfrak{Y}_w(\Upsilon'')$ with $z_{w,i' + r_w}$ in $\mathfrak{Y}_w(\Upsilon')$; then we conclude that $\mathfrak{Y}_w(\Upsilon') + \mathfrak{Y}_w(\Upsilon'')$ equals (under the notations in $\mathfrak{Y}_w(\Upsilon')$)
\begin{equation}
(2\pi\im)^{r_w}
(-1)^{r_w} \frac{\left( \frac{z_{w,k_w} t^{k_w - r_w - 1}}{z_{w,r_w + 1} q^{k_w-r_w-1}} -1 \right) t^{-N_wr_w}}
{
\prod_{ r_w + 1 \leq i' < k_w }\left(1 - \frac{tz_{w,i'+1}}{qz_{w,i'}}\right)
}
\prod_{i' < j': z_{w,i'}, z_{w,j'} \not\in \Upsilon }
\frac{ \left(1 - \frac{z_{w,i'}}{z_{w,j'}}\right) \left(1 - \frac{qz_{w,i'}}{tz_{w,j'}}\right) }{ \left(1 - \frac{z_{w,i'}}{tz_{w,j'}}\right)\left(1 - \frac{qz_{w,i'}}{z_{w,j'}}\right) }.
\end{equation}
By setting $a_i = t^{i-1}, t = \exp(-\theta \epsilon), q = \exp(-\epsilon), z_{i, i'} = \exp(\epsilon u_{i, i'})$, and sending $\epsilon \rightarrow 0_+$, this expression is in the order of $\epsilon^{- k_w + r_w + 2} = \epsilon^{ - |\{ z_{w,1}, \cdots, z_{w,k_w} \} \backslash \Upsilon| + 2}$.
Thus $\epsilon^{-l} \left( \mathfrak{Q}_{\Upsilon'} + \mathfrak{Q}_{\Upsilon''} \right)$ is in the order of $\epsilon^{l - \sum_{i \neq w}^l (\mathds{1}_{s_i = 1} + \mathds{1}_{r_i = k_i})}$, which decays to zero when $\epsilon \rightarrow 0_+$.
Then we conclude \eqref{eq:lemma:lmt2}.
\end{proof}

\begin{proof}[Proof of Theorem \ref{thm:var:dis}]
From Lemma \ref{lemma:tipi} and Lemma \ref{lemma:lmt} we see that
\begin{multline}
\lim_{\epsilon \rightarrow 0_+} \left. \frac{\epsilon^{-l} \prod_{i=1}^{l} (t^{-N_{i}k_{i}} - \mathcal{D}_{-k_{i}}^{N_i})
\prod_{i = 1}^{N_l} \frac{\prod_{k=1}^{\infty}(1 - a_i t^{\alpha + M} q^{k-1}) }{\prod_{k=1}^{\infty}(1 - a_i t^{\alpha} q^{k-1}) } }
{
\prod_{i = 1}^{N_l} \frac{\prod_{k=1}^{\infty}(1 - a_i t^{\alpha + M} q^{k-1}) }{\prod_{k=1}^{\infty}(1 - a_i t^{\alpha} q^{k-1}) }
}
\right\vert
\substack{
\vphantom{\int}
\\
\scriptstyle a_i = t^{i-1}, \\ \scriptstyle t = \exp(-\theta \epsilon), \\ \scriptstyle q = \exp(-\epsilon)}
\\
=
\lim_{\epsilon \rightarrow 0_+}
\frac{\epsilon^{-l} (-1)^{k_1 + \cdots + k_l}}{(2 \pi \im)^{k_1 + \cdots + k_l}}
\mathfrak{Q}_{\emptyset} .
\end{multline}
Notice that in $\mathfrak{Q}_{\emptyset}$, each contour encloses all of $qa_1, \cdots, qa_{N_l}$ but not $0$;
then we can set each $z_{i, i'} = \exp(\epsilon u_{i, i'})$, with $u_{i, i'}$ independent of $\epsilon$, satisfying the stated requirements.
Evaluating the limit gives 
\begin{multline}  \label{eq:prop:ev:rhs}
\lim_{\epsilon \rightarrow 0_+}
\left. \frac{\epsilon^{-l} \prod_{i=1}^{l} (t^{-N_{i}k_{i}} - \mathcal{D}_{-k_{i}}^{N_i})
\prod_{i = 1}^{N_l} \frac{\prod_{k=1}^{\infty}(1 - a_i t^{\alpha + M} q^{k-1}) }{\prod_{k=1}^{\infty}(1 - a_i t^{\alpha} q^{k-1}) } }
{
\prod_{i = 1}^{N_l} \frac{\prod_{k=1}^{\infty}(1 - a_i t^{\alpha + M} q^{k-1}) }{\prod_{k=1}^{\infty}(1 - a_i t^{\alpha} q^{k-1}) }
}
\right\vert
\substack{
\vphantom{\int}
\\
\scriptstyle a_i = t^{i-1}, \\ \scriptstyle t = \exp(-\theta \epsilon), \\ \scriptstyle q = \exp(-\epsilon)}
=
\frac{(-1)^l \prod_{i=1}^l k_i}{(2\pi\im)^{k_1 + \cdots + k_l}} \oint \cdots \oint
\\
\times
\prod_{i=1}^{l} \mathfrak{I}(u_{i,1}, \cdots, u_{i,k_i}; \alpha, M, \theta, N_i)
\prod_{i < j} \mathfrak{L}(u_{i,1}, \cdots, u_{i,k_i}; u_{j,1}, \cdots, u_{j,k_j}; \theta)
\prod_{i=1}^l \prod_{i'=1}^{k_i} du_{i,i'}
,
\end{multline}
where for each $i = 1, \cdots, l$,
the contours of $u_{i,1}, \cdots, u_{i, k_i}$ enclose $-\theta (N_i - 1)$ but not $\theta(\alpha + M)$,
and $|u_{i,1}|\ll \cdots \ll |u_{i,k_i}|$.
For $1\leq i \leq i+1 \leq l$,
we also require that $|u_{i, k_{i}}| \ll |u_{i+1, 1}|$.

By Proposition \ref{prop:jotdis}, the identity given by Proposition \ref{prop:ev:lhs} can be interpreted as
\begin{multline} \label{eq:jhm:pf1}
\left.
\frac{\prod_{i=1}^{l} \frac{1}{\theta k_i} (t^{-N_{i}k_{i}} - \mathcal{D}_{-k_{i}}^{N_i})
\prod_{i = 1}^{N_l} \frac{\prod_{k=1}^{\infty}(1 - a_i t^{\alpha + M} q^{k-1}) }{\prod_{k=1}^{\infty}(1 - a_i t^{\alpha} q^{k-1}) }
}{
\prod_{i = 1}^{N_l} \frac{\prod_{k=1}^{\infty}(1 - a_i t^{\alpha + M} q^{k-1}) }{\prod_{k=1}^{\infty}(1 - a_i t^{\alpha} q^{k-1}) }
}
\right\vert_{
\scriptstyle a_i = t^{i-1}}
\\
=
\mathbb{E}\left(\prod_{i=1}^{l}
\left( \frac{1}{\theta k_i}   (t^{ - k_i}-1) \sum_{j=1}^{N_i} (q^{\lambda^{N_i}_j}t^{-j+1})^{k_i} \right) \right)
\end{multline}
where the joint distribution of $\lambda^{N_1}, \cdots, \lambda^{N_l}$ is distributed like a Macdonald process, with parameters $M$, $\{a_1, \cdots, a_{N_l}, 0, \cdots \}$, and $\{b_i\}_{i=1}^M$.

By Theorem \ref{thm:lim} we have that
\begin{multline}  \label{eq:jhm:pf2}
\lim_{\epsilon \rightarrow 0_+}
\left.
\mathbb{E}\left(\prod_{i=1}^{l}
\left( \frac{1}{\theta k_i}  \epsilon^{-1} (t^{- k_i}-1) \sum_{j=1}^{N_i} (q^{\lambda^{N_i}_j}t^{-j+1})^{k_i} \right) \right)
\right\vert
\substack{
\vphantom{\int}
\\
\vphantom{\int}
\\
\scriptstyle t = \exp(-\theta \epsilon), \\ \scriptstyle q = \exp(-\epsilon)}
\\
=
\mathbb{E}\left( \mathfrak{P}_{k_1}(x^{N_1}) \cdots \mathfrak{P}_{k_l}(x^{N_l}) \right)  .
\end{multline}
Putting \eqref{eq:prop:ev:rhs}, \eqref{eq:jhm:pf1}, and \eqref{eq:jhm:pf2} together finishes the proof.
\end{proof}







\section{Law of Large Numbers}  \label{sec:tm}

In this section we present the proofs of Theorems \ref{thm:mom}, \ref{thm:dia}, \ref{thm:mea}, and \ref{thm:jpr}.
We first prove Theorem \ref{thm:mom} in Section \ref{ssec:mom}, as a direct application of Theorem \ref{thm:var:dis}.
In Section \ref{ssec:dia}, we prove Theorem \ref{thm:dia}, by first proving the convergence of the interlacing diagram in moments (Proposition \ref{prop:dia:cd}), and then showing that this is equivalent to convergence in uniform topology.
In Section \ref{ssec:mea} we deduce Theorem \ref{thm:mea} from Theorem \ref{thm:dia} via integration by parts.
We pass $\theta \rightarrow \infty$ in Section \ref{ssec:jpr} and obtain Theorem \ref{thm:jpr} using results from Section \ref{ssec:dia}.

\subsection{First moment of adjacent rows: proof of Theorems \ref{thm:mom}}   \label{ssec:mom}

\begin{proof}[Proof of Theorem \ref{thm:mom}]
Taking $l = 1$ in Theorem \ref{thm:var:dis} we get
\begin{multline} \label{eq:dismomdif}
\mathbb{E}\left(\mathfrak{P}_k(x^N) - \mathfrak{P}_k(x^{N-1})\right)
=
\frac{(-\theta)^{-1}}{(2\pi\im)^k}
\oint\cdots\oint \frac{1}{(u_2-u_1 + 1 - \theta) \cdots (u_k-u_{k-1} + 1 - \theta)} \\
\times \prod_{i<j}\frac{(u_j-u_i)(u_j-u_i + 1 - \theta)}{(u_j-u_i + 1)(u_j-u_i - \theta)}
\left( \prod_{i=1}^k  \frac{u_i - \theta}{u_i + (N-1)\theta} - \prod_{i=1}^k \frac{u_i - \theta}{u_i + (N-2)\theta}  \right) \\
\times \prod_{i=1}^k \frac{\theta \alpha - u_i}{\theta (\alpha+M) - u_i }
du_i .
\end{multline}

Setting $u_i = L\theta v_i$, we send $L\rightarrow \infty$ under \eqref{eq:lsch}.
Note that
\begin{multline}
\lim_{L\rightarrow \infty} L\left( \prod_{i=1}^k  \frac{u_i - \theta}{u_i + (N-1)\theta} - \prod_{i=1}^k \frac{u_i - \theta}{u_i + (N-2)\theta} \right) \\
= \lim_{L\rightarrow \infty} L \prod_{i=1}^k  \frac{u_i - \theta}{u_i + (N-1)\theta} \left(1 - \prod_{i=1}^k \frac{u_i + (N-1)\theta}{u_i + (N-2)\theta} \right)
= - \left( \prod_{i=1}^k \frac{v_i}{v_i + \hat{N}} \right) \left(\sum_{i=1}^k \frac{1}{v_i + \hat{N}} \right).
\end{multline}
Therefore
\begin{multline}
\lim_{L\rightarrow \infty} \mathbb{E}\left(\mathfrak{P}_k(x^N) - \mathfrak{P}_k(x^{N-1})\right)
= \frac{1}{(2\pi\im)^k}\oint\cdots\oint \frac{1}{(v_2-v_1) \cdots (v_k-v_{k-1} )} \\
\times \left( \prod_{i=1}^k \frac{v_i}{v_i + \hat{N}} \cdot \frac{ \hat{\alpha} - v_i}{ \hat{\alpha} + \hat{M} - v_i }
dv_i  \right) \left(\sum_{i=1}^k \frac{1}{v_i + \hat{N}} \right) ,
\end{multline}
where the contours enclose $-\hat{N}$ but not $\hat{\alpha} + \hat{M}$,
and $|v_1|\ll \cdots \ll |v_k|$.
By Corollary \ref{cor:dr}, this is simplified to (\ref{eq:mom:diff}).

The decay of variance will be proved as a special case of Lemma \ref{lemma:cov:s}.
\end{proof}

\subsection{Convergence of diagrams: proof of Theorem \ref{thm:dia}} \label{ssec:dia}


\begin{prop}  \label{prop:dia:cd}
Let $\varphi$ be defined as in Theorem \ref{thm:dia}.
For any nonnegative integer $k$,
under the limit scheme (\ref{eq:lsch}) we have
\begin{equation} \label{eq:cd:prop}
\lim_{L\rightarrow \infty} \int_{0}^1 w^{\tilde{x}^N, \tilde{x}^{N-1}}(u) u^k du = \int_{0}^1 \varphi(u) u^k du ,
\end{equation}
in probability.
\end{prop}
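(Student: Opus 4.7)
The plan is to apply two integrations by parts to reduce $\int_0^1 u^k w^{\tilde{x}^N,\tilde{x}^{N-1}}(u)\,du$ to moments of the signed interlacing measure $\mu^{\tilde{x}^N,\tilde{x}^{N-1}}$, for which Theorem \ref{thm:mom} gives convergence in probability. Write $w := w^{\tilde{x}^N,\tilde{x}^{N-1}}$, $\mu := \mu^{\tilde{x}^N,\tilde{x}^{N-1}}$, and set $h(u) := \frac{u^{k+2}}{(k+1)(k+2)}$, so that $h''=u^k$ and $h(0)=h'(0)=0$. Using Remark \ref{rem:secder} (the distributional identity $w'' = 2\mu$, supported in $[0,1]$) and the standard Stieltjes integration by parts for the BV function $w'$, I will obtain
\[
\int_0^1 u^k w(u)\,du \;=\; \frac{w(1)}{k+1} \;-\; h(1)\,w'(1^+) \;+\; 2\int_{[0,1]} h(u)\,d\mu(u),
\]
and $w'(1^+) = 1$ because $w(u) = u - u_0$ for all $u$ beyond the largest particle (with $u_0$ the center of the diagram).

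Next, a direct computation with $\eqref{eq:defpk}$ (in both cases $N\le M$ and $N>M$) gives the identity $\int_0^1 u^m\,d\mu = \mathfrak{P}_m(x^N) - \mathfrak{P}_m(x^{N-1})$ for all $m\ge 0$; in particular $u_0 = \mathfrak{P}_1(x^N) - \mathfrak{P}_1(x^{N-1})$ and hence $w(1) = 1 - u_0$. Substituting this into the display and applying Theorem \ref{thm:mom} with indices $1$ and $k+2$ gives that $\int_0^1 u^k w\,du$ converges in probability to
\[
\frac{1 - \alpha_1^{\infty}}{k+1} \;-\; \frac{1}{(k+1)(k+2)} \;+\; \frac{2\,\alpha_{k+2}^{\infty}}{(k+1)(k+2)},
\]
where $\alpha_m^{\infty}$ denotes the contour integral on the right-hand side of \eqref{eq:mom:diff} with $k$ replaced by $m$.

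Finally, to identify this limit with $\int_0^1 u^k\varphi(u)\,du$, I will repeat the same two integrations by parts for the limiting diagram $\varphi$ in place of $w$: $\varphi$ is continuous, $\varphi'$ is of bounded variation on $[0,1]$, $\varphi(u) = u - u_0^\infty$ for $u$ large, and $\varphi''$ is the explicit measure in Theorem \ref{thm:dia}. The identical calculation produces the analogous formula in which $u_0^\infty = \int u \cdot \tfrac{1}{2}\varphi''\,du$ and $\int u^{k+2}\cdot\tfrac{1}{2}\varphi''\,du$ appear, and these equal $\alpha_1^\infty$ and $\alpha_{k+2}^\infty$ respectively by \eqref{eq:mom:diff}, so the two right-hand sides coincide. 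I expect the only nontrivial technical point to be the careful bookkeeping at the endpoint $u=1$ when $N > M$, where $\mu$ has an atom of mass $1$ and $w'(1^-) = -1$: one needs to verify that the Stieltjes integration-by-parts formula absorbs this atom into the $\int_{[0,1]} h\,d\mu$ term (and thus into $\mathfrak{P}_{k+2}(x^N) - \mathfrak{P}_{k+2}(x^{N-1})$), producing the case-independent boundary term $-h(1)\,w'(1^+) = -h(1)$ above.
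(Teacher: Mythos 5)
Your proof takes essentially the same route as the paper's: two integrations by parts reduce $\int_0^1 u^k w\,du$ to moments of $w''=2\mu$, Theorem~\ref{thm:mom} supplies the convergence in probability of $\mathfrak{P}_1$ and $\mathfrak{P}_{k+2}$, and the same IBP applied to $\varphi$ (together with the identity $\int_0^1 u^m\cdot\tfrac12\varphi''\,du=\alpha_m^\infty$) closes the loop. The one small slip is a citation: that identity is Lemma~\ref{lemma:dia:id}, proved by a separate generating-function/residue computation, and is not itself a consequence of~\eqref{eq:mom:diff} (which only defines the contour integral as the limit of the expectation).
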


The proof of Proposition \ref{prop:dia:cd} relies on the following identity.
\begin{lemma} \label{lemma:dia:id}
For $\varphi$ as in Theorem \ref{thm:dia}, we have
\begin{equation} \label{eq:cd:a1}
\frac{1}{2\pi\im}\oint_{\Gamma} \left(\frac{v}{v + \hat{N}}\cdot \frac{v - \hat{\alpha}}{v - \hat{\alpha} - \hat{M}}\right)^k \frac{1}{v+\hat{N}} dv = \frac{1}{2} \int_{0}^1 \varphi''(u) u^k du ,
\end{equation}
where $\Gamma$ is a positive oriented contour enclosing $-\hat{N}$ but not $\hat{\alpha} + \hat{M}$.
\end{lemma}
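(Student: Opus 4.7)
The plan is to prove (\ref{eq:cd:a1}) via a standard generating-function trick. Multiplying both sides by $z^{-k-1}$ and summing over $k\ge 0$, which is legal for $|z|$ larger than the maximum of $|U(v)|$ on $\Gamma$ and larger than $1$, the identity reduces to the equality of Cauchy transforms
\begin{equation*}
\frac{1}{2\pi\im}\oint_{\Gamma} \frac{dv}{(z-U(v))(v+\hat N)} \;=\; \frac{1}{2}\int_{0}^{1}\frac{\varphi''(u)\,du}{z-u}, \qquad U(v):=\frac{v(v-\hat\alpha)}{(v+\hat N)(v-\hat\alpha-\hat M)}.
\end{equation*}
Matching Taylor coefficients in $z^{-1}$ then recovers (\ref{eq:cd:a1}) for every $k$ simultaneously.

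For the left-hand side I would compute by residues inside $\Gamma$. The apparent pole at $v=-\hat N$ is removable because $U$ itself has a simple pole there, so only the solutions of $U(v)=z$ contribute. This equation becomes the quadratic $(1-z)v^2+(z(\hat\alpha+\hat M-\hat N)-\hat\alpha)v + z\hat N(\hat\alpha+\hat M)=0$, whose discriminant, after simplification using the Vieta-type relations $\gamma_1\gamma_2=\hat\alpha^2/(\hat N+\hat M+\hat\alpha)^2$ and $\gamma_1+\gamma_2=2[\hat\alpha^2+\hat\alpha(\hat M+\hat N)+2\hat M\hat N]/(\hat N+\hat M+\hat\alpha)^2$, equals $(\hat N+\hat M+\hat\alpha)^2(z-\gamma_1)(z-\gamma_2)$, with exactly the $\gamma_i$ of Theorem~\ref{thm:dia}. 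Of the two roots $v_\pm(z)$, the $+$ branch satisfies $v_+(z)\to-\hat N$ and the $-$ branch satisfies $v_-(z)\to\hat\alpha+\hat M$ as $|z|\to\infty$, so only $v_+$ sits inside $\Gamma$. Implicit differentiation of the quadratic and the factorization $v^2-(\hat\alpha+\hat M-\hat N)v-\hat N(\hat\alpha+\hat M)=(v+\hat N)(v-\hat\alpha-\hat M)$ collapse the residue to
\begin{equation*}
\text{LHS} \;=\; \frac{\hat\alpha+\hat M-v_+(z)}{(\hat N+\hat M+\hat\alpha)\sqrt{(z-\gamma_1)(z-\gamma_2)}} \;=\; \frac{\hat\alpha+2\hat M-z(\hat N+\hat M+\hat\alpha)}{2(1-z)(\hat N+\hat M+\hat\alpha)\sqrt{(z-\gamma_1)(z-\gamma_2)}} + \frac{1}{2(z-1)}.
\end{equation*}

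For the right-hand side I would decompose $\varphi''/2$ into its absolutely continuous part on $(\gamma_1,\gamma_2)$ and the atom $C(\hat M,\hat N)\delta(u-1)$, and split
$\frac{\hat M-\hat N+(\hat N+\hat M+\hat\alpha)(1-u)}{(\hat N+\hat M+\hat\alpha)(1-u)}=1+\frac{\hat M-\hat N}{(\hat N+\hat M+\hat\alpha)(1-u)}$.
Combining the classical arcsine identity $\int_{\gamma_1}^{\gamma_2}\frac{du}{\pi\sqrt{(\gamma_2-u)(u-\gamma_1)}(z-u)}=\frac{1}{\sqrt{(z-\gamma_1)(z-\gamma_2)}}$ with the partial fraction $\frac{1}{(z-u)(1-u)}=\frac{1}{1-z}\bigl(\frac{1}{z-u}-\frac{1}{1-u}\bigr)$ and the evaluation $(1-\gamma_1)(1-\gamma_2)=(\hat N-\hat M)^2/(\hat N+\hat M+\hat\alpha)^2$ reduces the right-hand side to the same explicit shape: a $1/\sqrt{(z-\gamma_1)(z-\gamma_2)}$ term whose coefficient matches that of the LHS, plus residual $(1-z)^{-1}$ contributions from the ``boundary'' term $1/\sqrt{(1-\gamma_1)(1-\gamma_2)}$ and from the atom. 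Case by case in $\hat M\gtrless\hat N$ and $\hat M=\hat N$, the combination $\frac{\hat M-\hat N}{|\hat N-\hat M|}+2C(\hat M,\hat N)$ telescopes these residuals uniformly to $\frac{1}{2(z-1)}$, matching the LHS term by term.

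The main obstacle I anticipate is the branch-and-sign accounting rather than anything structural. One must fix $\sqrt{(z-\gamma_1)(z-\gamma_2)}$ to be asymptotic to $z$ at infinity, so that the $+$ branch of the quadratic formula for $v_\pm(z)$ is indeed the one with $v_+(z)\to-\hat N$ and hence the correct one to pick up inside $\Gamma$; and one must verify that the piecewise prescription $C(\hat M,\hat N)\in\{0,\tfrac12,1\}$ is precisely what is required to absorb the boundary term $(\hat N+\hat M+\hat\alpha)/|\hat N-\hat M|$ uniformly across all three regimes of $\hat M$ vs.\ $\hat N$. Everything else reduces to routine residue and partial-fraction bookkeeping.
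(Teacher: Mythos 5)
Your approach is essentially the paper's: both reduce the lemma to matching generating functions, compute the left side by residues at the branch of $U(v)=z$ inside $\Gamma$, and reduce the right side by splitting $\varphi''/2$ into the arcsine-type density on $(\gamma_1,\gamma_2)$ plus the atom at $1$, with the piecewise constant $C(\hat M,\hat N)$ absorbing the boundary contribution. The only cosmetic differences are that you sum $z^{-k-1}$ for large $|z|$ rather than $z^k$ for small $|z|$, and invoke the arcsine identity plus partial fractions where the paper extends the integral to a contour around $[\gamma_1,\gamma_2]$ and collects residues at $1$ and $z^{-1}$.
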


\begin{proof}
Fix the contour $\Gamma$,
and let $\mathcal{Q}$ be the constant
\begin{equation}
\mathcal{Q} :=  \inf_{v \in \Gamma} \left| \frac{v + \hat{N}}{v}\cdot \frac{v - \hat{\alpha} - \hat{M}}{v - \hat{\alpha}} \right| .
\end{equation}
For any $z \in \mathbb{C}$, $|z| < \mathcal{Q}$,
summing the geometric series we get
\begin{multline}  \label{eq:lemma:dia:id:pf}
\sum_{k=0}^{\infty} z^k \frac{1}{2\pi\im}\oint_{\Gamma} \left(\frac{v}{v + \hat{N}}\cdot \frac{v - \hat{\alpha}}{v - \hat{\alpha} - \hat{M}}\right)^k \frac{1}{v+\hat{N}} dv \\
= \frac{1}{2\pi\im}\oint_{\Gamma} \sum_{k=0}^{\infty} \left(z \cdot \frac{v}{v + \hat{N}}\cdot \frac{v - \hat{\alpha}}{v - \hat{\alpha} - \hat{M}}\right)^k \frac{1}{v+\hat{N}} dv
= \frac{1}{2\pi\im}\oint_{\Gamma}
\frac{v - \hat{\alpha} - \hat{M}}{(1 - z)(v - \mathcal{R}_1)(v - \mathcal{R}_2)}
dv ,
\end{multline}
where
\begin{equation}
\begin{split}
\mathcal{R}_1 & = \frac{(1 - z)\hat{\alpha} + \hat{M} - \hat{N} - \sqrt{((1 - z)\hat{\alpha} + \hat{M} + \hat{N})^2 - 4z\hat{M}\hat{N} } }{2(1 - z)} ,
 \\
\mathcal{R}_2 & = \frac{(1 - z)\hat{\alpha} + \hat{M} - \hat{N} + \sqrt{((1 - z)\hat{\alpha} + \hat{M} + \hat{N})^2 - 4z\hat{M}\hat{N} } }{2(1 - z)} .
\end{split}
\end{equation}

The definition of the contour $\Gamma$ implies that there exists $0 < \mathcal{Q}' < \mathcal{Q}$,
such that $\Gamma$ encloses $\mathcal{R}_1$, but not $\mathcal{R}_2$,
for all $z\in \mathbb{C}$, $|z| < \mathcal{Q}
'$.
Then (\ref{eq:lemma:dia:id:pf}) is evaluated as a residue at $\mathcal{R}_1$; more precisely,
\begin{multline} \label{eq:cd:lhs}
\sum_{k=0}^{\infty} z^k \frac{1}{2\pi\im}\oint_{\Gamma} \left(\frac{v}{v + \hat{N}}\cdot \frac{v - \hat{\alpha}}{v - \hat{\alpha} - \hat{M}}\right)^k \frac{1}{v+\hat{N}} dv \\
= \frac{1}{2\pi \im (1-z)} \oint_{\Gamma} \frac{v - \hat{\alpha} - \hat{M}}{(v - \mathcal{R}_1)(v - \mathcal{R}_2)} dv
= \frac{1}{1-z} \cdot \frac{\mathcal{R}_1 - \hat{\alpha} - \hat{M}}{\mathcal{R}_1 - \mathcal{R}_2} .
\end{multline}

On the other hand, for $|z|< 1$ we have that
\begin{multline}  \label{eq:cd:lhs:pf1}
\sum_{k=0}^{\infty} \frac{1}{2} \int_{0}^1 \varphi''(u) u^k z^k du \\
=  \frac{C(\hat{M}, \hat{N})}{1-z} + \sum_{k=0}^{\infty} \frac{1}{2\pi} \int_{\gamma_1}^{\gamma_2}  \frac{\hat{M} - \hat{N} + (\hat{N}+\hat{M} + \hat{\alpha})(1-u)}{(\hat{N}+\hat{M} + \hat{\alpha})(1-u)}\cdot \frac{z^k u^k}{\sqrt{(\gamma_2 - u)(u - \gamma_1)}} du \\
= \frac{C(\hat{M}, \hat{N})}{1-z} + \frac{1}{2\pi} \int_{\gamma_1}^{\gamma_2}  \frac{\hat{M} - \hat{N} + (\hat{N}+\hat{M} + \hat{\alpha})(1-u)}{(\hat{N}+\hat{M} + \hat{\alpha})(1-u)(1-zu)}\cdot \frac{1}{\sqrt{(\gamma_2 - u)(u - \gamma_1)}} du ,
\end{multline}
where $\gamma_1$ and $\gamma_2$ are defined as in Theorem \ref{thm:dia}.
The last integral can be evaluated in the following way:
define $\eta : [\gamma_2, \infty) \rightarrow \mathbb{C}, $ as
\begin{equation}
\eta(w) = \frac{1}{2\pi \im} \cdot \frac{\hat{M} - \hat{N} + (\hat{N}+\hat{M} + \hat{\alpha})(1-w)}{(\hat{N}+\hat{M} + \hat{\alpha})(1-w)(1-zw)}\cdot \frac{1}{\sqrt{(w - \gamma_1)(w - \gamma_2)}},
\end{equation}
and then extend the definition of $\eta$ to $\mathbb{C} \backslash \left( [\gamma_1, \gamma_2] \bigcup \{1\} \bigcup \{z^{-1}\} \right)$, by taking the analytic continuation.
The integral in the last line of (\ref{eq:cd:lhs:pf1}) is equal to the contour integral of $\eta$ around $[\gamma_1, \gamma_2]$.
Then it suffices to subtract the residues of $\eta(w)$ at $1$ and $z^{-1}$, which are
\begin{equation}
\frac{1}{1 - z} \left( C(\hat{M}, \hat{N}) - \frac{1}{2} \right) ,
\quad
\frac{ (2\hat{M} + \hat{\alpha})z - (\hat{M} + \hat{N} + \hat{\alpha}) }{2(1-z)\sqrt{((1 - z)\hat{\alpha} + \hat{M} + \hat{N})^2 - 4z\hat{M}\hat{N} }}
\end{equation}
respectively.
We thus conclude that (\ref{eq:cd:lhs:pf1}) equals
\begin{equation}
\frac{1}{2(1-z)} + \frac{ \hat{M} + \hat{N} + \hat{\alpha} - 2\hat{M}z - z\hat{\alpha} }{2(1-z)\sqrt{((1 - z)\hat{\alpha} + \hat{M} + \hat{N})^2 - 4z\hat{M}\hat{N} }} ,
\end{equation}
which coincides with (\ref{eq:cd:lhs}).

In other words, for any $z \in \mathbb{C}$, $|z| < \min\{ \mathcal{Q}', 1\}$,
we have
\begin{equation} \label{eq:cd:a2}
\sum_{k=0}^{\infty} z^k \frac{1}{2\pi\im}\oint \left(\frac{v}{v + \hat{N}}\cdot \frac{v - \hat{\alpha}}{v - \hat{\alpha} - \hat{M}}\right)^k \frac{1}{v+\hat{N}} dv
=
\sum_{k=0}^{\infty} z^k \frac{1}{2} \int_{0}^1 \varphi''(u) u^k du .
\end{equation}
The uniqueness of the Taylor series expansion then implies (\ref{eq:cd:a1}).
\end{proof}

\begin{proof} [Proof of Proposition \ref{prop:dia:cd}]
First, by Remark \ref{rem:secder}, Theorem \ref{thm:mom}, and Lemma \ref{lemma:dia:id},
for any nonnegative integer $k$,
under the limit scheme (\ref{eq:lsch}) we have
\begin{equation} \label{eq:cd:a0}
\lim_{L\rightarrow \infty} \int_{0}^1 \frac{d^2}{du^2} w^{\tilde{x}^N, \tilde{x}^{N-1}}(u) u^k du =
\lim_{L\rightarrow \infty} 2 \left( \mathfrak{P}_k(x^N) - \mathfrak{P}_k(x^{N-1}) \right) =
\int_{0}^1 \varphi''(u) u^k du ,
\end{equation}
in probability.
Integrating by parts twice,
this implies \eqref{eq:cd:prop}.
\end{proof}

The following result connects Proposition \ref{prop:dia:cd} and Theorem \ref{thm:dia}.

\begin{lemma} \protect{\cite[Lemma 5.7]{Ivanov2002}} \label{lemma:eqcov}
For any fixed interval $[a, b] \subset \mathbb{R}$,
let $\Sigma$ be the set of all functions $\rho:\mathbb{R} \rightarrow \mathbb{R}$,
that are supported in $[a, b]$
and satisfy $|\rho(u_1) - \rho(u_2)| \leq |u_1 - u_2|$, $\forall u_1, u_2 \in [a, b]$.
Then the weak topology defined by the functionals
\begin{equation}
\rho \rightarrow \int \rho(u) u^k du, \quad k = 0, 1, \cdots
\end{equation}
coincides with the uniform topology given by the supremum norm $\| \rho \| = \sup |\rho(u)|$ .
\end{lemma}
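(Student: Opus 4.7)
The plan is to reduce the statement to two standard ingredients: an Arzelà--Ascoli type compactness of $\Sigma$ in the uniform topology, and the uniqueness of a continuous function on $[a,b]$ determined by its polynomial moments via Weierstrass' theorem.

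First I would check that $\Sigma$ is compact in the uniform topology. Since $\rho \in \Sigma$ vanishes outside $[a,b]$ and is $1$-Lipschitz on $\mathbb{R}$, continuity forces $\rho(a)=\rho(b)=0$, whence $|\rho(u)| \le \min(u-a,\, b-u)$ for $u \in [a,b]$; this gives uniform boundedness. Equicontinuity is immediate from the Lipschitz condition. By Arzel\`a--Ascoli, $\Sigma$ is relatively compact in $C([a,b])$, and both the support condition and the Lipschitz bound are preserved under uniform limits, so $\Sigma$ itself is compact. The easy direction of the lemma is then clear: if $\rho_n \to \rho$ uniformly on $[a,b]$, then $\int \rho_n(u)u^k\,du \to \int \rho(u)u^k\,du$ for every $k$, because $u \mapsto u^k$ is bounded on $[a,b]$.

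For the non-trivial direction, I would proceed by a subsequence-of-subsequence argument. Assume $\{\rho_n\} \subset \Sigma$ converges to $\rho \in \Sigma$ in the weak (moment) topology. Given any subsequence, compactness of $\Sigma$ lets us extract a further subsequence $\rho_{n_j}$ converging uniformly to some $\rho^{*} \in \Sigma$. By the easy direction, $\int \rho^{*}(u)u^k\,du = \lim_j \int \rho_{n_j}(u)u^k\,du = \int \rho(u)u^k\,du$ for every $k \ge 0$. Weierstrass' approximation theorem makes polynomials dense in $C([a,b])$ in the uniform norm, so a continuous function on $[a,b]$ is uniquely determined by its sequence of polynomial moments; hence $\rho^{*} = \rho$. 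Thus every subsequence of $\{\rho_n\}$ contains a further subsequence converging uniformly to $\rho$, which forces $\rho_n \to \rho$ uniformly.

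Both topologies on $\Sigma$ are first countable (the uniform topology from $\|\cdot\|_\infty$, and the weak topology from, say, $d(\rho_1,\rho_2) = \sum_{k\ge 0} 2^{-k}\min(1,|\int(\rho_1-\rho_2)u^k\,du|)$), so sequential equivalence of convergence yields equality of the topologies. The main conceptual obstacle is really the hard direction, and it is resolved cleanly only because of the compactness of $\Sigma$: without it one would have to produce uniform convergence directly from moment convergence, which is far less transparent. The role of compactness is to turn Weierstrass-style uniqueness into an actual convergence statement via the standard principle that a continuous bijection from a compact space to a Hausdorff space is a homeomorphism.
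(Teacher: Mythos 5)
Your proof is correct and follows what is essentially the standard argument for this kind of statement (it is the same type of reasoning one finds in Ivanov--Olshanski); since the paper cites this lemma from the literature rather than proving it, there is no internal proof to compare against. The three ingredients you use — compactness of $\Sigma$ via Arzel\`a--Ascoli, separation of points by moments via Weierstrass, and the abstract fact that a continuous bijection from a compact space to a Hausdorff space is a homeomorphism — are exactly what is needed, and the subsequence-of-subsequence argument is a correct (if slightly longer) realization of the same idea.

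One small point to be careful about: the lemma as written only posits the Lipschitz bound on $[a,b]$, not on all of $\mathbb{R}$, and you invoke continuity to get $\rho(a)=\rho(b)=0$ and hence the uniform bound $|\rho| \le \tfrac{b-a}{2}$. Strictly speaking, a function supported in $[a,b]$ that is Lipschitz only on $[a,b]$ could in principle be discontinuous at the endpoints, in which case $\Sigma$ would fail to be compact. You should either note explicitly that $\rho$ is taken to be continuous on $\mathbb{R}$ (which is the intended reading, since the functions appearing in the application are differences of diagrams, and diagrams are globally Lipschitz by Definition \ref{defn:dia}), or observe that the one-sided Lipschitz bound at $a$ and $b$ together with the support condition already forces $\rho(a)=\rho(b)=0$ once continuity from the interior is used. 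With that caveat made explicit, the argument is complete.
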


\begin{proof}[Proof of Theorem \ref{thm:dia}]
By Lemma \ref{lemma:eqcov}, the convergence of $w^{\tilde{x}^N, \tilde{x}^{N-1}}$ (in probability) under
the uniform topology is equivalent to the convergence (in probability) of each moment,
and the later is precisely Proposition \ref{prop:dia:cd}.
\end{proof}

\subsection{Convergence of discrete signed measures: proof of Theorem \ref{thm:mea}}  \label{ssec:mea}


\begin{proof}[Proof of Theorem \ref{thm:mea}]
It suffices to consider a function $f$ whose derivative is nondecreasing, since each function of finite variation can be written as the difference of two nondecreasing functions.
Now since $f'$ is nondecreasing,
we can define $g: [0, 1] \rightarrow \mathbb{R}$ such that $g(u)$ is the right limit of $f'$ at $u$, for any $u \in [0, 1)$;
and $g(1)$ the left limit of $f'$ at $1$.
Then $g$ is also nondecreasing, and bounded, and right continuous.
Also $g = f'$ almost everywhere.

Let $\eta$ be a measure on $[0, 1]$, such that $\eta([0, u]) = g(u)$, for any $u \in [0, 1]$.

Integrating by parts (and with Remark \ref{rem:secder}) we have
\begin{multline}
\int_0^1 f d\mu^{\tilde{x}^N, \tilde{x}^{N-1}}
= \frac{-f(0)\frac{d}{du} w^{\tilde{x}^N, \tilde{x}^{N-1}}(0) + f(1)\frac{d}{du} w^{\tilde{x}^N, \tilde{x}^{N-1}}(1)}{2} - \frac{1}{2} \int_{0}^1  f'(u) \frac{d}{du} w^{\tilde{x}^N, \tilde{x}^{N-1}}(u) du \\
= \frac{f(0) + f(1)}{2} - \frac{1}{2} \int_{0}^1  g(u) \frac{d}{du} w^{\tilde{x}^N, \tilde{x}^{N-1}}(u) du
\\
= \frac{f(0) + f(1)}{2} - \frac{1}{2}
\left( g(1) {w^{\tilde{x}^N, \tilde{x}^{N-1}}}(1) - g(0) {w^{\tilde{x}^N, \tilde{x}^{N-1}}}(0) \right)
+ \frac{1}{2} \int_{0}^1 {w^{\tilde{x}^N, \tilde{x}^{N-1}}}(u) d\eta .
\end{multline}

By Theorem \ref{thm:dia},
we have
\begin{equation}
\lim_{L\rightarrow \infty} {w^{\tilde{x}^N, \tilde{x}^{N-1}}}(0) = \varphi(0) , \quad
\lim_{L\rightarrow \infty} {w^{\tilde{x}^N, \tilde{x}^{N-1}}}(1) = \varphi(1) ,
\end{equation}
in probability.
Since
\begin{multline}
\left| \frac{1}{2} \int_{0}^1 {w^{\tilde{x}^N, \tilde{x}^{N-1}}}(u) d\eta
- \frac{1}{2} \int_{0}^1 \varphi(u) d\eta \right| \leq
\frac{1}{2} \int_{0}^1 \left| \left({w^{\tilde{x}^N, \tilde{x}^{N-1}}}(u) - \varphi(u)\right) \right| d\eta \\
\leq \frac{1}{2} \sup_{u\in \mathbb{R}} \left| {w^{\tilde{x}^N, \tilde{x}^{N-1}}}(u) - \varphi(u) \right|
\eta([0, 1]),
\end{multline}
we have
\begin{equation}
\lim_{L\rightarrow \infty} \frac{1}{2} \int_{0}^1 {w^{\tilde{x}^N, \tilde{x}^{N-1}}}(u) d\eta = \frac{1}{2} \int_{0}^1 \varphi(u) d\eta ,
\end{equation}
in probability.
Again integrating by parts, we conclude
\begin{multline}
\lim_{L\rightarrow \infty } \int_0^1 f d\mu^{\tilde{x}^N, \tilde{x}^{N-1}}
= \frac{1}{2} \left( f(0) + f(1) -
g(1) \varphi(1) + f'(0) \varphi(0)
+ \int_{0}^1 \varphi(u) d\eta \right) \\
= \frac{1}{2} \int_{0}^1 f(u) \varphi''(u) du  ,
\end{multline}
which is precisely (\ref{eq:thm:mea}).
\end{proof}

\subsection{Asymptotics of roots of Jacobi polynomials: proof of Theorem \ref{thm:jpr}}  \label{ssec:jpr}

We prove Theorem \ref{thm:jpr} by utilizing a limit transition between the distribution $\mathbb{P}^{\alpha, M, \theta}$ on $\chi^M$
and the roots of $\mathcal{F}_{\min(M, N)}^{\alpha - 1, |M-N|}$.

\begin{theorem}\protect{\cite[Theorem 5.1]{borodin2015general}} \label{thm:trt}
Let $(x^1, x^2, \cdots) \in \chi^M$ be distributed as $\mathbb{P}^{\alpha, M, \theta}$,
and let $j_{M, N, \alpha, i}$ be the $i$th root (in increasing order) of
$\mathcal{F}_{\min(M, N)}^{\alpha - 1, |M-N|}$,
for $1 \leq i \leq \min(M, N)$.
Then we have
\begin{equation}
\lim_{\theta \rightarrow \infty} x^{N}_i = j_{M, N, \alpha, i} ,
\end{equation}
in probability.
\end{theorem}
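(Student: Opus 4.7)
The plan is to recognize that as $\theta \to \infty$ the joint density of $x^N$ concentrates exponentially fast on the unique maximizer of an explicit ``potential'', and to identify this maximizer with the root vector of the Jacobi polynomial via the classical electrostatic characterization of the roots. Writing $K = \min(M, N)$, I would rewrite the density \eqref{eq:cp:defn1} in Laplace form as
\begin{equation}
\rho_\theta(x) = Z_\theta^{-1} \exp\bigl(\theta H(x)\bigr) \prod_{i=1}^K x_i^{-1}(1-x_i)^{-1},
\end{equation}
where the $\theta$-independent leading potential is
\begin{equation}
H(x) = 2\sum_{1 \le i < j \le K}\log(x_j - x_i) + \alpha\sum_{i=1}^K \log x_i + (|M-N|+1)\sum_{i=1}^K \log(1-x_i),
\end{equation}
and the prefactor is the $O(1)$ correction from the exponents $\theta\alpha - 1$ and $\theta(|M-N|+1) - 1$ in \eqref{eq:cp:defn1}.

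I would then verify that $H$ is strictly concave on the open chamber $\Delta = \{0 < x_1 < \cdots < x_K < 1\}$. The Hessian of the Vandermonde contribution has diagonal entries $-2\sum_{j \ne k}(x_k - x_j)^{-2}$ and off-diagonal entries $2(x_k - x_l)^{-2}$; its quadratic form is $-\sum_{k \ne l}(v_k - v_l)^2/(x_k - x_l)^2$, which is negative semidefinite with kernel $\mathbb{R}\,(1, \ldots, 1)^t$. The two logarithmic terms contribute strictly negative diagonal entries $-\alpha x_k^{-2}$ and $-(|M-N|+1)(1-x_k)^{-2}$, killing that null direction and making the total Hessian strictly negative definite. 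Since $H \to -\infty$ on $\partial \Delta$, $H$ attains its supremum at a unique interior critical point $x^* \in \Delta$, which by the equations $\partial_k H(x^*) = 0$ satisfies
\begin{equation}
\sum_{j \ne k}\frac{2}{x_k^* - x_j^*} + \frac{\alpha}{x_k^*} - \frac{|M-N|+1}{1 - x_k^*} = 0, \quad k = 1, \ldots, K.
\end{equation}
These are exactly the classical Stieltjes--Calogero equilibrium relations for the zeros of the orthogonal polynomial with weight $x^{\alpha - 1}(1 - x)^{|M-N|}$ on $[0, 1]$ (which follow from the second-order ODE for $\mathcal{F}_K^{\alpha - 1, |M-N|}$ via the usual logarithmic-derivative computation; see e.g.\ \cite{book:7334}), so $x^* = \bigl(j_{M,N,\alpha,1}, \ldots, j_{M,N,\alpha,K}\bigr)$.

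Finally I would run a standard Laplace/concentration argument. For any open neighborhood $U \subset \Delta$ of $x^*$, strict concavity gives $\delta := H(x^*) - \sup\{H(x) : x \in \Delta \setminus U, \; \mathrm{dist}(x, \partial\Delta) \ge \epsilon\} > 0$ for each fixed $\epsilon > 0$; meanwhile on the boundary layer $\{x \in \Delta : \mathrm{dist}(x, \partial \Delta) < \epsilon\}$ the potential $H$ diverges to $-\infty$ like $(2\theta - 1)\log \mathrm{dist}$ times the appropriate coefficients, which dominates the polynomial blow-up of the prefactor $\prod x_i^{-1}(1 - x_i)^{-1}$ once $\theta$ is large. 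Combining these two estimates yields $\mathbb{P}(x^N \notin U) \le C e^{-c\theta} \to 0$, proving $x_i^N \to j_{M,N,\alpha,i}$ in probability for each $1 \le i \le K$.

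The main obstacle is the rigorous boundary control in the Laplace argument: near $\partial \Delta$ the prefactor is singular and $H$ diverges at different logarithmic rates depending on which subset of boundary faces is approached, so one needs a quantitative comparison $H(x^*) - H(x) \ge c\bigl(|\!\log \mathrm{dist}(x, \partial \Delta)| + \|x - x^*\|^2\bigr)$ uniformly on $\Delta$ in order to bound the ratio of integrals $\int_{\Delta \setminus U} e^{\theta H} \prod x_i^{-1}(1-x_i)^{-1} / \int_\Delta e^{\theta H} \prod x_i^{-1}(1-x_i)^{-1}$. This is routine but requires partitioning $\Delta$ into an inner Gaussian neighborhood of $x^*$, a bulk region handled by strict concavity, and a boundary layer handled by the logarithmic barrier of $H$; the key point is that for $\theta$ large enough the $\theta H$ term dominates the lower-order $-\log x_i - \log(1-x_i)$ corrections everywhere in $\Delta$.
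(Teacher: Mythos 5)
The paper never proves Theorem \ref{thm:trt} itself: it is imported verbatim from \cite[Theorem 5.1]{borodin2015general}, so there is no internal argument to compare yours against, and a self-contained proof like yours is a genuine addition rather than a rederivation of something in the text. Your argument is correct and is essentially the standard crystallization route: writing the marginal \eqref{eq:cp:defn1} as $Z_\theta^{-1}e^{\theta H(x)}\prod_i x_i^{-1}(1-x_i)^{-1}$ isolates the right $\theta$-independent potential; your Hessian computation is right (the Vandermonde part is negative semidefinite with kernel the constant vector, and the terms $-\alpha x_k^{-2}$, $-(|M-N|+1)(1-x_k)^{-2}$ remove that kernel, with $\alpha>0$ guaranteed by the definition of the corners process); and the critical-point equations $2\sum_{j\ne k}(x_k^*-x_j^*)^{-1}+\alpha/x_k^*-(|M-N|+1)/(1-x_k^*)=0$ do match the zeros of $\mathcal{F}_{K}^{\alpha-1,|M-N|}$, $K=\min(M,N)$, via the Jacobi ODE (the identification $p+1=\alpha$, $q+1=|M-N|+1$ is exactly Szeg\H{o}'s electrostatic theorem transplanted to $[0,1]$), so by uniqueness of the critical point the maximizer is the root vector. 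For the one step you flag as the main obstacle, you can avoid any uniform bound of the form $H(x^*)-H(x)\ge c\bigl(|\log \mathrm{dist}(x,\partial\Delta)|+\|x-x^*\|^2\bigr)$ and any separate boundary-layer analysis: fix $\theta_0$ with $\theta_0\alpha>1$ and factor the unnormalized density as $e^{(\theta-\theta_0)H}\cdot\bigl(e^{\theta_0 H}\prod_i x_i^{-1}(1-x_i)^{-1}\bigr)$, where the bracket is a fixed integrable function on $\Delta$; since $H$ extends upper semicontinuously to $\overline{\Delta}$ with value $-\infty$ on the boundary, $\sup_{\Delta\setminus U}H\le H(x^*)-\delta$ for some $\delta>0$, so the numerator is at most $Ce^{(\theta-\theta_0)(H(x^*)-\delta)}$, while restricting the normalizing integral to a small ball around $x^*$ gives a lower bound $c\,e^{(\theta-\theta_0)(H(x^*)-\delta/2)}$; hence $\mathbb{P}(x^N\notin U)\le C'e^{-c'\theta}\to 0$, which yields the coordinatewise convergence in probability.
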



\begin{proof}[Proof of Theorem \ref{thm:jpr}]
The interlacing relationship for the roots immediately follows Theorem \ref{thm:trt} and the interlacing relationship for the sequences $x_i^N$ and $x_i^{N-1}$.

In Theorem \ref{thm:var:dis}, take $l = 1$, set $u_{1, i} = \theta w_i$, and send $\theta \rightarrow \infty$.
Using Theorem \ref{thm:trt}, we compute for $k = 1, 2, \cdots$:
\begin{multline}
\sum_{i=1}^{N} j_{M, N, \alpha, i}^k
= \frac{-1}{(2\pi\im)^k} \oint\cdots \oint \frac{1}{(w_2 - w_1 - 1) \cdots (w_k - w_{k-1} - 1)} \\
\times\prod_{i=1}^k \frac{w_i - 1}{w_i + N - 1} \cdot \frac{\alpha - w_i}{\alpha + M - w_i} dw_i  ,
\end{multline}
where each contour encloses $-N+1$ but not $M + \alpha$,
and $|w_1|\ll \cdots \ll |w_k|$.

Under (\ref{eq:lsch}), setting $w_i = L\upsilon_i$, we have
\begin{multline}   \label{eq:thm:jpr:pf}
\lim_{L\rightarrow \infty}
\left( \sum_{i=1}^{N} j_{M, N, \alpha, i}^k - \sum_{i=1}^{N-1} j_{M, N-1, \alpha, i}^k \right) \\
= \lim_{L\rightarrow \infty} \frac{1}{(2\pi\im)^k} \oint\cdots \oint \frac{1}{(w_2 - w_1 - 1) \cdots (w_k - w_{k-1} - 1)} \\
\times\prod_{i=1}^k \frac{w_i - 1}{w_i + N - 1} \cdot \frac{\alpha - w_i}{\alpha + M - w_i} dw_i
\left(\prod_{i=1}^k \frac{w_i + N - 1}{w_i + N - 2} - 1\right) \\
= \frac{1}{(2\pi\im)^k} \oint\cdots \oint \frac{1}{(\upsilon_2 - \upsilon_1) \cdots (\upsilon_k - \upsilon_{k-1})}
\prod_{i=1}^k \frac{\upsilon_i}{\upsilon_i + \hat{N}} \cdot \frac{\hat{\alpha} - \upsilon_i}{\hat{\alpha} + \hat{M} - \upsilon_i} d\upsilon_i
\left(\sum_{i=1}^k \frac{1}{\upsilon_i + \hat{N}} \right) .
\end{multline}

We apply Corollary \ref{cor:dr} to (\ref{eq:thm:jpr:pf}) to do dimension reduction, and get
\begin{equation}
\lim_{L\rightarrow \infty}
\left( \sum_{i=1}^{N} j_{M, N, \alpha, i}^k - \sum_{i=1}^{N-1} j_{M, N-1, \alpha, i}^k \right)
=
\frac{1}{2\pi\im} \oint \left(\frac{\upsilon}{\upsilon + \hat{N}}\cdot \frac{\upsilon - \hat{\alpha}}{\upsilon - \hat{\alpha} - \hat{M}}\right)^k \frac{1}{\upsilon+\hat{N}} d\upsilon  ,
\end{equation}
where the contour on the right hand side is around $-\hat{N}$ but not $\hat{\alpha} + \hat{M}$.

The right hand side now exactly fits Lemma \ref{lemma:dia:id},
and we conclude that
\begin{equation}
\lim_{L\rightarrow \infty}
\int_0^1 \iota_{M,N,\alpha}''(u)u^k du =
\lim_{L\rightarrow \infty}
2\left( \sum_{i=1}^{N} j_{M, N, \alpha, i}^k - \sum_{i=1}^{N-1} j_{M, N-1, \alpha, i}^k \right)
=
\int_0^1 \varphi''(u) u^k du  .
\end{equation}

Finally, we integrate by parts, which leads to
\begin{equation}
\lim_{L\rightarrow \infty}
\int_0^1 \iota_{M,N,\alpha}(u)u^k du =
\int_0^1 \varphi(u) u^k du  .
\end{equation}
By Lemma \ref{lemma:eqcov}, the above is equivalent to the statement of Theorem \ref{thm:jpr}.
\end{proof}

\section{Central Limit Theorem and Gaussianity of fluctuations}  \label{sec:clt}

The ultimate goal of this section is to present the proofs of Theorem \ref{thm:clt1} and Theorem \ref{thm:clt2}.
To prove weak convergence of the joint distribution of a vector to a Gaussian vector, it suffices to check that all the cumulants converge to the corresponding ones of a Gaussian vector.
We first introduce our notations for cumulants, and recall a basic result about the cumulants of multivariate Gaussian distribution.
\begin{defn}
For any positive integer $h$, let $\Theta_{h}$ be the collection of all unordered partitions of $\{1, \cdots, h\}$:
\begin{equation}
\Theta_{h} = \left\{ \left\{ U_1, \cdots, U_t \right\} \left| t\in \mathbb{Z}_+, \bigcup_{i=1}^t U_i = \{1, \cdots, h\}, U_i \bigcap U_j = \emptyset,
U_i \neq \emptyset, \forall 1\leq i < j \leq t \right. \right\} .
\end{equation}
For a random vector $\mathbf{u} = \{ u_i \}_{i=1}^w$, and any $v_1, \cdots, v_h \in \{u_1, \cdots, u_w \}$, define the \emph{(order $h$) cumulant} $\kappa(v_1, \cdots, v_h)$ as
\begin{equation}   \label{eq:defn:cum}
\kappa(v_1, \cdots, v_h) = \sum_{\substack{t \in \mathbb{Z}_{>0} \\ \left\{ U_1, \cdots, U_t \right\} \in \Theta_h}} (-1)^{t-1} (t-1)! \prod_{r=1}^t \mathbb{E}\left[ \prod_{i \in U_r} v_i \right].
\end{equation}
\end{defn}
The definition implies that all cumulants of order up to $h$ of a vector exist if and only if all joint moments of order up to $h$ of the same vector exist.
The moments and joint cumulants uniquely determine each other.

We further give another (and more commonly used) definition of cumulants.
\begin{defn}
For a random vector $\mathbf{u} = \{ u_i \}_{i=1}^w$, the \emph{characteristic function} $\phi: \mathbb{R}^w \rightarrow \mathbb{C}$ is defined as
\begin{equation}
\phi(\lambda_1, \cdots, \lambda_w) = \mathbb{E}\left( e^{\im(\lambda_1 u_1 + \cdots + \lambda_w u_w)} \right) .
\end{equation}
\end{defn}
\begin{defn}
Let $\mathbf{u} = \{ u_i \}_{i=1}^w$ be a random vector with characteristic function $\phi$,
then the \emph{cumulants} are the coefficients of the Taylor expansion of $\log (\phi(\lambda_1, \cdots, \lambda_w))$, if it exists in a neighborhood of the origin:
\begin{equation}
\log (\phi(\lambda_1, \cdots, \lambda_w))
=
\sum_{h=1}^{\infty} \frac{1}{h!}\sum_{a_1, \cdots, a_h = 1}^w \kappa(u_{a_1}, \cdots, u_{a_h}) \prod_{j=1}^h \im \lambda_{a_j} ,
\end{equation}
and we require that $\kappa(u_{a_1}, \cdots, u_{a_h})$ is symmetric on $a_1, \cdots, a_h$, for fixed $h$.
\end{defn}
This definition imposes more requirements on the random vector, since we need the existence of the Taylor expansion of $\log (\phi(\lambda_1, \cdots, \lambda_w))$.
This can be generalized by using the derivatives of $\log (\phi(\lambda_1, \cdots, \lambda_w))$ at the origin.
For more discussions, see e.g. \cite[Section 3.1, 3.2]{book:766347}, where the second definition is taken, and \eqref{eq:defn:cum} is proved as a proposition.

From the second definition, the following result immediately follows.
\begin{lemma}  \label{lemma:geq}
A random vector is Gaussian if and only if all of its cumulants with order 3 or more are zero.
\end{lemma}

Now let's consider the cumulants of the vectors in Theorem \ref{thm:clt1} and Theorem \ref{thm:clt2}.
For the vectors \eqref{eq:clt1:d:cov} and \eqref{eq:clt1:cov} in Theorem \ref{thm:clt1}, the first order cumulants obviously vanish.
In Section \ref{ssec:cc}, we show that the second order cumulants, which are precisely the covariances, converge to the desired values.
Then it suffices to show that the higher order cumulants converge to zero as $L \rightarrow \infty$.
In Section \ref{ssec:gf}, we first prove a formula about the decay of the cumulants with certain constraints (Proposition \ref{prop:gtp}), and use a linear combination of it to get the decay of any high order cumulants (Proposition \ref{prop:gtp2}).
These results lead to Theorem \ref{thm:clt1} (actually the decay of high order cumulants is faster than needed).
Finally, in Section \ref{ssec:gs}, we integrate Proposition \ref{prop:gtp2} and the computations of covariances over levels, and prove Theorem \ref{thm:clt2}.




\subsection{Computation of covariances}  \label{ssec:cc}

The first step of our proof of Theorems \ref{thm:clt1}, \ref{thm:clt2} is the covariance computation, presented in this section.

Throughout this section, let $k_1, k_2$ and $N_1, N_2$ be positive integers.
In addition to (\ref{eq:lsch}), we also let
\begin{equation}  \label{eq:lsch4}
\lim_{L\rightarrow \infty} \frac{N_1}{L} = \hat{N}_1, \quad \lim_{L\rightarrow \infty} \frac{N_2}{L} = \hat{N}_2 ,
\end{equation}
where $\hat{N}_1$ and $\hat{N}_2$ are positive real numbers.

\begin{lemma}  \label{lemma:cov:as}
Under the limit scheme (\ref{eq:lsch}) and (\ref{eq:lsch4}), let us assume additionally either (I) $N_1 \leq N_2$ for all $L$ large enough, or (II) $N_1 > N_2$ for all $L$ large enough.
Then
\begin{multline}  \label{eq:cov:as}
\lim_{L\rightarrow \infty} L \cdot
\mathbb{E}\left[
\left(
\mathfrak{P}_{k_1}(x^{N_1}) - \mathfrak{P}_{k_1}(x^{N_1-1})
\right)
\mathfrak{P}_{k_2}(x^{N_2})
\right]
- L \cdot \mathbb{E}
\left[\mathfrak{P}_{k_1}(x^{N_1}) - \mathfrak{P}_{k_1}(x^{N_1-1}) \right]
\mathbb{E}\left[\mathfrak{P}_{k_2}(x^{N_2}) \right]
\\
=
-
\frac{\theta^{-1}k_1}{(2\pi\im)^{2}}
\oint \oint
\frac{1}{(v_1 - v_2)^2}
\frac{1}{v_1 + \hat{N}_1}
\prod_{i=1}^2
\left( \frac{v_i}{v_i+\hat{N}_i}\cdot \frac{v_i-\hat{\alpha}}{v_i - \hat{\alpha} - \hat{M}} \right)^{k_i} dv_i,
\end{multline}
where the contours enclose poles at $-\hat{N}_1$ and $-\hat{N}_2$, but not $\hat{\alpha} + \hat{M}$, and are nested: $|v_1| \ll |v_2|$ in case I,
and $|v_1| \gg |v_2|$ in case II.
\end{lemma}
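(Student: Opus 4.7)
The plan is to invoke Theorem~\ref{thm:var:dis} with $l=2$ and rearrange into a single nested contour integral for the covariance. Since $\mathfrak{I}(u_{1,1},\dots,u_{1,k_1};\alpha,M,\theta,n)$ depends on $n$ only through the product $\prod_{i'}(u_{1,i'}-\theta)/(u_{1,i'}+(n-1)\theta)$, the discrete difference factors as
\[
\mathfrak{I}(\,\cdot\,;\alpha,M,\theta,N_1)-\mathfrak{I}(\,\cdot\,;\alpha,M,\theta,N_1-1)=\mathfrak{I}(\,\cdot\,;N_1)\Bigl(1-\prod_{i'=1}^{k_1}\tfrac{u_{1,i'}+(N_1-1)\theta}{u_{1,i'}+(N_1-2)\theta}\Bigr),
\]
and subtracting the product of expectations replaces the cross-interaction $\mathfrak{L}$ by $\mathfrak{L}-1$. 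Thus the quantity whose $L\to\infty$ behaviour we seek equals
\[
\frac{\theta^{-2}}{(2\pi\im)^{k_1+k_2}}\oint\cdots\oint\mathfrak{I}(N_1)\,\mathfrak{I}(N_2)\Bigl(1-\prod_{i'}\tfrac{u_{1,i'}+(N_1-1)\theta}{u_{1,i'}+(N_1-2)\theta}\Bigr)(\mathfrak{L}-1)\prod du_{i,i'}.
\]

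The next step is to rescale $u_{i,i'}=L\theta v_{i,i'}$ and expand the two ``small'' factors. Exactly as in the proof of Theorem~\ref{thm:mom},
\[
1-\prod_{i'}\tfrac{u_{1,i'}+(N_1-1)\theta}{u_{1,i'}+(N_1-2)\theta}=-\frac{1}{L}\sum_{i'}\frac{1}{v_{1,i'}+\hat{N}_1}+O(L^{-2}),
\]
while, as in \eqref{eq:cov:obs2},
\[
\mathfrak{L}-1=\frac{1}{L^2\theta}\sum_{i',j'}\frac{1}{(v_{1,i'}-v_{2,j'})^2}+O(L^{-3}).
\]
The remaining pieces of $\mathfrak{I}(N_1)\mathfrak{I}(N_2)$, together with the $(L\theta)^{k_1+k_2}$ Jacobian and the overall $\theta^{-2}$ prefactor, reproduce precisely the limiting integrand from Lemma~\ref{lemma:cov:o}; the aggregated prefactor is of order $L^{-1}\theta^{-1}$. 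Hence $L$ times the covariance converges to the integral over nested contours (enclosing $-\hat{N}_i$ but not $\hat{\alpha}+\hat{M}$) of the limiting integrand from Lemma~\ref{lemma:cov:o}, multiplied by the extra factor $-\bigl(\sum_{i'}(v_{1,i'}+\hat{N}_1)^{-1}\bigr)\bigl(\sum_{i',j'}(v_{1,i'}-v_{2,j'})^{-2}\bigr)$.

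The final step is dimension reduction via Corollary~\ref{cor:dr}, applied twice. Applied to the $v_1$-cluster with $n=k_1$, $s=2$, and the two test functions $g_1(v)=(v+\hat{N}_1)^{-1}$ and $g_{2,j'}(v)=(v-v_{2,j'})^{-2}$ (for each fixed $j'$), it collapses the $k_1$ inner integrals to a single integral with multiplicative factor $k_1^{s-1}=k_1$. A further application to the $v_2$-cluster with $n=k_2$, $s=1$ collapses the outer $k_2$ integrals to a single one, producing precisely the right hand side of \eqref{eq:cov:as}, with contours nested in the order $|v_1|\ll|v_2|$ inherited from Theorem~\ref{thm:var:dis}. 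In Case II ($N_1>N_2$) the same argument applies after swapping the two clusters in Theorem~\ref{thm:var:dis}, which flips the nesting to $|v_1|\gg|v_2|$.

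The main obstacle is uniform-in-$L$ control of the $O(L^{-2})$ and $O(L^{-3})$ remainders so that their contributions integrate to $o(L^{-1})$. This should follow because the rescaled contours can be chosen independently of $L$ and kept uniformly bounded away from all poles; this is precisely where the hypothesis that the ordering of $N_1$ and $N_2$ is fixed for all large $L$ enters, as it guarantees the contour configuration from Theorem~\ref{thm:var:dis} is stable along the limit.
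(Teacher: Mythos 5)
Your proposal is correct and follows essentially the same route as the paper: invoke Theorem~\ref{thm:var:dis} with $l=2$, absorb the subtraction of $\mathbb{E}[\cdot]\,\mathbb{E}[\cdot]$ into the replacement of $\mathfrak{L}$ by $\mathfrak{L}-1$, rescale $u_{i,i'}=L\theta v_{i,i'}$, expand the two small factors (matching the paper's (\ref{eq:cov:obs1}) and (\ref{eq:cov:obs2})), and collapse each cluster with Corollary~\ref{cor:dr}. The only cosmetic difference is your $O(L^{-3})$ bound on $\mathfrak{L}-1$ versus the paper's sharper $O(L^{-4})$, but both suffice for the limit.
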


\begin{proof}
Applying Theorem \ref{thm:var:dis} to the following four terms: $\mathbb{E}\left[\mathfrak{P}_{k_1}(x^{N_1})\mathfrak{P}_{k_2}(x^{N_2})\right]$, $\mathbb{E}\left[\mathfrak{P}_{k_1}(x^{N_1-1})\mathfrak{P}_{k_2}(x^{N_2})\right]$, $\mathbb{E}\left[\mathfrak{P}_{k_1}(x^{N_1})\right]\mathbb{E}\left[\mathfrak{P}_{k_2}(x^{N_2})\right]$, and $\mathbb{E}\left[\mathfrak{P}_{k_1}(x^{N_1-1})\right]\mathbb{E}\left[\mathfrak{P}_{k_2}(x^{N_2})\right]$, we get
\begin{multline}  \label{eq:cov:as:pf1}
\mathbb{E}\left[
\left(
\mathfrak{P}_{k_1}(x^{N_1}) - \mathfrak{P}_{k_1}(x^{N_1-1})
\right)
\mathfrak{P}_{k_2}(x^{N_2})
\right]
- \mathbb{E}
\left[\mathfrak{P}_{k_1}(x^{N_1}) - \mathfrak{P}_{k_1}(x^{N_1-1}) \right]
\mathbb{E}\left[\mathfrak{P}_{k_2}(x^{N_2}) \right]
\\
=
\frac{(-\theta)^{-2}}{(2\pi\im)^{k_1 + k_2}}
\oint \cdots \oint \prod_{i=1}^2 \left[ \frac{1}{(u_{i,2}-u_{i,1}+1-\theta)\cdots (u_{i,k_i} - u_{i,k_i-1}+1-\theta)} \right. \\
\times
\left.
\prod_{1\leq i'<j'\leq k_i}\frac{(u_{i,j'}-u_{i,i'}) (u_{i,j'}-u_{i,i'}+1-\theta)}{(u_{i,j'}-u_{i,i'}-\theta) (u_{i,j'}-u_{i,i'}+1)}
\prod_{i'=1}^{k_i}\frac{u_{i,i'}-\theta}{u_{i,i'}+(N_i-1)\theta}\cdot \frac{u_{i,i'}-\theta\alpha}{u_{i,i'}-\theta\alpha - \theta M}du_{i,i'} \right]
 \\
\times \left(1 -  \prod_{i'=1}^{k_1}\frac{u_{1,i'}+(N_1-1)\theta}{u_{1,i'}+(N_1-2)\theta} \right)
\left( \prod_{ \substack{1\leq i'\leq k_1, \\ 1\leq j'\leq k_2}}\frac{(u_{1,i'}-u_{2,j'}) (u_{1,i'}-u_{2,j'}+1-\theta)}{(u_{1,i'}-u_{2,j'}-\theta) (u_{1,i'}-u_{2,j'}+1)} - 1 \right) ,
\end{multline}
where the contours for $u_{i, k_1}, \cdots, u_{i, k_i}$ enclose $-\theta (N_i - 2)$ and $-\theta (N_i - 1)$
but not $\theta(\alpha + M)$, for $i = 1, 2$.
We also require that
$|u_{1,1}|\ll \cdots \ll |u_{1,k_1}|$, $|u_{2,1}|\ll \cdots \ll |u_{2,k_2}|$;
and $|u_{1,k_1}| \ll |u_{2,1}|$ when $N_1 \leq N_2$, $|u_{2,k_2}| \ll |u_{1,1}|$ when $N_1 > N_2$.
The four terms obtained from expanding the two factors in the last line correspond to the four terms to which we apply Theorem \ref{thm:var:dis}.

Set $u_{i,i'} = L\theta v_{i,i'}$ for $i = 1, 2$, and any $1\leq i' \leq k_i$, and send $L \rightarrow \infty$.
Observe that
\begin{equation}  \label{eq:cov:obs2}
\prod_{ \substack{1\leq i'\leq k_1, \\ 1\leq j'\leq k_2}}\frac{(u_{1,i'}-u_{2,j'}) (u_{1,i'}-u_{2,j'}+1-\theta)}{(u_{1,i'}-u_{2,j'}-\theta) (u_{1,i'}-u_{2,j'}+1)} - 1
=
L^{-2} \cdot \left( \sum_{\substack{1\leq i'\leq k_1, \\ 1\leq j'\leq k_2}} \frac{\theta^{-1}}{(v_{1,i'} - v_{2,j'})^2}  \right)
+ O(L^{-4}).
\end{equation}
and 
\begin{equation}  \label{eq:cov:obs1}
1 -  \prod_{i'=1}^{k_1}\frac{u_{1,i'}+(N_1-1)\theta}{u_{1,i'}+(N_1-2)\theta}
=
- L^{-1} \cdot \left( \sum_{i'=1}^{k_1} \frac{1}{ v_{1,i'} + \hat{N}_1} \right) + O(L^{-2}) .
\end{equation}
Therefore, (\ref{eq:cov:as:pf1}) multiplied by $L$ converges to
\begin{multline}
-
\frac{\theta^{-1}}{(2\pi\im)^{k_1 + k_2}}
\oint \cdots \oint
\left( \sum_{\substack{1\leq i'\leq k_1, \\ 1\leq j'\leq k_2}} \frac{1}{(v_{1,i'} - v_{2,j'})^2}  \right)
  \left( \sum_{i'=1}^{k_1} \frac{1}{ v_{1,i'} + \hat{N}_1} \right)
\\
\times
\prod_{i=1}^2
\left( \frac{1}{(v_{i,2}-v_{i,1})\cdots (v_{i,k_i} - v_{i,k_i-1})}
\left( \prod_{i'=1}^{k_i}\frac{v_{i,i'}}{v_{i,i'}+\hat{N}_i}\cdot \frac{v_{i,i'}-\hat{\alpha}}{v_{i,i'}- \hat{\alpha} - \hat{M}}dv_{i,i'} \right)
\right) .
\end{multline}
Applying Corollary \ref{cor:dr} to $v_{i, k_1}, \cdots, v_{i, k_i}$ and $v_{j, k_1}, \cdots, v_{j, k_j}$, respectively, we get (\ref{eq:cov:as}).
\end{proof}

\begin{lemma}  \label{lemma:cov:d}
Assume that $N_1 < N_2$ for $L$ large enough, then
\begin{multline}  \label{eq:lemma:cov:d}
\lim_{L\rightarrow \infty} L^2
\mathbb{E}\left[
\prod_{i=1}^2 \left(
\mathfrak{P}_{k_i}(x^{N_i}) - \mathfrak{P}_{k_i}(x^{N_i-1}) - \mathbb{E}\left(\mathfrak{P}_{k_i}(x^{N_i}) - \mathfrak{P}_{k_i}(x^{N_i-1}) \right) \right)
\right]
\\
=
\frac{\theta^{-1}k_1k_2}{(2\pi\im)^{2}}
\oint \oint
\frac{1}{(v_1 - v_2)^2}
\prod_{i=1}^2
\frac{dv_i}{v_i + \hat{N}_i}
\left( \frac{v_i}{v_i+\hat{N}_i}\cdot \frac{v_i-\hat{\alpha}}{v_i - \hat{\alpha} - \hat{M}} \right)^{k_i} ,
\end{multline}
where the contours enclose poles at $-\hat{N}_1$ and $-\hat{N}_2$, but not $\hat{\alpha} + \hat{M}$, and are nested with $|v_1| \ll |v_2|$.
\end{lemma}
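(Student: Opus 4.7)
The plan is to parallel the proofs of Lemmas \ref{lemma:cov:o} and \ref{lemma:cov:as}, now combining two ``difference'' factors with one ``covariance'' factor. Writing $\Delta_i := \mathfrak{P}_{k_i}(x^{N_i}) - \mathfrak{P}_{k_i}(x^{N_i-1})$, the goal is to evaluate the asymptotics of $\mathbb{E}[\Delta_1\Delta_2] - \mathbb{E}[\Delta_1]\mathbb{E}[\Delta_2]$.

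First I would apply Theorem \ref{thm:var:dis} with $l=2$ to each of the four joint moments $\mathbb{E}[\mathfrak{P}_{k_1}(x^{N_1'})\mathfrak{P}_{k_2}(x^{N_2'})]$ for $N_i'\in\{N_i, N_i-1\}$, using common contours that enclose both $-\theta(N_i-1)$ and $-\theta(N_i-2)$, nested so that $|u_{1,k_1}|\ll|u_{2,1}|$. Since the only dependence on $N_i'$ in the integrand is through the factor $\prod_{i'=1}^{k_i}(u_{i,i'}+(N_i'-1)\theta)^{-1}$ inside $\mathfrak{I}$, the two subtractions factor cleanly. Writing $\mathbb{E}[\Delta_1]\mathbb{E}[\Delta_2]$ as the same double contour integral with $\mathfrak{L}\equiv 1$ (each single moment given by Theorem \ref{thm:var:dis} with $l=1$), I would obtain
\begin{multline*}
\mathbb{E}[\Delta_1\Delta_2]-\mathbb{E}[\Delta_1]\mathbb{E}[\Delta_2]
=\frac{(-\theta)^{-2}}{(2\pi\im)^{k_1+k_2}}\oint\cdots\oint\prod_{i=1}^2\mathfrak{I}(u_{i,1},\dots,u_{i,k_i};\alpha,M,\theta,N_i)\\
\times(\mathfrak{L}-1)\prod_{i=1}^2\left(1-\prod_{i'=1}^{k_i}\frac{u_{i,i'}+(N_i-1)\theta}{u_{i,i'}+(N_i-2)\theta}\right).
\end{multline*}

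Next I would rescale $u_{i,i'}=L\theta v_{i,i'}$ and send $L\to\infty$ using the expansions (\ref{eq:cov:obs1}) and (\ref{eq:cov:obs2}): each difference factor contributes $L^{-1}\sum_{i'}(v_{i,i'}+\hat{N}_i)^{-1}+O(L^{-2})$ (up to sign), and $\mathfrak{L}-1=L^{-2}\theta^{-1}\sum_{i',j'}(v_{1,i'}-v_{2,j'})^{-2}+O(L^{-4})$. The Jacobian $(L\theta)^{k_1+k_2}$ cancels against the $(L\theta)^{-(k_1-1)-(k_2-1)}$ scaling of $\prod_i\mathfrak{I}$, so the entire integrand behaves as $L^{-2}$ times a limiting function. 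Multiplication by $L^2$ and dominated convergence (with fixed, $L$-independent contours) then yields
\begin{multline*}
\frac{\theta^{-1}}{(2\pi\im)^{k_1+k_2}}\oint\cdots\oint\prod_{i=1}^2\frac{\prod_{i'=1}^{k_i}f_i(v_{i,i'})\,dv_{i,i'}}{\prod_{i'=1}^{k_i-1}(v_{i,i'+1}-v_{i,i'})}\\
\times\prod_{i=1}^2\left(\sum_{i'=1}^{k_i}\frac{1}{v_{i,i'}+\hat{N}_i}\right)\sum_{i'=1}^{k_1}\sum_{j'=1}^{k_2}\frac{1}{(v_{1,i'}-v_{2,j'})^2},
\end{multline*}
where $f_i(v)=\tfrac{v}{v+\hat{N}_i}\cdot\tfrac{v-\hat{\alpha}}{v-\hat{\alpha}-\hat{M}}$ and the contours remain nested as in Theorem \ref{thm:var:dis}.

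Finally I would perform dimension reduction by applying Corollary \ref{cor:dr} twice. Decomposing $\sum_{i',j'}(v_{1,i'}-v_{2,j'})^{-2}=\sum_{j'}\sum_{i'}(v_{1,i'}-v_{2,j'})^{-2}$, for each fixed $j'$ the $v_{1,\cdot}$-integral carries the two multiplicative $v_{1,\cdot}$-sums $\sum_{i'}(v_{1,i'}+\hat{N}_1)^{-1}$ and $\sum_{i'}(v_{1,i'}-v_{2,j'})^{-2}$; Corollary \ref{cor:dr} with $n=k_1$, $s=2$ collapses the $k_1$-fold contour integral to $k_1\cdot(2\pi\im)^{-1}\oint f_1(v_1)^{k_1}(v_1+\hat{N}_1)^{-1}(v_1-v_{2,j'})^{-2}\,dv_1$. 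A second application of Corollary \ref{cor:dr} (with $n=k_2$, $s=2$, and the two multiplicative $v_{2,\cdot}$-sums $\sum_{j'}(v_{2,j'}+\hat{N}_2)^{-1}$ together with the sum over $j'$ of the previous result) produces the double integral on the right-hand side of (\ref{eq:lemma:cov:d}) with prefactor $\theta^{-1}k_1k_2/(2\pi\im)^2$. The main technical check is Step 2, namely carefully bookkeeping all three $L^{-1}$ and $L^{-2}$ expansions and verifying that subleading terms vanish after the $L^2$ multiplication; once this is in place, the remaining argument is the same formal manipulation used in Lemmas \ref{lemma:cov:o} and \ref{lemma:cov:as}, iterated one additional time.
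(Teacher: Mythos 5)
Your proposal matches the paper's own proof of Lemma \ref{lemma:cov:d} in all essentials: both apply Theorem \ref{thm:var:dis} with $l=2$ (to the centered product, factored through the $\mathfrak{I}$-dependence on level), rescale by $u_{i,i'}=L\theta v_{i,i'}$, invoke the expansions (\ref{eq:cov:obs1}) and (\ref{eq:cov:obs2}) to extract the $L^{-2}$ leading order, and then collapse dimensions with Corollary \ref{cor:dr} applied in turn to the $v_{1,\cdot}$ and $v_{2,\cdot}$ blocks. You spell out the two applications of Corollary \ref{cor:dr} in slightly more detail than the paper's ``eventually get (\ref{eq:lemma:cov:d})'', but the route is the same.
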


\begin{proof}
The proof is very similar to the proof of Lemma \ref{lemma:cov:as}.
By Theorem \ref{thm:var:dis} we obtain that
\begin{multline}
\mathbb{E}\left[
\prod_{i=1}^2 \left(
\mathfrak{P}_{k_i}(x^{N_i}) - \mathfrak{P}_{k_i}(x^{N_i-1}) - \mathbb{E}\left(\mathfrak{P}_{k_i}(x^{N_i}) - \mathfrak{P}_{k_i}(x^{N_i-1}) \right) \right)
\right]
\\
=
\frac{(-\theta)^{-2}}{(2\pi\im)^{k_1 + k_2}}
\oint \cdots \oint \prod_{i=1}^2 \left[ \frac{1}{(u_{i,2}-u_{i,1}+1-\theta)\cdots (u_{i,k_i} - u_{i,k_i-1}+1-\theta)} \right.
\\
\times
\prod_{1\leq i'<j'\leq k_i}\frac{(u_{i,j'}-u_{i,i'}) (u_{i,j'}-u_{i,i'}+1-\theta)}{(u_{i,j'}-u_{i,i'}-\theta) (u_{i,j'}-u_{i,i'}+1)}
\prod_{i'=1}^{k_i}\frac{u_{i,i'}-\theta}{u_{i,i'}+(N_i-1)\theta}\cdot \frac{u_{i,i'}-\theta\alpha}{u_{i,i'}-\theta\alpha - \theta M}du_{i,i'}
\\
\left.
\times \left(1 -  \prod_{i'=1}^{k_i}\frac{u_{i,i'}+(N_i-1)\theta}{u_{i,i'}+(N_i-2)\theta} \right) \right]
\left( \prod_{ \substack{1\leq i'\leq k_1, \\ 1\leq j'\leq k_2}}\frac{(u_{1,i'}-u_{2,j'}) (u_{1,i'}-u_{2,j'}+1-\theta)}{(u_{1,i'}-u_{2,j'}-\theta) (u_{1,i'}-u_{2,j'}+1)} - 1 \right) ,
\end{multline}
where the contours for $u_{i, k_1}, \cdots, u_{i, k_i}$ enclose $-\theta (N_i - 2)$ and $-\theta (N_i - 1)$
but not $\theta(\alpha + M)$, for $i = 1, 2$.
We also require that
$|u_{1,1}|\ll \cdots \ll |u_{1,k_1}| \ll |u_{2,1}|\ll \cdots \ll |u_{2,k_2}|$.

Again set $u_{i,i'} = L\theta v_{i,i'}$ for $i = 1, 2$ and any $1\leq i' \leq k_i$.
Sending $L\rightarrow \infty$, using (\ref{eq:cov:obs1}) and (\ref{eq:cov:obs2}), and applying Corollary \ref{cor:dr} to $v_{i, k_1}, \cdots, v_{i, k_i}$ and $v_{j, k_1}, \cdots, v_{j, k_j}$, respectively,
we eventually get (\ref{eq:lemma:cov:d}).
\end{proof}

\begin{lemma}  \label{lemma:cov:s}
Under the limit scheme \eqref{eq:lsch}, we have
\begin{multline}   \label{eq:lemma:cov:s}
\lim_{L\rightarrow \infty} L \cdot
\mathbb{E}\left[
\prod_{i=1}^2 \left(
\mathfrak{P}_{k_i}(x^{N}) - \mathfrak{P}_{k_i}(x^{N-1}) - \mathbb{E}\left(\mathfrak{P}_{k_i}(x^{N}) - \mathfrak{P}_{k_i}(x^{N-1}) \right) \right)
\right]
\\
=
-
\frac{\theta^{-1}k_1k_2}{2\pi\im(k_1 + k_2)}
\oint
\frac{dv}{(v + \hat{N})^2}
\left( \frac{v}{v+\hat{N}}\cdot \frac{v-\hat{\alpha}}{v - \hat{\alpha} - \hat{M}} \right)^{k_2 + k_2} ,
\end{multline}
where the contours enclose poles at $-\hat{N}$ but not $\hat{\alpha} + \hat{M}$.
\end{lemma}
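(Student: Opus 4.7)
The strategy follows that of Lemma \ref{lemma:cov:d}, adapted to the coinciding-levels case. First, combine the four identities from Theorem \ref{thm:var:dis} applied to $\mathbb{E}[\mathfrak{P}_{k_1}(x^{a})\mathfrak{P}_{k_2}(x^{b})]$ for $(a,b)\in\{N,N-1\}^2$ together with the single-variable integrals for $\mathbb{E}[\mathfrak{P}_{k_i}(x^N)]-\mathbb{E}[\mathfrak{P}_{k_i}(x^{N-1})]$. This rewrites the covariance as a $(k_1+k_2)$-fold contour integral
\begin{equation*}
\frac{(-\theta)^{-2}}{(2\pi\im)^{k_1+k_2}}\oint\!\cdots\!\oint \prod_{i=1}^{2}\mathfrak{I}(u_{i,*};\alpha,M,\theta,N)\cdot F_1 F_2\cdot\bigl(\mathfrak{L}(u_{1,*};u_{2,*};\theta)-1\bigr)\prod du,
\end{equation*}
where $F_i:=1-\prod_{i'}\frac{u_{i,i'}+(N-1)\theta}{u_{i,i'}+(N-2)\theta}$ captures the $\mathfrak{P}_{k_i}(x^N)-\mathfrak{P}_{k_i}(x^{N-1})$ difference, and the contours enclose both $-(N-1)\theta$ and $-(N-2)\theta$ with the standard nesting $|u_{1,*}|\ll|u_{2,*}|$.

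Next, substitute $u_{i,i'}=L\theta v_{i,i'}$ and pass to $L\to\infty$. The decisive new feature compared to Lemma \ref{lemma:cov:d} is that both families of $v$-contours now collapse onto the same point $-\hat{N}$. In particular, the two simple-pole locations $-(N-1)/L$ and $-(N-2)/L$ inside each $\mathfrak{I}$ factor and each $F_i$ factor coalesce into a double pole at $v=-\hat{N}$, while the poles of $\mathfrak{L}-1$ at $u_{2,j'}-u_{1,i'}\in\{\theta,-1\}$ translate to $v_{2,j'}-v_{1,i'}\in\{1/L,-1/(L\theta)\}$, which also merge to $v_{2,j'}=v_{1,i'}$. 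As in (\ref{eq:cov:obs1})--(\ref{eq:cov:obs2}), expand the various factors in $1/L$; unlike in Lemma \ref{lemma:cov:d}, the naive bulk cancels at leading order, and the surviving contribution comes from the residue interaction between the merging poles and scales as $L^{-1}$ rather than $L^{-2}$.

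After identifying this leading contribution, apply Corollary \ref{cor:dr} to the $v_{1,*}$ block with $n=k_1$ and to the $v_{2,*}$ block with $n=k_2$ (each with a single $g$-factor coming from the residue analysis), then a further dimension reduction identifies the two resulting single integrals into one using the double pole at $v=-\hat{N}$. This yields exactly the kernel $\frac{1}{(v+\hat{N})^2}\bigl(\frac{v}{v+\hat{N}}\cdot\frac{v-\hat{\alpha}}{v-\hat{\alpha}-\hat{M}}\bigr)^{k_1+k_2}$, and the combinatorial prefactor $\frac{k_1k_2}{k_1+k_2}$ emerges from combining the $k_1\cdot k_2$ choices of index pair $(i',j')$ at which the two blocks are glued together with the $n^{s-1}$ factor of Corollary \ref{cor:dr} applied to the unified block of $k_1+k_2$ variables.

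The main obstacle is performing the pole-coalescence calculation cleanly: one must carefully track how $F_i$, $\mathfrak{I}$, and $\mathfrak{L}-1$ interact as their poles merge in the limit, show that the naive $L^{-2}$ order term vanishes, and identify the subleading $L^{-1}$ contribution with the specific coefficient $\frac{k_1 k_2}{k_1+k_2}$ and the double-pole structure $(v+\hat{N})^{-2}$ that distinguishes this same-$N$ case from the formula of Lemma \ref{lemma:cov:d}.
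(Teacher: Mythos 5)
Your proposed approach has a genuine gap at the very first step, and this gap is not a technicality---it is precisely the obstruction that forces the paper to take a different route than the one used for Lemma~\ref{lemma:cov:d}.

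You attempt to combine all four moments $\mathbb{E}[\mathfrak{P}_{k_1}(x^a)\mathfrak{P}_{k_2}(x^b)]$, $(a,b)\in\{N,N-1\}^2$, under a single $(k_1+k_2)$-fold contour integral with ``the standard nesting $|u_{1,*}|\ll|u_{2,*}|$.'' But Theorem~\ref{thm:var:dis} only yields this nesting when the levels are non-decreasing. For the term $(a,b)=(N,N-1)$ one must reorder and apply Theorem~\ref{thm:var:dis} with levels $N-1<N$, which forces the \emph{opposite} nesting $|u_{2,*}|\ll|u_{1,*}|$. In Lemma~\ref{lemma:cov:d} this problem never arises because $N_1<N_2$ implies $N_1-1<N_1\leq N_2-1<N_2$, so all four sub-level combinations admit a compatible contour order and the $F_1F_2(\mathfrak L-1)$ expansion is legitimate. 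For $N_1=N_2=N$ you have $N_2-1<N_1$, and the compatibility breaks. If you force a uniform nesting anyway, you are no longer computing the covariance: the discrepancy is exactly a nested residue, and this residue is not an error you can sweep into the ``pole coalescence'' heuristic---it is the dominant $O(L^{-1})$ contribution that the lemma is after.

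Your remaining steps are correspondingly unsupported. The claim that ``the naive bulk cancels at leading order'' has it backwards: the naive $F_1F_2(\mathfrak L-1)$ expression, if it were correct, would be of size $O(L^{-2})$, and the actual variance is the \emph{larger} quantity $O(L^{-1})$; nothing cancels, something extra appears. The explanation of the prefactor $k_1k_2/(k_1+k_2)$ via ``$k_1\cdot k_2$ choices of index pair glued together with the $n^{s-1}$ factor of Corollary~\ref{cor:dr}'' does not correspond to any actual computation and is not how that constant arises.

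The paper avoids the nesting conflict entirely. It telescopes the centered product into two covariances of the form handled by Lemma~\ref{lemma:cov:as}---one with $N_2=N\geq N_1$ (case~I, $|v_1|\ll|v_2|$) and one with $N_2=N-1<N_1$ (case~II, $|v_1|\gg|v_2|$)---and applies that lemma to each with its own valid nesting. This yields a two-contour formula (\ref{eq:lemma:cov:s:pf1}) with a $(v_2-v_1)^2$ denominator, a $k_1$ prefactor, and two contours around the \emph{same} point $-\hat N$. By interchanging $k_1\leftrightarrow k_2$ one gets an alternative expression (\ref{eq:lemma:cov:s:pf2}) for the same limit; the convex combination $\tfrac{1}{k_1}(\ref{eq:lemma:cov:s:pf1})+\tfrac{1}{k_2}(\ref{eq:lemma:cov:s:pf2})$ collapses $(v_2-v_1)^{-2}$ to $(v_2-v_1)^{-1}$, which permits the reduction to a single contour via Theorem~\ref{thm:dr} and produces the double pole $(v+\hat N)^{-2}$. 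Solving $V(\tfrac{1}{k_1}+\tfrac{1}{k_2})=(\ref{eq:lemma:cov:s:pf3})$ is precisely where $k_1k_2/(k_1+k_2)$ comes from. If you want to make a direct approach of the type you outline work, you would have to track explicitly the residue picked up when converting the $(N,N-1)$ term to the wrong nesting; the paper's decomposition is a cleaner way of organizing exactly that bookkeeping.
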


\begin{proof}
We can write the expectation as
\begin{multline}
\mathbb{E}\left[
\prod_{i=1}^2 \left(
\mathfrak{P}_{k_i}(x^{N}) - \mathfrak{P}_{k_i}(x^{N-1}) - \mathbb{E}\left(\mathfrak{P}_{k_i}(x^{N}) - \mathfrak{P}_{k_i}(x^{N-1}) \right) \right)
\right]
\\
=
\mathbb{E}\left[ \left( \mathfrak{P}_{k_1}(x^N) - \mathfrak{P}_{k_1}(x^{N-1}) \right) \mathfrak{P}_{k_2}(x^N) \right]
- \mathbb{E}\left[ \mathfrak{P}_{k_2}(x^{N-1}) \left( \mathfrak{P}_{k_1}(x^N) - \mathfrak{P}_{k_1}(x^{N-1}) \right) \right] \\
-
\mathbb{E}\left( \mathfrak{P}_{k_1}(x^N) - \mathfrak{P}_{k_1}(x^{N-1}) \right) \mathbb{E} \left(  \mathfrak{P}_{k_2}(x^N) \right)
+
\mathbb{E}\left( \mathfrak{P}_{k_2}(x^{N-1}) \right) \mathbb{E} \left( \mathfrak{P}_{k_1}(x^N) - \mathfrak{P}_{k_1}(x^{N-1}) \right) .
\end{multline}

Now apply Lemma \ref{lemma:cov:as}, and we obtain
\begin{multline}  \label{eq:lemma:cov:s:pf1}
\lim_{L\rightarrow \infty} L \cdot
\mathbb{E}\left[
\prod_{i=1}^2 \left(
\mathfrak{P}_{k_i}(x^{N}) - \mathfrak{P}_{k_i}(x^{N-1}) - \mathbb{E}\left(\mathfrak{P}_{k_i}(x^{N}) - \mathfrak{P}_{k_i}(x^{N-1}) \right) \right)
\right]
\\
=
\frac{k_1 \theta^{-1}}{(2\pi\im)^2 }
\oint \oint \left[
\left( \frac{v_{2}}{v_{2}+\hat{N}}\cdot \frac{v_{2}-\hat{\alpha}}{v_{2}- \hat{\alpha} - \hat{M}} \right)^{k_1}
\left( \frac{v_{1}}{v_{1}+\hat{N}}\cdot \frac{v_{1}-\hat{\alpha}}{v_{1}- \hat{\alpha} - \hat{M}} \right)^{k_2}
\frac{1}{(v_2 + \hat{N})(v_{2} - v_1)^2} \right.\\
- \left.
\left( \frac{v_{1}}{v_{1}+\hat{N}}\cdot \frac{v_{1}-\hat{\alpha}}{v_{1}- \hat{\alpha} - \hat{M}} \right)^{k_1}
\left( \frac{v_{2}}{v_{2}+\hat{N}}\cdot \frac{v_{2}-\hat{\alpha}}{v_{2}- \hat{\alpha} - \hat{M}} \right)^{k_2}
\frac{1}{(v_1 + \hat{N})(v_{2} - v_1)^2} \right] dv_1 dv_2  ,
\end{multline}
where
the contours of $v_1$ and $v_2$ enclose $-\hat{N}$
but not $\hat{\alpha} + \hat{M}$;
and $|v_{1}|\ll |v_{2}|$.

Interchanging $k_1$ and $k_2$, for the same limit we have
\begin{multline}  \label{eq:lemma:cov:s:pf2}
\frac{k_2 \theta^{-1}}{(2\pi\im)^2 }
\oint \oint \left[
\left( \frac{v_{2}}{v_{2}+\hat{N}}\cdot \frac{v_{2}-\hat{\alpha}}{v_{2}- \hat{\alpha} - \hat{M}} \right)^{k_2}
\left( \frac{v_{1}}{v_{1}+\hat{N}}\cdot \frac{v_{1}-\hat{\alpha}}{v_{1}- \hat{\alpha} - \hat{M}} \right)^{k_1}
\frac{1}{(v_2 + \hat{N})(v_{2} - v_1)^2} \right.\\
- \left.
\left( \frac{v_{1}}{v_{1}+\hat{N}}\cdot \frac{v_{1}-\hat{\alpha}}{v_{1}- \hat{\alpha} - \hat{M}} \right)^{k_2}
\left( \frac{v_{2}}{v_{2}+\hat{N}}\cdot \frac{v_{2}-\hat{\alpha}}{v_{2}- \hat{\alpha} - \hat{M}} \right)^{k_1}
\frac{1}{(v_1 + \hat{N})(v_{2} - v_1)^2} \right] dv_1 dv_2  ,
\end{multline}where
the contours of $v_1$ and $v_2$ enclose $-\hat{N}$
but not $\hat{\alpha} + \hat{M}$;
and $|v_{1}|\ll |v_{2}|$.

Notice that $(\ref{eq:lemma:cov:s:pf1}) \times \frac{k_2}{k_1 + k_2} + (\ref{eq:lemma:cov:s:pf2}) \times \frac{k_1}{k_1 + k_2}$ equals
\begin{multline}   \label{eq:lemma:cov:s:pf3}
-
\frac{\theta^{-1}}{(2\pi\im)^2 }
\frac{k_1 k_2}{k_1 + k_2}
\oint \oint
\frac{1}{(v_1 + \hat{N})(v_2 + \hat{N})(v_{2} - v_1)} \\
\times \left(
\left( \frac{v_{2}}{v_{2}+\hat{N}}\cdot \frac{v_{2}-\hat{\alpha}}{v_{2}- \hat{\alpha} - \hat{M}} \right)^{k_1}
\left( \frac{v_{1}}{v_{1}+\hat{N}}\cdot \frac{v_{1}-\hat{\alpha}}{v_{1}- \hat{\alpha} - \hat{M}} \right)^{k_2} \right. \\
+ \left.
\left( \frac{v_{1}}{v_{1}+\hat{N}}\cdot \frac{v_{1}-\hat{\alpha}}{v_{1}- \hat{\alpha} - \hat{M}} \right)^{k_1}
\left( \frac{v_{2}}{v_{2}+\hat{N}}\cdot \frac{v_{2}-\hat{\alpha}}{v_{2}- \hat{\alpha} - \hat{M}} \right)^{k_2} \right) dv_1 dv_2
\end{multline}
where
the contours enclose $-\hat{N}$
but not $\hat{\alpha} + \hat{M}$, and $|v_1| \ll |v_2|$.
By Theorem \ref{thm:dr} this equals the right hand side of \eqref{eq:lemma:cov:s}.
\end{proof}

\subsection{Decay of cumulants: proof of Theorem \ref{thm:clt1}}  \label{ssec:gf}

In this section we present formulas about the decay of the high order cumulants of \eqref{eq:clt1:d:cov} and \eqref{eq:clt1:cov}.
They will lead to the proof of Theorem \ref{thm:clt1}.


\begin{prop}  \label{prop:gtp}
Let $k_1, \cdots, k_h$ and $N_1 \leq \cdots \leq N_h$ be positive integers,
and let $D \subset \{1, \cdots, h\}$ be a subset of indices, satisfying that for any $1 \leq i < j \leq h$ and $j \in D$, there is $N_i < N_j$.

For any $i \in D$, denote
\begin{equation}   \label{eq:prop:gtp:nt1}
\mathfrak{E}_i =
\mathfrak{P}_{k_i}(x^{N_i}) - \mathfrak{P}_{k_i}(x^{N_i-1}) - \mathbb{E}\left(\mathfrak{P}_{k_i}(x^{N_i}) - \mathfrak{P}_{k_i}(x^{N_i-1}) \right) ,
\end{equation}
and for any $i \not\in D$, denote
\begin{equation}   \label{eq:prop:gtp:nt2}
\mathfrak{E}_i =
\mathfrak{P}_{k_i}(x^{N_i}) - \mathbb{E}\left(\mathfrak{P}_{k_i}(x^{N_i}) \right) .
\end{equation}
Then for any $\eta < h - 2 + |D|$, we have
\begin{equation} \label{eq:gtp}
\lim_{L\rightarrow \infty} L^{\eta} \kappa\left( \mathfrak{E}_1, \cdots, \mathfrak{E}_h \right)
 = 0 .
\end{equation}
\end{prop}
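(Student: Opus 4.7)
The plan is to realize the left-hand side of \eqref{eq:gtp} as the joint cumulant $\kappa(\mathfrak{E}_1,\dots,\mathfrak{E}_h)$, represent it as a contour integral via Theorem \ref{thm:var:dis}, and then extract the scaling from a Mayer-type cluster expansion. Since cumulants of order $\geq 2$ are invariant under constant shifts, I may work with $\mathfrak{G}_i$ in place of $\mathfrak{E}_i$, where $\mathfrak{G}_i := \mathfrak{P}_{k_i}(x^{N_i}) - \mathfrak{P}_{k_i}(x^{N_i-1})$ for $i \in D$ and $\mathfrak{G}_i := \mathfrak{P}_{k_i}(x^{N_i})$ otherwise. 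For each $U \subseteq \{1,\dots,h\}$, Theorem \ref{thm:var:dis} (linearity absorbing the differences) yields
\[
\mathbb{E}\Bigl[\prod_{i \in U}\mathfrak{G}_i\Bigr] = c_U \oint\cdots\oint \prod_{i \in U}\widetilde{\mathfrak{I}}_i(u_{i,\cdot}) \prod_{\substack{i<j\\ i,j\in U}} \mathfrak{L}(u_{i,\cdot};u_{j,\cdot};\theta) \prod du,
\]
where $\widetilde{\mathfrak{I}}_i$ equals $\mathfrak{I}_i$ multiplied by the extra difference factor $\bigl(1-\prod_{i'}\tfrac{u_{i,i'}+(N_i-1)\theta}{u_{i,i'}+(N_i-2)\theta}\bigr)$ when $i\in D$, and is simply $\mathfrak{I}_i$ when $i\notin D$.

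Next I would expand $\mathfrak{L}(i,j)=1+(\mathfrak{L}(i,j)-1)$ within each block, giving $\prod_{i<j\in U}\mathfrak{L}(i,j) = \sum_{E\subseteq\binom{U}{2}}\prod_{(i,j)\in E}(\mathfrak{L}(i,j)-1)$. Interchanging this edge-sum with the outer partition-sum from the definition of $\kappa$ and invoking the classical Möbius identity $\sum_{\pi\ge\sigma}(-1)^{|\pi|-1}(|\pi|-1)! = \mathds{1}_{\sigma=\hat{1}}$ on the partition lattice annihilates every graph $E$ whose connected-components partition is not the one-block partition. Consequently
\[
\kappa(\mathfrak{G}_1,\dots,\mathfrak{G}_h) = \sum_{E\text{ connected on }\{1,\dots,h\}} \oint\cdots\oint \prod_i \widetilde{\mathfrak{I}}_i \prod_{(i,j)\in E}(\mathfrak{L}(i,j)-1) \prod du.
\]

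To bound each summand I rescale $u_{i,i'}=L\theta v_{i,i'}$ as in Section \ref{ssec:cc}. Each differential $du_{i,i'}$ contributes $L$, and the $k_i-1$ factors $(u_{i,i'+1}-u_{i,i'}+1-\theta)^{-1}$ in $\mathfrak{I}_i$ contribute $L^{-1}$ each, so group $i$ produces a base scaling $L^{k_i-(k_i-1)}=L$ and the $h$ groups together yield $L^h$. Each edge factor $\mathfrak{L}(i,j)-1$ scales like $L^{-2}$ by \eqref{eq:cov:obs2}, and each difference factor attached to $i\in D$ scales like $L^{-1}$ by \eqref{eq:cov:obs1}. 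Since any connected graph on $h$ vertices has at least $h-1$ edges, every surviving summand is of order at most $L^h \cdot L^{-2(h-1)}\cdot L^{-|D|}=L^{-(h-2)-|D|}$, giving $\kappa = O(L^{-(h-2)-|D|})$, which delivers \eqref{eq:gtp} for every $\eta<h-2+|D|$.

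The main obstacle is that Theorem \ref{thm:var:dis} prescribes contours whose nesting depends on the block $U$ through the ordering within $U$, so the interchange of edge-expansion and Möbius-inversion must be preceded by a deformation bringing the various blockwise contours to a single common nested configuration. After rescaling, the only singularities of $\mathfrak{L}(i,j)-1$ in the $v$-variables occur at coincidences $v_{i,i'}=v_{j,j'}$, which can be avoided uniformly by the fixed nested family $|v_{1,1}|\ll\cdots\ll|v_{h,k_h}|$; residues incurred during any such deformation are of strictly higher order in $L^{-1}$ and are harmless for the stated estimate.
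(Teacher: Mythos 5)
Your proposal follows essentially the same route as the paper's own proof: express the left-hand side as the joint cumulant, remove the centering by shift-invariance (the paper's Lemma~\ref{lem:ctr}), invoke Theorem~\ref{thm:var:dis} to get one big contour integral, expand $\mathfrak{L}=1+(\mathfrak{L}-1)$ blockwise, annihilate disconnected edge-sets via the partition-lattice M\"obius identity (the paper proves this by hand in (\ref{eq:gau:10})--(\ref{eq:gau:11})), and finally rescale $u_{i,i'}=L\theta v_{i,i'}$ and count: $L^{+1}$ per rescaled differential against $L^{-1}$ per inner $(u_{i,i'+1}-u_{i,i'}+1-\theta)^{-1}$, $L^{-2}$ per edge factor, $L^{-1}$ per $D$-difference, and $\geq h-1$ edges in a connected graph. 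Two small remarks. First, you gloss over why Theorem~\ref{thm:var:dis} applies uniformly to all four shifted choices $N_i-\lambda_i$ with $\lambda_i\in\{0,1\}$ for $i\in D$: the hypothesis ``$N_i<N_j$ whenever $i<j$ and $j\in D$'' forces $N_{j-1}\leq N_j-1$, so the linear ordering (hence the contour nesting) is unchanged under every choice of $\lambda$; this is the one place the special assumption on $D$ is actually used, and it is worth saying out loud. Second, your worry about residues picked up when deforming blockwise contours to a common nested family is unfounded: distinct blocks of a partition integrate against integrands in disjoint variable sets with no cross-block $\mathfrak{L}$ factors, so the global nesting $|u_{1,1}|\ll\cdots\ll|u_{h,k_h}|$ is already an admissible contour choice for every block simultaneously, and no deformation or residue accounting is required.
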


\begin{proof}
For a random vector, adding a constant vector only adds a first order term to the log of its characteristic function;
thus for $h \geq 2$, 
\begin{multline}   \label{eq:prop:gtp:pf:00}
\lim_{L\rightarrow \infty}
L^{\eta} \kappa\left( \mathfrak{E}_1, \cdots, \mathfrak{E}_h \right) = 
\lim_{L\rightarrow \infty} L^{\eta} \sum_{\substack{t \in \mathbb{Z}_{>0} \\ \left\{ U_1, \cdots, U_t \right\} \in \Theta_h}}  (-1)^{t-1} (t-1)!
\\
\times
\prod_{r=1}^t \mathbb{E}\left[\prod_{i\in U_r \bigcap D} \left( \mathfrak{P}_{k_i}(x^{N_i}) - \mathfrak{P}_{k_i}(x^{N_i - 1})  \right)
\prod_{i\in U_r \backslash D} \mathfrak{P}_{k_i}(x^{N_i}) \right] .
\end{multline}


For any fixed $\left\{ U_1, \cdots, U_t \right\} \in \Theta_{h}$,
we apply Theorem \ref{thm:var:dis} to expectations in the following form:
\begin{equation}
\mathbb{E}\left[\prod_{i\in U_r \bigcap D} (- 1 )^{\lambda_i} \mathfrak{P}_{k_i}(x^{N_i - \lambda_i})
\prod_{i\in U_r \backslash D} \mathfrak{P}_{k_i}(x^{N_i}) \right],
\end{equation}
where $1 \leq r \leq t$, and each $\lambda_i \in \{0, 1\}$.
We multiply the contour integrals \eqref{eq:thm:var:dis:st} over all $r=1,\dots,t$, and then sum the result over all choices of $\{ \lambda_i \}_{i\in D}$.
Note that for any $i \in D$, $i > 1$, we have assumed $N_{i-1} \leq N_i - 1 < N_i$;
thus the nesting order of the contour integrals is the same for different choices of $\{ \lambda_i \}_{i\in D}$.
Therefore, we have
\begin{multline}   \label{eq:lem:ctr:pf1}
\prod_{r=1}^t \mathbb{E}\left[\prod_{i\in U_r \bigcap D} \left( \mathfrak{P}_{k_i}(x^{N_i}) - \mathfrak{P}_{k_i}(x^{N_i - 1})  \right)
\prod_{i\in U_r \backslash D} \mathfrak{P}_{k_i}(x^{N_i}) \right]
\\
=
\frac{(-\theta)^{-h}}{(2\pi\im)^{k_1 + \cdots + k_{h}}} \oint \cdots \oint
\prod_{i \in \{ 1, \cdots, h\} \backslash D} \mathfrak{I}(u_{i,1}, \cdots, u_{i,k_i}; \alpha, M, \theta, N_i)
\\
\times
\prod_{i \in D}
\left(
\mathfrak{I}(u_{i,1}, \cdots, u_{i,k_i}; \alpha, M, \theta, N_i)
-
\mathfrak{I}(u_{i,1}, \cdots, u_{i,k_i}; \alpha, M, \theta, N_i - 1)
\right)
\\
\times \prod_{r=1}^{t}  \prod_{\substack{i < j,\\ i, j \in U_r } } \mathfrak{L}(u_{i,1}, \cdots, u_{i,k_i}; u_{j,1}, \cdots, u_{j,k_j}; \theta)
\prod_{i=1}^{h} \prod_{i'=1}^{k_i} du_{i,i'}
,
\end{multline}
where the contours are nested such that
for each $1\leq i \leq h$,
we have $|u_{i,1}| \ll \cdots \ll |u_{i,k_i}|$;
for $1 \leq i \leq h-1$,
we have $|u_{i, k_i}| \ll |u_{i+1, 1}|$.

For each $1\leq i < j \leq h$,
denote
\begin{equation}
\mathfrak{M}_{i,j} = \mathfrak{L}(u_{i,1}, \cdots, u_{i,k_i}; u_{j,1}, \cdots, u_{j,k_j}; \theta) - 1.
\end{equation}
Now sum \eqref{eq:lem:ctr:pf1} over all $\left\{ U_1, \cdots, U_t \right\} \in \Theta_{h}$, and rewrite the expression with the notation $\mathfrak{M}_{i,j}$:
\begin{multline}  \label{eq:prop:gtp:pf1}
\sum_{\substack{t \in \mathbb{Z}_{>0} \\ \left\{ U_1, \cdots, U_t \right\} \in \Theta_h}}  (-1)^{t-1} (t-1)!
\prod_{r=1}^t \mathbb{E}\left[\prod_{i\in U_r \bigcap D} \left( \mathfrak{P}_{k_i}(x^{N_i}) - \mathfrak{P}_{k_i}(x^{N_i - 1})  \right)
\prod_{i\in U_r \backslash D} \mathfrak{P}_{k_i}(x^{N_i}) \right]
\\
=
\frac{(-\theta)^{-h}}{(2\pi\im)^{k_1 + \cdots + k_{h}}} \oint \cdots \oint
\prod_{i \in \{1, \cdots, h\} \backslash D} \mathfrak{I}(u_{i,1}, \cdots, u_{i,k_i}; \alpha, M, \theta, N_i)
\\
\times
\prod_{i \in D}
\left(
\mathfrak{I}(u_{i,1}, \cdots, u_{i,k_i}; \alpha, M, \theta, N_i)
-
\mathfrak{I}(u_{i,1}, \cdots, u_{i,k_i}; \alpha, M, \theta, N_i - 1)
\right)
\\
\times \sum_{\substack{t \in \mathbb{Z}_{>0} \\ \left\{ U_1, \cdots, U_t \right\} \in \Theta_h}}  (-1)^{t-1} (t-1)! \prod_{r=1}^{t} \prod_{\substack{i < j,\\ i, j \in U_r } } \left(\mathfrak{M}_{i,j} + 1 \right)
\prod_{i=1}^{h} \prod_{i'=1}^{k_i} du_{i,i'}
.
\end{multline}


We use a combinatorial argument to simplify the last line of \eqref{eq:prop:gtp:pf1}.
For any set $U \subset \{ 1, \cdots, h\}$, let $\mathcal{T}(U)$ be the set of all undirected simple graphs whose vertices are labeled by $U$,
and $\mathcal{L}(U) \subset \mathcal{T}(U)$ be the set of such graphs that are connected.
We also denote
\begin{equation}
T(U) = \sum_{\Omega \in \mathcal{T}(U)} \prod_{\substack{i < j,\\ (i, j) \in \Omega }} \mathfrak{M}_{i,j} , \quad
R(U) = \sum_{\Omega \in \mathcal{L}(U)} \prod_{\substack{i < j,\\ (i, j) \in \Omega }} \mathfrak{M}_{i,j} .
\end{equation}
Then we have
\begin{equation}  \label{eq:new0}
T(U) 
=
\sum_{t \in \mathbb{Z}_{>0}, \{U_1, \cdots, U_t \}}
\prod_{k=1}^t R(U_k) ,
\end{equation}
where the sum is over all partitions of $U$.
By induction on $|U|$ (or (generalized) M\"{o}bius inversion formula), we invert \eqref{eq:new0}, and get
\begin{equation}  \label{eq:new1}
R(U)
=
\sum_{t \in \mathbb{Z}_{>0}, \{U_1, \cdots, U_t \}}
(-1)^{t-1}(t-1)!\prod_{k=1}^t T(U_k) ,
\end{equation}
where the sum is over all partitions of $U$.
Take $U = \{1, \cdots, h\}$, then the right hand side of \eqref{eq:new1} is precisely the last line of \eqref{eq:prop:gtp:pf1}.

We replace the last row of \eqref{eq:prop:gtp:pf1} by $R(\{1, \cdots, h\})$.
We further set $u_{i,i'} = L\theta v_{i,i'}$ for any $1\leq i \leq h$, $1\leq i' \leq k_i$.
By changing notations, \eqref{eq:prop:gtp:pf1} multiplied by $L^{\eta}$ can be written as
\begin{equation}  \label{eq:prop:gtp:pf3}
\frac{(- 1)^{-h} }{(2\pi\im)^{k_1 + \cdots + k_{h}}} \oint \cdots \oint
\prod_{i = 1}^h \mathcal{S}_i
\prod_{i \in D} \mathcal{R}_i
\sum_{ \Omega \in \mathcal{L}(\{1, \cdots, h\}) }
L^{\eta + h - |D| - 2|\Omega|}
\prod_{\substack{i < j, \\ (i,j) \in \Omega}} \mathfrak{N}_{i,j}
\prod_{i=1}^{h} \prod_{i'=1}^{k_i} dv_{i,i'},
\end{equation}
where $|\Omega|$ is the number of edges in $\Omega$, and 
\begin{multline}
\mathcal{S}_i
= \frac{1}{\left( v_{i,2}-v_{i,1} + (\theta^{-1} - 1)L^{-1} \right)\cdots \left( v_{i,k_i}-v_{i,k_i - 1} + (\theta^{-1} - 1)L^{-1} \right)} \\
\times \prod_{1\leq i'<j' \leq m}\frac{(v_{i,j'}-v_{i,i'}) (v_{i,j'}-v_{i,i'}+(\theta^{-1} - 1)L^{-1})}{(v_{i,j'}-v_{i,i'}- L^{-1}) (v_{i,j'}-v_{i,i'}+\theta^{-1}L^{-1})}
\prod_{i'=1}^{k_i}\frac{v_{i,i'}-L^{-1} }{v_{i,i'} + \hat{N}_i - L^{-1}}\cdot \frac{v_{i,i'}-\hat{\alpha}}{v_{i,i'}-\hat{\alpha} - \hat{M}} ,
\end{multline}
\begin{equation}
\mathcal{R}_i =
L
\left( 1 - \prod_{i'=1}^{k_i} \frac{v_{i,i'} + \hat{N}_i - L^{-1}}{v_{i,i'} + \hat{N}_i - 2L^{-1}} \right),
\end{equation}
and
\begin{equation}
\mathfrak{N}_{i,j}
=
L^2 \mathfrak{M}_{i,j}
=
L^2
\left(
\prod_{1\leq i'\leq k_i, 1\leq j' \leq k_j}\frac{ (v_{i,i'} - v_{j,j'}) (v_{i,i'} - v_{j,j'} + (\theta^{-1} - 1)L^{-1}) }{(v_{i,i'} - v_{j,j'} - L^{-1}) (v_{i,i'} - v_{j,j'} + \theta^{-1}L^{-1})} - 1 \right).
\end{equation}
Also, the contours in (\ref{eq:prop:gtp:pf3}) are nested, such that for any $1\leq i \leq h$,
we have $|v_{i,1}| \ll \cdots \ll |v_{i,k_i}|$;
for $1 \leq i \leq h-1$,
we have $|u_{i, k_i}| \ll |u_{i+1, 1}|$;
and all contours enclose all $ - \hat{N}_i$, but not $\hat{\alpha} + \hat{M}$.

We briefly explain the exponent of $L$ in \eqref{eq:prop:gtp:pf3}:
the change of variables produces $L^{k_1 + \cdots + k_h}$,
each $\mathcal{S}_i$ produces $L^{-k_i + 1}$,
each $\mathcal{R}_i$ produces $L^{-1}$,
and each $\mathfrak{N}_{i,j}$ produces $L^{-2}$.

As $L \rightarrow \infty$, each $\mathcal{S}_i$ obviously converges.
Besides, we also have
\begin{equation}
\mathcal{R}_i
=
- \sum_{i'=1}^{k_i} \frac{1}{v_{i,i'} + \hat{N}_i - 2L^{-1}} + O(L^{-1})
\end{equation}
and
\begin{equation}
\mathfrak{N}_{i,j}
=
\prod_{1\leq i'\leq k_i, 1\leq j' \leq k_j}\frac{ \theta^{-1} }{(v_{i,i'} - v_{j,j'} - L^{-1}) (v_{i,i'} - v_{j,j'} + \theta^{-1}L^{-1})} + O(L^{-2}) ,
\end{equation}
so when send $L \rightarrow \infty$, the integrand in \eqref{eq:prop:gtp:pf3} converges to zero if $\eta + h - |D| - 2|\Omega| < 0$.


For any $\Omega \subset \mathcal{L}(\{1, \cdots, h\})$, we have $|\Omega| \geq h - 1$.
Since we require that $\eta < h - 2 + |D|$, we have $\eta + h - |D| - 2|\Omega| < 0$.
We also note that the convergence of the integrand in \eqref{eq:prop:gtp:pf3} is uniform in the variables $v_{i,i'}$ in the contours.
This means that the integral also converges to zero, and we finish the proof.
\end{proof}
\begin{rem}
Ideas similar to the combinatoric arguments to obtain \eqref{eq:new1} can be found in standard arguments of cluster expansions in statistical physics; see e.g. \cite[Section 2]{faris2010combinatorics}, \cite[Chapter 9]{book:18320} for more discussions.
\end{rem}

From the proof, we also conclude that the convergence is uniform in $\hat{N}_1, \cdots, \hat{N}_h$.
\begin{cor}   \label{cor:clt2:bd}
For any $G \in \mathbb{R}_{>0}$,
there is a constant $C(\hat{\alpha}, \hat{M}, G, k_1, \cdots, k_h)$
, independent of $L$, $\hat{N}_1, \cdots, \hat{N}_h$, and $D$,
such that for any $0 \leq \hat{N}_1 \leq \cdots \leq \hat{N}_h \leq G$, satisfying $\hat{N}_i < \hat{N}_j$ for any $1\leq i < j \leq h$, $j \in D$, we have
\begin{equation}  \label{eq:clt2:bd}
L^{\eta} \left|
\kappa\left( \mathfrak{E}_1, \cdots, \mathfrak{E}_h \right)
\right|
\leq C(\hat{\alpha}, \hat{M}, G, k_1, \cdots, k_h)
\end{equation}
for any $L > C(\hat{\alpha}, \hat{M}, G, k_1, \cdots, k_h)$.
\end{cor}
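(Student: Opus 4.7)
The plan is to revisit the contour integral identity \eqref{eq:prop:gtp:pf3} from the proof of Proposition \ref{prop:gtp}, which expresses the quantity inside $|\cdot|$ in \eqref{eq:clt2:bd} as a finite sum, indexed by partitions $\{U_1,\ldots,U_t\}\in\Theta_h$ and by connected subgraphs $\Omega\subset\mathcal{T}_0$, of explicit nested contour integrals in the variables $v_{i,i'}$. Since the number of terms is bounded solely by $h$ and $k_1,\ldots,k_h$, it suffices to bound each integrand uniformly in $\hat N_1,\ldots,\hat N_h$, $D$, and $L$, along a single choice of contours that depends only on $\hat\alpha,\hat M,h,k_1,\ldots,k_h$.

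The key observation is that all relevant poles $-\hat N_1,\ldots,-\hat N_h$ lie in the fixed compact set $[-1,0]$. I will therefore fix, once and for all, a system of $K=k_1+\cdots+k_h$ smooth nested simple closed contours $\Gamma_1\subset\cdots\subset\Gamma_K$, each enclosing $[-1,0]$ but not $\hat\alpha+\hat M$, such that the pairwise distances between distinct $\Gamma_i$'s, and the distances from each $\Gamma_i$ to $[-1,0]\cup\{\hat\alpha+\hat M\}$, are all bounded below by some $\delta>0$ depending only on $\hat\alpha,\hat M,h,k_1,\ldots,k_h$. Assigning these contours to the variables $v_{i,i'}$ in the order required in \eqref{eq:prop:gtp:pf3}, every denominator appearing in the factors $\mathcal{S}_j,\mathcal{R}_j,\mathfrak{N}_{i,j}$ is bounded below in modulus by a constant depending only on $\delta$, provided $L$ exceeds some threshold $L_0(\delta)$ for which the $O(L^{-1})$ perturbations can be absorbed. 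The $L$-prefactors $L(1-\prod(\cdots))$ in $\mathcal{R}_j$ and $L^{2}(\prod(\cdots)-1)$ in $\mathfrak{N}_{i,j}$ are finite differences that remain uniformly bounded on $\Gamma_1,\ldots,\Gamma_K$, with limits $-\sum_{i'}(v_{j,i'}+\hat N_j)^{-1}$ and $\sum_{i',j'}\theta^{-1}(v_{i,i'}-v_{j,j'})^{-2}$ respectively.

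It remains to track the global $L$-powers in \eqref{eq:prop:gtp:pf3}: connectedness of $G_\Omega$ forces $|\Omega|\geq h-1$, hence $L^{2(h-1-|\Omega|)}\leq 1$; and the hypothesis $\eta<h-2+|D|$ gives $L^{\eta-(h-2+|D|)}\leq 1$ for $L\geq 1$. Combined with the uniform bounds on the integrand, this yields a constant $C_1$ bounding \eqref{eq:clt2:bd} for all $L>L_0$. For the finite range $L\leq L_0$, the almost-sure bound $\mathfrak{P}_{k_j}(x^{N_j})\in[0,L+1]$ produces a trivial polynomial-in-$L$ estimate on each $\mathfrak{E}_j$, hence on the whole left-hand side of \eqref{eq:clt2:bd}, and enlarging $C$ to dominate this bounded quantity finishes the argument. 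The main obstacle I anticipate is purely technical: verifying that the nesting required in \eqref{eq:prop:gtp:pf3} is simultaneously compatible with the uniform-separation property of the $\Gamma_i$'s, which is a routine geometric construction given that $[-1,0]$ is compact and $K$ is finite.
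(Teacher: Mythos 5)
Your proposal is correct and follows essentially the same route as the paper: fix nested contours enclosing $[-1,0]$ but not $\hat\alpha+\hat M$, use them for the integral representation \eqref{eq:prop:gtp:pf3}, and bound $\mathcal{S}_j$, $\mathcal{R}_j$, $\mathfrak{N}_{i,j}$ uniformly on those contours for $L$ large. The only extra content is your treatment of small $L$, which is unnecessary since the statement only asserts the bound for $L>C(\hat\alpha,\hat M,k_1,\ldots,k_h)$.
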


\begin{proof}
In \eqref{eq:prop:gtp:pf3}, we can fix the contours for all $v_{i, i'}$, such that they are nested and each encloses the line segment $[-G, 0]$ but not $\hat{\alpha} + \hat{M}$, when $L$ large enough.
Then each of $|\mathcal{S}_j|$, $|\mathcal{R}_j|$, and $|\mathfrak{N}_{i, j}|$ is upper bounded by a constant relying on the chosen contours, and so is the integral.
\end{proof}

We proceed to remove the strict ordering constraints in Proposition \ref{prop:gtp}
\begin{prop}  \label{prop:gtp2}
Let $k_1, \cdots, k_h$ and $N_1 \leq \cdots \leq N_h$ be positive integers,
and let $D \subset \{1, \cdots, h\}$.
Let the notation $\mathfrak{E}_i$ be the same as in Proposition \ref{prop:gtp}, for $i \in D$ and $i \not\in D$, respectively.

Suppose that the number of different values among $\left\{ \hat{N}_i \right\}_{i \in D}$ is $s$,
then for any $\eta < h - 2 + s$, we have
\begin{equation} \label{eq:gtp2}
\lim_{L\rightarrow \infty} L^{\eta} \kappa\left( \mathfrak{E}_1, \cdots, \mathfrak{E}_h \right)
 = 0 .
\end{equation}
\end{prop}
\begin{proof}
Suppose that $\hat{N}_{i_1}, \cdots, \hat{N}_{i_s}$ include all the $s$ values in $\{ \hat{N}_i \}_{i \in D}$, and $i_1, \cdots, i_s \in D$.
For any $i \in D \backslash \{i_1, \cdots, i_s\}$, denote
\begin{equation}
\mathfrak{E}_i' =
\mathfrak{P}_{k_i}(x^{N_i}) - \mathbb{E}\left(\mathfrak{P}_{k_i}(x^{N_i}) \right) ,\quad
\mathfrak{E}_i'' =
\mathfrak{P}_{k_i}(x^{N_i-1}) - \mathbb{E}\left(\mathfrak{P}_{k_i}(x^{N_i-1}) \right) .
\end{equation}
Then via the identity $\mathfrak{E}_i = \mathfrak{E}_i' - \mathfrak{E}_i''$,
we can write $\kappa\left( \mathfrak{E}_1, \cdots, \mathfrak{E}_h \right)$ as a sum of $2^{|D| - s}$ cumulants.
We get \eqref{eq:gtp2} by applying Proposition \ref{prop:gtp} to each of them and summing them up.
\end{proof}

Based on Proposition \ref{prop:gtp2}, we finish the proof of Theorem \ref{thm:clt1}.

\begin{proof}[Proof of Theorem \ref{thm:clt1}]
By Lemma \ref{lemma:geq}, to show that (\ref{eq:clt1:d:cov}) and (\ref{eq:clt1:cov}) jointly weakly converge to a Gaussian vector, it suffices to show that each cumulant of order greater than two converges to zero, and the first two moments converge to the desired values.
Since all the first order moments are zero, and covariances are given by \cite[Theorem 4.1]{borodin2015general}, Lemma \ref{lemma:cov:d}, and Lemma \ref{lemma:cov:s}, it suffices to consider the cumulants of order greater than two.
For any positive integers $h \geq 3$, $k_1, \cdots, k_h$ and $N_1 \leq \cdots \leq N_h$, and $D \subset \{1, \cdots, h\}$ a subset of indices,
let the notation $\mathfrak{E}_i$ be the same as in Proposition \ref{prop:gtp}, for $i \in D$ and $i \not\in D$, respectively.
We need that
\begin{equation} \label{eq:thm:clt1:pf1}
\lim_{L\rightarrow \infty} L^{\frac{|D|}{2}}
\kappa\left( \mathfrak{E}_1, \cdots, \mathfrak{E}_h \right)
 = 0 ,
\end{equation}

By Proposition \ref{prop:gtp2},
under the limit scheme \eqref{eq:lsch}, \eqref{eq:lsch2} we have
\begin{equation}  \label{eq:thm:clt1:pf11}
\lim_{L\rightarrow \infty} L^{h - 1 - \epsilon} \kappa\left( \mathfrak{E}_1, \cdots, \mathfrak{E}_h \right) = 0 .
\end{equation}
for any $\epsilon > 0$.
Since when $h \geq 3$, $h - 1 > \frac{h}{2} \geq \frac{|D|}{2}$, we immediately obtain \eqref{eq:thm:clt1:pf1}.
\end{proof}

\begin{rem} \label{rem:add2}
According to Corollary \ref{cor:clt2:bd}, for any $G \in \mathbb{R}_{>0}$, the convergence of (\ref{eq:thm:clt1:pf11}) is uniform in $\hat{N}_1, \cdots, \hat{N}_h \in [0, G]$.
\end{rem}

\subsection{Integration over levels: proof of Theorem \ref{thm:clt2}}  \label{ssec:gs}




\begin{proof}[Proof of Theorem \ref{thm:clt2}]
By rescaling, it suffices to consider the case where $G = 1$.
To simplify notations, denote
\begin{equation}
\mathfrak{C}_i(y) = \mathfrak{P}_{k_i}(x^{\lfloor y \rfloor}) - \mathfrak{P}_{k_i}(x^{\lfloor y \rfloor-1}) - \mathbb{E}\left(\mathfrak{P}_{k_i}(x^{\lfloor y \rfloor}) - \mathfrak{P}_{k_i}(x^{\lfloor y \rfloor-1})\right)   .
\end{equation}

By Lemma \ref{lemma:geq}, to prove that \eqref{eq:clt2:def} is asymptotically Gaussian, it suffices to show that all cumulants of order greater than two converge to zero.
That is, it suffices to show that, for any positive integers $h \geq 3, k_1, \cdots, k_h$, $N_1 \leq \cdots \leq N_h$,
and any functions $g_1, \cdots, g_{h} \in L^{\infty}([0, 1])$, we should have
\begin{equation}   \label{eq:thm:clt2:pf0}
\lim_{L\rightarrow \infty} L^{h}
\kappa\left(
\int_0^1 g_1(y) \mathfrak{C}_1(Ly) dy, \cdots,
\int_0^1 g_h(y) \mathfrak{C}_h(Ly) dy
\right) 
= 0 .
\end{equation}
By the multi-linearity of cumulants, the left hand side of \eqref{eq:thm:clt2:pf0} equals
\begin{equation}   \label{eq:thm:clt2:pf005}
\lim_{L \rightarrow \infty}
\int_0^1 \cdots \int_0^1 L^h
\kappa\left( \mathfrak{C}_1(Ly_1), \cdots, \mathfrak{C}_h(Ly_h) \right)
\prod_{i=1}^h g_i(y_i)dy_i   .
\end{equation}
The expression (\ref{eq:thm:clt2:pf005}) can be split into a (finite) sum of integrals in the form of
\begin{multline} \label{eq:thm:clt2:pf01}
\int_0^1 \cdots \int_0^1
\mathds{1}_{\lfloor Ly_1 \rfloor = \cdots = \lfloor Ly_{c_1} \rfloor
< \lfloor Ly_{c_1+1} \rfloor = \cdots = \lfloor Ly_{c_2} \rfloor
< \cdots
< \lfloor Ly_{c_{s-1}+1} \rfloor = \cdots = \lfloor Ly_{c_s} \rfloor }
\\
\times
L^h
\kappa\left( \mathfrak{C}_1(Ly_1), \cdots, \mathfrak{C}_h(Ly_h) \right)
\prod_{i=1}^h g_i(y_i)dy_i   ,
\end{multline}
where $1 \leq s \leq h$ and $1\leq c_1 < \cdots < c_s = h$.
Since each $g_i$ is almost everywhere bounded, there is a number $K$, independent of $L$, such that (\ref{eq:thm:clt2:pf01}) is bounded above by
\begin{multline} \label{eq:thm:clt2:pf02}
\int_0^1 \cdots \int_0^1
K \cdot
\mathds{1}_{\lfloor Ly_1 \rfloor = \cdots = \lfloor Ly_{c_1} \rfloor
< \lfloor Ly_{c_1+1} \rfloor = \cdots = \lfloor Ly_{c_2} \rfloor
< \cdots
< \lfloor Ly_{c_{s-1}+1} \rfloor = \cdots = \lfloor Ly_{c_s} \rfloor }
\\
\times
L^h
\left|
\kappa\left( \mathfrak{C}_1(Ly_1), \cdots, \mathfrak{C}_h(Ly_h) \right)
\right|
\prod_{i=1}^h
dy_i   .
\end{multline}
Note that each $\mathfrak{C}_j(Ly_j)$ is constant when $y_j \in \left[\frac{m}{L}, \frac{m+1}{L} \right)$, for any $0 \leq m \leq L - 1$.
For each $i = 1, \cdots, s$, we can replace each of $y_{c_{i-1} + 1}, \cdots, y_{c_i}$ by the same variable $z_i$ (here and below $c_{0} = 0$).
Thus (\ref{eq:thm:clt2:pf02}) becomes
\begin{equation}  \label{eq:thm:clt2:pf1}
\int_0^1 \cdots \int_0^1
K \cdot
\mathds{1}_{\lfloor Lz_1 \rfloor
< \cdots
< \lfloor Lz_{s} \rfloor}
L^s
\left|
\kappa\left( \mathfrak{W}_1, \cdots, \mathfrak{W}_h \right)
\right|
\prod_{i=1}^s  dz_i,
\end{equation}
here for any $1 \leq i \leq s$ and $1\leq j \leq h$, such that $c_{i-1}< j \leq c_i$, we denote $\mathfrak{W}_j = \mathfrak{C}_j(Lz_i)$.

Since $h \geq 3$, we have $s < h - 2 + s$. By Proposition \ref{prop:gtp2}, as $L \rightarrow \infty$ the integrand of (\ref{eq:thm:clt2:pf1}) converges to $0$ for each $z_1, \cdots, z_s$;
and by Corollary \ref{cor:clt2:bd} the integrand is bounded regardless of $L$ and $z_1, \cdots, z_s$.
Using the dominated convergence theorem we conclude that (\ref{eq:thm:clt2:pf1}) converges to $0$, as $L \rightarrow \infty$.
This implies (\ref{eq:thm:clt2:pf0}).

It remains to match the covariances.
For any functions $g_1, g_2 \in L^{\infty}([0, 1])$, we have
\begin{multline} \label{eq:thm:clt2:pf03}
\lim_{L\rightarrow \infty}
L^2
\mathbb{E} \left( \prod_{i=1}^{2} \int_0^1 g_i(y) \mathfrak{C}_i(Ly) dy \right)
=
\lim_{L\rightarrow \infty}
\int_0^1 \int_0^1
L^2
\mathbb{E} \left( \prod_{i=1}^2
\mathfrak{C}_i(Ly_i)
\right)
\prod_{i=1}^2
g_i(y_i) dy_i
\\
= 
\lim_{L\rightarrow \infty}
\iint_{\lfloor Ly_1 \rfloor < \lfloor Ly_2 \rfloor}
L^2
\left[
\mathbb{E} \left(
\mathfrak{C}_1(Ly_1)
\mathfrak{C}_2(Ly_2)
\right)
g_1(y_1)
g_2(y_2) 
\right.
\\
\left.
+
\mathbb{E} \left(
\mathfrak{C}_1(Ly_2)
\mathfrak{C}_2(Ly_1)
\right)
g_1(y_2)
g_2(y_1) 
\right]
dy_1 dy_2
\\
+
\lim_{L\rightarrow\infty}
\int_0^1
L^3
\left(\iint_{\left[ \frac{\lfloor Ly \rfloor}{L}, \frac{\lfloor Ly + 1\rfloor}{L} \right]^2} \prod_{i=1}^2 g_i(y_i) dy_i \right)
\mathbb{E} \left( \prod_{i=1}^2 \mathfrak{C}_i(Ly) \right) dy  .
\end{multline}
By Lebesgue Differentiation Theorem, for $y$ at almost everywhere we have
\begin{equation}
\lim_{L\rightarrow \infty} L^2 \iint_{\left[ \frac{\lfloor Ly \rfloor}{L}, \frac{\lfloor Ly + 1\rfloor}{L} \right]^2} \prod_{i=1}^2 g_i(y_i) dy_i = g_1(y)g_2(y).
\end{equation}
From the expectations computed in Section \ref{ssec:cc}, the integrands in (\ref{eq:thm:clt2:pf03}) converges pointwise.
By Corollary \ref{cor:clt2:bd}, and since $g_1$ and $g_2$ are bounded,
the integrands in \eqref{eq:thm:clt2:pf03} are uniformly bounded.
We can thus move the limits inside the integrals.
By Lemma \ref{lemma:cov:d} and \ref{lemma:cov:s}, 
\eqref{eq:thm:clt2:pf03} equals
\begin{multline}
\iint_{0 \leq y_1 < y_2 \leq 1}
\frac{\theta^{-1}}{(2\pi\im)^{2}}
\oint \oint
\frac{k_1 k_2}{(v_1 - v_2)^2(v_1 + y_1 )(v_2 + y_2 )}
\\
\times 
\left[
g_1(y_1) g_2(y_2)
\left( \frac{v_1}{v_1+y_1}\cdot \frac{v_1-\hat{\alpha}}{v_1- \hat{\alpha} - \hat{M}} \right)^{k_1}
\left( \frac{v_2}{v_2+y_2}\cdot \frac{v_2-\hat{\alpha}}{v_2- \hat{\alpha} - \hat{M}} \right)^{k_2}
\right.
\\
\left.
 + g_1(y_2) g_2(y_1)
\left( \frac{v_1}{v_1+y_1}\cdot \frac{v_1-\hat{\alpha}}{v_1- \hat{\alpha} - \hat{M}} \right)^{k_2}
\left( \frac{v_2}{v_2+y_2}\cdot \frac{v_2-\hat{\alpha}}{v_2- \hat{\alpha} - \hat{M}} \right)^{k_1}
\right]
dv_1 dv_2 
dy_1 dy_2
\\
- \int_0^1
\frac{\theta^{-1} g_1(y) g_2(y)}{2\pi\im}
\oint
\frac{k_1 k_2}{(k_1 + k_2)(v + y)^2} \left( \frac{v}{v+y}\cdot \frac{v-\hat{\alpha}}{v - \hat{\alpha} - \hat{M}}\right)^{k_1 + k_2} dv dy ,
\end{multline}
where in the first integral, the contours of $v_1, v_2$ enclose $-y_1, -y_2$, respectively, but not $\hat{\alpha} + \hat{M}$, and $|v_1| \ll |v_2|$;
and in the second integral, the contour of $v$ encloses $-y$ but not $\hat{\alpha} + \hat{M}$.
In slightly different notations this is precisely \eqref{eq:clt2:cov}.
\end{proof}

\section{Connecting the limit field with the Gaussian Free Field}  \label{sec:gff}

In this section we interpret Theorem \ref{thm:clt1} and \ref{thm:clt2} as convergence of the height functions (see Definition \ref{defn:hf}) towards a Gaussian random field.

In Section \ref{ssec:gff:la}, we give the proof of Theorem \ref{thm:gff:la}.
It is based on Theorem \ref{thm:clt1}, and computing the covariances of the 1--dimensional integrals of the pullback of the Gaussian Free Field.
In Section \ref{ssec:gff:sm}, we discuss the 2--dimensional integrals.
We first compute the covariances of the 2--dimensional integrals against the pullback of the Gaussian Free Field.
Then, with a bound on the covariances, we prove Lemma \ref{lemma:derk:defn}, and extend the definition of $\mathfrak{Z}_{g, k}$ to any $g\in L^2([0, 1])$.
Based on these and Theorem \ref{thm:clt2}, we finish the proof of Theorem \ref{thm:gff:sm}.


\subsection{Identification of the 1--dimensional integral: proof of Theorem \ref{thm:gff:la}}  \label{ssec:gff:la}

\begin{proof}[Proof of Theorem \ref{thm:gff:la}]
Denote $N_i = \left\lfloor L\hat{N}_i \right\rfloor$, for $i = 1, \cdots, h$, and $N'_i = \left\lfloor L\hat{N}'_i \right\rfloor$, for $i = 1, \cdots, h'$.
Through integration by parts, 
\eqref{eq:prop:gff:la1} and \eqref{eq:lemma:if:2} are respectively reduced to
\begin{equation}  \label{eq:thm:gff:la:pf1}
\left( - \frac{L^{\frac{1}{2}}}{k_i + 1}\left( \mathfrak{P}_{k_i}(x^{N_i}) - \mathfrak{P}_{k_i}(x^{N_i-1}) - \mathbb{E}\left(\mathfrak{P}_{k_i}(x^{N_i}) - \mathfrak{P}_{k_i}(x^{N_i-1})\right) \right) \right)_{i=1}^h
\end{equation}
and
\begin{equation}
\left( - \frac{1}{k'_i + 1}\left( \mathfrak{P}_{k'_i}(x^{N'_i}) - \mathbb{E}\left(\mathfrak{P}_{k'_i}(x^{N'_i}) \right) \right) \right)_{i=1}^{h'} .
\end{equation}
By Theorem \ref{thm:clt1} we conclude that, as $L \rightarrow \infty$, they (weakly) converge to Gaussian jointly, and the limit vectors are independent.
For $\hat{N}_i = \hat{N}_j$, the covariance of the $i$th and $j$th component in the $L \rightarrow \infty$ limit of \eqref{eq:thm:gff:la:pf1} is
\begin{equation}  \label{eq:prop:gff:pf:cov}
 - \frac{1}{k_i + k_j + 2} \cdot
\frac{\theta^{-1}}{2\pi\im }
\oint
\frac{1}{(v + \hat{N}_i)^2} \left( \frac{v}{v+\hat{N}_i}\cdot \frac{v-\hat{\alpha}}{v - \hat{\alpha} - \hat{M}}\right)^{k_i + k_j + 2} dv
\end{equation}
where
the contour encloses $-\hat{N}_i = -\hat{N}_j$
but not $\hat{\alpha} + \hat{M}$.
For $\hat{N}_i \neq \hat{N}_j$, the covariance of the $i$th and $j$th component is $0$.

Now let's turn to evaluate \eqref{eq:prop:gff:la2}.
By Lemma \ref{lemma:pbcov1}, for $\delta > 0$ the distribution of
\begin{equation}   \label{eq:thm:gff:la:pf2}
\delta^{-\frac{1}{2}} \left( \int_0^1 u^{k_i} \mathcal{K}(u, \hat{N}_i + \delta) du - \int_0^1 u^{k_i} \mathcal{K}(u, \hat{N}_i) du \right)_{i=1}^h  ,
\end{equation}
is also Gaussian, and the covariance of the $i$th and $j$th ($i < j$) component is as following: when $\hat{N}_i < \hat{N}_j$, for $\delta < \hat{N}_j - \hat{N}_i$ the covariance is
\begin{multline}  \label{eq:prop:gff:la:pf1}
\frac{\delta^{-1}}{(k_i + 1)(k_j + 1)} \cdot
\frac{\theta^{-1}}{(2\pi\im)^2 }
\oint \oint
\left(
\left( \frac{v_1}{v_1+\hat{N}_i+\delta}\cdot \frac{v_1-\hat{\alpha}}{v_1 - \hat{\alpha} - \hat{M}}\right)^{k_i + 1}
-
\left( \frac{v_1}{v_1+\hat{N}_i}\cdot \frac{v_1-\hat{\alpha}}{v_1 - \hat{\alpha} - \hat{M}}\right)^{k_i + 1}
\right) \\
\times
\left(
\left( \frac{v_2}{v_2+\hat{N}_j+\delta}\cdot \frac{v_2-\hat{\alpha}}{v_2 - \hat{\alpha} - \hat{M}}\right)^{k_j + 1}
-
\left( \frac{v_2}{v_2+\hat{N}_j}\cdot \frac{v_2-\hat{\alpha}}{v_2 - \hat{\alpha} - \hat{M}}\right)^{k_j + 1}
\right)
\frac{dv_1 dv_2}{(v_1 - v_2)^2},
\end{multline}
where $|v_1| \ll |v_2|$, and the contours enclose $-\hat{N}_i$, $-\hat{N}_j$, $-\hat{N}_i - \delta$, and $-\hat{N}_j - \delta$, but not $\hat{\alpha} + \hat{M}$;
when $\hat{N}_i = \hat{N}_j$ the covariance is
\begin{multline}  \label{eq:prop:gff:la:pf2}
\frac{\delta^{-1}}{(k_i + 1)(k_j + 1)} \cdot
\frac{\theta^{-1}}{(2\pi\im)^2}
\oint \oint
\left(
\left( \frac{v_1}{v_1+\hat{N}_i+\delta}\cdot \frac{v_1-\hat{\alpha}}{v_1 - \hat{\alpha} - \hat{M}}\right)^{k_i + 1}
\left( \frac{v_2}{v_2+\hat{N}_i+\delta}\cdot \frac{v_2-\hat{\alpha}}{v_2 - \hat{\alpha} - \hat{M}}\right)^{k_j + 1}
\right.
\\
-
\left( \frac{v_1}{v_1+\hat{N}_i}\cdot \frac{v_1-\hat{\alpha}}{v_1 - \hat{\alpha} - \hat{M}}\right)^{k_i + 1}
\left( \frac{v_2}{v_2+\hat{N}_i+\delta}\cdot \frac{v_2-\hat{\alpha}}{v_2 - \hat{\alpha} - \hat{M}}\right)^{k_j + 1}
\\
-
\left( \frac{v_1}{v_1+\hat{N}_i}\cdot \frac{v_1-\hat{\alpha}}{v_1 - \hat{\alpha} - \hat{M}}\right)^{k_j + 1}
\left( \frac{v_2}{v_2+\hat{N}_i+\delta}\cdot \frac{v_2-\hat{\alpha}}{v_2 - \hat{\alpha} - \hat{M}}\right)^{k_i + 1}
\\
+
\left.
\left( \frac{v_1}{v_1+\hat{N}_i}\cdot \frac{v_1-\hat{\alpha}}{v_1 - \hat{\alpha} - \hat{M}}\right)^{k_j + 1}
\left( \frac{v_2}{v_2+\hat{N}_i}\cdot \frac{v_2-\hat{\alpha}}{v_2 - \hat{\alpha} - \hat{M}}\right)^{k_i + 1}
\right) 
\frac{dv_1 dv_2}{(v_1 - v_2)^2},
\end{multline}
where $|v_1| \ll |v_2|$, and the contours enclose $-\hat{N}_i$ and $-\hat{N}_i - \delta$, but not $\hat{\alpha} + \hat{M}$.

Notice that by sending $\delta \rightarrow 0_+$,
\eqref{eq:thm:gff:la:pf2} converges to a Gaussian as well, because the covariances converge.
Indeed, when $\delta \rightarrow 0_+$, \eqref{eq:prop:gff:la:pf1} converges to zero,
and \eqref{eq:prop:gff:la:pf2} converges to
\begin{multline}  \label{eq:prop:gff:la:pf3}
- \frac{1}{k_j + 1} \cdot
\frac{\theta^{-1}}{(2\pi\im)^2}
\oint \oint
\left(
\frac{1}{v_1 + \hat{N}_i}
\left( \frac{v_1}{v_1+\hat{N}_i}\cdot \frac{v_1-\hat{\alpha}}{v_1 - \hat{\alpha} - \hat{M}}\right)^{k_i + 1}
\left( \frac{v_2}{v_2+\hat{N}_i}\cdot \frac{v_2-\hat{\alpha}}{v_2 - \hat{\alpha} - \hat{M}}\right)^{k_j + 1}
\right.
\\
-
\left.
\frac{1}{v_2 + \hat{N}_i}
\left( \frac{v_1}{v_1+\hat{N}_i}\cdot \frac{v_1-\hat{\alpha}}{v_1 - \hat{\alpha} - \hat{M}}\right)^{k_i + 1}
\left( \frac{v_2}{v_2+\hat{N}_i}\cdot \frac{v_2-\hat{\alpha}}{v_2 - \hat{\alpha} - \hat{M}}\right)^{k_j + 1}
\right)
\frac{dv_1 dv_2}{(v_1 - v_2)^2} 
\\
=
\frac{1}{k_j + 1} \cdot
\frac{\theta^{-1}}{(2\pi\im)^2}
\oint \oint
\frac{dv_1 dv_2}{(v_1 - v_2)}\cdot
\frac{1}{(v_1 + \hat{N}_i)(v_2 + \hat{N}_i)}
\\
\times
\left( \frac{v_1}{v_1+\hat{N}_i}\cdot \frac{v_1-\hat{\alpha}}{v_1 - \hat{\alpha} - \hat{M}}\right)^{k_i + 1}
\left( \frac{v_2}{v_2+\hat{N}_i}\cdot \frac{v_2-\hat{\alpha}}{v_2 - \hat{\alpha} - \hat{M}}\right)^{k_j + 1}  .
\end{multline}
We switch $k_i$ and $k_j$, and obtain an equivalent expression
\begin{multline}  \label{eq:prop:gff:la:pf4}
\frac{1}{k_i + 1} \cdot
\frac{\theta^{-1}}{(2\pi\im)^2}
\oint \oint
\frac{dv_1 dv_2}{(v_1 - v_2)}\cdot
\frac{1}{(v_1 + \hat{N}_i)(v_2 + \hat{N}_i)}
\\
\times
\left( \frac{v_1}{v_1+\hat{N}_i}\cdot \frac{v_1-\hat{\alpha}}{v_1 - \hat{\alpha} - \hat{M}}\right)^{k_j + 1}
\left( \frac{v_2}{v_2+\hat{N}_i}\cdot \frac{v_2-\hat{\alpha}}{v_2 - \hat{\alpha} - \hat{M}}\right)^{k_i + 1}  .
\end{multline}
We get \eqref{eq:prop:gff:pf:cov} by applying Theorem \ref{thm:dr} to $\frac{k_j + 1}{k_i + k_j + 2} \times$\eqref{eq:prop:gff:la:pf3} $+ \frac{k_i + 1}{k_i + k_j + 2}\times$ \eqref{eq:prop:gff:la:pf4}.
\end{proof}
\begin{rem} \label{rem:add3}
As the convergence is proved by applying Theorem \ref{thm:clt1}, from Remark \ref{rem:add2} we conclude that, for any $G \in \mathbb{R}_{>0}$, and the vector \eqref{eq:prop:gff:la1}, the convergence of all its cumulants of order at least three are uniform in $\hat{N}_1, \cdots, \hat{N}_h \in [0, G]$, and its covariances are bounded uniformly for $\hat{N}_1, \cdots, \hat{N}_h \in [0, G]$
\end{rem}


\subsection{Identification of the 2--dimensional integral: proof of Theorem \ref{thm:gff:sm}}  \label{ssec:gff:sm}

Now let us discuss the random variable $\mathfrak{Z}_{g, k}$ from Definition \ref{defn:pair}.
\begin{prop}  \label{prop:derk:smt}
Let $k_1, \cdots, k_h$ be integers, $G \in \mathbb{R}_{>0}$, and let
$g_1, \cdots, g_h \in C^{\infty}([0, G])$, satisfying $g_i(G) = 0$ for each $1 \leq i \leq h$.
The distribution of the vector
$\left( \mathfrak{Z}_{g_i, k_i} \right)_{i=1}^h$
is joint centered Gaussian, and the covariance between the $i$th and $j$th component is
\begin{multline}  \label{eq:prop:derk:smp:cov}
\iint_{0 \leq y_1 < y_2 \leq G}
\frac{\theta^{-1}}{(2\pi \im)^2 }
\oint\oint  \frac{1}{(v_1 - v_2)^2 (v_1 + y_1)(v_2 + y_2)}
\\
\times
\left[
g_i(y_2)g_j(y_1)
\left(\frac{v_1}{v_1 + y_1} \cdot \frac{v_1 - \hat{\alpha}}{v_1 - \hat{\alpha} - \hat{M}}\right)^{k_j + 1} \left(\frac{v_2}{v_2 + y_2} \cdot \frac{v_2 - \hat{\alpha}}{v_2 - \hat{\alpha} - \hat{M}}\right)^{k_i + 1}
\right.
\\
+
\left.
g_i(y_1)g_j(y_2)
\left(\frac{v_1}{v_1 + y_1} \cdot \frac{v_1 - \hat{\alpha}}{v_1 - \hat{\alpha} - \hat{M}}\right)^{k_i + 1} \left(\frac{v_2}{v_2 + y_2} \cdot \frac{v_2 - \hat{\alpha}}{v_2 - \hat{\alpha} - \hat{M}}\right)^{k_j + 1} \right] dv_1 dv_2 dy_1 dy_2
\\
-
\int_0^1 \frac{g_i(y)g_j(y)\theta^{-1}}{2\pi \im (k_i + k_j + 2)}
\oint \frac{1}{(v + y)^2}
\left(\frac{v}{v + y} \cdot \frac{v - \hat{\alpha}}{v - \hat{\alpha} - \hat{M}}\right)^{k_i + k_j + 2} dv dy,
\end{multline}
where for the first two summands, the contours enclose poles at $-y_1$ and $-y_2$,
but not $\hat{\alpha} + \hat{M}$, and are nested with $v_2$ larger;
for the last summand, the contour encloses pole at $-y$ but not $\hat{\alpha} + \hat{M}$.
\end{prop}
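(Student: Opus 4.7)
The plan is to derive the claim directly from Lemma \ref{lemma:pbcov2} by integration by parts in $y$. Since each $g_i \in C^{\infty}([0,1])$, so is $g_i'$, and the random variable $\mathfrak{Z}_{g_i,k_i} = \int_0^1\int_0^1 u^{k_i} g_i'(y)\mathcal{K}(u,y)\,du\,dy$ fits exactly into the framework of Lemma \ref{lemma:pbcov2} applied with the smooth test function $g_i'$ in place of $g_i$. This immediately gives joint Gaussianity of $(\mathfrak{Z}_{g_i,k_i})_{i=1}^h$ and yields the covariance
\begin{equation*}
\iint_{[0,1]^2} \frac{g_i'(y_1)g_j'(y_2)\,\theta^{-1}}{(2\pi\im)^2(k_i+1)(k_j+1)} \oint\oint \frac{A(v_1,y_1)^{k_i+1}A(v_2,y_2)^{k_j+1}}{(v_1-v_2)^2}\,dv_1\,dv_2\,dy_1\,dy_2,
\end{equation*}
where $A(v,y)=\tfrac{v}{v+y}\cdot\tfrac{v-\hat\alpha}{v-\hat\alpha-\hat M}$, the contours enclose $-y_1,-y_2$ but not $\hat\alpha+\hat M$, with nesting $|v_1|\ll|v_2|$ when $y_1\le y_2$ and $|v_1|\gg|v_2|$ when $y_1\ge y_2$.

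The main work is to convert $g_i'(y_1)g_j'(y_2)$ into the $g_i(y_1)g_j(y_2)$ and $g_i(y_2)g_j(y_1)$ structure of \eqref{eq:prop:derk:smp:cov}. I would split $[0,1]^2$ into $T_<=\{y_1<y_2\}$ and $T_>=\{y_1>y_2\}$ so the nesting is fixed on each piece, and integrate by parts once in $y_1$ and once in $y_2$ on each region. The key identity
\begin{equation*}
\frac{\partial}{\partial y_l}A(v_l,y_l)^{k+1} = -\frac{k+1}{v_l+y_l}\,A(v_l,y_l)^{k+1}
\end{equation*}
produces exactly the factor $(k_i+1)(k_j+1)/\bigl((v_1+y_1)(v_2+y_2)\bigr)$ which cancels the prefactor and creates the $1/\bigl((v_1+y_1)(v_2+y_2)\bigr)$ of the first two summands. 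After the two integrations by parts, $T_<$ yields the $g_i(y_1)g_j(y_2)$ term, and $T_>$, after relabeling $v_1\leftrightarrow v_2$ and $y_1\leftrightarrow y_2$, yields the $g_i(y_2)g_j(y_1)$ term.

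The resulting boundary contributions are of three types. The boundary at $y_i=1$ vanishes because $g_i(1)=0$. The boundary at $y_i=0$ vanishes because, when $y_i=0$, the $v_i$-contour is a small loop around $0$ and the factor $v_i^{k_i+1}$ in the numerator of $A(v_i,0)^{k_i+1}$ exactly cancels the $(v_i+0)^{k_i+1}$ denominator, leaving an integrand that is analytic inside the contour; the inner integral therefore vanishes identically. The nontrivial diagonal boundary $y_1=y_2=y$ produces single-integral expressions involving $J_<(y,y)$, $J_>(y,y)$, and their first partials.

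The main obstacle is showing that these diagonal contributions combine across $T_<$ and $T_>$ into the clean single-integral term
$-\int_0^1 \tfrac{g_i(y)g_j(y)\theta^{-1}}{2\pi\im(k_i+k_j+2)}\oint \tfrac{1}{(v+y)^2}A(v,y)^{k_i+k_j+2}\,dv\,dy.$
The difference $J_<(y,y)-J_>(y,y)$ (and analogous derivative combinations) is the jump created by switching the contour nesting, which can be evaluated as a residue at the double pole $v_1=v_2$. To reduce the resulting two-contour residue to a single-contour integral I expect to invoke Corollary \ref{cor:dr} (with $n=2$), applied to $f_1=A^{k_i+1}$ and $f_2=A^{k_j+1}$ (or closely related functions); the normalization $1/(k_i+k_j+2)$ then arises from writing $\tfrac{1}{(v+y)^2}A^{k_i+k_j+2}$ as a total $v$-derivative of $(k_i+k_j+2)^{-1}A^{k_i+k_j+2}$ times the appropriate logarithmic derivative, so that the single-integral residue carries exactly this coefficient.
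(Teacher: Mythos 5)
Your plan is essentially the paper's proof: start from Lemma \ref{lemma:pbcov2} applied to $g_i'$, split $[0,1]^2$ into $T_<=\{y_1<y_2\}$ and $T_>=\{y_1>y_2\}$ to fix the nesting, integrate by parts once in each of $y_1,y_2$ on each region, kill boundaries at $0$ and $1$ for the stated reasons, and isolate a diagonal correction. Where you deviate is in the treatment of the diagonal: the paper first shows that the $y_1=y_2$ boundary from the \emph{first} IBP cancels between $T_<$ and $T_>$, then handles the $y_2=y_1$ boundary from the \emph{second} IBP by computing it in both IBP orders (getting $B$ and $B'=B$), taking the combination $\tfrac{(k_j+1)B+(k_i+1)B'}{k_i+k_j+2}$ to collapse $(v_1-v_2)^{-2}$ to $(v_1-v_2)^{-1}$, and then invoking Theorem \ref{thm:dr}. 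You instead propose to compute directly the jump of the (piecewise-smooth) covariance kernel across the diagonal as a residue at the double pole $v_1=v_2$; this is a legitimate alternative route and is arguably more streamlined, since the residue $\text{Res}_{v_1=v_2}\bigl[\tfrac{A(v_1,y)^{k_i+1}}{(v_1-v_2)^2(v_1+y)}\bigr]A(v_2,y)^{k_j+1}$, after one $v$-integration by parts, gives exactly $-\tfrac{k_j+1}{k_i+k_j+2}\tfrac{A^{k_i+k_j+2}}{(v+y)^2}$.

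Two points you should firm up. First, the cancellation of the first-IBP diagonal boundary between $T_<$ and $T_>$ is \emph{not} automatic even though both contours enclose the same poles: the two nestings differ by the residue at the double pole $v_1=v_2$, and that residue is $\tfrac{k_i+1}{k_i+k_j+2}\partial_{v}A^{k_i+k_j+2}$, a total $v$-derivative, which is why its contour integral vanishes. Your proposal glosses over this, since you lump all diagonal effects together; if you take the unified jump-of-$\partial F$ approach, make sure you identify which derivative actually jumps. Second, the final reduction you describe does not go through Corollary \ref{cor:dr}: that corollary (and Theorem \ref{thm:dr}) treats a simple pole $(v_2-v_1)^{-1}$, whereas after taking the residue at $v_1=v_2$ you are already down to a single contour. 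The correct mechanism is the $v$-integration by parts in the single remaining contour; your phrase about writing $\tfrac{1}{(v+y)^2}A^{k_i+k_j+2}$ as a total derivative "times the appropriate logarithmic derivative" is also off, since $\partial_v\ln A$ produces several partial-fraction pieces, not just $(v+y)^{-1}$. Once these two points are made precise, the argument closes.
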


\begin{proof}
By Lemma \ref{lemma:pbcov2}, the vector $\left( \mathfrak{Z}_{g_i, k_i} \right)_{i=1}^h$
is centered Gaussian, and the covariance between the $i$th and $j$th component is
\begin{multline}  \label{prop:derk:smt:pf0}
\int_0^G \int_0^G \frac{\left(\frac{d}{dy}{g}_{i}(y_1)\right) \left(\frac{d}{dy}{g}_{j}(y_2)\right) \theta^{-1}}{(2\pi \im)^2 (k_i + 1)(k_j + 1)} \oint \oint \frac{1}{(v_1 - v_2)^2} \\
\times \left(\frac{v_1}{v_1 + y_1} \cdot \frac{v_1 - \hat{\alpha}}{v_1 - \hat{\alpha} - \hat{M}}\right)^{k_i + 1} \left(\frac{v_2}{v_2 + y_2} \cdot \frac{v_2 - \hat{\alpha}}{v_2 - \hat{\alpha} - \hat{M}}\right)^{k_j + 1} dv_1 dv_2 dy_1 dy_2 ,
\end{multline}
where the contours enclose poles at $-y_1$ and $-y_2$, but not $\hat{\alpha} + \hat{M}$,
and are nested: when $y_1 \leq y_2$, $v_2$ is larger; when $y_1 \geq y_2$, $v_1$ is larger.

We fix the contours when $y_1 \leq y_2$ and $y_2 < y_1$, respectively, and switch the order of integrals:
\begin{multline}  \label{prop:derk:smt:pf1}
\oint \oint \iint_{0\leq y_1 \leq y_2 \leq G} \frac{ \left(\frac{d}{dy}{g}_{i}(y_1)\right) \left(\frac{d}{dy}{g}_{j}(y_2)\right)\theta^{-1}}{(2\pi \im)^2 (k_i + 1)(k_j + 1)} \frac{1}{(v_1 - v_2)^2} \\
\times \left(\frac{v_1}{v_1 + y_1} \cdot \frac{v_1 - \hat{\alpha}}{v_1 - \hat{\alpha} - \hat{M}}\right)^{k_i + 1} \left(\frac{v_2}{v_2 + y_2} \cdot \frac{v_2 - \hat{\alpha}}{v_2 - \hat{\alpha} - \hat{M}}\right)^{k_j + 1}  dy_1 dy_2dv_1 dv_2 \\
+
\oint \oint \iint_{0\leq y_2 < y_1 \leq G} \frac{\left(\frac{d}{dy} {g}_{i}(y_1)\right) \left(\frac{d}{dy}{g}_{j}(y_2)\right) \theta^{-1}}{(2\pi \im)^2 (k_i + 1)(k_j + 1)} \frac{1}{(v_1 - v_2)^2} \\
\times \left(\frac{v_1}{v_1 + y_1} \cdot \frac{v_1 - \hat{\alpha}}{v_1 - \hat{\alpha} - \hat{M}}\right)^{k_i + 1} \left(\frac{v_2}{v_2 + y_2} \cdot \frac{v_2 - \hat{\alpha}}{v_2 - \hat{\alpha} - \hat{M}}\right)^{k_j + 1}  dy_1 dy_2dv_1 dv_2 ,
\end{multline}
where in the first summand, $|v_1| \ll |v_2|$; and in the second summand, $|v_2| \ll |v_1|$.

We then integrate by parts for $y_1$.
The first summand in (\ref{prop:derk:smt:pf1}) has boundary terms at $0$ and $y_2$, while the second summand has boundary terms at $y_2$ and $G$.
The boundary term at $0$ vanishes since the contour of $v_1$ encloses no pole when $y_1 = 0$;
and the boundary term at $G$ vanishes since $g_i(G) = 0$.

Let us show that the boundary terms at $y_2$ in the two summands cancel out.
Indeed, each of them is an integral of $v_1$, $v_2$, $y_2$,
of the same expression.
The only difference is the nesting of the contours.
We fix the contour of $v_2$ in these two terms, and integrate $v_1$ along two circles, one inside and another one outside $v_2$, with different orientations.
Then we only need to compute the residue of $v_1$ at $v_2$.
The result is an integral of $v_2$ and $y_2$, and equals zero, because the integrand is the $v_2$ derivative of another function.
Then we conclude that \eqref{prop:derk:smt:pf1} equals
\begin{multline}  \label{prop:derk:smt:pf2}
\oint\oint \iint_{0 \leq y_1 \leq y_2 \leq G} \frac{g_i(y_1)\left(\frac{d}{dy}{g}_{j}(y_2)\right)\theta^{-1}}{(2\pi \im)^2 (k_j + 1)} \cdot \frac{1}{(v_1 - v_2)^2(v_1 + y_1)}
\\
\times
\left(\frac{v_1}{v_1 + y_1} \cdot \frac{v_1 - \hat{\alpha}}{v_1 - \hat{\alpha} - \hat{M}}\right)^{k_i + 1} \left(\frac{v_2}{v_2 + y_2} \cdot \frac{v_2 - \hat{\alpha}}{v_2 - \hat{\alpha} - \hat{M}}\right)^{k_j + 1} dy_1 dy_2  dv_1 dv_2
\\
+
\oint\oint \iint_{0 \leq y_2 < y_1 \leq G} \frac{g_i(y_1)\left(\frac{d}{dy}{g}_{j}(y_2)\right)\theta^{-1}}{(2\pi \im)^2 (k_j + 1)} \cdot \frac{1}{(v_1 - v_2)^2(v_1 + y_1)}
\\
\times
\left(\frac{v_1}{v_1 + y_1} \cdot \frac{v_1 - \hat{\alpha}}{v_1 - \hat{\alpha} - \hat{M}}\right)^{k_i + 1} \left(\frac{v_2}{v_2 + y_2} \cdot \frac{v_2 - \hat{\alpha}}{v_2 - \hat{\alpha} - \hat{M}}\right)^{k_j + 1} dy_1 dy_2  dv_1 dv_2
\end{multline}
where the contours are nested: in the first summand $|v_1| \ll |v_2|$, and in the second summand $|v_2| \ll |v_1|$.
Then we integrate by parts for $y_2$, for each of the two summands in \eqref{prop:derk:smt:pf2}.
For the second summand, we further exchange $v_1$ and $v_2$, $y_1$ and $y_2$.
In the end, we conclude that
\eqref{prop:derk:smt:pf0} $= A + B$, where
\begin{multline}
A =
\oint\oint \iint_{0 \leq y_1 \leq y_2 \leq G}
\frac{g_i(y_2)g_j(y_1)\theta^{-1}}{(2\pi \im)^2 } \cdot \frac{1}{(v_1 - v_2)^2 (v_1 + y_1)(v_2 + y_2)}
\\
\times
\left(\frac{v_1}{v_1 + y_1} \cdot \frac{v_1 - \hat{\alpha}}{v_1 - \hat{\alpha} - \hat{M}}\right)^{k_j + 1} \left(\frac{v_2}{v_2 + y_2} \cdot \frac{v_2 - \hat{\alpha}}{v_2 - \hat{\alpha} - \hat{M}}\right)^{k_i + 1} dy_1 dy_2 dv_1 dv_2
\\
+
\oint\oint \iint_{0 \leq y_1 \leq y_2 \leq G}
\frac{g_i(y_1)g_j(y_2)\theta^{-1}}{(2\pi \im)^2 } \cdot \frac{1}{(v_1 - v_2)^2 (v_1 + y_1)(v_2 + y_2)}
\\
\times
\left(\frac{v_1}{v_1 + y_1} \cdot \frac{v_1 - \hat{\alpha}}{v_1 - \hat{\alpha} - \hat{M}}\right)^{k_i + 1} \left(\frac{v_2}{v_2 + y_2} \cdot \frac{v_2 - \hat{\alpha}}{v_2 - \hat{\alpha} - \hat{M}}\right)^{k_j + 1} dy_1 dy_2 dv_1 dv_2 ,
\end{multline}
and
\begin{multline}
B =
\oint\oint \int_0^G \frac{g_i(y)g_j(y)\theta^{-1}}{(2\pi \im)^2 (k_j + 1)} \cdot \frac{1}{(v_1 - v_2)^2(v_2 + y)}
\\
\times
\left(\frac{v_1}{v_1 + y} \cdot \frac{v_1 - \hat{\alpha}}{v_1 - \hat{\alpha} - \hat{M}}\right)^{k_j + 1} \left(\frac{v_2}{v_2 + y} \cdot \frac{v_2 - \hat{\alpha}}{v_2 - \hat{\alpha} - \hat{M}}\right)^{k_i + 1} dy  dv_1 dv_2
\\
-
\oint\oint \int_0^G \frac{g_i(y)g_j(y)\theta^{-1}}{(2\pi \im)^2 (k_j + 1)} \cdot \frac{1}{(v_1 - v_2)^2(v_1 + y)}
\\
\times
\left(\frac{v_1}{v_1 + y} \cdot \frac{v_1 - \hat{\alpha}}{v_1 - \hat{\alpha} - \hat{M}}\right)^{k_i + 1} \left(\frac{v_2}{v_2 + y} \cdot \frac{v_2 - \hat{\alpha}}{v_2 - \hat{\alpha} - \hat{M}}\right)^{k_j + 1} dy  dv_1 dv_2 ,
\end{multline}
where the contours in both $A$ and $B$ are nested and $|v_1| \ll |v_2|$.
Note that $A$ equals the first and second summands in \eqref{eq:prop:derk:smp:cov}.

By symmetry, if we interchange $i$ and $j$ in $A$ and $B$ to get $A'$ and $B'$, we would have that \eqref{prop:derk:smt:pf0} $ = A' + B'$.
Note that $A = A'$, and
\begin{multline}
B' =
-
\oint\oint \int_0^G \frac{g_i(y)g_j(y)\theta^{-1}}{(2\pi \im)^2 (k_i + 1)} \cdot \frac{1}{(v_1 - v_2)^2(v_1 + y)}
\\
\times
\left(\frac{v_1}{v_1 + y} \cdot \frac{v_1 - \hat{\alpha}}{v_1 - \hat{\alpha} - \hat{M}}\right)^{k_j + 1} \left(\frac{v_2}{v_2 + y} \cdot \frac{v_2 - \hat{\alpha}}{v_2 - \hat{\alpha} - \hat{M}}\right)^{k_i + 1} dy  dv_1 dv_2
\\
+
\oint\oint \int_0^G \frac{g_i(y)g_j(y)\theta^{-1}}{(2\pi \im)^2 (k_i + 1)} \cdot \frac{1}{(v_1 - v_2)^2(v_2 + y)}
\\
\times
\left(\frac{v_1}{v_1 + y} \cdot \frac{v_1 - \hat{\alpha}}{v_1 - \hat{\alpha} - \hat{M}}\right)^{k_i + 1} \left(\frac{v_2}{v_2 + y} \cdot \frac{v_2 - \hat{\alpha}}{v_2 - \hat{\alpha} - \hat{M}}\right)^{k_j + 1} dy  dv_1 dv_2 ,
\end{multline}
where the contours are also nested and $|v_1| \ll |v_2|$.
Then we have
\begin{multline}  \label{eq:last:exp}
B = B' =
\frac{(k_j + 1)B + (k_i + 1)B'}{k_i + k_j + 2}
\\
=
\oint\oint \int_0^G \frac{g_i(y)g_j(y)\theta^{-1}}{(2\pi \im)^2 (k_i + k_j + 2)} \cdot \frac{1}{(v_1 - v_2)(v_1 + y)(v_2 + y)}
\\
\times
\left(
\left(\frac{v_1}{v_1 + y} \cdot \frac{v_1 - \hat{\alpha}}{v_1 - \hat{\alpha} - \hat{M}}\right)^{k_j + 1} \left(\frac{v_2}{v_2 + y} \cdot \frac{v_2 - \hat{\alpha}}{v_2 - \hat{\alpha} - \hat{M}}\right)^{k_i + 1}
\right.
\\
+
\left.
\left(\frac{v_1}{v_1 + y} \cdot \frac{v_1 - \hat{\alpha}}{v_1 - \hat{\alpha} - \hat{M}}\right)^{k_i + 1} \left(\frac{v_2}{v_2 + y} \cdot \frac{v_2 - \hat{\alpha}}{v_2 - \hat{\alpha} - \hat{M}}\right)^{k_j + 1}
\right)
dy  dv_1 dv_2 ,
\end{multline}
where the contours are nested and $|v_1| \ll |v_2|$.
By applying Theorem \ref{thm:dr} to \eqref{eq:last:exp}, we get the last summand in \eqref{eq:prop:derk:smp:cov}.
\end{proof}

Using this proposition we can bound the covariance uniformly:
\begin{cor}   \label{cor:l2bd}
There is a constant $C(\hat{\alpha}, \hat{M}, k, G)$, such that for any $g_1, g_2 \in C^{\infty}([0, G])$, we have
\begin{equation}
\mathbb{E}\left( \mathfrak{Z}_{g_1, k} \mathfrak{Z}_{g_2, k} \right) \leq C(\hat{\alpha}, \hat{M}, k, G) \|g_1\|_{L^{2}} \|g_2\|_{L^{2}} .
\end{equation}
\end{cor}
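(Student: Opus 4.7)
The plan is to apply Proposition \ref{prop:derk:smt} with $k_i=k_j=k$ to write the covariance $\mathbb{E}(\mathfrak{Z}_{g_1,k}\mathfrak{Z}_{g_2,k})$ as the explicit sum of three integrals appearing in \eqref{eq:prop:derk:smp:cov}, and then to bound each of the three summands separately by a constant (depending only on $\hat{\alpha},\hat{M},k$) times $\|g_1\|_{L^2}\|g_2\|_{L^2}$.

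The first step is to fix, once and for all, two positively oriented nested contours $\Gamma_1\subset\Gamma_2$ in the complex plane, chosen independently of $y_1,y_2\in[0,1]$, such that both enclose the real segment $[-1,0]$ (hence enclose $-y_1$ and $-y_2$ for every $y_1,y_2\in[0,1]$), while neither encloses the point $\hat{\alpha}+\hat{M}$. Since $\hat{\alpha}+\hat{M}>0$ is at positive distance from $[-1,0]$, such contours exist. On $\Gamma_1\cup\Gamma_2$ every factor appearing in the inner contour integrals of \eqref{eq:prop:derk:smp:cov} is continuous in $(v_1,v_2,y_1,y_2)\in\Gamma_1\times\Gamma_2\times[0,1]^2$; in particular $(v_1-v_2)^{-2}$ is bounded because $\Gamma_1$ and $\Gamma_2$ are separated by a positive distance. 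Consequently the double-contour kernel $K(y_1,y_2)$ appearing as the coefficient of $g_i(y_a)g_j(y_b)$ in the first two summands is uniformly bounded by some constant $C_1=C_1(\hat{\alpha},\hat{M},k)$, and similarly the single-contour kernel $\tilde K(y)$ in the third summand is uniformly bounded by $C_2=C_2(\hat{\alpha},\hat{M},k)$.

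The second step is the elementary estimate. For the first two summands, using $|K(y_1,y_2)|\le C_1$ and Cauchy--Schwarz (together with $\|g\|_{L^1([0,1])}\le\|g\|_{L^2([0,1])}$ since $[0,1]$ has unit measure),
\[
\biggl|\iint_{0\le y_1<y_2\le 1} g_1(y_a)\,g_2(y_b)\,K(y_1,y_2)\,dy_1 dy_2\biggr|
\;\le\; C_1\,\|g_1\|_{L^1}\|g_2\|_{L^1}\;\le\; C_1\,\|g_1\|_{L^2}\|g_2\|_{L^2},
\]
and for the third summand
\[
\biggl|\int_0^1 g_1(y)\,g_2(y)\,\tilde K(y)\,dy\biggr|
\;\le\; C_2\int_0^1|g_1(y)g_2(y)|\,dy
\;\le\; C_2\,\|g_1\|_{L^2}\|g_2\|_{L^2}
\]
by Cauchy--Schwarz. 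Adding the three bounds gives the desired inequality with $C(\hat{\alpha},\hat{M},k)=2C_1+C_2$.

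I do not anticipate a genuine obstacle: once Proposition \ref{prop:derk:smt} provides the explicit covariance, the proof is a straightforward uniform-boundedness argument for the contour kernels followed by Cauchy--Schwarz in $y$. The only point requiring mild care is making sure that a single pair of contours can be used for all $y_1,y_2\in[0,1]$ simultaneously, which is immediate from the separation of the pole set $\{-y_1,-y_2\}\subset[-1,0]$ from the forbidden point $\hat{\alpha}+\hat{M}>0$.
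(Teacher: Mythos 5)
Your proof is correct and takes essentially the same approach as the paper: Proposition \ref{prop:derk:smt} gives the explicit covariance, the contours are fixed to enclose $[-1,0]$ but not $\hat{\alpha}+\hat{M}$ so the kernels are uniformly bounded, and then Cauchy--Schwarz together with $\|g\|_{L^1([0,1])}\le\|g\|_{L^2([0,1])}$ finishes the estimate.
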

\begin{proof}
By Proposition \ref{prop:derk:smt}, since the random variables $\mathfrak{Z}_{g_1, k}$ and $\mathfrak{Z}_{g_2, k}$ are centered Gaussian, $\mathbb{E}\left( \mathfrak{Z}_{g_1, k} \mathfrak{Z}_{g_2, k} \right)$ is just the covariance given by (\ref{eq:prop:derk:smp:cov}).
We fix the contours in (\ref{eq:prop:derk:smp:cov}) to enclose line segment $[- G, 0]$ but not $\hat{\alpha} + \hat{M}$; then it is bounded by
\begin{multline}
C \left( \int_{0}^G \int_0^G  \left| g_1(y_1) g_2(y_2) \right| dy_1 dy_2
+
\int_0^G \left| g_1(y) g_2(y) \right|  dy
\right)
\\
\leq C \left( \left\| g_1 \right\|_{L^1} \left\| g_2 \right\|_{L^1} + \left\| g_1 \right\|_{L^2}\left\| g_2 \right\|_{L^2} \right)
\leq 2C \left\| g_1 \right\|_{L^2}\left\| g_2 \right\|_{L^2},
\end{multline}
for some constant $C$ independent of $g_1, g_2$.
Setting $C(\hat{\alpha}, \hat{M}, k, G) = 2C$ finishes the proof.
\end{proof}

Now we show that $\mathfrak{Z}_{g, k}$ can be defined for any $G \in \mathbb{R}_{>0}$ and $g\in L^2([0, G])$.

\begin{proof}[Proof of Lemma \ref{lemma:derk:defn}]
Since smooth functions are dense in $L^2([0, G])$, there is a sequence $h_1, h_2, \cdots \in C^{\infty}([0, G])$ which converges to $g$ in $L^2([0, G])$.
We further take a sequence $\lambda_1, \lambda_2, \cdots \in C^{\infty}([0, G])$, where each $0 \leq \lambda_n \leq 1$, $\lambda_n(G) = 1$, and each $\| \lambda_n \|_{L^2} < 2^{-n} \| h_n \|_{L^{\infty}}^{-1}$.
Set $g_n = (1 - \lambda_n)h_n$, then each $g_n \in C^{\infty}([0, G])$ satisfies $g_n(G) = 0$, and $\| g_n - g \|_{L^2} \leq \| h_n - g \|_{L^2} + \|h_n\|_{L^{\infty}} \|\lambda_n \|_{L^{2}}$.
Then the sequence $g_1, g_2, \cdots$ converges to $g$ in $L^2$.

Passing to a subsequence if necessary, we can assume that for any $n$ we have
$\| g_{n} - g_{n+1} \|_{L^2} < 2^{-n}$.
By Corollary \ref{cor:l2bd}, we have
\begin{multline}
\mathbb{E}\left(  \left| \mathfrak{Z}_{g_n, k} - \mathfrak{Z}_{g_{n+1}, k} \right| \right)
\leq
\mathbb{E}\left( \left| \mathfrak{Z}_{g_n, k} - \mathfrak{Z}_{g_{n+1}, k} \right|^2 \right)^{\frac{1}{2}}
=
\mathbb{E}\left( \left| \mathfrak{Z}_{g_n - g_{n+1}, k} \right|^2 \right)^{\frac{1}{2}}
\\
\leq
C(\hat{\alpha}, \hat{M}, k, G)^{\frac{1}{2}} \| g_{n} - g_{n+1} \|_{L^{2}}
\leq 2^{-n} C(\hat{\alpha}, \hat{M}, k, G)^{\frac{1}{2}}  .
\end{multline}
Then by the dominated convergence theorem, the limit
\begin{equation}
\lim_{m\rightarrow \infty} \mathfrak{Z}_{g_m, k}
=
\lim_{m \rightarrow \infty} \sum_{n=0}^{m - 1} \left( \mathfrak{Z}_{g_{n+1}, k} - \mathfrak{Z}_{g_n, k}\right), \mathrm{\; where\;} \mathfrak{Z}_{g_0, k} = 0,
\end{equation}
exists almost surely.
Denote it as $\mathfrak{Z}_{g, k}$.

For the uniqueness of $\mathfrak{Z}_{g, k}$, if there is another such sequence $\tilde{g}_1, \tilde{g}_2, \cdots$, then we have
\begin{multline}
\mathbb{E} \left( \left| \mathfrak{Z}_{g, k} - \mathfrak{Z}_{\tilde{g}_n, k} \right| \right)
\leq
\mathbb{E} \left( \left| \mathfrak{Z}_{g, k} - \mathfrak{Z}_{{g}_n, k} \right| \right)
+
\mathbb{E} \left( \left| \mathfrak{Z}_{{g}_n, k} - \mathfrak{Z}_{\tilde{g}_n, k} \right| \right)
\\
\leq
\mathbb{E} \left( \left| \mathfrak{Z}_{g, k} - \mathfrak{Z}_{{g}_n, k} \right| \right)
+
C(\hat{\alpha}, \hat{M}, k, G)^{\frac{1}{2}} \| g_n - \tilde{g}_n \|_{L^{2}},
\end{multline}
which goes to $0$ as $n \rightarrow \infty$.
Then $\mathfrak{Z}_{\tilde{g}_n, k}$ also converges almost surely to $\mathfrak{Z}_{g, k}$.
\end{proof}

With this we can extend Proposition \ref{prop:derk:smt} to functions in $L^2([0, 1])$.
\begin{prop}  \label{prop:derk:smte}
Let $k_1, \cdots, k_h$ be integers, $G \in \mathbb{R}_{>0}$, and
$g_1, \cdots, g_h \in L^2([0, G])$.
Then the joint distribution of the vector
$\left( \mathfrak{Z}_{g_i, k_i} \right)_{i=1}^h$
is Gaussian, and the covariance between the $i$th and $j$th component is given by the same expression
(\ref{eq:prop:derk:smp:cov}).
\end{prop}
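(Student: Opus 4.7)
The plan is to deduce Proposition \ref{prop:derk:smte} from Proposition \ref{prop:derk:smt} by an $L^2$ approximation argument, leveraging the uniform bound established in Corollary \ref{cor:l2bd}.

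First, for each index $i$, I would invoke Lemma \ref{lemma:derk:defn} to fix an approximating sequence $g_{i,n} \in C^{\infty}([0,1])$ with $g_{i,n}(1)=0$ and $g_{i,n} \to g_i$ in $L^2([0,1])$, chosen so that $\mathfrak{Z}_{g_{i,n}, k_i} \to \mathfrak{Z}_{g_i, k_i}$ almost surely. For each fixed $n$, Proposition \ref{prop:derk:smt} applies to the vector $(\mathfrak{Z}_{g_{i,n}, k_i})_{i=1}^h$, which is therefore centered Gaussian with covariance given by formula (\ref{eq:prop:derk:smp:cov}) evaluated on $(g_{i,n}, g_{j,n})$.

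Next I would pass to the limit $n\to\infty$ componentwise. Almost sure convergence of each marginal automatically yields joint almost sure convergence of the vector, and a weak limit of Gaussian vectors (with convergent covariance matrices) is Gaussian. To identify the limiting covariance, I would observe that the bilinear form $(g,h) \mapsto \mathbb{E}(\mathfrak{Z}_{g,k_i}\mathfrak{Z}_{h,k_j})$, when $g,h$ are smooth and vanish at $1$, equals the right-hand side of (\ref{eq:prop:derk:smp:cov}) by Proposition \ref{prop:derk:smt}. Corollary \ref{cor:l2bd}, together with the polarization identity and Cauchy--Schwarz, promotes this to a bilinear form that is $L^2$-continuous; hence its value at $(g_{i,n}, g_{j,n})$ converges to its value at $(g_i, g_j)$. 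One then checks directly from formula (\ref{eq:prop:derk:smp:cov}) by the same contour-bounding argument as in the proof of Corollary \ref{cor:l2bd} (bounding the integrand on a fixed contour enclosing $[-1,0]$ but not $\hat{\alpha}+\hat{M}$, followed by Cauchy--Schwarz on the outer $y$-integration) that the stated expression defines a jointly continuous bilinear functional on $L^2([0,1])\times L^2([0,1])$, so the limit of the right-hand side of (\ref{eq:prop:derk:smp:cov}) at $(g_{i,n},g_{j,n})$ is exactly its value at $(g_i,g_j)$.

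Combining: the Gaussian vectors $(\mathfrak{Z}_{g_{i,n},k_i})_{i=1}^h$ converge almost surely to $(\mathfrak{Z}_{g_i,k_i})_{i=1}^h$, and their covariance matrices converge to the matrix given by (\ref{eq:prop:derk:smp:cov}) evaluated on $(g_i,g_j)$. Thus the limiting vector is centered Gaussian with the claimed covariance. The only mildly delicate point I anticipate is the joint -- as opposed to marginal -- convergence, but this is free once one works with a single almost sure event on which every marginal converges; so the main obstacle is really just the routine verification that (\ref{eq:prop:derk:smp:cov}) is continuous in $(g_i,g_j)$ with respect to the $L^2$ topology, which is immediate from the uniform contour bounds used for Corollary \ref{cor:l2bd}.
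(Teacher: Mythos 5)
Your proposal is correct and takes essentially the same route as the paper's proof: approximate each $g_i$ by smooth functions vanishing at $1$ via Lemma \ref{lemma:derk:defn}, apply Proposition \ref{prop:derk:smt} at each step of the approximation, and pass to the limit using the $L^2$ control from Corollary \ref{cor:l2bd}. The paper states the final covariance step more tersely ("passing the corresponding covariances to the limit"), whereas you spell out why formula (\ref{eq:prop:derk:smp:cov}) is an $L^2$-continuous bilinear form, but the underlying argument is the same.
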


\begin{proof}
For each $1 \leq i \leq h$, take a sequence $g_{1,i}, g_{2,i}, \cdots$ such that each is in $C^{\infty}([0, G])$, and $g_{n, i}(G) = 0$ for each positive integer $n$, and $g_{n, i}$ converges to $g_i$ in $L^2([0, G])$.
The joint distribution of $\left( \mathfrak{Z}_{g_i, k_i} \right)_{i=1}^h$
is the limit
\begin{equation}
\lim_{n \rightarrow \infty}
\left( \mathfrak{Z}_{g_{n, i}, k_i} \right)_{i=1}^h ,
\end{equation}
in the sense that this vector almost surely converges.
By Proposition \ref{prop:derk:smt}, for each $n$, $\left( \mathfrak{Z}_{g_{n, i}, k_i} \right)_{i=1}^h$ is jointly Gaussian, then so is $\left( \mathfrak{Z}_{g_i, k_i} \right)_{i=1}^h$;
and the covariances are given by taking the $n \rightarrow \infty$ limit of the corresponding covariances.
\end{proof}

Finally we finish the proof of Theorem \ref{thm:gff:sm}.

\begin{proof}[Proof of Theorem \ref{thm:gff:sm}]
Integrating by parts in the $u$--direction, we obtain
\begin{multline}
\int_0^G\int_0^1 u^{k_i} g_i(y) \mathcal{W}_L(u, \hat{N}_i) du dy
=
L\int_0^G \frac{g_i(y)}{k_i + 1} \left( \min\{ N_i, M \} - \min\{ N_i - 1, M \} \right) dy
\\
- L\int_0^G \frac{g_i(y)}{k_i + 1} \left( \sum_{j=1}^{\min\{ N_i, M \}} \left(x_j^{N_i} \right)^{k_i + 1}
 - \sum_{j=1}^{\min\{ N_i-1, M \}} \left(x_j^{N_i} \right)^{k_i + 1}
 \right) dy.
\end{multline}
Thus, (\ref{eq:prop:gff:sm1}) equals
\begin{equation}
\left( - 
\int_0^G \frac{g_i(y)}{k_i + 1}
\left( \mathfrak{P}_{k_i+1}(x^{N_i}) - \mathfrak{P}_{k_i+1}(x^{N_i-1}) - \mathbb{E}\left(\mathfrak{P}_{k_i+1}(x^{N_i}) - \mathfrak{P}_{k_i+1}(x^{N_i-1})\right) \right)
 dy
\right)_{i=1}^h,
\end{equation}
which, by Theorem \ref{thm:clt2}, is asymptotically Gaussian, and the covariances are given by (\ref{eq:prop:derk:smp:cov}).

On the other hand, by Proposition \ref{prop:derk:smte}, the joint distribution of $\left( \mathfrak{Z}_{g_i, k_i} \right)_{i=1}^{h}$ is also Gaussian, with covariances also given by (\ref{eq:prop:derk:smp:cov}).

For \eqref{eq:prop:gff:sm2}, it suffices to show that for each $1 \leq i \leq h'$ the random variable
\begin{equation}  \label{eq:thm:gff:sm:pf1}
\int_0^G\int_0^1 \Big[ u^{k_i'} \left(\frac{d}{dy}\tilde{g}_{i}(y)\right) \left( \mathcal{H}_L(u, y) - \mathbb{E} \left( \mathcal{H}_L(u, y) \right) \right)
+
u^{k_i'} \tilde{g}_i(y) \left( \mathcal{W}_L(u, y) - \mathbb{E} \left( \mathcal{W}_L(u, y) \right) \right) \Big]
du dy
\end{equation}
weakly converges to $0$ as $L \rightarrow \infty$.
Here we can also assume that $\tilde{g}_i(G) = 0$ for each $1 \leq i \leq h'$, since adding a constant to each $\tilde{g}_i$ does not influence \eqref{eq:thm:gff:sm:pf1}.

Take $L_0$ be the largest integer strictly less than $GL$.
Note that $\mathcal{H}_L(u, y) - \mathbb{E} \left( \mathcal{H}_L(u, y) \right)$ is a piecewise constant function for $y$:
specifically, fixing $u$, it is constant for $y \in \left[\frac{n-1}{L}, \frac{n}{L} \right)$ for any positive integer $n$.
By integrating in the $y$--direction we have
\begin{multline}
\int_0^G \int_0^1 u^{k_i'} \left(\frac{d}{dy}\tilde{g}_{i}(y)\right) \left( \mathcal{H}_L(u, y) - \mathbb{E} \left( \mathcal{H}_L(u, y) \right) \right) du dy
\\
=
- \int_0^1 u^{k_i'} L^{-1}\sum_{n = 1}^{L_0} \tilde{g}_i\left( \frac{n}{L} \right) \left[ \mathcal{W}_L\left(u, \frac{n}{L}\right) - \mathbb{E} \left( \mathcal{W}_L\left(u, \frac{n}{L}\right) \right) \right] du .
\end{multline}
This implies that the absolute value of (\ref{eq:thm:gff:sm:pf1}) is bounded by
\begin{multline}   \label{eq:thm:gff:sm:pf2}
\sup_{a, b \in [0, G], |a - b| \leq L^{-1}} |\tilde{g}_i(a) - \tilde{g}_i(b)| \int_0^G \left| \int_0^1 u^{k_i'}  \mathcal{W}_L(u, y) - \mathbb{E} \left( \mathcal{W}_L(u, y) \right) du \right| dy
\\
\leq
\left\| \frac{d}{dy}\tilde{g}_{i} \right\|_{L^{\infty}}
\int_0^G \left|L^{-1} \int_0^1 u^{k_i'} \mathcal{W}_L(u, y) - \mathbb{E} \left( \mathcal{W}_L(u, y) \right) du \right| dy  .
\end{multline}
By Theorem \ref{thm:gff:la}, as $L \rightarrow \infty$, the integrand of the outer integral weakly converges to zero, and this is uniform in $y$ according to Remark \ref{rem:add3}.
Then \eqref{eq:thm:gff:sm:pf2}, and
\eqref{eq:thm:gff:sm:pf1}, weakly converge to $0$ as $L \rightarrow \infty$.
\end{proof}

\appendix
\section*{Appendices}
\addcontentsline{toc}{section}{Appendices}
\renewcommand{\thesubsection}{\Alph{subsection}}

\numberwithin{equation}{subsection}
\numberwithin{theorem}{subsection}
\subsection{Dimension reduction identities}  \label{sec:dr}

In this appendix we discuss integral identities, which are widely used in proofs in the main text.
A special case ($m=1$) of the following result was communicated to the authors by Alexei Borodin, and we present our own proof here.

For any positive integer $n$, let $\sigma_n$ denote the cycle $(12\cdots n)$, and let $S^{cyc}(n)$ denote the $n$--element subgroup of the symmetric group spanned by $\sigma_n$.
\begin{theorem} \label{thm:dr}
Let $n\geq 2$, and $f_1, \cdots, f_n : \mathbb{C} \rightarrow \mathbb{C}$ be meromorphic with
possible poles at $\{\mathfrak{p}_1, \cdots, \mathfrak{p}_m \}$.
Then we have the identity
\begin{equation} \label{eq:thm:dr}
\sum_{\sigma \in S^{cyc}(n)} \frac{1}{(2\pi\im)^n} \oint \cdots \oint \frac{f_{\sigma(1)}(u_{1}) \cdots f_{\sigma(n)}(u_{n}) }{(u_2 - u_1) \cdots (u_n - u_{n-1})} du_1\cdots du_n
= \frac{1}{2\pi\im} \oint f_1(u)\cdots f_n(u) du ,
\end{equation}
where the contours in both sides are positively oriented, enclosing $\{\mathfrak{p}_1, \cdots, \mathfrak{p}_m \}$,
and for the left hand side we require $|u_1| \ll \cdots \ll |u_n|$.
\end{theorem}

\begin{proof}
Let $\mathfrak{C}_1, \cdots, \mathfrak{C}_{2n-1}$
be closed paths around $\{\mathfrak{p}_1, \cdots, \mathfrak{p}_m \}$,
and each $\mathfrak{C}_i$ is inside $\mathfrak{C}_{i+1}$,
$1\leq i \leq 2n-2$.
Also, to simplify notations,
set $f_{n+t} = f_t$ and $u_{n+t} = u_t$ for any $1 \leq t \leq n-1$.
Then the left hand side of (\ref{eq:thm:dr}) can be written as
\begin{equation}
\sum_{t=0}^{n-1}
\frac{1}{(2\pi \im)^n}
\oint_{\mathfrak{C}_{1}} \cdots \oint_{\mathfrak{C}_{n}} \frac{f_{1+t}(u_1) \cdots f_{n+t}(u_n) }{(u_2 - u_1) \cdots (u_n- u_{n-1})}
du_n \cdots du_1  .
\end{equation}

When $n=2$,
we have
\begin{multline}  \label{eq:thm:dr:pf1}
\frac{1}{(2\pi \im)^2}
\oint_{\mathfrak{C}_{1}} \oint_{\mathfrak{C}_{2}} \frac{f_1(u_1) f_2(u_2)}{u_2 - u_1} du_2 du_1
+
\frac{1}{(2\pi \im)^2}
\oint_{\mathfrak{C}_{1}} \oint_{\mathfrak{C}_{2}} \frac{f_2(u_1) f_1(u_2)}{u_2 - u_1} du_2 du_1
\\
=
\frac{1}{(2\pi \im)^2}
\oint_{\mathfrak{C}_{1}} \oint_{\mathfrak{C}_{2}} \frac{f_1(u_1) f_2(u_2)}{u_2 - u_1} du_2 du_1
+
\frac{1}{(2\pi \im)^2}
\oint_{\mathfrak{C}_{3}} \oint_{\mathfrak{C}_{2}} \frac{f_1(u_1) f_2(u_2)}{u_1 - u_2} du_2 du_1
\\
=
\frac{1}{(2\pi \im)^2}
\oint_{\mathfrak{C}_{3} - \mathfrak{C}_{1}} \oint_{\mathfrak{C}_{2}} \frac{f_1(u_1) f_2(u_2)}{u_1 - u_2} du_2 du_1
 ,
\end{multline}
where $\oint_{\mathfrak{C}_{3} - \mathfrak{C}_{1}}$ is a notation for the difference of integrals over $\mathfrak{C}_3$ and $\mathfrak{C}_1$.
Further, (\ref{eq:thm:dr:pf1}) equals to
\begin{equation}
\frac{1}{2\pi \im}
\oint_{\mathfrak{C}_{2}} f_1(u) f_2(u) du ,
\end{equation}
since as a function of $u_1$,
$\frac{f_1(u_1) f_2(u_2)}{u_1 - u_2} $ has a single pole at $u_2$
between $\mathfrak{C}_{3}$ and $\mathfrak{C}_{1}$;
and the residue at this pole equals
$f_1(u_2)f_2(u_2)$.
This proves the case of $n=2$.

When $n\geq 3$, we argue by induction and assume that Theorem \ref{thm:dr} is true for $n-1$.
For any $1 \leq t \leq n-1$,
we have that
\begin{multline}
\frac{1}{(2\pi \im)^n}
\oint_{\mathfrak{C}_{1}} \cdots \oint_{\mathfrak{C}_{n}} \frac{f_{1+t}(u_1) \cdots f_{n+t}(u_n) }{(u_2 - u_1) \cdots (u_n- u_{n-1})}
du_n \cdots du_1 \\
= \frac{1}{(2\pi \im)^n}
\oint_{\mathfrak{C}_{1+t}} \cdots \oint_{\mathfrak{C}_{n+t}} \frac{f_{1}(u_1) \cdots f_{n}(u_n) }{(u_{2+t} - u_{1+t}) \cdots (u_{n+t}- u_{n-1+t})}
du_{n+t} \cdots du_{1+t} \\
= \frac{1}{(2\pi \im)^n}
\oint_{\mathfrak{C}_{n+1}} \cdots \oint_{\mathfrak{C}_{n+t}} \oint_{\mathfrak{C}_{t+1}} \cdots \oint_{\mathfrak{C}_{n}}  \frac{f_{1}(u_1) \cdots f_{n}(u_n) }{(u_{2+t} - u_{1+t}) \cdots (u_{n+t}- u_{n-1+t})}
du_{n} \cdots du_{1}  .
\end{multline}

Now we can move the contours of $u_1, \cdots, u_t$ from $\mathfrak{C}_{n+1}, \cdots, \mathfrak{C}_{n+t}$
to $\mathfrak{C}_{1}, \cdots, \mathfrak{C}_{t}$,
respectively.
We move the contours one by one starting from $u_1$,
and each move is across $\mathfrak{C}_{t+1}, \cdots, \mathfrak{C}_{n}$.
For $u_1 (= u_{n+1})$, the only pole between $\mathfrak{C}_{n+1}$ and $\mathfrak{C}_{1}$
is $u_n$;
for any $u_i$, $1 < i \leq t$,
there is no pole between $\mathfrak{C}_{n+i}$ and $\mathfrak{C}_{i}$.
Thus we have that
\begin{multline}
\frac{1}{(2\pi \im)^n}
\oint_{\mathfrak{C}_{n+1}} \cdots \oint_{\mathfrak{C}_{n+t}} \oint_{\mathfrak{C}_{t+1}} \cdots \oint_{\mathfrak{C}_{n}}  \frac{f_{1}(u_1) \cdots f_{n}(u_n) }{(u_{2+t} - u_{1+t}) \cdots (u_{n+t}- u_{n-1+t})}
du_{n} \cdots du_{1}  \\
=
\frac{1}{(2\pi \im)^n}
\oint_{\mathfrak{C}_{1}} \cdots \oint_{\mathfrak{C}_{n}}  \frac{f_{1}(u_1) \cdots f_{n}(u_n) }{(u_{2+t} - u_{1+t}) \cdots (u_{n+t}- u_{n-1+t})}
du_{n} \cdots du_{1} \\
+
\frac{1}{(2\pi \im)^{n-1}}
\oint_{\mathfrak{C}_{1}} \cdots \oint_{\mathfrak{C}_{n-1}} \frac{f_{1+t}(u_1) \cdots f_{n}(u_{n-t}) f_{n+1}(u_{n-t}) \cdots f_{n+t}(u_{n-1}) }{(u_2 - u_1) \cdots (u_{n-1}- u_{n-2})}
du_{n-1} \cdots du_1  .
\end{multline}
Notice that (taking into account that $u_{n+t} = u_t$)
\begin{equation}
\sum_{t=0}^{n-1} \frac{f_{1}(u_1) \cdots f_{n}(u_n) }{(u_{2+t} - u_{1+t}) \cdots (u_{n+t}- u_{n-1+t})}
= 0  ,
\end{equation}
and by the induction assumption (applied to $f_2, \cdots, f_{n-1}, f_n f_1$), we have that
\begin{multline}
\sum_{t=0}^{n-1}
\frac{1}{(2\pi \im)^n}
\oint_{\mathfrak{C}_{1}} \cdots \oint_{\mathfrak{C}_{n}} \frac{f_{1+t}(u_1) \cdots f_{n+t}(u_n) }{(u_2 - u_1) \cdots (u_n- u_{n-1})}
du_n \cdots du_1  \\
= \sum_{t=1}^{n-1}
\frac{1}{(2\pi \im)^{n-1}}
\oint_{\mathfrak{C}_{1}} \cdots \oint_{\mathfrak{C}_{n-1}} \frac{f_{1+t}(u_1) \cdots f_{n}(u_{n-t}) f_{n+1}(u_{n-t}) \cdots f_{n+t}(u_{n-1}) }{(u_2 - u_1) \cdots (u_{n-1}- u_{n-2})}
du_{n-1} \cdots du_1  \\
= \frac{1}{2\pi\im} \oint f_1(u)\cdots f_n(u) du  .
\end{multline}
\end{proof}

\begin{cor} \label{cor:dr}
Let $s$ be a positive integer.
Let $f$, $g_1, \cdots, g_s$ be meromorphic functions with
possible poles at $\{\mathfrak{p}_1, \cdots, \mathfrak{p}_m \}$.
Then for $n\geq 2$,
\begin{multline} \label{eq:dr:cor}
\frac{1}{(2\pi \im)^n} \oint \cdots \oint \frac{1}{(v_2-v_1)\cdots (v_n-v_{n-1})} \prod_{i=1}^n f(v_i) dv_i
\prod_{i=1}^s \left(\sum_{j=1}^n g_i(v_j)\right)\\
= \frac{n^{s-1}}{2\pi \im}\oint f(v)^n \prod_{i=1}^s g_i(v) dv ,
\end{multline}
where the contours in both sides are around all of $\{\mathfrak{p}_1, \cdots, \mathfrak{p}_m \}$,
and for the left hand side we require $|u_1| \ll \cdots \ll |u_n|$.
\end{cor}

\begin{proof}
Take disjoint sets $U_1, \cdots, U_n$, with $\bigcup_{i=1}^n U_i = \{1, \cdots, s\}$ (some of which might be empty).
In Theorem \ref{thm:dr} we let $f_{i} = f\prod_{j\in U_{i} } g_j$ for each $1 \leq i \leq n$, and get
\begin{multline}
\sum_{\sigma \in S^{cyc}(n)} \frac{1}{(2\pi \im)^n} \oint \cdots \oint \frac{1}{(v_2-v_1)\cdots (v_n-v_{n-1})} \prod_{i=1}^n \left( f(v_i) \prod_{j\in U_{\sigma(i)}} g_j(v_i) dv_i \right) \\
= \frac{1}{2\pi \im}\oint f(v)^n \prod_{i=1}^s g_i(v) dv .
\end{multline}
Summing over all $n^s$ partitions $U_1, \cdots, U_n$ of $\{1, \cdots, s\}$ into $n$ disjoint sets, we obtain (\ref{eq:dr:cor}).
\end{proof}

\bibliographystyle{halpha}
\bibliography{bibliography}

\end{document}